\newtheorem{theorem}{Theorem}[section]
\newtheorem{lemma}[theorem]{Lemma}
\newtheorem{proposition}[theorem]{Proposition}
\newtheorem{question}[theorem]{Question}
\newtheorem{corollary}[theorem]{Corollary}
\newtheorem{definition}[theorem]{Definition}
\newtheorem{claim}[theorem]{Claim}
\newtheorem{conjecture}[theorem]{Conjecture}
\theoremstyle{definition}
\newtheorem{example}[theorem]{Example}
\newtheorem{remark}[theorem]{Remark}
\newcommand{\R}{\mathbb{R}}
\newcommand{\e}{\varepsilon}
\newcommand{\beq}{\begin{equation}}
\newcommand{\eeq}{\end{equation}}
\newcommand{\bdef}{\begin{definition}}
\newcommand{\eedef}{\end{definition}}
\newcommand{\bpf}{\begin{proof}}
\newcommand{\epf}{\end{proof}}
\newcommand{\bconj}{\begin{conjecture}}
\newcommand{\econj}{\end{conjecture}}
\newcommand{\blem}{\begin{lemma}}
\newcommand{\elem}{\end{lemma}}
\newcommand{\bexample}{\begin{example}}
\newcommand{\eexample}{\end{example}}
\newcommand{\defeq}{\vcentcolon=}
\newcommand{\Cvx}{\mathrm{Cvx}}
\newcommand{\M}{\mathcal{M}}
\newcommand\setItemnumber[1]{\setcounter{enum\romannumeral\@enumdepth}{\numexpr#1-1\relax}}
\def\frame#1#2#3#4{\hbox{\vbox{\hrule height#1pt
 \hbox{\vrule width#1pt\kern #2pt
 \vbox{\kern #2pt
 \vbox{\hsize #3\noindent #4}
 \kern#2pt}
 \kern#2pt\vrule width #1pt}
 \hrule height0pt depth#1pt}}}
\newcolumntype{L}[1]{>{\raggedright\let\newline\\\arraybackslash\hspace{0pt}}m{#1}}
\newcolumntype{C}[1]{>{\centering\let\newline\\\arraybackslash\hspace{0pt}}m{#1}}
\newcolumntype{R}[1]{>{\raggedleft\let\newline\\\arraybackslash\hspace{0pt}}m{#1}}
\numberwithin{equation}{section}
\title{$L^p$-Legendre transforms and Mahler integrals: \\
Asymptotics and the Fokker–Planck heat flow}
\author{Vlassis Mastrantonis}
\date{}
\begin{document}

\maketitle

\begin{abstract}
$L^p$-polarity and $L^p$-Mahler volumes were recently introduced by Berndtsson, Rubinstein, and the author as a new approach, inspired by complex geometry, to the Mahler, Bourgain, and B\l{}ocki Conjectures.
This paper serves two purposes. First, it introduces functional analogues of these notions and establishes functional versions of key theorems previously formulated in the setting of convex bodies.
This involves introducing the $L^p$-Legendre transform and analyzing the associated Santal\'o points and the dimensional asymptotics of the Mahler volumes of the conjectured extremizers. 
Second, the paper investigates the connection between the $L^p$-Legendre transform and the recent work of Nakamura--Tsuji on the Fokker--Planck heat flow. As a byproduct, a functional $L^p$ Santal\'o inequality is established. 
The proof is based on deriving the evolution equations for the $L^p$-Legendre transform and Mahler integral under the Fokker--Planck heat flow. 
A second approach using the geometric method of Artstein--Klartag--Milman is also presented, for which the necessary asymptotics are derived in the $L^1$-case. 

\end{abstract}
% \tableofcontents

\bigskip 
\noindent

\section{Introduction}
In a recent program initiated jointly with Rubinstein \cite{MR22, Mastr23, MR24, Rubi24} and Berndtsson--Rubinstein \cite{BMR23}, we introduced a notion of $L^p$-polarity for convex bodies and defined the corresponding $L^p$-Mahler volume, generalizing the classical notion of polarity, support function, and Mahler volume. This work extended the classical notion of the Santal\'o point to the $L^p$-setting \cite[Proposition 1.5]{BMR23}, and established an $L^p$ Santal\'o inequality \cite[Theorem 1.6]{BMR23} \cite[Theorem 1.2]{Mastr23}. 
Moreover, the $L^p$-Santal\'o point turned out to be crucial for a complex geometric approach to Bourgain's conjecture \cite[Theorem 1.9]{BMR23}.
Additionally, we proposed $L^p$-versions of Mahler's conjectures \cite[Conjectures 1.3 and 1.4]{BMR23}, which we resolved in dimension two \cite[Theorem 1.9 and 1.10]{MR24}. The primary aim of this paper is to extend these concepts to functions by introducing an $L^p$-Legendre transform and studying the corresponding $L^p$-Mahler integral. A secondary aim is to point out relations of our work to recent work of Nakamura--Tsuji and an old remark by Tao.

The main result is establishing a functional $L^p$-Santal\'o inequality (Theorem \ref{LpFuncSantalo}). We offer two approaches. 

The first approach, inspired by the work of Artstein--Klartag--Milman, involves overcoming a significant technical obstacle. Namely, the precise dimensional asymptotic estimate for the $L^p$-Mahler volume of the Euclidean ball. We address this in the most interesting case of $p=1$, which corresponds to the Bergman kernels of tube domains \cite[(1--4)]{BMR23}, by explicitly computing the asymptotics of the $L^1$-Mahler volume of the ball. We leave the approximation of the Mahler integral by the Mahler volume of higher-dimensional bodies to a subsequent paper \cite{Mastr24}.

The second approach is inspired by the method developed by Nakamura--Tsuji using the Fokker--Planck heat flow \cite{NakamuraTsuji22, NakamuraTsuji24, CGNT24}. Recently, they established a sharp inequality on the $L^p$-norms of the Laplace transform for log-concave functions \cite[Corollary 1.6]{NakamuraTsuji24}, which directly corresponds to the functional $L^p$-Santal\'o inequality for even convex functions, as proposed here. In the following sections, we explore the relationship between their inequality and Theorem \ref{LpFuncSantalo}. The key is computing the evolution equations for the $L^p$-Legendre transform and $L^p$-Mahler integral under the Fokker--Planck heat flow, demonstrating the monotonicity of the Mahler integral after time-dependent translation by the $L^p$-Santal\'o point. This implies that the limiting function $|x|^2/2$ serves as the maximizer.

\subsection{\texorpdfstring{$L^p$-polarity}{Lp-polarity}}
The following generalizes the classical support function \cite[(1-8)]{BMR23}: 
\begin{definition}
\label{LpSupportDef}
Let $p\in (0,\infty)$. For a convex body $K\subset \R^n$, let
\begin{equation*}
    h_{p,K}(y)\defeq \frac1p \log\int_{K} e^{p\langle x,y\rangle}\frac{dx}{|K|} 
\end{equation*}    
be the $L^p$-support function of $K$. 
\end{definition}

It follows directly from H\"older's inequality that $h_{p,K}$ is a monotone increasing family of functions \cite[Lemma 2.4(v)]{BMR23}. The $p=\infty$ case corresponds to the classical support function, $h_{\infty, K}(y)\defeq \lim_{p\to\infty} h_{p,K}(y)= \sup_{x\in\R^n}\langle x,y\rangle= h_K(y)$, as can be verified by evaluating the limit \cite[Corollary 2.7]{BMR23}. However, for finite $p$, $h_{p,K}$ loses homogeneity, a hallmark property of the classical support function. Consequently, defining an $L^p$-polar body requires constructing a norm from $h_{p,K}$. This is done via Ball's classical construction \cite[Theorem 5]{Ball88}. Let
\begin{equation*}
    K^{\circ,p}\defeq \left\{y\in\R^n: \|y\|_{K^{\circ,p}}\defeq \left( \frac{1}{(n-1)!}\int_0^\infty r^{n-1} e^{-h_{p,K}(ry)}dr\right)^{-\frac1n}\leq 1\right\}
\end{equation*}
be the $L^p$-polar body of $K$ \cite[Definition 1.1]{BMR23}. This is such that $n! |K^{\circ,p}|= \int e^{-h_{p,K}}$ \cite[Theorem 1.2]{BMR23}. Note that homogeneity of $h_K$ implies 
$$K^{\circ, \infty}= \{y\in \R^n: h_K(y)\leq 1\}= K^\circ,$$ 
recovering the classical polar body. 
The main distinction between $L^p$-polarity and classical polarity is the loss of duality: in general, $(K^{\circ,p})^{\circ,p} \neq K$. This can be seen most clearly when $K$ is a polytope, as $(K^{\circ,p})^{\circ,p}$ has a smooth boundary, whereas $K^\circ$ does not.

Motivated by the well-known formula $\M(K)=|K|\int e^{-h_K}$, define the $L^p$-Mahler volume of $K$ via
\begin{equation*}
    \M_p(K)\defeq |K|\int_{\R^n} e^{-h_{p,K}(y)}dy = n! |K| |K^{\circ,p}| 
\end{equation*}
\cite[(1-2)]{BMR23}. 
This definition satisfies $\lim_{p\to\infty}\M_p(K)=\M(K)\defeq n!|K||K^\circ|$. One can formulate conjectures analogous to Mahler's \cite[Conjectures 1.3--1.4]{BMR23}: 
Let
\begin{equation*}
    \Delta_{n,0}\defeq \mathrm{conv}\{e_1, \ldots, e_n, -(e_1 + \ldots + e_n)\}
\end{equation*}
denote a centered simplex, where $e_1, \ldots, e_n$ are the standard basis vectors of $\R^n$. 
\begin{conjecture}
\label{LpMahlerConjSym}
    Let $p\in (0,\infty]$. For a symmetric convex body $K\subset \R^n, \M_p(K)\geq \M_p([-1,1]^n)$. 
\end{conjecture}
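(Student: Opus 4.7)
The conjecture specializes as $p\to\infty$ to the classical symmetric Mahler conjecture, which is open in dimensions $n\geq 4$, so a complete proof is out of reach; my aim is to outline a plausible two-stage strategy parallel to what is known classically, and to flag where the $L^p$-specific obstructions appear. First I would settle the unconditional case (the analog of Saint-Raymond), then attempt to reduce the general symmetric case to the unconditional one.

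For unconditional $K\subset\R^n$, I would induct on $n$. Decompose $K$ by the last coordinate as $K_t\defeq\{x'\in\R^{n-1}:(x',t)\in K\}$; unconditionality makes $K^{\circ,p}$ unconditional, and
\[
\int_K e^{p\langle x,y\rangle}\,dx = \int_{-\alpha}^{\alpha} e^{py_n x_n}\Big(\int_{K_{x_n}} e^{p\langle x',y'\rangle}\,dx'\Big)\,dx_n,
\]
where $[-\alpha,\alpha]$ is the $x_n$-range of $K$. Brunn's inequality gives log-concavity of $t\mapsto|K_t|$; combined with the inductive bound on $\M_p(K_t)$ (after renormalization) and convexity of $h_{p,K}$ in $y$, one should be able to integrate out $y_n$ and reduce to the $(n-1)$-dimensional conjecture. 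The one-dimensional base case is an explicit computation, which along the way pins down $\M_p([-1,1]^n)$ as a product of one-dimensional integrals, identifying the target bound.

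The main obstacle is the passage from general symmetric to unconditional bodies, which is precisely the bottleneck in the classical case. Steiner symmetrization does not help: by Meyer-Reisner it preserves or increases the Mahler volume of bodies centered at the Santal\'o point, which is the wrong direction for the lower (cube-side) conjecture. Meyer-Pajor shadow systems exploit the duality $(K^\circ)^\circ = K$, which fails for $L^p$-polarity since $(K^{\circ,p})^{\circ,p}\neq K$ as noted in the excerpt, closing that route too. The paper's Fokker-Planck approach leading to Theorem \ref{LpFuncSantalo} produces monotonicity toward a Gaussian, corresponding at the body level to the Euclidean ball rather than the cube, so it naturally yields the upper (Santal\'o) inequality and not the conjectured lower bound. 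A genuinely new idea, perhaps a flow through symmetric convex bodies that is monotone for $\M_p$ and stabilizes at the cube, appears to be required to close the gap.
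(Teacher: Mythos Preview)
The statement you are addressing is labeled a \emph{Conjecture} in the paper, and the paper offers no proof of it; it is presented as open, with only a remark that it has been verified in dimension two in separate work and that for $p=\infty$ it reduces to the classical symmetric Mahler conjecture. So there is no ``paper's own proof'' to compare against, and your opening observation that a complete proof is out of reach is exactly right and matches the paper's stance.

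Your strategic discussion is reasonable as commentary but is not, and does not claim to be, a proof. A few remarks on the specifics: the inductive scheme you sketch for unconditional bodies mimics Saint-Raymond's classical argument, but that argument leans on the explicit duality between $K$ and $K^\circ$ (in particular on how sections of $K$ pair with sections of $K^\circ$), and you have not indicated what replaces this when $(K^{\circ,p})^{\circ,p}\neq K$; the step ``one should be able to integrate out $y_n$'' is precisely where the $L^p$ version would need new input, and you have not supplied it. Your assessment of the obstructions---that symmetrization and shadow-system techniques go the wrong way or rely on duality, and that the Fokker--Planck flow in the paper drives toward the ball rather than the cube---is accurate and consistent with why the paper leaves this as a conjecture.
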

\begin{conjecture}
\label{LpMahlerConj}
    Let $p\in (0,\infty]$. For a convex body $K\subset \R^n$, $\M_p(K)\geq \M_p(\Delta_{n,0})$. 
\end{conjecture}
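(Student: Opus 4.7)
Since the $p=\infty$ case of this conjecture is the non-symmetric Mahler conjecture (open for $n \geq 4$), no complete proof is realistic; I outline the most natural attack and flag where it stalls.

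The plan is to run an induction on dimension via the shadow-system method of Campi--Gronchi and Meyer--Reisner. First, I would record the affine covariance $h_{p,TK+v}(y) = h_{p,K}(T^t y) + \langle v, y\rangle$, from which $\M_p(TK+v) = \M_p(K)$ for $T\in GL(n)$ and $v\in \R^n$; this lets us place $K$ in a convenient affine position (for instance, with $L^p$-Santal\'o point at the origin). Base cases: $n=1$ is a direct computation, and $n=2$ is already resolved in \cite{MR24}, anchoring the induction. For the inductive step, I would embed $K$ into a shadow system $K_t = \{x + t\alpha(x)e_n : x\in K\}$, $t\in [0,1]$, with $\alpha$ affine on $K$, and aim to prove that $t \mapsto \log \M_p(K_t)$ is convex, so the minimum is attained at an endpoint. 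By picking $\alpha$ and the endpoint cleverly, either $K_0$ degenerates to an $(n-1)$-dimensional body (appeal to the inductive hypothesis after a limiting argument), or $K_0 = \Delta_{n,0}$ directly.

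The key technical step is this log-convexity of $\M_p(K_t)$ in $t$. The promising input is
\[
    e^{p\, h_{p, K_t}(y)} \;=\; \frac{1}{|K|}\int_K e^{p\langle x,y\rangle + pt\alpha(x)y_n}\,dx,
\]
which is log-affine in $t$ for each fixed $y$. Combining this with Pr\'ekopa--Leindler applied to the $(x,t)$-variable in $n!|K_t^{\circ,p}| = \int e^{-h_{p,K_t}(y)}dy$ should yield log-concavity of $|K_t^{\circ,p}|$ in $t$; since $|K_t|$ is constant under shadow movement, this would translate into log-convexity of $\M_p(K_t)$.

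The main obstacle is precisely this step. In the classical case $p=\infty$, the Campi--Gronchi argument works cleanly because $K^\circ$ is directly a sublevel set of the convex function $h_K$, so Pr\'ekopa applies in one shot. For finite $p$, the $L^p$-polar is constructed through Ball's radial renormalization,
\[
    \|y\|_{K^{\circ,p}}^{-n} \;=\; \frac{1}{(n-1)!}\int_0^\infty r^{n-1} e^{-h_{p,K}(ry)}\,dr,
\]
and $h_{p,K}$ itself is already a log-integral of an exponential linear form, so the $t$-convexity we want is really the convexity of a triple integral. A single application of Pr\'ekopa is unlikely to suffice; one probably needs an iterated Brascamp--Lieb-type inequality, or an entirely different route via the Fokker--Planck evolution equations developed later in the paper---running the flow on the shadow family $K_t$ and monitoring $\M_p$ along both parameters simultaneously---to isolate the extremizer.
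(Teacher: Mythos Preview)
The statement is labeled a conjecture, and the paper does not supply a proof; it is presented as an open problem generalizing Mahler's non-symmetric conjecture, with only the two-dimensional case resolved in the companion work \cite{MR24}. Your opening sentence already acknowledges this, so there is no paper-proof to compare your outline against.

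That said, your sketch contains a direction error worth fixing before you pursue it further. You aim to show that $t \mapsto \log \M_p(K_t)$ is \emph{convex} and conclude that the minimum is at an endpoint; but a convex function on an interval has its \emph{maximum} at an endpoint, not its minimum. The classical Campi--Gronchi mechanism runs the other way: one proves that $t \mapsto |K_t^\circ|^{-1}$ is convex along the shadow system, and since $|K_t|$ is constant, $\M(K_t)^{-1}$ is convex, whence $\M(K_t)$ attains its \emph{minimum} at an endpoint. Your later sentence---``log-concavity of $|K_t^{\circ,p}|$ \ldots\ would translate into log-convexity of $\M_p(K_t)$''---compounds the slip, since a log-concave function times a constant is log-concave, not log-convex. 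With the directions corrected, the obstacle you isolate is genuine: Ball's radial construction inserts an extra layer of integration between $h_{p,K_t}$ and $|K_t^{\circ,p}|$, and moreover the exponent $p\langle x,y\rangle + pt\,\alpha(x)\,y_n$ is bilinear rather than affine in $(t,y)$, so neither a single application of Pr\'ekopa--Leindler nor the standard shadow-system convexity goes through directly. This is consistent with the conjecture remaining open for all finite $p$ and $n\geq 3$.
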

The $p=\infty$ case corresponds to Mahler's conjectures \cite{Mahler39b, Mahler39a, Mahler50}.
A potential advantage of Conjecture \ref{LpMahlerConjSym} for finite $p$ is that $[-1,1]^n$ appears to be the unique minimizer (modulo linear transformations). This assertion is currently only supported by numerical evidence \cite[\S 3E]{BMR23} \cite[(13)]{Blocki15}. Notably, we have verified both Conjectures \ref{LpMahlerConjSym} and \ref{LpMahlerConj} in dimension two \cite{MR24}. Additionally, a Santal\'o inequality holds for $\M_p$: Let
\begin{equation*}
    B_2^n\defeq \{x\in \R^n: |x|\defeq \sqrt{x_1^2 + \ldots+ x_n^2}\leq 1\}
\end{equation*}
denote the Euclidean unit ball. For symmetric convex bodies, $\M_p(K)\leq \M_p(B_2^n)$  \cite[Theorem 1.6]{BMR23}. In general, the body needs to be translated by a ``Santal\'o point". The notion of a Santal\'o point \cite[155]{Santalo49} extends to the $L^p$-Mahler volume \cite[Proposition 1.5]{BMR23}. 
\begin{proposition}
\label{LpSantaloPoint}
    Let $p\in (0,\infty)$. For a convex body $K\subset \R^n$ there exists a unique $s_p(K)\in \R^n$ such that 
    \begin{equation*}
        \M_p(K-s_p(K))= \inf_{x\in \R^n}\M_p(K), 
    \end{equation*}
    uniquely characterized by the vanishing of the barycenter of $h_{p, K-s_p(K)}$. Moreover, $s_p(K)\in\mathrm{int}\,K$. 
\end{proposition}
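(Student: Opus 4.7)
The plan is to reduce the proposition to the analysis of the single function $F(x) \defeq \M_p(K-x)$ on $\R^n$. A direct change of variables in Definition \ref{LpSupportDef} gives the translation identity $h_{p,K-x}(y) = h_{p,K}(y) - \langle x, y\rangle$, and since $|K-x|=|K|$, substituting into $\M_p(K-x) = |K-x|\int e^{-h_{p,K-x}(y)}\,dy$ yields
\[F(x) = |K|\int_{\R^n} e^{\langle x,y\rangle - h_{p,K}(y)}\,dy,\]
so $F$ is, up to the constant $|K|$, the Laplace transform of the positive measure $d\mu(y) \defeq e^{-h_{p,K}(y)}\,dy$. The existence, uniqueness and characterization statements will then all follow from strict log-convexity and a blow-up at the boundary of $K$.

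Next I would identify the effective domain of $F$: the claim is $F(x)<\infty$ iff $x\in\mathrm{int}\,K$, and $F(x_k)\to\infty$ whenever $x_k\to x_0\in\R^n\setminus\mathrm{int}\,K$. For finiteness on the interior, fix $x\in\mathrm{int}\,K$ and $r>0$ with $B_r(0)\subset K-x$; a standard Laplace computation for the ball gives $\int_{K-x}e^{p\langle z,y\rangle}\,dz\geq \int_{B_r(0)}e^{p\langle z,y\rangle}\,dz\gtrsim (p|y|)^{-(n+1)/2}e^{pr|y|}$ uniformly in the direction $y/|y|$ for $|y|$ large, whence $h_{p,K-x}(y)\geq r|y|-C\log|y|-C'$ and $e^{-h_{p,K-x}}$ is integrable on $\R^n$. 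For blow-up off the interior, use the bound $h_{p,K-x}\leq h_{K-x}$ (monotonicity $h_{p,K}\nearrow h_K$ from the excerpt) combined with the classical identity $\int e^{-h_{K-x}(y)}\,dy=n!\,|(K-x)^\circ|$: when $x\notin\mathrm{int}\,K$, one has $0\notin\mathrm{int}(K-x)$, so $(K-x)^\circ$ is unbounded of infinite volume, forcing $\int e^{-h_{p,K-x}(y)}\,dy=\infty$. The limit $F(x_k)\to\infty$ then follows from Fatou applied to the pointwise convergence of integrands.

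With finiteness in hand, strict convexity is immediate: $\log F$ is (up to $\log|K|$) the cumulant generating function of $\mu$, whose Hessian equals the covariance matrix of the tilted probability measure $e^{\langle x,y\rangle}\,d\mu(y)/F(x)$. Since $e^{-h_{p,K}}$ is positive on all of $\R^n$, $\mu$ is not supported on any affine hyperplane, this covariance is strictly positive definite, and $\log F$ is strictly convex on $\mathrm{int}\,K$. Combined with the boundary blow-up, $F$ attains its infimum at a unique $s_p(K)\in\mathrm{int}\,K$. Differentiating under the integral (justified by uniform exponential decay of the integrand on compact subsets of $\mathrm{int}\,K$) gives
\[\nabla F(x) = |K|\int_{\R^n} y\, e^{-h_{p,K-x}(y)}\,dy,\]
so $\nabla F(s_p(K))=0$ is precisely vanishing of the barycenter of the density $e^{-h_{p,K-s_p(K)}}$, as required.

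The main obstacle I anticipate is the second step: establishing the direction-uniform Laplace-type lower bound on $h_{p,K-x}$ used for integrability, and coupling $h_{p,K}\leq h_K$ with the classical polar-volume identity to force infinite mass in translates with $x\notin\mathrm{int}\,K$. The strict convexity via cumulants and the critical-point calculation are routine once the effective domain of $F$ has been pinned down.
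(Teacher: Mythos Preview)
Your proposal is correct and follows essentially the same strategy as the paper (which proves the functional analogue in \S\ref{LpSantaloPointSection} and cites \cite{BMR23} for the body case): identify the effective domain of $x\mapsto\M_p(K-x)$ as $\mathrm{int}\,K$, show strict convexity there via the Hessian/covariance computation, establish blow-up at the boundary, and read off the barycenter characterization from the first-order condition. The one notable variation is your infiniteness argument: you invoke $h_{p,K-x}\le h_{K-x}$ together with $\int e^{-h_{K-x}}=n!\,|(K-x)^\circ|=\infty$ when $0\notin\mathrm{int}(K-x)$, whereas the paper's Claim~\ref{FinitenessClaim2} argues directly via a separating hyperplane that $\phi^{*,p}$ stays bounded on a cone; both are short and valid, and your route is arguably cleaner for bodies since the classical polar-volume identity is already available.
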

We call $s_p(\phi)$ the \textit{$L^p$-Santal\'o point} of $\phi$. 

Santal\'o's inequality \cite{Santalo49} extends to the $L^p$-Mahler volume \cite[Theorem 1.2]{Mastr23}
\begin{theorem}
\label{LpSantaloThm}
    Let $p\in (0,\infty)$. For a convex body $K\subset \R^n$, 
    $
        \inf_{x\in\R^n}\M_p(K-x)\leq \M_p(B_2^n). 
    $
\end{theorem}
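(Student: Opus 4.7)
The plan is to adapt the Meyer--Pajor Steiner-symmetrization proof of the classical Blaschke--Santal\'o inequality to the $L^p$-setting. By Proposition \ref{LpSantaloPoint}, after translating by $s_p(K)$ I may assume $s_p(K)=0$, so that the infimum is attained at $K$ itself and the barycenter of $h_{p,K}$ vanishes. A direct change of variables in Definition \ref{LpSupportDef} gives $h_{p,\lambda K}(y) = h_{p,K}(\lambda y)$ for $\lambda > 0$, hence $\M_p(\lambda K) = \M_p(K)$; in particular $\M_p(r B_2^n) = \M_p(B_2^n)$ for any $r > 0$. It thus suffices to prove $\M_p(K) \le \M_p(B_2^n)$.

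Fix a hyperplane $H \ni 0$ and let $K' = \mathrm{St}_H K$ be the Steiner symmetrization of $K$ across $H$. Since $K'$ is invariant under reflection across $H$ and the $L^p$-support function inherits this symmetry, uniqueness in Proposition \ref{LpSantaloPoint} forces $s_p(K') \in H$. The core monotonicity claim is
\[
\M_p(K) \;\le\; \inf_{x \in \R^n}\M_p(K' - x).
\]
Using $|K|=|K'|$, this reduces to the functional comparison $\int e^{-h_{p,K}(y)}\,dy \le \int e^{-h_{p,K' - s_p(K')}(y)}\,dy$. Choosing coordinates so $H = e_n^\perp$ and writing the vertical slice $K \cap (x' + \R e_n) = \{x'\} \times [a(x'), b(x')]$, the inner Laplace-type integral $\int_{a(x')}^{b(x')} e^{p x_n y_n}\,dx_n$ is explicit, and the same holds for the centered slice $[-\tfrac{1}{2}(b-a), \tfrac{1}{2}(b-a)]$ of $K'$. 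A line-by-line 1D inequality, combining log-concavity of these Laplace integrals in $y_n$ with the vanishing-barycenter condition playing the role of the 1D Santal\'o translation, should yield the required bound via Fubini in $x'$ and $y_n$.

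Once the symmetrization monotonicity is established, iterating Steiner symmetrizations along a dense family of hyperplanes through the origin and invoking the classical convergence theorem of iterated Steiner symmetrizations to a Euclidean ball produces a sequence of convex bodies converging in Hausdorff distance to $r B_2^n$ with $|r B_2^n| = |K|$; continuity of $\M_p$ together with the scale invariance above closes the argument. The main obstacle I expect is the 1D comparison underlying the symmetrization step: one must simultaneously handle the $-1/p$ power appearing in $e^{-h_{p,K}}$ and the translation realizing the $L^p$-Santal\'o centering of $K'$. For $p=\infty$ this degenerates to an elementary inequality for reciprocals of support functions, but for finite $p$ the loss of homogeneity highlighted after Definition \ref{LpSupportDef} forces one to work with Pr\'ekopa--Leindler-type functional inequalities applied to the log-concave Laplace integrals $\int_K e^{p\langle x,y\rangle}\,dx$.
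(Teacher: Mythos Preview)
The paper does not give its own proof of this statement: Theorem \ref{LpSantaloThm} is quoted from the author's earlier work \cite[Theorem 1.2]{Mastr23}, so there is no argument here to compare against.

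Regarding your proposal itself: what you have written is a strategy, not a proof. You explicitly flag the decisive step as unresolved (``The main obstacle I expect is the 1D comparison\ldots''), and rightly so. In the symmetric case the Steiner-symmetrization route is known to succeed \cite[Theorem 1.6]{BMR23}, because $s_p(K)=s_p(K')=0$ is automatic and the slicewise comparison reduces to an even-function inequality. In the general case the difficulty is genuine: after symmetrizing across $H=e_n^\perp$ you only know $s_p(K')\in H$, and the monotonicity claim $\M_p(K)\le \inf_x \M_p(K'-x)$ requires a pointwise (in $x'$) 1D inequality between $\bigl(\int_{a(x')}^{b(x')} e^{px_ny_n}\,dx_n\bigr)^{-1/p}$ and its symmetrized counterpart, integrated in $y_n$ against a weight depending on the $L^p$-Santal\'o translation of $K'$. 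Neither log-concavity of the Laplace integral nor a direct application of Pr\'ekopa--Leindler furnishes this, precisely because of the loss of homogeneity you note. Until that step is actually carried out, the argument is incomplete.
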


In this paper, we aim to address the following question: 
\begin{question}
\label{GoalQuestion}
    Can analogous notions be defined for convex functions? 
\end{question}

\subsection{Functional analogues}
Throughout this paper, we consider functions $f:\R^n\to \R\cup\{\infty\}$ that are not $\infty$ almost everywhere. Equivalently, the \textit{volume} of $f$: 
\begin{equation*}
    V(f)\defeq \int_{\R^n} e^{-f(x)}dx 
\end{equation*}
is positive.  
For example, 
\begin{equation}
\label{|x|^2/2Vol}
       V(|x|^2/2) = \int_{\R^n}e^{-|x|^2/2}dx = \left(\int_{\R^2}e^{-(x^2+y^2)/2} dx dy\right)^{n/2} \!\!\!= \left( 2\pi \int_{\R}e^{-r^2/2}rdr\right)^{n/2} = (2\pi)^{n/2}. 
\end{equation}
The definition generalizes the volume of a convex body $K\subset \R^n$ as follows: the volume of the convex indicator function
\begin{equation*}
    \bm{1}_K^\infty(x)\defeq \begin{dcases}
        0, & x\in K, \\
        \infty, & x\not\in K, 
    \end{dcases}
\end{equation*}
is exactly the volume of the underlying convex body: $V(\bm{1}_K^\infty)= \int_{\R^n}e^{-\bm{1}^\infty_K(x)}dx= \int_K dx = |K|$. 

The Mahler problem is traditionally studied not for all convex sets, but rather for those that have non-empty interiors and are compact. This restriction avoids cases where the Mahler volume may be ill-defined, such as when the product of $0$ and $\infty$ arises. Similarly, to formulate the Mahler problem for convex functions, we must impose additional conditions: Convexity of the set is replaced by convexity of the function. Non-emptiness of the interior, by positivity of the volume. Boundedness, for convex bodies with non-empty interior, is equivalent to finiteness of volume (Lemma \ref{FiniteVolClaim}), hence we adopt this condition for functions. Finally, closedness corresponds to lower semi-continuity. Recall that a function $f:\R^n\to \R\cup\{\infty\}$ is lower semi-continuous if 
\begin{equation}
\label{LSC}
    f(x_0)\leq \liminf_{x\to x_0}f(x), \quad \text{for all } x_0\in \R^n. 
\end{equation}
This condition ensures that the sublevel sets $\{f\leq M\}$ are closed. To summarize: 
we restrict to the class
\begin{equation*}
    \Cvx(\R^n)\defeq \{\phi: \R^n\to \R\cup\{\infty\} \text{ proper, convex, and lower semi-continuous with } 0\!<\!V(\phi)\!<\!\infty\}.
\end{equation*}
Notably, any convex function can be slightly altered to a lower semi-continuous function. In particular, the \textit{lower semi-continuous hull} of $f$ \cite[p. 52]{Rock70}
$
    (\mathrm{cl}\,f)(x)\defeq \liminf_{y\to x}f(y)
$
is the greatest lower semi-continuous function majorized by $f$. For a convex function $\phi$, $\mathrm{cl}\,\phi$ is itself convex, and differs from $\phi$ only on a set of zero Lebsgue measure \cite[Theorem 7.4]{Rock70}.

Let us now begin addressing Question \ref{GoalQuestion}. 
To extend Definition \ref{LpSupportDef} to the functional setting, we introduce the following: 
\begin{definition}
\label{LpLegendreDef}
    Let $p\in (0, \infty)$. For $f:\R^n\to \R\cup\{\infty\}$ with $0<V(f)<\infty$, let 
    \begin{equation*}
        f^{*,p}(y)\defeq \frac1p\log\int_{\R^n} e^{p(\langle x,y\rangle - f(x))} \frac{e^{-f(x)}dx}{V(f)}. 
    \end{equation*}
    be the $L^p$-Legendre transform of $f$. 
\end{definition}
\noindent 
With this definition, for the convex indicator function of a convex body $K$, 
\begin{equation}
\label{LpLegendreK}
    (\bm{1}_K^\infty)^{*,p}(y) = \frac1p \log \int_{K}e^{p\langle x,y\rangle}\frac{dx}{|K|} = h_{p,K}(y), 
\end{equation} 
recovers the $L^p$-support function. However, to recover the $L^p$-support function, any scaling of $f$ would work in Definition \ref{LpLegendreDef}. The reason why $p+1$ is the appropriate scaling, is that this choice aligns the $L^p$-Legendre transform with the classical Legendre transform in the same way that the $L^p$-support function corresponds to the classical support function. Namely, $f^{*,p}$ should be increasing in $p$ and dominated by $f^*$ with $f^{*,\infty}\defeq \lim_{p\to\infty} f^{*,p}= f^*$. We demonstrate this property for convex functions in Corollary \ref{LpConvergence}. 

For the corresponding Mahler integral, let 
    \begin{equation*}
        \M_p(f)\defeq V(f)V(f^{*,p})= \int_{\R^n} e^{-f(x)}dx \int_{\R^n} e^{-f^{*,p}(y)}dy
    \end{equation*}
    be the \textit{$L^p$-Mahler integral} of $f$. 
By \eqref{LpLegendreK}, $\M_p(\bm{1}_K^\infty)= \M_p(K)$. 
We propose the following conjectures: 
\begin{conjecture}
\label{SymLpFunMahlerConj}
    Let $p\in (0,\infty]$. For an even $\phi\in \Cvx(\R^n)$, 
    $
        \M_p(\phi)\geq \M_p([-1,1]^n).
    $
\end{conjecture}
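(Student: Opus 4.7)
The strategy I would pursue mirrors the Artstein--Klartag--Milman geometric reduction emphasized in the introduction: lift the functional statement to a higher-dimensional body statement, invoke the body-level Conjecture \ref{LpMahlerConjSym}, and descend. Concretely, for an even $\phi \in \Cvx(\R^n)$ (after subtracting a constant, which leaves $\M_p(\phi)$ unchanged, assume $\phi(0) = \inf \phi = 0$) and each sufficiently large integer $m$, associate the symmetric convex body
\begin{equation*}
    K_{\phi, m} \defeq \{(x, y) \in \R^n \times \R^m : |y| \leq e^{-\phi(x)/m}\} \subset \R^{n+m},
\end{equation*}
which is genuinely convex once $e^{-\phi/m}$ is concave (achievable for $m$ large after standard regularization). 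A Fubini computation gives $|K_{\phi, m}| = |B_2^m| V(\phi)$, and the radial integration defining Ball's $L^p$-polar identifies $|K_{\phi, m}^{\circ, p}|$, up to explicit Gamma-function normalizations in $m$, with $V(\phi^{*, p})$ in the limit $m \to \infty$. The parallel construction for $\bm{1}^\infty_{[-1,1]^n}$ is the product body $[-1,1]^n \times B_2^m$. Granting Conjecture \ref{LpMahlerConjSym} in dimension $n+m$, namely $\M_p(K_{\phi, m}) \geq \M_p([-1,1]^{n+m})$, and normalizing both sides against the benchmark $\M_p([-1,1]^n \times B_2^m)$ as $m \to \infty$, the dimensional constants cancel and one extracts $\M_p(\phi) \geq \M_p(\bm{1}^\infty_{[-1,1]^n})$.

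The alternative Fokker--Planck route of Nakamura--Tsuji appears poorly suited here: it drives a general log-concave function toward the Gaussian equilibrium and thereby delivers the Santal\'o-type \emph{upper} bound (Theorem \ref{LpFuncSantalo}) rather than a lower bound, and there is no canonical heat flow contracting a general even convex function to the cube indicator $\bm{1}^\infty_{[-1,1]^n}$. A genuinely different mechanism --- a functional analogue of Meyer's shadow systems or of Steiner symmetrization adapted to the $L^p$-Legendre transform --- would be required in that direction.

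The hard part is twofold. First, Conjecture \ref{LpMahlerConjSym} is itself open for $n \geq 3$, so the reduction above is conditional; in dimension $n = 2$ the body conjecture is known by \cite{MR24}, yielding Conjecture \ref{SymLpFunMahlerConj} unconditionally for $n = 2$ as a byproduct. Second, even granting the body inequality, the dimensional asymptotics needed to pass $m \to \infty$ cleanly must be in hand: the paper establishes these for $B_2^m$ in the case $p = 1$ explicitly and defers the remaining cases to \cite{Mastr24}. Matching the Gamma-function constants coming from $K_{\phi, m}$ and $[-1,1]^{n+m}$ in the limit, and verifying the identification $|K_{\phi,m}^{\circ,p}| \sim c_{m,p,n} V(\phi^{*,p})$ uniformly in the relevant class, is where I would expect the bulk of the technical work to lie.
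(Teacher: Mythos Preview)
The statement is labeled a \emph{Conjecture} in the paper and is not proved there; there is no proof to compare your proposal against. What you have written is a heuristic reduction, which you yourself acknowledge is conditional on Conjecture~\ref{LpMahlerConjSym} holding in every dimension $n+m$.

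Even granting the body-level conjecture in all dimensions, your normalization step fails. Under the lift $K_{\phi,m}=\{(x,y)\in\R^n\times\R^m:|y|\le e^{-\phi(x)/m}\}$, the conjectured functional minimizer $\bm 1^\infty_{[-1,1]^n}$ lifts to $[-1,1]^n\times B_2^m$, \emph{not} to $[-1,1]^{n+m}$. Invoking Conjecture~\ref{LpMahlerConjSym} in $\R^{n+m}$ gives only
\[
\M_p(K_{\phi,m})\ \ge\ \M_p\big([-1,1]^{n+m}\big)=\M_p\big([-1,1]^n\big)\,\M_p\big([-1,1]^m\big),
\]
while the asymptotic identification you need is $\M_p(K_{\phi,m})\sim \M_p(\phi)\cdot\M_p(B_2^m)$ (consistent with the test case $\phi=\bm 1^\infty_{[-1,1]^n}$). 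Dividing through, the right-hand side carries the factor $\M_p([-1,1]^m)/\M_p(B_2^m)$, which by Theorem~\ref{LpSantaloThm} is at most $1$ and in fact decays exponentially in $m$ (already for $p=\infty$ one is comparing $4^m$ against roughly $(2\pi)^m$). The limit therefore yields only the trivial bound $\M_p(\phi)\ge 0$. The same objection voids your $n=2$ remark: your scheme needs the body conjecture in dimensions $2+m$ for all large $m$, not merely in dimension~$2$.

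A possible repair is to lift with a cube fiber, $\{(x,y):\|y\|_\infty\le e^{-\phi(x)/m}\}$, so that $\bm 1^\infty_{[-1,1]^n}$ does lift to $[-1,1]^{n+m}$; but then the rotational symmetry driving the asymptotics of \S\ref{Asymptotics} is lost, and identifying $\M_p$ of the lifted body with $\M_p(\phi)$ times a clean $m$-dependent constant becomes a separate, unresolved problem.
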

\begin{conjecture}
\label{LpFuncMahlerConj}
    Let $p\in (0,\infty]$. For $\phi\in \Cvx(\R^n)$, 
    $
        \M_p(\phi)\geq \Big( e^{1+\frac1p} p^{\frac1p}\Gamma(1+\frac1p)\Big)^{n}.
    $
\end{conjecture}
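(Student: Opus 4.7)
The plan is to deduce Conjecture \ref{LpFuncMahlerConj} from the convex body version (Conjecture \ref{LpMahlerConj}) via a Ball-type embedding into higher dimensions followed by a dimensional limit, which is precisely the strategy alluded to in the introduction and to be developed in the companion work \cite{Mastr24}. For $\phi\in \Cvx(\R^n)$ and $m\in \N$, I would set
$$K_\phi^{(m)} \defeq \Big\{(x,y)\in \R^n\times \R^m : |y|\leq \big(1-(p+1)\phi(x)/m\big)_+\Big\}\subset \R^{n+m}.$$
Convexity of $\phi$ makes $(1-(p+1)\phi/m)_+$ a concave non-negative function, which in turn makes $K_\phi^{(m)}$ a convex body. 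Standard asymptotics $(1-t/m)^m\to e^{-t}$ then give $|K_\phi^{(m)}|/\omega_m\to V((p+1)\phi)$, and a parallel calculation shows that the $L^p$-support function satisfies $h_{p,K_\phi^{(m)}}(u,0)\to \phi^{*,p}(u) + \tfrac{1}{p}\log\big(V(\phi)/V((p+1)\phi)\big)$ as $m\to\infty$, with Gaussian-type behaviour in the auxiliary $v$-directions. After the appropriate normalization by a sequence $c_m>0$ absorbing the factors from $\omega_m$ and the $v$-integrals, this yields $c_m^{-1}\M_p(K_\phi^{(m)})\to \M_p(\phi)$.

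Assuming Conjecture \ref{LpMahlerConj} in dimension $n+m$ gives $\M_p(K_\phi^{(m)})\geq \M_p(\Delta_{n+m,0})$, so the desired functional inequality reduces to identifying
$$\lim_{m\to\infty}\frac{\M_p(\Delta_{n+m,0})}{c_m} = \Big(e^{1+\tfrac{1}{p}}\,p^{\tfrac{1}{p}}\,\Gamma(1+\tfrac{1}{p})\Big)^n.$$
Consistency of the conjectured constant is verified in dimension one by the explicit candidate extremizer $\phi_0(x) = x+\bm{1}_{[-1,\infty)}^\infty(x)$: the substitution $u=p+1-py$ in the defining integral for $V(\phi_0^{*,p})$ produces a Gamma integral, giving $V(\phi_0)=e$ and $V(\phi_0^{*,p}) = e^{1/p}p^{1/p}\Gamma(1+\tfrac{1}{p})$, so $\M_p(\phi_0) = e^{1+\tfrac{1}{p}}p^{\tfrac{1}{p}}\Gamma(1+\tfrac{1}{p})$ matches the conjectured value exactly, and tensorization shows the same one-dimensional extremizer per coordinate attains equality for general $n$.

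The main obstacle is the sharp dimensional asymptotics of $\M_p(\Delta_{n+m,0})$. This is the polytope analogue of the explicit computation the author carries out for the Euclidean ball $B_2^{n+m}$ in the $L^1$ case (for the functional Santal\'o inequality), but is substantially harder: because $\Delta_{n+m,0}$ has singular boundary, a Laplace-type asymptotic analysis of $\int e^{-h_{p,\Delta_{n+m,0}}}$ requires careful local contributions from each of the $n+m+1$ vertices and their balancing against $|\Delta_{n+m,0}|$. A secondary obstacle is that Conjecture \ref{LpMahlerConj} is currently only established in dimension two \cite{MR24}, so this reduction yields a proof that is at best conditional on the convex body Mahler conjecture. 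An unconditional proof in dimension $n=1$ is plausible by direct analysis given how explicit the extremizer is; by contrast, the Fokker--Planck method developed in this paper appears naturally adapted to the upper Santal\'o-type bound rather than the lower Mahler-type bound, and does not seem to immediately apply here.
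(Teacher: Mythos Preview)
The statement you are attempting to prove is labeled a \emph{Conjecture} in the paper, and the paper does not provide a proof of it. The only thing the paper establishes in connection with this statement is that the conjectured constant is actually attained, namely by the functional simplex $\phi_{\Delta_n}$ (Lemma~\ref{MpFuncSimplex}); this matches exactly your verification with the one-dimensional extremizer $\phi_0(x)=x+\bm{1}^\infty_{[-1,\infty)}(x)$ and its tensor powers.

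Your proposal is therefore not competing against any proof in the paper. As a strategy it is coherent, and indeed parallels the Artstein--Klartag--Milman route that the paper discusses for the Santal\'o direction in \S\ref{Asymptotics}. But, as you yourself note, it is doubly conditional. First, it requires Conjecture~\ref{LpMahlerConj} in dimension $n+m$ for all large $m$, and that conjecture is open in every dimension $\geq 3$. Second, the key asymptotic you need,
\[
\M_p(\Delta_{n+m,0}) = \Big(e^{1+\frac1p}\,p^{\frac1p}\,\Gamma\big(1+\tfrac1p\big)\Big)^{n+m} e^{o(m)},
\]
is itself posed as an open conjecture in the paper (end of \S3.2.5), with the explicit remark that the author does not know how to compute $\M_p(\Delta_{n,+})$ for any finite $p$. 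So the two obstacles you identify are precisely the ones the paper leaves open, and your proposal does not close either gap. In short: there is no proof here to compare against, and your sketch, while a reasonable roadmap, remains a conditional reduction rather than a proof.
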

The $p=\infty$ case corresponds to the functional Mahler conjectures as posed by Fradelizi--Meyer \cite[Conjectures (1'), (2')]{FM08a}.  

In extending Proposition \ref{LpFuncSantalo} for functions, along with the work of Artstein--Klartag--Milman on the Santal\'o point of a function \cite{AKM04}, we require the following: The functional analogue of translation is defined by
\begin{equation*}
    (T_af)(x)\defeq f(x+a),
\end{equation*}
since for the convex indicator function, $T_a\bm{1}_K^\infty= \bm{1}_{K-a}^\infty$. 
Additionally, for a measurable $f$ with $0<V(f)<\infty$, let 
\begin{equation*}
    b(f)\defeq \int_{\R^n}x\frac{e^{-f(x)}dx}{V(f)}
\end{equation*}
be the \textit{barycenter} of $f$. We demonstrate the following: 

\begin{proposition}
\label{LpFunSantaloPoint}
    Let $p\in (0,\infty)$. For $\phi\in Cvx(\R^n)$, there exists unique $s_p(\phi)\in \R^n$ with
    \begin{equation*}
        \M_p(T_{s_p(\phi)}\phi)= \inf_{x\in \R^n}\M_p(T_x\phi), 
    \end{equation*}
    which is also the unique point such that $b((T_{s_p(\phi)}\phi)^{*,p})=0$. Moreover, $s_p(\phi)\in \mathrm{int}\,\{\phi<\infty\}$. 
\end{proposition}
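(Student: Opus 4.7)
The plan is to mirror the proof of Proposition~\ref{LpSantaloPoint}, reducing the functional question to the minimization of a single strictly log-convex function on $\R^n$. The first step is a translation formula: a direct change of variable $z = x + a$ in Definition~\ref{LpLegendreDef} gives
\[
    (T_a\phi)^{*,p}(y) = \phi^{*,p}(y) - \langle a, y \rangle,
\]
which, combined with translation invariance $V(T_a\phi) = V(\phi)$, yields
\[
    \M_p(T_a\phi) = V(\phi)\, F(a), \qquad F(a) := V\bigl((T_a\phi)^{*,p}\bigr) = \int_{\R^n} e^{\langle a, y\rangle - \phi^{*,p}(y)}\,dy.
\]
So everything reduces to showing that $F$ attains a unique minimum inside $\mathrm{int}\{\phi<\infty\}$.

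By H\"older's inequality, $\log F$ is convex on its effective domain; differentiating under the integral sign (which will be justified by a local domination argument) yields
\[
    \nabla \log F(a) = b\bigl((T_a\phi)^{*,p}\bigr) \quad \text{and} \quad \mathrm{Hess}(\log F)(a) = \mathrm{Cov}_{(T_a\phi)^{*,p}}(Y) \succ 0,
\]
where the Hessian is the covariance under the log-concave probability measure $e^{-(T_a\phi)^{*,p}(y)}\,dy/F(a)$. Positive definiteness follows because this measure is not supported on any hyperplane. Hence $\log F$ is strictly convex wherever finite, and any minimizer is unique and characterized by the vanishing of $b((T_a\phi)^{*,p})$.

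The crucial step is matching the effective domain of $F$ with $\mathrm{int}\{\phi<\infty\}$. For $a\notin\mathrm{int}\{\phi<\infty\}$, convex separation produces a unit vector $u$ with $\langle u, x-a\rangle\le 0$ on $\{T_a\phi<\infty\}$. Substituting into the integral defining $(T_a\phi)^{*,p}(ru)$ and letting $r\to\infty$, dominated convergence, with dominator $e^{-(p+1)T_a\phi}$ (integrable since $\phi\in\Cvx(\R^n)$ is bounded below), forces the integral onto the measure-zero hyperplane $\{\langle u,\cdot\rangle=0\}$; hence $(T_a\phi)^{*,p}(ru)\to-\infty$, the log-concave function $e^{-(T_a\phi)^{*,p}}$ is unbounded, and the standard fact that a log-concave function with finite integral is bounded gives $F(a) = +\infty$. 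Conversely, for $a\in\mathrm{int}\{\phi<\infty\}$, local boundedness of $\phi$ on a small ball around $a$ (continuity of finite convex functions on the interior of their domain) yields a lower estimate of the form $\phi^{*,p}(y)\ge \langle a, y\rangle+\epsilon|y|-C\log|y|-C'$, so the integrand defining $F(a)$ decays exponentially and $F(a)<\infty$. Combining, $\log F$ is finite, strictly convex, and blows up at the boundary of $\mathrm{int}\{\phi<\infty\}$, so it attains a unique minimum $s_p(\phi)$ there, characterized by $b((T_{s_p(\phi)}\phi)^{*,p})=0$.

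The main technical obstacle is the infinity/finiteness dichotomy for $F$; compared to the body case of \cite{BMR23}, the unboundedness of $\{\phi<\infty\}$ and the fact that $\phi$ is nonconstant on its domain force the dominated-convergence step driving $(T_a\phi)^{*,p}(ru)\to-\infty$, as well as the quantitative linear lower bound on $\phi^{*,p}$ away from $a$, to be argued with more care, relying essentially on the standing assumption $\phi\in\Cvx(\R^n)$.
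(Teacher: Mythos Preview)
Your approach mirrors the paper's closely: reduce to studying $F(a)=\int e^{\langle a,y\rangle-\phi^{*,p}(y)}\,dy$, identify its effective domain with $\mathrm{int}\{\phi<\infty\}$, and use strict convexity to conclude. Your arguments for the finiteness dichotomy differ in detail from the paper's (you invoke the fact that an integrable log-concave function must be bounded, whereas the paper integrates directly in polar coordinates over a cone; you bound $\phi^{*,p}$ below via a small ball, whereas the paper compares to a cube), but both versions are correct.

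There is, however, one genuine gap. You conclude existence of the minimizer from ``$\log F$ is finite, strictly convex, and blows up at the boundary of $\mathrm{int}\{\phi<\infty\}$.'' This is insufficient when $\mathrm{int}\{\phi<\infty\}$ is unbounded, which for $\phi\in\Cvx(\R^n)$ is the generic situation (e.g.\ $\phi=|x|^2/2$ has $\{\phi<\infty\}=\R^n$, so there is no boundary at all). A strictly convex function on an unbounded open convex set that blows up at its boundary need not attain its infimum: consider $e^{-x}$ on $(0,\infty)$, or simply $e^{x}$ on $\R$. You must also establish coercivity at infinity, $F(a)\to\infty$ as $|a|\to\infty$ within $\mathrm{int}\{\phi<\infty\}$. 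The paper treats this in a separate lemma: since $\{\phi^{*,p}<\infty\}$ has nonempty interior (Corollary~\ref{LpLegendreNonEmptyInterior}), for every direction $u$ one can choose $\delta>0$ with $\int_{\{\langle y,u\rangle\ge\delta\}}e^{-\phi^{*,p}(y)}\,dy>0$, whence $F(tu)\ge e^{t\delta}\int_{\{\langle y,u\rangle\ge\delta\}}e^{-\phi^{*,p}}\to\infty$; a compactness argument on $\partial B_2^n$ then handles arbitrary sequences with $|a_k|\to\infty$. This step is short but indispensable, and your proposal omits it.
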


The proof of Proposition \ref{LpFunSantaloPoint} occupies \S \ref{LpSantaloPointSection}. 
For the upper bound we aim to prove the following: 
\begin{theorem}
\label{LpFuncSantalo}
    Let $p\in(0,\infty)$. For $\phi\in\Cvx(\R^n)$, $\inf_{x\in\R^n}\M_p(T_{x}\phi)\leq \M_p(|x|^2/2)$. 
\end{theorem}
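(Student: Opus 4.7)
The plan is to follow the Fokker--Planck heat flow method of Nakamura--Tsuji, interpolating between an arbitrary $\phi\in\Cvx(\R^n)$ and the extremal Gaussian $|x|^2/2$, and showing that the $L^p$-Mahler integral becomes nondecreasing once one translates at each time by the $L^p$-Santal\'o point. Combined with Proposition \ref{LpFunSantaloPoint}, which identifies $\inf_{x\in\R^n}\M_p(T_x\phi)=\M_p(T_{s_p(\phi)}\phi)$, and the fact that the Gaussian is the (essentially unique) stationary state of the flow, this will yield the desired bound.

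First I would set up the flow: let $\phi_t$ evolve so that $e^{-\phi_t}$ is the density under the Ornstein--Uhlenbeck semigroup; equivalently, $\phi_t$ satisfies a semilinear parabolic PDE of Hamilton--Jacobi type (schematically, $\partial_t\phi_t = \Delta\phi_t - |\nabla\phi_t|^2 + \langle x,\nabla\phi_t\rangle - n$), with $\phi_0=\phi$ and $\phi_t\to |x|^2/2$ modulo additive constants as $t\to\infty$. Parabolic smoothing yields that $\phi_t$ is smooth and strictly convex for $t>0$, so differentiation under the integral sign is permitted in what follows. Differentiating the identity $e^{p\phi_t^{*,p}(y)}V(\phi_t)=\int e^{p\langle x,y\rangle-(p+1)\phi_t(x)}dx$ in $t$ using the flow equation and integrating by parts then gives an explicit formula for $\partial_t\phi_t^{*,p}$, and hence for $\tfrac{d}{dt}\M_p(\phi_t)$. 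The $p+1$ scaling in Definition \ref{LpLegendreDef} should be decisive here, as it is the one for which the cross terms between $\Delta\phi_t$ and $|\nabla\phi_t|^2$ combine cleanly with the Gaussian weight arising from the $p\langle x,y\rangle$ factor.

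The core of the argument is monotonicity after translation. Setting $\tilde\phi_t\defeq T_{s_p(\phi_t)}\phi_t$, the characterization $b(\tilde\phi_t^{*,p})=0$ from Proposition \ref{LpFunSantaloPoint} implies, by an envelope argument, that the contribution of the moving Santal\'o point $\dot s_p(\phi_t)$ to $\tfrac{d}{dt}\M_p(\tilde\phi_t)$ vanishes. The remaining expression should, after the rearrangement indicated above, be manifestly nonnegative via a Brascamp--Lieb-type inequality applied pointwise in $y$ to the log-concave tilted measure $e^{p\langle x,y\rangle-(p+1)\tilde\phi_t(x)}dx$. Since $\M_p$ is invariant under adding a constant to $\phi$, passing to the limit $t\to\infty$ and using $\tilde\phi_t\to |x|^2/2$ yields $\M_p(\tilde\phi_0)\leq \M_p(|x|^2/2)$, which is the theorem.

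The main obstacle will be identifying the precise algebraic rearrangement, after differentiating the product $V(\phi_t)V(\phi_t^{*,p})$, that makes the sign of the time derivative manifest and links the result to the sharp Laplace-transform inequality of \cite{NakamuraTsuji24}. Secondary technical issues are the rigorous justification of the envelope step (requiring at least Lipschitz dependence of $s_p(\phi_t)$ on $t$, for which Proposition \ref{LpFunSantaloPoint} gives only existence and uniqueness) and the $t\to\infty$ convergence, which must be strong enough to ensure $V(\tilde\phi_t^{*,p})\to V((|x|^2/2)^{*,p})$.
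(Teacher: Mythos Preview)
Your proposal is correct and follows essentially the same route as the paper: the Fokker--Planck flow, explicit evolution equations for $\phi_t^{*,p}$ and $\M_p(\phi_t)$, the envelope argument at the moving Santal\'o point, and the limit $t\to\infty$ to the Gaussian. The only sharpening worth noting is that the monotonicity step in the paper uses two inequalities in tandem---Cram\'er--Rao on the tilted measure $d\phi^{p,y}$ (pointwise in $y$, via the identity $D^2\phi^{*,p}(y)=p\,\mathrm{Cov}(d\phi^{p,y})$) together with the Brascamp--Lieb variance inequality applied in the $y$-variable to the convex potential $\phi^{*,p}$---rather than a single Brascamp--Lieb estimate on the tilted measure alone.
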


Convexity is not directly used in the proof of Theorem \ref{LpFuncSantalo}. Instead, it is leveraged for its imposed growth condition via Lemma \ref{ConvexLowerBoundLemma}. Since Lemma \ref{ConvexLowerBoundLemma} is not true in general without the assumption of convexity, Theorem \ref{LpFuncSantalo} can be extended to non-convex functions by assuming superlinearity. It should not come as a surprise that Santal\'o-type theorems do not require convexity: 
\begin{corollary}
\label{GeneralLpFunSantalo}
    Let $f: \R^n\to \R\cup \{\infty\}$ be a measurable function such that there exist $a>0$ and $b\in \R$ with $f(x)\geq a|x| + b$. Then, $\inf_{x\in\R^n}\M_p(T_xf)\leq \M_p(|x|^2/2)$. 
\end{corollary}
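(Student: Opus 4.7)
The plan is to observe, as flagged in the paragraph preceding the corollary, that convexity of $\phi$ enters the proof of Theorem \ref{LpFuncSantalo} only through the linear lower bound $\phi(x)\geq a|x|+b$ supplied by Lemma \ref{ConvexLowerBoundLemma}. Since the corollary postulates exactly such a bound on $f$ directly, the same proof should apply with $f$ in place of $\phi$, provided the prerequisites that previously relied on convexity are independently re-established. First I would audit the proof of Theorem \ref{LpFuncSantalo}, locating each invocation of convexity and verifying that in every case only the linear lower bound is in fact being used.

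Two prerequisites need attention. The hypothesis $f(x)\geq a|x|+b$ gives $V(f)\leq e^{-b}\int_{\R^n}e^{-a|x|}dx<\infty$, and positivity of $V(f)$ is part of the standing convention. A short change-of-variables computation yields $V(T_xf)=V(f)$ and $(T_xf)^{*,p}(y)=f^{*,p}(y)-\langle x,y\rangle$, so $x\mapsto \M_p(T_xf)$ is $V(f)$ times the Laplace transform of $e^{-f^{*,p}}$, which is convex in $x$. The existence of a minimizer—i.e., a non-convex analogue of Proposition \ref{LpFunSantaloPoint}—is the delicate point. The linear lower bound on $f$ translates into an upper bound on $f^{*,p}$ on a ball (estimating $\int e^{p\langle x,y\rangle - (p+1)f(x)}dx\leq \int e^{p\langle x,y\rangle - (p+1)(a|x|+b)}dx$, which is finite for $|y|<a(1+1/p)$), and hence $e^{-f^{*,p}}$ has positive mass on a ball of positive radius. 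This forces the Laplace transform to blow up exponentially as $|x|\to\infty$, giving coercivity; combined with convexity of $x\mapsto \M_p(T_xf)$ and lower semi-continuity, it yields a (not necessarily unique) minimizer $s\in\R^n$.

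With $s$ in hand, the audited argument of Theorem \ref{LpFuncSantalo} applied to $T_sf$ delivers $\M_p(T_sf)\leq \M_p(|x|^2/2)$, and the corollary follows. The main obstacle is precisely the coercivity/existence step: the loss of convexity in $f$ does not affect the convexity of $x\mapsto \M_p(T_xf)$, so only coercivity has to be re-derived, and this requires exploiting the linear lower bound quantitatively rather than merely qualitatively as in Proposition \ref{LpFunSantaloPoint}. The uniqueness of a Santaló point for $f$ is lost, but only existence is needed for the upper bound, so this is cosmetic. The core inequality at the heart of Theorem \ref{LpFuncSantalo}—whether proved via the Fokker–Planck heat flow or the Artstein–Klartag–Milman method—is untouched by the non-convexity of $f$.
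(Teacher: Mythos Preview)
Your proposal is correct and follows essentially the same approach as the paper: both argue that the proof of Theorem~\ref{LpFuncSantalo} via the Fokker--Planck flow goes through verbatim once the linear lower bound is assumed directly, since the Cram\'er--Rao step needs no convexity of $f$, the Brascamp--Lieb step is applied to $f^{*,p}$ (always convex by Lemma~\ref{LpLegendreConvexity}), and superlinearity is what legitimizes the integrations by parts. Your treatment of the Santal\'o-point existence via coercivity of $x\mapsto \M_p(T_xf)$ is in fact more detailed than the paper's two-line proof, which simply asserts that ``the same proof works'' and notes the instantaneous smoothing of the flow; the paper does not spell out the existence argument you give, so if anything your version is slightly more thorough on that point.
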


Our attempt to prove Theorem \ref{LpFuncSantalo} via the method of Artstein--Klartag--Milman \cite{AKM04}, by approximating the Mahler integral of a function via the Mahler volume of convex bodies and applying the inequality for bodies, is yet incomplete. Nonetheless, we discuss our progress towards this direction in \S\ref{Asymptotics}. The main ingredient is the calculation of the asymptotics in dimension for $\M_p(B_2^n)$. For finite $p$, this requires evaluating an integral involving negative powers of the modified Bessel function of the first kind, making this case significantly more complex than the $p=\infty$ case. However, Theorem \ref{LpFuncSantalo} may be obtained using Nakamura--Tsuji's approach via the Fokker--Planck heat flow with minimal modifications. We lay out the details in \S \ref{FokkerPlanckSection}.

\subsection{Relation to a comment of Klartag and Tao}
One of the motivations for Definition \ref{LpLegendreDef} is that it ties neatly to a comment by Tao in his blog post in response to a suggestion by Klartag \cite{TaoBlog}. Tao suggested the following: For an exponentially decaying log-concave function $F:\R^n\to (0,\infty)$, denote its Laplace transform via
\begin{equation*}
    \mathcal{L}F (y)\defeq \int_{\R^n} F(x)e^{\langle x,y\rangle}dx. 
\end{equation*}
If one could establish a Hausdorff--Young type inequality
\begin{equation}
\label{TaoConjecture}
    \frac{\|F\|_{L^a(\R^n)}}{\|\mathcal{L}F\|_{L^{\frac{a}{a-1}}(\R^n)}}\geq (e+ g(a, n))^{\frac{n}{a}}
\end{equation}
for some $g$ with $\lim_{a\to 0}g(a)=0$, then Mahler's conjecture would follow by taking $a\to 0$. The following Lemma shows how Tao's approach is related to Conjectures \ref{SymLpFunMahlerConj} and \ref{LpFuncMahlerConj}. 

\begin{lemma}
\label{DetropicalizedEquiv}
    Let $a\in (0,1)$ and $p\defeq \frac{1-a}{a}\in(0,\infty)$. For any non-negative $F\in L^a(\R^n)$, let $\phi\defeq -a\log F$. Then, 
    \begin{equation*}
        \frac{\|F\|_{L^a}}{\|\mathcal{L}F\|_{L^{\frac{a}{a-1}}}} = p^{np}\M_p(\phi)^p. 
    \end{equation*}
\end{lemma}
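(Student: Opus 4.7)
The lemma is a direct computation once one exploits the arithmetic identity $1/a = p+1$, which is immediate from $p = (1-a)/a$. Writing $F = e^{-\phi/a} = e^{-(p+1)\phi}$ makes the weight $e^{-(p+1)\phi}$ exactly match the one appearing in Definition \ref{LpLegendreDef}. The plan is therefore to rewrite both sides of the claimed identity as elementary expressions in $V(\phi)$ and $V(\phi^{*,p})$ and compare.

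For the numerator, $|F|^a = e^{-\phi}$, so $\|F\|_{L^a} = V(\phi)^{1/a} = V(\phi)^{p+1}$.

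For the denominator, the first step is to express $\mathcal{L}F$ in terms of $\phi^{*,p}$. Unwinding definitions,
\[
\mathcal{L}F(y) = \int_{\R^n} e^{\langle x,y\rangle - (p+1)\phi(x)}\,dx, \qquad e^{p\phi^{*,p}(z)} = \frac{1}{V(\phi)} \int_{\R^n} e^{p\langle x,z\rangle - (p+1)\phi(x)}\,dx,
\]
so the dilation $z = y/p$ produces the clean identity $\mathcal{L}F(y) = V(\phi)\, e^{p\phi^{*,p}(y/p)}$. The second step is to interpret the exponent $a/(a-1) = -1/p < 0$ via the standard convention $\|g\|_{L^{-1/p}} := \bigl(\int g^{-1/p}\bigr)^{-p}$ for positive $g$, which is legitimate here since $\mathcal{L}F > 0$. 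Substituting the formula for $\mathcal{L}F$ and changing variables $y \mapsto p y$ (which contributes a Jacobian factor $p^n$) yields $\|\mathcal{L}F\|_{L^{a/(a-1)}} = p^{-np}\, V(\phi)\, V(\phi^{*,p})^{-p}$.

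Taking the ratio then gives $p^{np}\, V(\phi)^p\, V(\phi^{*,p})^p = p^{np}\, \M_p(\phi)^p$, as claimed. There is no substantive obstacle: the argument is bookkeeping. The only genuine subtleties are (i) interpreting the negative-exponent ``$L^{a/(a-1)}$-norm'' and (ii) tracking the factor $p^n$ from the dilation needed to align the Laplace kernel $e^{\langle x,y\rangle}$ with the $L^p$-Legendre kernel $e^{p\langle x,y\rangle}$.
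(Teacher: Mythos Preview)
Your proposal is correct and follows essentially the same route as the paper's proof: both compute $\|F\|_{L^a}=V(\phi)^{p+1}$, rewrite $\mathcal{L}F(y)=V(\phi)e^{p\phi^{*,p}(y/p)}$, evaluate the negative-exponent norm via the change of variables $y\mapsto py$ to get $\|\mathcal{L}F\|_{L^{a/(a-1)}}=p^{-np}V(\phi)V(\phi^{*,p})^{-p}$, and take the ratio. The two subtleties you flag (the meaning of the $L^{a/(a-1)}$-``norm'' for a negative exponent and the Jacobian $p^n$) are exactly the points the paper handles implicitly.
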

We prove Lemma \ref{DetropicalizedEquiv} in \S \ref{NakamuraTsujiConnection}. For now, let us demonstrate how, by Lemma \ref{DetropicalizedEquiv}, Conjecture \ref{LpFuncMahlerConj} implies Tao's comment \eqref{TaoConjecture}: Suppose Conjecture \ref{LpFuncMahlerConj} holds. Then, by Lemma \ref{DetropicalizedEquiv} and the fact that $p=\frac{1-a}{a}$, 
\begin{equation*}
    \frac{\|F\|_{L^a}}{\|\mathcal{L}F\|_{L^{\frac{a}{a-1}}}}\geq p^{np}(e^{1+\frac1p}p^{\frac1p}\Gamma(1+\frac1p))^{np}= \left(e - e +\frac{e}{1-a}\Gamma(\frac{1}{1-a})^{1-a} \right)^{\frac{n}{a}}, 
\end{equation*}
where $\lim_{a\to 0}( -e + \frac{e}{1-a}\Gamma(\frac{1}{1-a})^{1-a})= -e + e =0$. In sum:
\begin{proposition}
    If Conjecture \ref{LpFuncMahlerConj} holds for some $p\in (0,\infty)$, then \eqref{TaoConjecture} holds with $a= \frac{1}{p+1}$. 
\end{proposition}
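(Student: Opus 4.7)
The plan is to chain Lemma \ref{DetropicalizedEquiv} with the hypothesized bound of Conjecture \ref{LpFuncMahlerConj}, and then reorganize the resulting inequality into the form demanded by \eqref{TaoConjecture}. Given an exponentially decaying, log-concave $F:\R^n\to(0,\infty)$, I would first set $\phi \defeq -a\log F$ and verify that $\phi\in\Cvx(\R^n)$: log-concavity of $F$ yields convexity of $\phi$, while exponential decay together with $F>0$ gives $0<V(\phi)<\infty$, and any failure of lower semi-continuity is absorbed by passing to the lower semi-continuous hull (which alters no integrals).

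With $p\defeq(1-a)/a$ so that $a=1/(p+1)$, Lemma \ref{DetropicalizedEquiv} rewrites the Hausdorff--Young ratio as
\begin{equation*}
\frac{\|F\|_{L^a}}{\|\mathcal{L}F\|_{L^{a/(a-1)}}} = p^{np}\M_p(\phi)^p.
\end{equation*}
Inserting the hypothesized bound $\M_p(\phi)\geq \bigl(e^{1+1/p}p^{1/p}\Gamma(1+1/p)\bigr)^n$ then produces the lower estimate $\bigl(p\cdot e^{1+1/p}p^{1/p}\Gamma(1+1/p)\bigr)^{np}$ on the right-hand side.

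The remaining step is a purely algebraic repackaging. Using $np=(1-a)\cdot(n/a)$, $1+1/p=1/(1-a)$, and $p=(1-a)/a$, I would collect the exponent $n/a$ outside, so that the bound becomes $B(a)^{n/a}$ for an explicit base $B(a)$ built from $e$, a rational factor in $a$, and $\Gamma(1/(1-a))^{1-a}$. Setting $g(a,n)\defeq B(a)-e$ then recasts the bound as $(e+g(a,n))^{n/a}$, which is exactly \eqref{TaoConjecture} with $a=1/(p+1)$. The limit $g(a,n)\to 0$ as $a\to 0$ follows from continuity of $\Gamma$ at $1$ and the elementary limits $p^{1/p},\,e^{1/p}\to 1$ as $p\to\infty$.

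There is no genuine obstacle here: the argument is essentially the displayed calculation in the paragraph preceding the statement, and the only nontrivial ingredient, Lemma \ref{DetropicalizedEquiv}, is proved separately in \S\ref{NakamuraTsujiConnection}. If any subtlety deserves attention, it is bookkeeping the three substitutions $p\mapsto(1-a)/a$, $np\mapsto(1-a)n/a$, $1+1/p\mapsto 1/(1-a)$ so that exponents match cleanly, and then reading off the asymptotics of $B(a)-e$ near $a=0$.
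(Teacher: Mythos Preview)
Your proposal is correct and takes essentially the same approach as the paper's argument (which is the displayed calculation immediately preceding the Proposition): apply Lemma~\ref{DetropicalizedEquiv} to rewrite the ratio as $p^{np}\M_p(\phi)^p$, insert the conjectured lower bound, repackage into the form $(e+g(a,n))^{n/a}$, and read off the behavior of $g$. Your extra step of checking $\phi\in\Cvx(\R^n)$ is a minor refinement the paper leaves implicit.
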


\subsection{The Fokker--Planck heat flow}
Nakamura--Tsuji recently initiated a study of the Mahler volume via the Ornstein--Uhlenbeck semigroup of a log-concave function and its dual \cite{NakamuraTsuji22}:  
\begin{equation}
\label{OUsemigroup}
    (P_tf)(t,x)\defeq \int_{\R^n} f(y) e^{-\frac{|x- e^{-t} y|^2}{2(1-e^{-2t})}} \frac{dx}{(2\pi (1-e^{-2t})^{\frac{n}{2}})}dt, \quad (t,x)\in [0,\infty)\times\R^n. 
\end{equation}
The key observation is that $P_tf$ is a solution to the following parabolic equation: 
\begin{equation}
\label{FPEq1}
    \begin{dcases}
        (\partial_t f) (t,x)= (\Delta f)(t,x) + \langle x, Df(t,x)\rangle + nf(t,x) &
        (t,x)\in [0,\infty) \times \R^n, \\
        f(0, x) = f_0(x), &\quad x\in\R^n, 
    \end{dcases}
\end{equation}
known as the Fokker--Planck heat flow, which regardless of the initial condition, converges to a Gaussian. This flow can thus interpolate between any log-concave function (resp. convex function) and a Gaussian (resp. convex quadratic polynomial). Consequently, demonstrating monotonicity properties for the desired functional along this flow allows one to establish Gaussians, or equivalently convex quadratic polynomials, as extremizers. 
Nakamura--Tsuji demonstrated such monotonicity for the quotient that appears on the left-hand side of \eqref{TaoConjecture} for even functions \cite[Theorem 1.2]{NakamuraTsuji24}. As a result, they obtained a sharp upper bound on this quotient, attained by Gaussians \cite[Corollary 1.6]{NakamuraTsuji24}. In view of Lemma \ref{DetropicalizedEquiv}, this result is exactly equivalent to Theorem \ref{LpFuncSantalo} for even functions. 

Is \S \ref{FokkerPlanckSection}, we streamline the proof of Theorem \ref{LpFuncSantalo} using the Fokker--Planck heat flow by explicitly computing the evolution equations of the involved terms. First, by representing log-concave functions as $f= e^{-\phi}$ for a convex function $\phi$, equation \eqref{FPEq1} induces a flow on convex functions that converges to a convex quadratic polynomial. Second, \eqref{OUsemigroup} provides an integral representation for the solution, from which it becomes clear that the volume of the function remains constant under this flow (Lemma \ref{VolEvolution}). As a result, it suffices to study the evolution equation of $V(\phi^{*,p})$. We compute the evolution equation of $\phi^{*,p}$ (Lemma \ref{LpLegendreEvolution}), involving the Fischer information of a measure closely related to the Hessian of $\phi^{*,p}$ (Lemma \ref{D^2CovEq}). Then we compute the evolution equation of $\M_p(\phi)$ (Lemma \ref{LpMahlerIntegralEvolution}).
Using this together with the Cram\'er--Rao (Theorem \ref{CramerRaoIneq}) and a variance inequality (Theorem \ref{VarianceIneq}), we bound $\partial_t\M_p(\phi)$ from below:
\begin{proposition}
\label{Mpbound}
    Let $p\in (0,\infty)$. For $\phi$ such that $e^{-\phi}$ is a solution to \eqref{FPEq1}, 
    \begin{equation*}
        \partial_t \M_p(\phi)\geq -\frac{p}{p+1} \M_p(\phi)\, |b(\phi^{*,p})|^2. 
    \end{equation*}
\end{proposition}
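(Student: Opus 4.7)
The plan is to reduce $\partial_t \M_p(\phi)$ to a Fisher-information integral minus a Gaussian-type quadratic form, then bound the Fisher information from below by the Cram\'er--Rao inequality and the quadratic form by the variance inequality, extracting the barycenter of $\phi^{*,p}$ as the only obstruction to monotonicity. By Lemma \ref{VolEvolution}, $V(\phi)$ is preserved along the flow, so
\[
    \partial_t \M_p(\phi) \;=\; V(\phi)\,\partial_t V(\phi^{*,p}) \;=\; -V(\phi)\!\int_{\R^n} e^{-\phi^{*,p}(y)}\,\partial_t \phi^{*,p}(y)\,dy,
\]
and the remainder of the argument is devoted to bounding this integral from above.

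First, I would substitute Lemma \ref{LpLegendreEvolution} to express $\partial_t \phi^{*,p}(y)$ in terms of the Fisher information and the covariance of the tilted probability measure
\[
    d\mu_{p,y}(x) \;\propto\; e^{p\langle x,y\rangle-(p+1)\phi(x)}\,dx,
\]
whose covariance is proportional to $D^2\phi^{*,p}(y)$ by Lemma \ref{D^2CovEq}. Next, I would apply the Cram\'er--Rao inequality (Theorem \ref{CramerRaoIneq}) pointwise in $y$, bounding the Fisher information of $\mu_{p,y}$ below by $\operatorname{tr}\bigl(D^2\phi^{*,p}(y)^{-1}\bigr)$ up to an explicit $p$-dependent factor. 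Plugging back into the evolution formula of Lemma \ref{LpMahlerIntegralEvolution}, an integration by parts in $y$ converts the $(D^2\phi^{*,p})^{-1}$ terms paired against $D^2 e^{-\phi^{*,p}}$ into a squared-gradient integrand; the drift $\langle x, Df\rangle$ and zeroth-order term $nf$ in \eqref{FPEq1} are exactly what is needed to close the integration by parts and to produce the coefficient $p/(p+1)$.

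At this point the lower bound takes the schematic shape
\[
    \partial_t \M_p(\phi) \;\geq\; -\tfrac{p}{p+1}\,V(\phi)\!\int_{\R^n}|y|^2\,e^{-\phi^{*,p}(y)}\,dy \;+\; R,
\]
with $R\geq 0$ coming from the Cram\'er--Rao slack. Applying the variance inequality (Theorem \ref{VarianceIneq}) to the probability density $e^{-\phi^{*,p}}/V(\phi^{*,p})$ and the coordinate map $y\mapsto y$, whose mean is $b(\phi^{*,p})$ by definition, decomposes $\int |y|^2 e^{-\phi^{*,p}}dy$ as $|b(\phi^{*,p})|^2 V(\phi^{*,p})$ plus a variance; the variance is precisely the quantity absorbed by $R$, leaving the stated bound $-\tfrac{p}{p+1}\M_p(\phi)\,|b(\phi^{*,p})|^2$. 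The main obstacle I anticipate is constant bookkeeping: every exponent $p$ versus $p+1$ appearing in Definition \ref{LpLegendreDef} alters the final coefficient, and the Cram\'er--Rao and variance inequalities must be applied to compatible measures so that the slack in one cancels the variance term in the other exactly, rather than up to an inconsequential multiplicative constant.
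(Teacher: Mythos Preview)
Your proposal identifies the correct ingredients and follows essentially the paper's route, but the description of how the two inequalities are deployed is scrambled. The paper's argument is a one-liner once Lemma~\ref{LpMahlerIntegralEvolution} is in hand: that lemma already gives
\[
\partial_t \M_p(\phi)=\frac{p\,\M_p(\phi)}{p+1}\Bigl(\tfrac1p\!\int I(d\phi^{p,y})\,\tfrac{e^{-\phi^{*,p}}dy}{V(\phi^{*,p})}-\operatorname{tr}\mathrm{Cov}(\phi^{*,p})-|b(\phi^{*,p})|^2\Bigr),
\]
and Corollary~\ref{traceInformationIneq} says the first two terms in parentheses sum to something nonnegative, leaving exactly the claimed bound. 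There is no further ``integration by parts in $y$ pairing $(D^2\phi^{*,p})^{-1}$ against $D^2 e^{-\phi^{*,p}}$'' at this stage; all the integrations by parts are already absorbed into the proof of Lemma~\ref{LpMahlerIntegralEvolution}.

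Two specific confusions to straighten out. First, the decomposition $\int |y|^2 e^{-\phi^{*,p}}dy/V(\phi^{*,p})=|b(\phi^{*,p})|^2+\operatorname{tr}\mathrm{Cov}(\phi^{*,p})$ is not the variance inequality (Theorem~\ref{VarianceIneq}); it is just the definition of covariance, and it is already baked into Lemma~\ref{LpMahlerIntegralEvolution}. Second, your $R$ is not ``Cram\'er--Rao slack'': $R$ is the entire Fisher-information integral $\tfrac{\M_p(\phi)}{p+1}\int I(d\phi^{p,y})\,e^{-\phi^{*,p}}dy/V(\phi^{*,p})$, and showing that $R$ dominates the variance term requires \emph{both} inequalities in sequence: Cram\'er--Rao (Corollary~\ref{CramerRaoCorollary}) gives $I(d\phi^{p,y})\geq p\,\operatorname{tr}(D^2\phi^{*,p}(y))^{-1}$ pointwise, and then Brascamp--Lieb (Theorem~\ref{VarianceIneq}) applied with $\psi=\phi^{*,p}$ and $g(y)=y_i$ gives $\operatorname{tr}\mathrm{Cov}(\phi^{*,p})\leq\int\operatorname{tr}(D^2\phi^{*,p})^{-1}\,e^{-\phi^{*,p}}dy/V(\phi^{*,p})$. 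Chain these and you are done; no separate integration by parts is needed, and the coefficient $p/(p+1)$ comes straight from Lemma~\ref{LpMahlerIntegralEvolution}, not from the Cram\'er--Rao step.
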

Therefore, as long as $\phi$ is translated to keep the $L^p$-Santal\'o point at the origin (then by Proposition \ref{LpFunSantaloPoint}, $b(\phi^{*,p})=0$), $\M_p(\phi)$ is monotone increasing under the Fokker--Planck heat flow, proving Theorem \ref{LpFuncSantalo}. The details are laid out in \S \ref{FinishingProofSection}.

\bigskip
\noindent
\textbf{Organization}. Section \ref{LegendreTransformSection} consists of two parts: In \S \ref{LpLegendreProperties}, a few fundamental properties of the $L^p$-Legendre transform are presented, including convexity, inequalities between different indices, and tensoriality. \S \ref{LpLegendreExamples} provides explicitly computations for examples including $|x|^2/2$, the Euclidean ball, and the simplex. Similarly, \S\ref{LpMahlerIntegralProperties} outlines basic properties of the $L^p$-Mahler integral, while \S\ref{LpMahlerIntegralExamples} presents several examples. \S \ref{NakamuraTsujiConnection} explores connections to the work of Nakamura--Tsuji. Section \ref{LpSantaloPointSection} is dedicated to the proof of Proposition \ref{LpSantaloPoint}. Section \ref{Asymptotics} discusses the asymptotics of the $L^p$-Mahler volume of the Euclidean ball. Finally, in Section \ref{FokkerPlanckSection} we compute the evolution equations of the $L^p$-Legendre transform and the $L^p$-Mahler integral under the Fokker--Planck heat flow, and demonstrate how they can be used to prove Theorem \ref{LpFuncSantalo}. 

\bigskip
This paper comprises part of the author's Ph.D. thesis and extends our previous work \cite{BMR23, Mastr23} from the case of convex bodies to convex functions. As this paper was nearing completion, the author was informed of a very recently posted preprint \cite{CFL24} that substantially overlaps some of our results. 

\bigskip
\noindent 
\textbf{Acknowledgments.}
The author would like to thank Yanir Rubinstein for his continuous support and guidance, and H. Tsuji for his interest in this work.
Research supported by NSF grants 
DMS-1906370,2204347, BSF 2020329 and an Ann G. Wylie Dissertation Fellowship Award at the University of Maryland.

\section{\texorpdfstring{$L^p$-Legendre Transform}{Lp Legendre Transform}}
\label{LegendreTransformSection}
This section is divided into two subsections. In the first, we establish several fundamental properties of the $L^p$-Legendre transform, focusing on convexity, smoothness, tensoriality, and the relationship between transforms for different values of $p$. In the second subsection, we explicitly compute the $L^p$-Legendre transform of specific functions, including the $L^1$-norm, a convex quadratic polynomial, the functional simplex, the Euclidean ball, and the simplex in $\R^n$. 

\subsection{Basic properties}
\label{LpLegendreProperties}
Let us start by proving a few basic properties of the $L^p$-Legendre transform. While most properties carry over for the case of convex bodies, the $L^p$-Legendre transform lacks certain features of the $L^p$-support function. This difference arises because the geometric case corresponds to convex indicator functions, which satisfy
\begin{equation*}
    (p+1) \bm{1}^\infty_K  = \bm{1}_K^\infty, 
\end{equation*}
for all $p>0$. However, this property does not hold when $\bm{1}_K^\infty$ is replaced by a general convex function. Let us list the main differences: 

\begin{itemize}
    \item One difference is that for convex bodies, $h_{p,K}(y)$ is finite for all $y\in \R^n$ (this follows from the finiteness of the classical support function $h_K$). While this does not generally hold for convex functions, we do know that for $\phi\in\Cvx(\R^n)$, the set $\{\phi^{*,p}<\infty\}$ has non-empty interior (Corollary \ref{LpLegendreNonEmptyInterior}). 
    
    \item Additionally, for convex bodies, knowing $h_{1,K}$ alone suffices to fully recover the convex body, as $h_{p,K}(y)= \frac1p h_{1,K}(py)$. Taking $p\to \infty$ recovers the support function $h_K$ \cite[Corollary 2.7]{BMR23}, hence the convex body. However, this relationship does not extend to the functional case. 
    
    \item Finally, the comparison between $\phi^{*,p}$ and $\phi^{*,q}$ (Lemma \ref{ReverseIneq}) is more rigid compared to the case of convex bodies, where it is possible to relate the transforms between different values of $p$ and $q$ more flexibly \cite[Lemma 2.6]{BMR23}. 
    
\end{itemize}

\subsubsection{Finiteness properties}
At this stage, we are not concerned with whether the transform is finite at specific values, as many of the forthcoming results remain valid even at points where the involved functions take the value $\infty$. However, it is key that $-\infty$ is never attained: 
\begin{lemma}
Let $p\in (0,\infty)$. 
For $f:\R^n\to \R\cup\{\infty\}$ with $0<V(f)<\infty$, $f^{*,p}>-\infty$. 
\end{lemma}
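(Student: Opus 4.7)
The plan is to bound the integral defining $f^{*,p}(y)$ below by a strictly positive quantity, which after taking logarithm gives a finite lower bound on $f^{*,p}(y)$. Expanding the definition, the integrand equals $e^{p\langle x,y\rangle - (p+1)f(x)}/V(f)$, which is pointwise non-negative, so it suffices to locate a set of positive Lebesgue measure on which the integrand is bounded below by a positive constant.

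First I would use the hypothesis $V(f) > 0$ to produce such a set. Since $\int e^{-f}dx > 0$, the sublevel sets $\{f \leq M\}$ exhaust $\{f < \infty\}$ from below as $M \to \infty$, so by monotone convergence there exists $M > 0$ with $|\{f \leq M\}| > 0$. Intersecting with a Euclidean ball $B_R$ of sufficiently large radius, I obtain a bounded set $A \subset \{f \leq M\} \cap B_R$ with $0 < |A| < \infty$.

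On $A$ both $|\langle x, y\rangle| \leq R|y|$ and $f(x) \leq M$ hold, so the integrand is bounded below by $e^{-pR|y| - (p+1)M}/V(f)$. Restricting the integral to $A$ and using $V(f) < \infty$ yields
\begin{equation*}
\int_{\R^n} e^{p(\langle x,y\rangle - f(x))}\frac{e^{-f(x)}dx}{V(f)} \,\geq\, \frac{|A|}{V(f)}\, e^{-pR|y| - (p+1)M} \,>\, 0.
\end{equation*}
Taking $\frac{1}{p}\log$ of both sides gives $f^{*,p}(y) \geq -R|y| - \tfrac{p+1}{p}M + \tfrac{1}{p}\log\tfrac{|A|}{V(f)} > -\infty$, as required.

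There is no real obstacle here; the only subtlety is that $f$ is only assumed measurable with $0 < V(f) < \infty$ (no convexity or lower semi-continuity), so one cannot quote any regularity of sublevel sets beyond measurability — but the monotone convergence argument above is robust enough for this. As a byproduct, the bound obtained shows that $f^{*,p}$ has at worst linear decay at infinity, which will be useful for later finiteness/integrability considerations.
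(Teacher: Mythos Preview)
Your proof is correct and follows essentially the same approach as the paper: both locate a bounded sublevel set $\{f\le M\}\cap B_R$ of positive measure, bound the integrand below on that set by $e^{-pR|y|-(p+1)M}/V(f)$, and conclude by taking logarithms. The paper's argument is identical in substance, differing only in minor presentational details.
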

\begin{proof}
The condition $V(f)>0$ implies that
\begin{equation*}
    \{f<\infty\} = \bigcup_{m\geq 1}\{f\leq m\}
\end{equation*}
has positive Lebesgue measure. Therefore, there exists some $m_0$ such that $\{f\leq m_0\}$ has positive Lebesgue measure. As a result, 
\begin{equation}
\label{negInfEq1}
    f^{*,p}(y)\geq \frac1p\log\int_{\{f\leq m_0\}}e^{p\langle x,y\rangle - (p+1)f(x)}\frac{dx}{V(f)}\geq \frac1p \log\int_{\{f\leq m_0\}} e^{p\langle x,y\rangle} e^{-(p+1)m_0} \frac{dx}{V(f)}
\end{equation}
Further restrict the domain by choosing large enough $R>0$ such that $\{f\leq m_0\}\cap RB_2^n$ has positive Lebesgue measure. For $x\in RB_2^n$, $\langle x,y\rangle\geq -|x||y|\geq -R|y|$. Therefore \eqref{negInfEq1} becomes:
\begin{equation*}
    f^{*,p}(y)\geq \frac1p \log\int_{\{f\leq m_0\}\cap RB_2^n} e^{-R|y|-(p+1)m_0}\frac{dx}{V(f)} =\frac1p\log\left( \frac{|\{f\leq m_0\}\cap RB_2^n|}{e^{R|y|+(p+1)m_0} V(f)}\right). 
\end{equation*}
Since $|\{f\leq m_0\}\cap RB_2^n|$ was chosen to be positive, the above expression is greater than $-\infty$. 
\end{proof}

As we will later see, 
it is possible for a function $f$ with $0<V(f)<\infty$ to have an $L^p$-Legendre transform of infinite volume, $V(f^{*,p})=\infty$. However, we currently do not know of any example where $V(f^{*,p})=0$. Such a case would imply that $f^{*,p}$ is $\infty$ almost everywhere, which seems unlikely. We can, however, prove that this does not occur for convex functions. 
\begin{lemma}
\label{finiteLemma2}
    Let $p\in (0,\infty)$. For $\phi\in\Cvx(\R^n)$, $V(\phi^{*,p})>0$. 
\end{lemma}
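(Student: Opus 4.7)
The plan is to exhibit a Euclidean ball around the origin on which $\phi^{*,p}$ is uniformly bounded above; then $e^{-\phi^{*,p}}$ is bounded below by a strictly positive constant there, which forces $V(\phi^{*,p}) > 0$.

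The first step is to invoke Lemma \ref{ConvexLowerBoundLemma} to obtain constants $a > 0$ and $b \in \R$ with $\phi(x) \geq a|x| + b$ for all $x \in \R^n$. This coercivity estimate is the only place where convexity of $\phi$ together with $V(\phi) < \infty$ enters the argument; it replaces the analogous observation, in the proof of the preceding lemma, that $V(\phi) > 0$ forces some sublevel set $\{\phi \leq m_0\}$ to have positive Lebesgue measure.

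The second step is an elementary estimate. For $y$ satisfying $p|y| < (p+1)a$, combining the linear lower bound with Cauchy--Schwarz yields
\[
e^{p\langle x,y\rangle - (p+1)\phi(x)} \leq e^{-(p+1)b}\, e^{(p|y| - (p+1)a)|x|},
\]
whose right-hand side is integrable over $\R^n$ since the coefficient of $|x|$ is strictly negative. Fixing any $r \in (0, (p+1)a/p)$, this bound is uniform in $y$ over the closed ball $\overline{rB_2^n}$, so integrating against $\tfrac{1}{V(\phi)}dx$ produces a finite constant $C_r$ with $\phi^{*,p}(y) \leq \tfrac{1}{p}\log C_r$ for all $|y| \leq r$. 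Consequently,
\[
V(\phi^{*,p}) \geq \int_{|y|\leq r} C_r^{-1/p}\, dy = C_r^{-1/p}\, |rB_2^n| > 0.
\]

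There is no substantial obstacle once Lemma \ref{ConvexLowerBoundLemma} is granted; the entire content of the lemma is the observation that the growth condition afforded by convexity ensures the log-Laplace integral defining $\phi^{*,p}$ converges on a full neighborhood of the origin. This is precisely what distinguishes functions in $\Cvx(\R^n)$ from general functions of finite positive volume, for which $\phi^{*,p}$ could be $+\infty$ almost everywhere.
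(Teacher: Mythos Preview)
Your proof is correct and follows essentially the same approach as the paper: invoke Lemma \ref{ConvexLowerBoundLemma} to get $\phi(x)\geq a|x|+b$, then use $\langle x,y\rangle\leq |x||y|$ to bound the integrand by an integrable function whenever $|y|<(p+1)a/p$. Your version is slightly more explicit than the paper's in spelling out the uniform bound on $\phi^{*,p}$ over a closed ball of radius $r<(p+1)a/p$; the paper simply notes that $\phi^{*,p}$ is finite on the open ball and concludes $V(\phi^{*,p})>0$, implicitly relying on convexity (hence local boundedness) of $\phi^{*,p}$.
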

\begin{proof}
By Lemma \ref{ConvexLowerBoundLemma}, there exist $a>0$ and $b\in \R$ such that $\phi(x)\geq a|x|+b$, $x\in \R^n$. Therefore,
\begin{equation*}
    \int_{\R^n} e^{p\langle x,y\rangle - (p+1)\phi(x)}\frac{dx}{V(\phi)}\leq \int_{\R^n}e^{p|y||x| -(p+1)a|x|- (p+1)b}\frac{dx}{V(\phi)}  = \int_{\R^n}e^{p(|y| - \frac{a(p+1)}{p})|x|} \frac{dx}{V(\phi)} e^{-(p+1)}b. 
\end{equation*}
For $|y|<\frac{a(p+1)}{p}$ the integral is finite, hence $\mathrm{int}(\frac{a(p+1)}{p}\,B_2^n) \subset \{\phi^{*,p}<\infty\}$. In particular, $V(e^{-\phi^{*,p}})>0$.
\end{proof}

As we will later see it can be $V(\phi^{*,p})=\infty$. In fact, this is equivalent to $0\notin\mathrm{int}\,\{\phi<\infty\}$ (Lemma \ref{FinitenessLemma}).

\begin{corollary}
\label{LpLegendreNonEmptyInterior}
    Let $p\in (0,\infty)$. For $\phi\in\Cvx(\R^n)$, $\mathrm{int}\,\{\phi^{*,p}<\infty\}\neq\emptyset$.
\end{corollary}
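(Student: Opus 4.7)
The claim is essentially a byproduct of the argument already used to prove Lemma \ref{finiteLemma2}, so the strategy is to read off what that proof gives and observe that the region of finiteness it produces is open. Concretely, I would invoke Lemma \ref{ConvexLowerBoundLemma} to obtain constants $a>0$ and $b\in\R$ such that $\phi(x)\geq a|x|+b$ for every $x\in\R^n$. Using the Cauchy--Schwarz bound $\langle x,y\rangle\leq |x||y|$ and substituting the linear minorant gives
\[
\int_{\R^n}e^{p\langle x,y\rangle - (p+1)\phi(x)}\,dx \;\leq\; e^{-(p+1)b}\int_{\R^n}e^{(p|y|-(p+1)a)|x|}\,dx.
\]
The right-hand side is finite whenever $p|y|-(p+1)a<0$, that is, whenever $|y|<\tfrac{(p+1)a}{p}$, so $\phi^{*,p}(y)<\infty$ on this open Euclidean ball.

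Since the ball $\mathrm{int}\bigl(\tfrac{(p+1)a}{p}B_2^n\bigr)$ is a non-empty open set contained in $\{\phi^{*,p}<\infty\}$, the sublevel set has non-empty interior, as required. The only difference from the statement of Lemma \ref{finiteLemma2} is that the present corollary records the geometric/topological fact (open ball of finiteness) rather than the measure-theoretic one ($V(\phi^{*,p})>0$); once the explicit ball is in hand, both conclusions are simultaneous.

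There is no real obstacle: the entire content of the argument is already contained in the preceding lemma, so the corollary is established essentially by inspection. If one wanted to present it without referring back to Lemma \ref{finiteLemma2}, the only ingredient needed is the linear lower bound from Lemma \ref{ConvexLowerBoundLemma}, which is the key place where convexity (and the integrability hypothesis $V(\phi)<\infty$) enters.
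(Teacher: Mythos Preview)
Your argument is correct and is indeed exactly what the proof of Lemma \ref{finiteLemma2} already establishes: that proof explicitly records the inclusion $\mathrm{int}\bigl(\tfrac{(p+1)a}{p}B_2^n\bigr)\subset\{\phi^{*,p}<\infty\}$, and you are simply reading this off.

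The paper's own proof of the corollary takes a slightly different (and shorter) route: rather than revisiting the estimate inside Lemma \ref{finiteLemma2}, it uses only its \emph{conclusion} $V(\phi^{*,p})>0$, together with the convexity of $\phi^{*,p}$ (hence of $\{\phi^{*,p}<\infty\}$), to argue that the integral of $e^{-\phi^{*,p}}$ is supported on $\mathrm{int}\,\{\phi^{*,p}<\infty\}$ and therefore this interior cannot be empty. Your version has the advantage of being self-contained and of exhibiting an explicit open ball of finiteness; the paper's version has the advantage of treating Lemma \ref{finiteLemma2} as a black box. Both are one-line deductions.
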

\begin{proof}
    Since, by Lemma \ref{finiteLemma2}, $V(\phi^{*,p})= \int_{\{\phi^{*,p}<\infty\}}e^{-\phi^{*,p}(y)}dy= \int_{\mathrm{int}\,\{\phi^{*,p}<\infty\}}$ is non-zero, it cannot be that $\mathrm{int}\,\{\phi^{*,p}<\infty\}$ is empty. 
\end{proof}

\subsubsection{Convexity}
\begin{lemma}
\label{LpLegendreConvexity}
    Let $p\in (0,\infty)$. For $f:\R^n\to \R\cup\{\infty\}$, $f^{*,p}(y)$ is a convex function of $y$. 
\end{lemma}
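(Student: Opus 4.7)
The plan is to observe that, up to the factor $\tfrac{1}{p}$, the function $f^{*,p}(y)$ is the logarithm of a Laplace-type integral against the finite positive measure $d\mu(x) \defeq \tfrac{1}{V(f)} e^{-(p+1)f(x)}\,dx$; that is,
$$f^{*,p}(y) \;=\; \frac{1}{p}\log \int_{\R^n} e^{p\langle x,y\rangle}\, d\mu(x).$$
Log-Laplace transforms (cumulant generating functions) of positive measures are classically convex, so the whole statement reduces to a single application of H\"older's inequality.

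Concretely, I would fix $y_1, y_2 \in \R^n$ and $\lambda \in (0,1)$, and write
$$e^{p\langle x, \lambda y_1 + (1-\lambda) y_2\rangle} \;=\; \bigl(e^{p\langle x, y_1\rangle}\bigr)^{\lambda}\bigl(e^{p\langle x, y_2\rangle}\bigr)^{1-\lambda}.$$
Integrating against $d\mu$ and applying H\"older's inequality with conjugate exponents $1/\lambda$ and $1/(1-\lambda)$ yields
$$\int_{\R^n} e^{p\langle x, \lambda y_1 + (1-\lambda) y_2\rangle}\, d\mu(x) \;\leq\; \left(\int_{\R^n} e^{p\langle x, y_1\rangle}\, d\mu(x)\right)^{\!\lambda} \left(\int_{\R^n} e^{p\langle x, y_2\rangle}\, d\mu(x)\right)^{\!1-\lambda}.$$
Taking $\tfrac{1}{p}\log$ of both sides (which preserves the inequality since $p>0$ and both sides are positive, with the right-hand side possibly $+\infty$) gives
$$f^{*,p}(\lambda y_1 + (1-\lambda) y_2) \;\leq\; \lambda f^{*,p}(y_1) + (1-\lambda) f^{*,p}(y_2),$$
which is precisely convexity.

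There is no real obstacle here; the only mild care needed is handling the cases where one of the integrals diverges ($+\infty$) so that the inequality still makes sense, and noting that the previous lemma ensures $f^{*,p} > -\infty$, so we never face an indeterminate $\infty - \infty$. No convexity or lower semi-continuity of $f$ is required for this argument, which is consistent with the fact that $f^{*,p}$ is being defined for any measurable $f$ with $0 < V(f) < \infty$.
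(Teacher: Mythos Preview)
Your proof is correct and essentially identical to the paper's own argument: both fix $y_1,y_2$ and $\lambda$, factor the exponential, apply H\"older's inequality with exponents $1/\lambda$ and $1/(1-\lambda)$ against the same measure (the paper writes it as $e^{-f(x)}dx/V(f)$ rather than packaging it as $d\mu$), and then take $\tfrac{1}{p}\log$. Your extra remark about handling $+\infty$ and invoking $f^{*,p}>-\infty$ is a helpful clarification that the paper leaves implicit.
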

\begin{proof}
Let $y_1, y_2\in \R^n$ and $\lambda\in [0,1]$. By H\"older's inequality, 
\begin{equation*}
\begin{aligned}
    &\int_{\R^n} e^{p(\langle x,(1-\lambda)y_1 + \lambda y_2\rangle - f(x))} \frac{e^{-f(x)}dx}{V(f)} \\
    &= \int_{\R^n}\left( e^{p(\langle x, y_1\rangle - f(x))}\frac{e^{-f(x)}}{V(f)}\right)^{1-\lambda} \left( e^{p(\langle x,y_2\rangle - f(x))}\frac{e^{-f(x)}}{V(f)}\right)^\lambda dx \\
    &\leq \left( \int_{\R^n}e^{p(\langle x, y_1\rangle- f(x))} \frac{e^{-f(x)}dx}{V(f)}\right)^{1-\lambda}  
    \left( \int_{\R^n}e^{p(\langle x, y_2\rangle- f(x))} \frac{e^{-f(x)}dx}{V(f)}\right)^{\lambda}. 
\end{aligned}
\end{equation*}
Applying $\log$ to both sides and dividing by $p$ we arrive at 
\begin{equation*}
    f^{*,p}((1-\lambda)y_1 + \lambda y_2)\leq (1-\lambda)f^{*,p}(y_1) + \lambda f^{*,p}(y_2), 
\end{equation*}
demonstrating convexity.     
\end{proof}

\subsubsection{Lower semi-continuity}
\begin{lemma}
\label{lowerSemiContLemma}
    Let $p\in (0,\infty)$. For $f:\R^n\to \R\cup\{\infty\}$, $f^{*,p}$ is lower semi-continuous. 
\end{lemma}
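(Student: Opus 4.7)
The plan is to apply Fatou's lemma directly to the defining integral. Given $y_0\in\R^n$ and an arbitrary sequence $y_k\to y_0$, I would set
\[
g_k(x)\defeq e^{p\langle x,y_k\rangle - (p+1)f(x)}/V(f),
\]
which are non-negative measurable functions of $x$. Since $y\mapsto e^{p\langle x,y\rangle}$ is continuous for each fixed $x$, and the factor $e^{-(p+1)f(x)}$ is independent of $y$ (interpreted as $0$ wherever $f(x)=\infty$), we have pointwise convergence $g_k(x)\to g_0(x)$ on all of $\R^n$. In particular, $\liminf_k g_k = g_0$ pointwise.

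Fatou's lemma then gives
\[
\int_{\R^n} g_0(x)\,dx \leq \liminf_{k\to\infty}\int_{\R^n} g_k(x)\,dx.
\]
The function $\tfrac{1}{p}\log(\cdot)$ is monotone non-decreasing and continuous on $[0,\infty]$, with the conventions $\log 0 = -\infty$ and $\log \infty=\infty$, so it commutes with $\liminf$. Applying it to both sides yields
\[
f^{*,p}(y_0) \leq \liminf_{k\to\infty} f^{*,p}(y_k),
\]
which is exactly the lower semi-continuity condition \eqref{LSC}. Since the sequence $y_k\to y_0$ was arbitrary, this completes the argument.

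There is no substantive obstacle in this proof, and the two degenerate cases are handled automatically: if $\int g_0=0$ then the left-hand side is $-\infty$ and the inequality is trivial, while if $\liminf_k \int g_k=\infty$ then the right-hand side is $+\infty$. It is worth remarking that convexity of $f$ is not used anywhere; only measurability and $0<V(f)<\infty$ are needed, matching the hypotheses of Lemma \ref{LpLegendreConvexity}. Consequently, $f^{*,p}$ is automatically a proper, convex, lower semi-continuous function on $\R^n$, which justifies later treating it as an element of the same class of functions as $\phi\in\Cvx(\R^n)$ whenever $0<V(f^{*,p})<\infty$.
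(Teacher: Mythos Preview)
Your proof is correct and follows essentially the same approach as the paper: apply Fatou's lemma to the non-negative integrand $e^{p\langle x,y\rangle-(p+1)f(x)}/V(f)$, then use the monotonicity (and continuity) of $\tfrac1p\log$ to pass the inequality through. The paper states this in one line, and your version just makes the sequential formulation and the edge cases explicit.
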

\begin{proof}
    Fix $y_0\in \R^n$. 
    By Fatou's lemma \cite[\S 2.18]{Folland99} and the monotonicity of the logarithm, 
    \begin{equation*}
        \liminf_{y\to y_0} f^{*,p}(y)\geq \frac1p \log\int_{\R^n}\liminf_{y\to y_0} e^{p\langle x,y\rangle - (p+1)f(x)}\frac{dx}{V(f)} = f^{*,p}(y_0), 
    \end{equation*}
    hence $f^{*,p}$ is lower semi-continuous. 
\end{proof}

\subsubsection{Smoothness}
\begin{proposition}
\label{LpLegendreSmooth}
    Let $p\in (0,\infty)$. For $\phi\in\Cvx(\R^n)$, $\phi^{*,p}$ is a smooth function in $\mathrm{int}\,\{\phi^{*,p}<\infty\}$. 
\end{proposition}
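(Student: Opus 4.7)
The plan is to prove smoothness by differentiation under the integral sign, treating $\phi^{*,p}$ as the (rescaled) logarithm of a Laplace-type integral. Writing
\[ \phi^{*,p}(y) = \frac{1}{p}\log F(y), \qquad F(y) \defeq \int_{\R^n} e^{p\langle x,y\rangle - (p+1)\phi(x)}\,\frac{dx}{V(\phi)}, \]
the lemma showing $\phi^{*,p} > -\infty$ tells us $F > 0$ pointwise, so smoothness of $\phi^{*,p}$ on $\mathrm{int}\,\{\phi^{*,p}<\infty\}$ will follow at once from smoothness of $F$ on this open set, since $\log$ is smooth on $(0,\infty)$.

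To show $F\in C^\infty$, I would fix $y_0\in\mathrm{int}\,\{\phi^{*,p}<\infty\}$ and choose $\delta>0$ small enough that $\overline{B}(y_0,2\delta)\subset\{\phi^{*,p}<\infty\}$, so that $F(y)<\infty$ for every $y$ in this closed ball. For $y\in B(y_0,\delta)$ and any multi-index $\alpha$, the formal differentiation gives
\[ \partial^{\alpha}F(y) = p^{|\alpha|}\int_{\R^n} x^{\alpha} e^{p\langle x,y\rangle-(p+1)\phi(x)}\,\frac{dx}{V(\phi)}, \]
and justifying this identity inductively via dominated convergence reduces the whole problem to producing, for each $\alpha$, an integrable majorant of the integrand that is uniform in $y\in B(y_0,\delta)$.

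The key technical step is this domination. I would absorb the polynomial factor into an exponential using $|x|^{|\alpha|}\leq C_{\alpha}\, e^{p\delta|x|/2}$, and then choose a finite set of points $v_1,\dots,v_N$ of norm at most $3\delta/2$ with the property that for every $x\in\R^n$ there exists $j$ with $\langle x,v_j\rangle\geq \delta|x|/2$ (such a set exists by a finite covering of the unit sphere). This gives the pointwise bound
\[ |x|^{|\alpha|}\, e^{p\langle x,y\rangle-(p+1)\phi(x)} \leq C_\alpha\sum_{j=1}^{N} e^{p\langle x,\, y+v_j\rangle-(p+1)\phi(x)}, \]
and for $y\in B(y_0,\delta)$ each shifted point $y+v_j$ lies in $\overline{B}(y_0,2\delta)\subset\{\phi^{*,p}<\infty\}$, so each term on the right is integrable against $dx$ by the choice of $\delta$. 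Hence the majorant is an integrable function independent of $y\in B(y_0,\delta)$, completing the domination step.

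I expect this domination, namely the trade of polynomial growth for a finite sum of shifted exponentials, to be the main obstacle; everything else is routine. Once it is in place, induction on $|\alpha|$ with dominated convergence yields the existence and continuity of every partial derivative of $F$ on $B(y_0,\delta)$, and since $y_0$ was arbitrary in the interior, $F\in C^\infty(\mathrm{int}\,\{\phi^{*,p}<\infty\})$. Composing with $\frac{1}{p}\log$ on $(0,\infty)$ then gives $\phi^{*,p}\in C^\infty(\mathrm{int}\,\{\phi^{*,p}<\infty\})$.
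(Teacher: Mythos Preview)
Your approach is essentially the paper's: differentiate under the integral sign via dominated convergence, with the majorant supplied by the finiteness of $F$ at nearby points of $\mathrm{int}\{\phi^{*,p}<\infty\}$. The paper is a bit more economical, handling one coordinate at a time and using only the two shifts $y\pm r e_i$ (via $e^{pr|x_i|}\le e^{prx_i}+e^{-prx_i}$) in place of your sphere covering.

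There is one slip to fix: your majorant $C_\alpha\sum_j e^{p\langle x,\,y+v_j\rangle-(p+1)\phi(x)}$ still depends on $y$, so it is not a single dominating function uniform over $y\in B(y_0,\delta)$, which is what dominated convergence (applied to the difference quotient) requires. The cure is to first bound $\langle x,y\rangle\le\langle x,y_0\rangle+\delta|x|$ for $y\in B(y_0,\delta)$ and then absorb the extra $e^{p\delta|x|}$ together with the polynomial factor before doing the covering; the shifts are then anchored at $y_0$, giving a $y$-independent bound of the same shape. Relatedly, $\delta+\tfrac{3\delta}{2}=\tfrac{5\delta}{2}$, not $2\delta$, so take the initial ball slightly larger. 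These are cosmetic adjustments; once made, the argument goes through.
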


Proposition \ref{LpLegendreSmooth} follows directly from the following: 
\begin{lemma}
    Let $p\in (0, \infty)$ and $\phi\in\Cvx(\R^n)$. For $m \in \mathbb{N}$, $m_1, \ldots, m_n\geq 0$ integers with $m_1 + \ldots + m_n= m$, and $y\in \mathrm{int}\{\phi^{*,p}<\infty\}$, 
    \begin{equation*}
        \frac{\partial^m}{\partial y_1^{m_1} \ldots \partial y_n^{m_n}}\int_{\R^n}e^{p\langle x,y\rangle - (p+1)\phi(x)}dx=p^m \int_{\R^n}x_1^{m_1}\ldots x_n^{m_n} e^{p\langle x,y\rangle -(p+1)\phi(x)}dx
    \end{equation*}
\end{lemma}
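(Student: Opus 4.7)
The plan is to proceed by induction on the total order $m = m_1 + \cdots + m_n$. The base case $m=0$ is trivial, and for $m \geq 1$, after relabeling we may assume $m_1 \geq 1$, so by induction it suffices to show that one can differentiate the order-$(m-1)$ integral representation once more in $y_1$. This reduces, via the standard differentiation-under-the-integral-sign theorem, to exhibiting a single integrable function dominating the $y_1$-derivative of the integrand uniformly on a neighborhood of an arbitrary $y_0 \in \mathrm{int}\,\{\phi^{*,p}<\infty\}$.

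The key step is producing this dominating function. Since $\phi^{*,p}$ is convex by Lemma \ref{LpLegendreConvexity}, the set $\{\phi^{*,p}<\infty\}$ is convex, so I can fix $\delta > 0$ small enough that the closed $\ell^\infty$-box $\{y: \|y - y_0\|_\infty \leq 2\delta\}$ is contained in $\{\phi^{*,p}<\infty\}$. Then for any $y$ in the smaller box of radius $\delta$ and any sign vector $\sigma \in \{-1,+1\}^n$, the point $y + \delta\sigma$ still lies in $\{\phi^{*,p}<\infty\}$, so
$$\int_{\R^n} e^{p\langle x, y + \delta\sigma\rangle - (p+1)\phi(x)}\, dx = V(\phi)\, e^{p\phi^{*,p}(y + \delta\sigma)} < \infty.$$
Combining this with the elementary bound $|x_1|^{k_1}\cdots |x_n|^{k_n} \leq C_{m,\delta,p}\sum_\sigma e^{p\delta \langle \sigma, x\rangle}$, valid for any multi-index $(k_1, \ldots, k_n)$ of total order at most $m$ (obtained by dominating each $|x_i|^{k_i}$ by a constant multiple of $e^{p\delta x_i} + e^{-p\delta x_i}$), one obtains
$$|x_1^{m_1}\cdots x_n^{m_n}|\, e^{p\langle x, y\rangle - (p+1)\phi(x)} \leq C_{m,\delta,p}\sum_\sigma e^{p\langle x, y + \delta\sigma\rangle - (p+1)\phi(x)},$$
which is integrable over $\R^n$ and independent of $y$ in the smaller box.

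With the dominating function in hand, a routine application of dominated convergence justifies passing $\partial/\partial y_1$ through the integral, completing the inductive step. Since $y_0 \in \mathrm{int}\,\{\phi^{*,p}<\infty\}$ was arbitrary, the formula holds throughout this set. The main (and essentially only) obstacle lies in producing the dominating function, for which the convexity and openness of $\mathrm{int}\,\{\phi^{*,p}<\infty\}$ are essential: they provide the slack needed to absorb polynomial growth into a small exponential factor that still keeps the perturbed exponent $p\langle x, y + \delta\sigma\rangle - (p+1)\phi(x)$ inside the domain of integrability.
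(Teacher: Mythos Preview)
Your proposal is correct and follows essentially the same approach as the paper: both arguments justify differentiation under the integral sign via dominated convergence, producing the dominating function by absorbing the polynomial factor $|x_1|^{k_1}\cdots|x_n|^{k_n}$ into a small exponential perturbation $e^{p\delta\langle \sigma,x\rangle}$ and using that $y$ lies in the interior of $\{\phi^{*,p}<\infty\}$ to ensure the perturbed points $y+\delta\sigma$ remain in the domain of finiteness. The paper carries out the $m=1$ case by hand (perturbing only in the single direction $\pm r e_i$ and bounding the difference quotient directly) and then says the higher-order cases follow similarly; your version packages this more cleanly via induction and the full sign-vector perturbation $\sigma\in\{-1,+1\}^n$, giving a dominating function that is uniform in $y$ over a box and handles all multi-indices of order at most $m$ at once, but the underlying idea is identical.
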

\begin{proof}
    We do the $m=1$ case; the higher-order cases follow similarly. Let $e_1, \ldots, e_n$ be the standard basis in $\R^n$. Fix $y\in \mathrm{int}\,\{\phi^{*,p}<\infty\}$ and choose $r>0$ such that $y+ rB_2^n\subset \mathrm{int}\{\phi^{*,p}<\infty\}$. For $0<\e<r$,  
    \begin{equation}
    \label{smoothnessEq1}
        \frac1\e\left(\int_{\R^n}e^{p\langle x, y+ \e e_i\rangle} e^{-(p+1)\phi(x)}dx - \int_{\R^n}e^{p\langle x,y\rangle -(p+1)\phi(x)}dx\right)= \int_{\R^n}\frac{e^{p\e x_i}-1}{\e} e^{p\langle x,y\rangle- (p+1)\phi(x)}dx.  
    \end{equation}
    Since $|\frac{e^{\e p x_i}-1}{\e}|\leq \frac1r e^{pr|x_i|}$, we can bound the integral \eqref{smoothnessEq1}, 
    \begin{equation*}
    \begin{aligned}
        \frac1r \int_{\R^n} e^{pr|x_i|} e^{p\langle x,y\rangle - (p+1)\phi(x)}dx
        &=\frac1r \int_{\R^{n-1}}\left(\int_{-\infty}^0 e^{-prx_i} + \int_{0}^\infty e^{prx_i}  \right) e^{p\langle x,y\rangle - (p+1)\phi(x)}dx \\
        &\leq \frac1r \int_{\R^n} e^{p\langle x,y - re_i\rangle- (p+1)\phi(x)}dx + \frac1r \int_{\R^n} e^{p\langle x, y + re_i\rangle - (p+1)\phi(x)}dx 
    \end{aligned}
    \end{equation*}
    which is finite because $y +rB_2^n\subset \mathrm{int}\,\{\phi^{*,p}<\infty\}$. Therefore, Dominated Convergence applies to \eqref{smoothnessEq1} to conclude: 
    \begin{equation*}
    \begin{aligned}
        \frac{\partial}{\partial y_i}\int_{\R^n} e^{p\langle x,y\rangle-(p+1)\phi(x)}dx &= \lim_{\e\to 0}\frac1\e \left( \int_{\R^n}e^{p\langle x,y+ \e e_i\rangle- (p+1)\phi(x)} - \int_{\R^n} e^{p\langle x,y\rangle- (p+1)\phi(x)}dx\right) \\
        &= \int_{\R^n} \lim_{\e\to 0}\frac{e^{p\e x_i}-1}{\e} e^{p\langle x,y\rangle - (p+1)\phi(x)}dx \\
        &= p \int_{\R^n}x_i e^{p\langle x,y\rangle - (p+1)\phi(x)}dx.
    \end{aligned}
    \end{equation*}
    This integral is finite because $y\in \mathrm{int}\,\{\phi^{*,p}<\infty\}$ implies $V((p+1)\phi- p\langle \,\cdot\,, y\rangle)<\infty$, thus $(p+1)\phi- p\langle \,\cdot\,, y\rangle \in \Cvx(\R^n)$, and hence all its moments are finite by Lemma \ref{FinitenessofMoments}.  
\end{proof}

\subsubsection{Additional properties}
One of the properties that does not carry over to the functional case is
\begin{equation*}
    h_{p,K}(y) = \frac1p h_{1,K}(py)
\end{equation*}
relating $h_{p,K}$ to $h_{1,K}$ for any index $p>0$ \cite[Lemma 2.1 (i)]{BMR23}. However, the remaining properties of $h_{p,K}$ continue to hold \cite[Lemma 2.1]{BMR23}. 

The applications of a linear transformation $A\in GL(n,\R)$ to $K$ corresponds to pulling back its convex indicator function by $A^{-1}$. Indeed, $\bm{1}_{AK}^\infty(x)= 1$ if and only if $x\in AK$, or equivalently, $A^{-1}x\in K$, so $\bm{1}^\infty_{K}(A^{-1}x)=1$. For $A\in GL(n,\R)$ denote by 
\begin{equation*}
    (A^*f)(x)\defeq f(Ax), \quad x \in \R^n. 
\end{equation*}
the pull-back of $f$ by the linear transformation $A$. 

\begin{lemma}
\label{listLemma}
    Let $0<p < q<\infty$. For functions $f,g :\R^n \to \R\cup\{\infty\}$ with $0<V(f), V(g)<\infty$, and $A\in GL(n,\R), a\in \R^n$: 

    \smallskip
    \noindent
    (i) $(T_a f)^{*,p}(y) = f^{*,p}(y) - \langle a, y\rangle$.  

    \smallskip 
    \noindent
    (ii) $(A^*f)^{*,p}(y) = f^{*,p}((A^{-1})^T y)$

    \smallskip 
    \noindent
    (iii) $f^{*,p}\leq f^{*,q}\leq f^*$. 

    \smallskip
    \noindent
    (iv) If $f\leq g$, then $g^{*,p}\leq f^{*,p} + \frac1p \log\frac{V(f)}{V(g)}.$
    
\end{lemma}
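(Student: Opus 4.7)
The plan is to treat each of the four parts via direct manipulation of the defining integral, exploiting the probabilistic form of $f^{*,p}$ obtained by writing $d\mu_f := e^{-f(x)}dx/V(f)$ as a probability measure on $\{f<\infty\}$. Under this viewpoint,
\[
    f^{*,p}(y) \;=\; \log\Bigl\|\, e^{\langle \cdot,y\rangle - f(\cdot)} \,\Bigr\|_{L^p(d\mu_f)},
\]
which immediately suggests where monotonicity in $p$ and dominated-function inequalities will come from.

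For (i), I would first note that the change of variables $u = x+a$ gives $V(T_a f) = V(f)$, and then substitute in the defining integral to factor out $e^{-p\langle a,y\rangle}$; taking $\tfrac{1}{p}\log$ yields the claimed translation formula. For (ii), the change of variables $u = Ax$ produces $V(A^*f) = V(f)/|\det A|$, and the two Jacobian factors cancel inside the $L^p$-average; the inner product transforms as $\langle A^{-1}u,y\rangle = \langle u,(A^{-1})^T y\rangle$, giving the stated pullback identity. These are routine.

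For (iii), the key observation is that $d\mu_f$ is a probability measure, so by the standard monotonicity of $L^p$-norms on probability spaces (a direct consequence of Jensen's or Hölder's inequality), $p\mapsto \|g\|_{L^p(d\mu_f)}$ is increasing in $p$ for any measurable $g$; applied to $g = e^{\langle \cdot,y\rangle - f(\cdot)}$ this gives $f^{*,p}\leq f^{*,q}$ for $p<q$. For the upper bound by the classical Legendre transform $f^*$, I would use the pointwise bound $\langle x,y\rangle - f(x) \leq f^*(y)$ inside the integral, which yields $f^{*,q}(y)\leq f^*(y)$ immediately because $d\mu_f$ is a probability measure.

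For (iv), assume $f\leq g$, so $V(g)\leq V(f)$ and $-(p+1)g\leq -(p+1)f$. Then
\[
    e^{p\,g^{*,p}(y)} \;=\; \int_{\R^n} e^{p\langle x,y\rangle - (p+1)g(x)}\frac{dx}{V(g)} \;\leq\; \frac{V(f)}{V(g)}\int_{\R^n} e^{p\langle x,y\rangle - (p+1)f(x)}\frac{dx}{V(f)} \;=\; \frac{V(f)}{V(g)}\, e^{p\,f^{*,p}(y)},
\]
and taking $\tfrac{1}{p}\log$ finishes the claim. No part presents a genuine obstacle; if anything, the only subtlety is being careful in (iii) that the monotonicity of $L^p$-norms requires the underlying measure to be a probability measure, which is precisely why the normalization by $V(f)$ was built into Definition \ref{LpLegendreDef}.
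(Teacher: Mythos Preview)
Your proposal is correct and matches the paper's proof essentially step for step: the same changes of variables for (i) and (ii), the same pointwise bound $\langle x,y\rangle - f(x)\le f^*(y)$ for the upper estimate in (iii), H\"older's inequality (equivalently, monotonicity of $L^p$-norms on the probability space $d\mu_f$) for the lower comparison in (iii), and the same direct integrand bound for (iv). Your repackaging of (iii) via $f^{*,p}(y)=\log\|e^{\langle\cdot,y\rangle-f}\|_{L^p(d\mu_f)}$ is a nice way to make the role of the normalization by $V(f)$ transparent, but it is the same argument the paper gives.
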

\begin{proof}
    (i) Note that $V(T_a f) = V(f)$. Using the change of variables $z= x+a$, 
    \begin{equation*}
        \begin{aligned}
            (T_af)^{*,p}(y) &= \frac1p \log \int_{\R^n} e^{p(\langle x,y\rangle - f(x+a))} \frac{e^{-f(x+a)}dx}{V(f)} \\
            &= \frac1p \log\int_{\R^n} e^{p(\langle z-a, y\rangle - f(z))} \frac{e^{-f(z)}dz}{V(f)} \\
            &= \frac1p \log\left( \int_{\R^n}e^{p(\langle z,y\rangle - f(z))} \frac{e^{-f(z)}dz}{V(f)} e^{-p\langle a,y\rangle}\right) \\
            &= f^{*,p}(y) - \langle a, y\rangle. 
        \end{aligned}
    \end{equation*}

    \smallskip
    \noindent
    (ii) For the volume of $A^*f$ we find 
    \begin{equation}
    \label{VolLinearTransformEq}
        V(A^*f) = \int_{\R^n} e^{-f(Ax)}dx = \int_{\R^n} e^{-f(z)} \frac{dz}{|\det\, A|}= \frac{V(f)}{|\det\,A|}. 
    \end{equation}
    Therefore, using the same change of variables $z= Ax, dz= |\det\,A|dx$, by \eqref{VolLinearTransformEq},  
    \begin{equation*}
        \begin{aligned}
            (A^*f)^{*,p}(y)&= \frac1p \log\int_{\R^n} e^{p(\langle x, y\rangle - f(Ax))}\frac{e^{-f(Ax)}dx}{V(A^*f)} \\
            &= \frac1p \log\int_{\R^n} e^{p(\langle A^{-1}z, y\rangle - f(z))} \frac{e^{-f(z)}dz}{|\det \,A|V(A^* f)} \\
            &= \frac1p \log\int_{\R^n} e^{p(\langle z, (A^{-1})^T y\rangle - f(z))}\frac{e^{-f(z)}dz}{V(f)} \\
            &= f^{*,p}((A^{-1})^T y). 
        \end{aligned}
    \end{equation*}

    \smallskip
    \noindent
    (iii) By definition of the Legendre transform, $\langle x,y\rangle - f(x)\leq f^*(y)$ for all $x,y\in \R^n$. Therefore, 
    \begin{equation*}
        f^{*,q}(y)\defeq \frac1q \log\int_{\R^n} e^{q(\langle x,y\rangle - f(x))} \frac{e^{-f(x)}dx}{V(f)}\leq \frac1q \log\int_{\R^n} e^{qf^*(y)}\frac{e^{-f(x)}dx}{V(f)} = \frac1q\log e^{qf^*(y)} = f^*(y).
    \end{equation*}
    In addition, for $p<q$, by H\"older's inequality, 
    \begin{equation*}
        \begin{aligned}
            f^{*,p}(y)&\defeq \frac1p \log\int_{\R^n} e^{p(\langle x,y\rangle - f(x))}\frac{e^{-f(x)}}{V(f)} = \frac1p \log\int_{\R^n} e^{\frac{p}{q}q(\langle x,y\rangle - f(x))}\frac{e^{-f(x)}}{V(f)} \\
            &= \frac1p \log\int_{\R^n}\left( e^{q(\langle x,y\rangle - f(x))}\frac{e^{-f(x)}}{V(f)}\right)^{\frac{p}{q}} \left( \frac{e^{-f(x)}}{V(f)}\right)^{1-\frac{p}{q}}dx \\
            &\leq \frac1p \log\left[ \left( \int_{\R^n}e^{q(\langle x,y\rangle- f(x))}\frac{e^{-f(x)}dx}{V(f)}\right)^{\frac{p}{q}} \left( \int_{\R^n}\frac{e^{-f(x)}}{V(f)}dx\right)^{1-\frac{p}{q}}\right] \\
            &= \frac1q \log\int_{\R^n} e^{q(\langle x,y\rangle - f(x))} \frac{e^{-f(x)}dx}{V(f)} = f^{*,q}(y).
        \end{aligned}
    \end{equation*}

    \smallskip
    \noindent
    (iv) Since $f\leq g$, $e^{-(p+1)g(x)}\leq e^{-(p+1)f(x)}$ hence
    \begin{equation*}
        e^{pg^{*,p}(y)}= \int_{\R^n}e^{p\langle x,y\rangle - (p+1)g(x)}\frac{dx}{V(g)} \leq \int_{\R^n} e^{p\langle x,y\rangle- (p+1)f(x)}\frac{dx}{V(f)} \frac{V(f)}{V(g)} = e^{pf^{*,p}(y)} \frac{V(f)}{V(g)}. 
    \end{equation*}
    Applying $\log$ to both sides proves the claim. 
\end{proof}

\subsubsection{A reverse inequality}
By Lemma \ref{listLemma}(iii), for $p\leq q$,
\begin{equation*}
    f^{*,p}\leq f^{*,q},
\end{equation*}
A reverse inequality holds for convex functions that have their barycenter at the origin. For a function $f:\R^n\to \R\cup\{\infty\}$ with $0<V(f)<\infty$, let
\begin{equation*}
    b(f)\defeq \int_{\R^n}x e^{-f(x)}\frac{dx}{V(f)}
\end{equation*}
denote its barycenter. 
\begin{lemma}
    \label{ReverseIneq}
    Let $0<p<q\leq \infty$. For $\phi\in\Cvx(\R^n)$ with $b(\phi)=0$, 
    \begin{equation*}
        \phi^{*,q}(y)\leq \phi^{*,p}\left( \frac{1+p}{p} y\right) + \frac{n}{p}\log(1+p). 
    \end{equation*}
\end{lemma}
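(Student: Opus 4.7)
The plan is to reduce to the case $q=\infty$ and then split the resulting inequality into two pieces, each isolating exactly one of the two hypotheses on $\phi$.

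\textbf{Reduction to $q=\infty$.} By Lemma \ref{listLemma}(iii), $\phi^{*,q}\leq \phi^*$ for every $q\leq\infty$, so it suffices to prove the case $q=\infty$. Exponentiating by $p$, the target becomes
\begin{equation*}
V(\phi)\, e^{p\phi^*(y)} \;\leq\; (p+1)^n \int_{\R^n} e^{(p+1)(\langle x,y\rangle - \phi(x))}\,dx.
\end{equation*}
I would introduce the auxiliary function $K(x):=\phi(x)+\phi^*(y)-\langle x,y\rangle$, which is convex with $K\geq 0$ and $\inf K=0$ directly from the definition of $\phi^*$. Using the identity $\phi(x)+\phi^*(y)=K(x)+\langle x,y\rangle$ to rewrite both sides in terms of $K$, the target reduces to
\begin{equation*}
\int_{\R^n} e^{-K(x)-\langle x,y\rangle}\,dx \;\leq\; (p+1)^n \int_{\R^n} e^{-(p+1)K(x)}\,dx.
\end{equation*}

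\textbf{Step using the barycenter hypothesis.} I would first establish $\int e^{-K-\langle\cdot,y\rangle}\,dx \leq \int e^{-K}\,dx$ by considering the log-Laplace transform $\Phi(z):=\log\int e^{-K(x)-\langle x,z\rangle}\,dx$, which is convex in $z$. Since $e^{-K(x)-\langle x,y\rangle}=e^{-\phi^*(y)}e^{-\phi(x)}$, the measure $e^{-K-\langle\cdot,y\rangle}\,dx$ is a scalar multiple of $e^{-\phi}\,dx$, so its barycenter equals $b(\phi)=0$. Hence $\nabla\Phi(y)=0$, and convexity makes $y$ a global minimum of $\Phi$, giving $\Phi(y)\leq\Phi(0)$.

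\textbf{Step using convexity of $K$.} I would then show $\int e^{-K}\,dx \leq (p+1)^n\int e^{-(p+1)K}\,dx$. Both integrals are translation-invariant in $K$, so by translating $K$ by a (near-)minimizer one may assume $K(0)=0$. Convexity then gives
\begin{equation*}
K\!\left(\tfrac{x}{p+1}\right) \;=\; K\!\left(\tfrac{1}{p+1}x+\tfrac{p}{p+1}\cdot 0\right) \;\leq\; \tfrac{K(x)}{p+1},
\end{equation*}
so $(p+1)K(x/(p+1))\leq K(x)$ and $e^{-(p+1)K(x/(p+1))}\geq e^{-K(x)}$. Integrating and substituting $u=x/(p+1)$ yields this step.

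Chaining the two steps produces the reduced inequality and hence the lemma.

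\textbf{Main obstacle.} The conceptual step is spotting the clean decomposition above, in which the barycenter hypothesis controls the first factor and the convexity of $K$ with $\inf K=0$ controls the second. Once this decomposition is identified, each piece reduces to a routine convexity/critical-point argument. The only technical subtlety is handling the case where $\inf K$ is not attained, which is resolved by approximation along a sequence $(x_n)$ with $K(x_n)\to 0$: translating $K$ by $-x_n$, applying the same convexity bound to the shifted function, and passing to the limit.
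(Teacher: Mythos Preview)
Your argument is correct and takes a genuinely different route from the paper's. The paper does \emph{not} reduce to $q=\infty$; instead it applies Jensen's inequality to the probability measure $e^{-\phi}/V(\phi)$ (this is where $b(\phi)=0$ enters), then performs the change of variables $v=(1-\lambda)u+\lambda x$ and invokes convexity of $\phi$ to obtain the pointwise bound
\[
e^{p(\langle x,y\rangle-\phi(x))}\;\le\;(1+p)^n\,e^{p\,\phi^{*,p}\!\left(\tfrac{1+p}{p}y\right)}\qquad\text{for every }x,
\]
and finally raises to the $q/p$ power and integrates against $e^{-\phi}/V(\phi)$. Your approach instead factors the problem cleanly into two independent inequalities: the barycenter hypothesis alone gives $V(\phi)\le\int e^{\langle x,y\rangle-\phi(x)}dx$ (via the critical point of the log--Laplace transform), and convexity alone gives the scaling inequality $\int e^{-K}\le(p+1)^n\int e^{-(p+1)K}$. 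This decomposition makes it transparent which hypothesis drives which step and is arguably more modular; the paper's approach has the minor advantage of producing a pointwise-in-$x$ bound, but after your reduction $\phi^{*,q}\le\phi^*$ the end result is the same.

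One small point to tidy: when $\phi^*(y)=\infty$ your auxiliary $K$ is identically $+\infty$, so the substitution breaks down. This is easily patched within your own framework: your Step~2, rewritten for the concave function $L(x)=\langle x,y\rangle-\phi(x)$, reads $(p+1)^n\int e^{(p+1)L}\ge e^{pL(x_0)}\int e^{L}$ for every $x_0$; letting $L(x_0)\to\infty$ forces $\int e^{(p+1)L}=\infty$, i.e.\ $\phi^{*,p}\!\big(\tfrac{1+p}{p}y\big)=\infty$, so the stated inequality holds trivially.
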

\begin{proof}
    Let $\lambda\in (0,1)$ to be chosen later. Fix $x, y\in \R^n$. Since $b(\phi)=0$, we may write $\lambda x = (1-\lambda)b(\phi) + \lambda x$. In addition, $x= \int_{\R^n}xe^{-\phi(u)}\frac{du}{V(\phi)}$, which allows us to write $\lambda x$ as: 
    \begin{equation}
    \label{ReverseIneqEq1}
    \begin{aligned}
        \lambda x &= \int_{\R^n}\Big( (1-\lambda) u + \lambda x\Big)e^{-\phi(u)}\frac{du}{V(\phi)}. 
    \end{aligned}
    \end{equation}
    By \eqref{ReverseIneqEq1} and Jensen's inequality for the probability measure $\frac{e^{-\phi(u)}du}{V(\phi)}$, 
    \begin{equation}
    \label{ReverseIneqEq2}
        \begin{aligned}
            e^{\langle x,y \rangle}&= e^{\langle \lambda x, \frac{y}{\lambda}\rangle} = e^{\int_{\R^n} \langle ((1-\lambda)u + \lambda x), \frac{y}{\lambda}\rangle \frac{e^{-\phi(u)}du}{V(\phi)}} 
            \leq \int_{\R^n} e^{\langle (1-\lambda)u + \lambda x, \frac{y}{\lambda}\rangle} \frac{e^{-\phi(u)} du}{V(\phi)}. 
        \end{aligned}
    \end{equation}
    Apply the change of variables $v= (1-\lambda)u + \lambda x, dv = (1-\lambda)^n du$. By convexity, $\phi(v)\leq (1-\lambda)\phi(u) + \lambda \phi(x)$, hence $\phi(u)\geq \frac{\phi(v)}{1-\lambda} - \frac{\lambda\phi(x)}{1-\lambda}$. Consequently, \eqref{ReverseIneqEq2} becomes: 
    \begin{equation*}
        e^{\langle x,y\rangle}
        \leq \frac{1}{(1-\lambda)^n} \int_{\R^n} e^{\langle v, \frac{y}{\lambda}\rangle} e^{-\frac{\phi(v)}{1-\lambda} + \frac{\lambda\phi(x)}{1-\lambda}} \frac{dv}{V(\phi)} 
        = \frac{e^{\frac{\lambda}{1-\lambda}\phi(x)}}{(1-\lambda)^n} \int_{\R^n} e^{\langle v, \frac{y}{\lambda}\rangle} e^{-\frac{\phi(v)}{1-\lambda}} \frac{dv}{V(\phi)}
    \end{equation*}
    Choosing $\lambda = \frac{p}{1+p}$,
    \begin{equation*}
        e^{\langle x,y\rangle}\leq (p+1)^n e^{p\phi(x)} \int_{\R^n} e^{\langle v, \frac{p+1}{p} y\rangle}e^{-(p+1)\phi(v)}\frac{dv}{V(\phi)} 
        = (1+p)^n e^{p\phi(x)} e^{p \phi^{*,p}(\frac{1+p}{p^2} y)}. 
    \end{equation*}
    Replacing $y$ by $py$, 
    \begin{equation*}
        e^{p\langle x,y\rangle- p\phi(x)}\leq (1+p)^n  e^{p\phi^{*,p}(\frac{p+1}{p}y)}
    \end{equation*}
    Raising both sides to the $q/p$-th power and then integrating against $\frac{e^{-\phi(x)}dx}{V(\phi)}$, 
    \begin{equation*}
        e^{q \phi^{*,q}(y)}= \int_{\R^n}e^{q(\langle x,y\rangle - \phi(x))}\frac{e^{-\phi(x)}dx}{V(\phi)}\leq (1+p)^{\frac{qn}{p}} e^{q \phi^{*,p}(\frac{1+p}{p}y)}
    \end{equation*}
    because the right-hand side is independent of $x$. Taking $\log$ proves the claim. 
\end{proof}

\begin{remark}
    Lemma \ref{ReverseIneq} differs from the corresponding geometric lemma \cite[Lemma 2.6]{BMR23} in that for convex bodies, due to the fact that $\lambda \bm{1}_K^\infty = \bm{1}_K^\infty$ for all $\lambda$, it is possible to show 
    \begin{equation*}
        h_K(y)\leq h_{p,K}(y/\lambda) - \frac{n}{p}\log(1-\lambda), \quad y\in \R^n,
    \end{equation*}
    when $b(K)=0$. In the functional case, we can only show the inequality for $\lambda = p/(p+1)$. 
\end{remark}

For the convex functions in the class $\Cvx(\R^n)$, i.e., those with non-zero and finite volume, the barycenter always exists (Corollary \ref{FinitenessofMoments}). More generally, all the moments of the function are finite \cite[Lemma 2.2.1]{BGV+14}. Therefore, Lemma \ref{ReverseIneq} can be applied on any convex function after translating by the barycenter. 
\begin{corollary}
    \label{ReverseIneq2}
    Let $0<p<q\leq \infty$. For $\phi\in\Cvx(\R^n)$, 
    \begin{equation*}
        \phi^{*,q}(y)\leq \phi^{*,p}\left( \frac{1+p}{p} y\right) - \frac{\langle b(\phi), y\rangle}{p} + \frac{n}{p}\log(1+p). 
    \end{equation*}
\end{corollary}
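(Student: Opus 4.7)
The plan is to reduce Corollary \ref{ReverseIneq2} to Lemma \ref{ReverseIneq} via a translation argument. Since $\phi \in \Cvx(\R^n)$, its barycenter $b(\phi)$ exists and is finite by the moment estimates for convex functions of finite volume (Corollary \ref{FinitenessofMoments}). Let $\psi \defeq T_{b(\phi)}\phi$, so that $\psi(x) = \phi(x+b(\phi))$. Translation preserves convexity, lower semi-continuity, and volume, so $\psi \in \Cvx(\R^n)$.

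First, I would verify that $b(\psi)=0$. Using the change of variables $z = x + b(\phi)$ together with $V(\psi)=V(\phi)$,
\[
b(\psi) = \int_{\R^n} x\, \frac{e^{-\phi(x+b(\phi))}\,dx}{V(\phi)} = \int_{\R^n}(z-b(\phi))\,\frac{e^{-\phi(z)}\,dz}{V(\phi)} = b(\phi)-b(\phi) = 0.
\]
Second, apply Lemma \ref{ReverseIneq} directly to $\psi$ (since it now has barycenter at the origin) to obtain
\[
\psi^{*,q}(y) \leq \psi^{*,p}\!\left(\tfrac{1+p}{p}y\right) + \tfrac{n}{p}\log(1+p).
\]

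Third, convert back to $\phi$ using Lemma \ref{listLemma}(i), namely $(T_a f)^{*,p}(y) = f^{*,p}(y) - \langle a, y\rangle$ (and the analogous well-known identity for the classical Legendre transform in case $q=\infty$). This yields $\psi^{*,q}(y) = \phi^{*,q}(y) - \langle b(\phi), y\rangle$ and $\psi^{*,p}\!\left(\tfrac{1+p}{p}y\right) = \phi^{*,p}\!\left(\tfrac{1+p}{p}y\right) - \langle b(\phi), \tfrac{1+p}{p}y\rangle$. Substituting and simplifying the linear terms via
\[
-\Big\langle b(\phi), \tfrac{1+p}{p}y\Big\rangle + \langle b(\phi), y\rangle = -\tfrac{\langle b(\phi),y\rangle}{p}
\]
produces exactly the claimed inequality.

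There is no real obstacle here — the argument is purely formal, the only subtlety being to correctly track the linear correction produced by the translation under the $L^p$-Legendre transform. The only point worth spelling out explicitly is the computation of $b(\psi)=0$, which justifies the applicability of Lemma \ref{ReverseIneq}.
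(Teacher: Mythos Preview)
Your proof is correct and follows essentially the same route as the paper: translate $\phi$ by its barycenter to reduce to Lemma \ref{ReverseIneq}, then use Lemma \ref{listLemma}(i) to convert the resulting inequality back to $\phi$ and simplify the linear terms. You are in fact slightly more careful than the paper, explicitly justifying the existence of $b(\phi)$ via Corollary \ref{FinitenessofMoments} and verifying $b(\psi)=0$.
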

\begin{proof}
    Let $\psi(y)\defeq (T_{b(\phi)}\phi)(y)= \phi(y+b(\phi))$, so that $b(\psi) = 0$. By Lemma \ref{ReverseIneq} and Lemma \ref{listLemma}(i), 
    \begin{equation*}
    \begin{aligned}
        \phi^{*,q}(y)-\langle b(\phi), y\rangle = \psi^{*,p}(x)
        &\leq \psi^{*,p}\left( \frac{1+p}{p}y\right) + \frac{n}{p}\log(1+p) \\
        &= \phi^{*,p}\left( \frac{1+p}{p} y\right) -\frac{1+p}{p}\langle b(\phi), y\rangle + \frac{n}{p}\log(1+p).
    \end{aligned}
    \end{equation*}
    Adding $\langle b(\phi), y\rangle$ to both sides proves the claim. 
\end{proof}

\begin{corollary}
\label{LpConvergence}
    For $\phi\in\Cvx(\R^n)$ and $y\in \R^n$, $\lim_{p\to\infty}\phi^{*,p}(y)= \phi^{*}(y)$. 
\end{corollary}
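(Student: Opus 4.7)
\bigskip
\noindent\textbf{Proof proposal.} The plan is to sandwich $\phi^{*,p}(y)$ between $\phi^*(y)$ from above and $\phi^*$ evaluated at a nearby point from below, then close the sandwich by invoking lower semi-continuity of the classical Legendre transform. Both ingredients are already in the paper: monotonicity in $p$ from Lemma \ref{listLemma}(iii), and the reverse comparison of Corollary \ref{ReverseIneq2}.

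The upper bound is immediate: Lemma \ref{listLemma}(iii) gives $\phi^{*,p}(y)\leq \phi^*(y)$ for every $p\in(0,\infty)$ and shows that the family $\{\phi^{*,p}(y)\}_{p>0}$ is monotone non-decreasing in $p$. Hence the limit $L(y)\defeq \lim_{p\to\infty}\phi^{*,p}(y)$ exists in $\R\cup\{+\infty\}$ and already satisfies $L(y)\leq \phi^*(y)$.

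For the matching lower bound, the key trick is to apply Corollary \ref{ReverseIneq2} with $q=\infty$ not at $y$, but at the rescaled point $y_p\defeq \tfrac{p}{p+1}y$, chosen so that $\tfrac{p+1}{p}y_p=y$. The conclusion of the corollary then rearranges to
\begin{equation*}
\phi^{*,p}(y)\;\geq\; \phi^*\bigl(\tfrac{p}{p+1}y\bigr) + \frac{\langle b(\phi),y\rangle}{p+1} - \frac{n}{p}\log(p+1).
\end{equation*}
As $p\to\infty$, the last two terms vanish, and since $\phi^*$ is the pointwise supremum of the continuous affine family $\{x\mapsto \langle x,y\rangle - \phi(x)\}_{x\in\R^n}$, it is lower semi-continuous on all of $\R^n$. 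Therefore $\liminf_{p\to\infty}\phi^*\bigl(\tfrac{p}{p+1}y\bigr)\geq \phi^*(y)$, whether $\phi^*(y)$ is finite or equal to $+\infty$. Taking $\liminf$ in the displayed inequality yields $L(y)\geq \phi^*(y)$, and combining with the upper bound gives $L(y)=\phi^*(y)$.

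The only genuine choice in the argument is the rescaling $y_p=\tfrac{p}{p+1}y$; this is what turns Corollary \ref{ReverseIneq2} — which a priori bounds $\phi^*$ in terms of $\phi^{*,p}$ at a perturbed argument — into a clean lower bound on $\phi^{*,p}(y)$ at the correct point. The remainder is a routine limit, with lower semi-continuity of $\phi^*$ handling both the finite and infinite values of $\phi^*(y)$ uniformly.
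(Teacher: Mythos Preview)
Your proof is correct and follows essentially the same approach as the paper: monotonicity and the upper bound from Lemma \ref{listLemma}(iii), the lower bound from Corollary \ref{ReverseIneq2} applied at the rescaled point $\tfrac{p}{p+1}y$, and lower semi-continuity of $\phi^*$ to close the sandwich. The only cosmetic difference is that you justify lower semi-continuity of $\phi^*$ via the supremum-of-affine-functions argument, whereas the paper cites Lemma \ref{lowerSemiContLemma}.
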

\begin{proof}
    By Lemma \ref{listLemma}(iii), $\{\phi^{*,p}(y)\}_{p> 0}$ is monotone increasing and bounded from above by $\phi^{*}(y)$. Therefore, the limit exists and $\lim_{p\to\infty}\phi^{*,p}(y)\leq \phi^*(y)$. On the other hand, by Corollary \ref{ReverseIneq2}, 
    \begin{equation*}
        \phi^{*,p}(y)\geq \phi^*\left(\frac{p}{p+1}y\right) + \frac{\langle b(\phi), y\rangle}{p+1} -\frac{n}{p}\log(p+1)
    \end{equation*}
    Using the lower semi-continuity of $\phi^{*}$ (Lemma \ref{lowerSemiContLemma}), 
    \begin{equation*}
        \lim_{p\to\infty}\phi^{*,p}(y)\geq \liminf_{p\to\infty}\phi^{*}\left( \frac{p}{p+1}y\right)\geq \phi^{*}\left( \lim_{p\to\infty}\frac{p}{p+1}y\right)= \phi^{*}(y), 
    \end{equation*}
    as desired. 
\end{proof}

\subsubsection{Additional convexity properties}
In Lemma \ref{LpLegendreConvexity}, we demonstrated the convexity of the $L^p$-Legendre transform $f^{*,p}(y)$ as a function of $y$. In this brief section, we document convexity properties of the transform as a function of $p$ and $f$. Although these two lemmas are not used elsewhere in the article, we include them for completeness, as they generalize the corresponding geometric results \cite[Lemmas 2.4--2.5]{BMR23}. 
\begin{lemma}
\label{pConvLemma}
    Let $p,q\in (0,\infty)$. For $f:\R^n\to \R\cup\{\infty\}$ and $\lambda\in (0,1)$, 
    \begin{equation*}
        f^{*, (1-\lambda)p + \lambda q}(y) \leq \frac{(1-\lambda)p}{(1-\lambda)p + \lambda q} f^{*,p}(y) + \frac{\lambda q}{(1-\lambda)p + \lambda q} f^{*,q}(y). 
    \end{equation*}
\end{lemma}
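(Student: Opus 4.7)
The plan is to recognize that the correct convexity statement is not for $p \mapsto f^{*,p}(y)$ directly but for $p \mapsto p\,f^{*,p}(y)$, which is essentially a log-moment generating function and is convex in $p$ by H\"older's inequality. The factors $\tfrac{(1-\lambda)p}{(1-\lambda)p + \lambda q}$ and $\tfrac{\lambda q}{(1-\lambda)p + \lambda q}$ on the right-hand side are precisely what one obtains after multiplying by $p/p$ and $q/q$ and then dividing the whole inequality by $r := (1-\lambda)p + \lambda q$, which is the reason the naive weights $(1-\lambda), \lambda$ do not appear.

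Concretely, I would first rewrite the definition by absorbing the extra weight into the exponent:
\begin{equation*}
    p\, f^{*,p}(y) \;=\; \log \int_{\R^n} e^{H(x,p)}\,\frac{dx}{V(f)},
    \qquad H(x,p) \;\defeq\; p\langle x,y\rangle - (p+1)f(x).
\end{equation*}
The key observation is that $p \mapsto H(x,p)$ is \emph{affine}, so $H(x,r) = (1-\lambda)H(x,p) + \lambda H(x,q)$. Applying H\"older's inequality with the conjugate pair $\tfrac{1}{1-\lambda}, \tfrac{1}{\lambda}$ to the product $e^{H(x,r)} = (e^{H(x,p)})^{1-\lambda}(e^{H(x,q)})^{\lambda}$ and taking logarithms yields
\begin{equation*}
    r\, f^{*,r}(y) \;\leq\; (1-\lambda)\, p\, f^{*,p}(y) + \lambda\, q\, f^{*,q}(y).
\end{equation*}

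Dividing by $r = (1-\lambda)p + \lambda q$ produces the stated inequality. There is no real obstacle here: the only conceptual point is identifying $p f^{*,p}(y)$ as the log of an integral whose integrand depends log-affinely on $p$, which makes H\"older's inequality directly applicable. This parallels the proof of Lemma \ref{LpLegendreConvexity} but in the $p$-variable rather than the $y$-variable, and generalizes the corresponding convex-body statement \cite[Lemma 2.4]{BMR23} by the same mechanism, since the geometric case arises by replacing $f$ with $\bm{1}_K^\infty$ and observing that the normalization $V(f)$ drops out.
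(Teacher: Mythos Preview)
Your proof is correct and is essentially the same as the paper's: both apply H\"older's inequality with exponents $\tfrac{1}{1-\lambda},\tfrac{1}{\lambda}$ to the splitting $e^{((1-\lambda)p+\lambda q)(\langle x,y\rangle - f(x))}\tfrac{e^{-f(x)}}{V(f)} = \big(e^{p(\langle x,y\rangle - f(x))}\tfrac{e^{-f(x)}}{V(f)}\big)^{1-\lambda}\big(e^{q(\langle x,y\rangle - f(x))}\tfrac{e^{-f(x)}}{V(f)}\big)^{\lambda}$, then take logarithms and divide by $r=(1-\lambda)p+\lambda q$. Your packaging via $H(x,p)=p\langle x,y\rangle-(p+1)f(x)$ and the observation that $p\mapsto p\,f^{*,p}(y)$ is the convex quantity is a nice conceptual gloss, but the underlying computation is identical.
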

\begin{proof}
    By H\"older's inequality, 
    \begin{equation*}
        \begin{aligned}
            &f^{*, (1-\lambda)p + \lambda q}(y)\\
            &= \frac{1}{(1-\lambda)p + \lambda q}\log\int_{\R^n}e^{((1-\lambda)p + \lambda q)(\langle x, y\rangle - f(x))} \frac{e^{-f(x)}dx}{V(f)} \\
            &= \frac{1}{(1-\lambda)p + \lambda q} \log\int_{\R^n}\left( e^{p(\langle x,y\rangle - f(x))}\frac{e^{-f(x)}}{V(f)}\right)^{1-\lambda} \left( e^{q(\langle x, y\rangle - f(x))}\frac{e^{-f(x)}}{V(f)}\right)^\lambda dx \\
            &\leq \frac{1}{(1-\lambda)p + \lambda q} \log\left[\left(\int_{\R^n} e^{p(\langle x,y\rangle - f(x))}\frac{e^{-f(x)}dx}{V(f)}\right)^{1-\lambda} \left(\int_{\R^n} e^{q(\langle x, y\rangle - f(x))}\frac{e^{-f(x)}dx}{V(f)}\right)^\lambda \right] \\
            &= \frac{(1-\lambda)p}{(1-\lambda)p + \lambda q} \frac1p \log\int_{\R^n}e^{p(\langle x,y\rangle - f(x))} \frac{e^{-f(x)}}{V(f)}dx + \frac{\lambda q}{(1-\lambda)p + \lambda q}\frac1q \log\int_{\R^n}e^{q(\langle x,y\rangle - f(x))}\frac{e^{-f(x)}dx}{V(f)} \\
            &= \frac{(1-\lambda)p}{(1-\lambda)p + \lambda q} f^{*,p}(y) + \frac{\lambda q}{(1-\lambda)p + \lambda q} f^{*,q}(y). 
        \end{aligned}
    \end{equation*}
\end{proof}

The infimum convolution
\begin{equation*}
    (f\,\square\, g)(z)\defeq \inf\{f(x) + g(z-x): x\in\R^n\} 
\end{equation*}
generalizes the Minkowski sum of convex bodies. For example, for the convex indicator function, 
\begin{equation*}
    (\bm{1}_K^\infty\,\square\,\bm{1}_L^\infty)(z) = \inf\{\bm{1}_K^\infty(x)+ \bm{1}_Y^\infty(z-x): x\in \R^n\} =\begin{dcases}
        0, & \text{ if there exists } x\in K \text{ with } z-x\in L, \\
        \infty, & \text{ otherwise},
    \end{dcases}
\end{equation*}
or equivalently, $(\bm{1}_K^\infty\,\square\,\bm{1}_L^\infty)(z) = 0$ if and only if $z\in K+ L$, which implies $(\bm{1}_K^\infty\,\square\,\bm{1}_L^\infty)(z)= \bm{1}^\infty_{K+L}$. 
The Pr\'ekopa--Leindler \cite[Theorem 3]{Prekopa73} gives
\begin{equation*}
    V(\big(((1-\lambda)\cdot f)\,\square\, (\lambda\cdot g)\big)) \geq V(f)^{1-\lambda} V(g)^\lambda,
\end{equation*}
so if both $f$ and $g$ have non-zero volume, so does $((1-\lambda)\cdot f)\,\square\, (\lambda\cdot g)$. 

To extend the rescaling of a convex body to functions, define: 
\begin{equation*}
    (\lambda\cdot f)(x)\defeq \lambda f\left(\frac{x}{\lambda}\right), \quad x\in \R^n. 
\end{equation*}
This preserves convexity and corresponds exactly to rescaling for convex bodies since $(\lambda\cdot \bm{1}_K^\infty)(x)= \lambda\bm{1}_K^\infty(x/\lambda)$ equals 0 if and only if $x/\lambda\in K$ or $x\in \lambda K$. That is, $(\lambda\cdot\bm{1}_K^\infty)= \bm{1}_{\lambda K}^\infty$. 
\begin{lemma}
\label{fConvLemma}
    Let $p\in (0,\infty)$. For $f,g:\R^n\to \R\cup\{\infty\}$ and $\lambda\in (0,1)$, 
    \begin{equation*}
        \big(((1-\lambda)\cdot f)\,\square\, (\lambda\cdot g)\big)^{*,p} \geq (1-\lambda) f^{*,p} + \lambda g^{*,p} -\frac1p\log\left(\frac{V(\big(((1-\lambda)\cdot f)\,\square\, (\lambda\cdot g)\big))}{V(f)^{1-\lambda}V(g)^\lambda}\right). 
    \end{equation*}
\end{lemma}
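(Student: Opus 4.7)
The plan is to recognize Lemma \ref{fConvLemma} as a direct consequence of the Prékopa--Leindler inequality applied to the weighted integrands defining $f^{*,p}$ and $g^{*,p}$. Set $h\defeq ((1-\lambda)\cdot f)\,\square\,(\lambda\cdot g)$. After the substitutions $a = x_1/(1-\lambda)$ and $b = x_2/\lambda$, the infimum convolution takes the form
\begin{equation*}
    h(z)=\inf\bigl\{(1-\lambda)f(a)+\lambda g(b):(1-\lambda)a+\lambda b=z\bigr\},
\end{equation*}
so for any $a,b\in\R^n$ we have the pointwise bound $h((1-\lambda)a+\lambda b)\leq (1-\lambda)f(a)+\lambda g(b)$.

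Next, I would fix $y\in\R^n$ and introduce the nonnegative functions
\begin{equation*}
    F(a)\defeq e^{p\langle a,y\rangle-(p+1)f(a)},\qquad G(b)\defeq e^{p\langle b,y\rangle-(p+1)g(b)},\qquad H(z)\defeq e^{p\langle z,y\rangle-(p+1)h(z)}.
\end{equation*}
The key observation is that the linear term satisfies $p\langle (1-\lambda)a+\lambda b, y\rangle=(1-\lambda)p\langle a,y\rangle+\lambda p\langle b,y\rangle$, while the pointwise bound on $h$ gives $-(p+1)h((1-\lambda)a+\lambda b)\geq -(1-\lambda)(p+1)f(a)-\lambda(p+1)g(b)$. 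Combining these yields precisely the Prékopa--Leindler hypothesis
\begin{equation*}
    H\bigl((1-\lambda)a+\lambda b\bigr)\geq F(a)^{1-\lambda}G(b)^{\lambda},\qquad a,b\in\R^n.
\end{equation*}

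Prékopa--Leindler then gives $\int H\geq \bigl(\int F\bigr)^{1-\lambda}\bigl(\int G\bigr)^{\lambda}$. Using the definitions of $f^{*,p}, g^{*,p}, h^{*,p}$, this reads
\begin{equation*}
    V(h)\,e^{p\,h^{*,p}(y)}\geq\bigl(V(f)\,e^{p\,f^{*,p}(y)}\bigr)^{1-\lambda}\bigl(V(g)\,e^{p\,g^{*,p}(y)}\bigr)^{\lambda}.
\end{equation*}
Taking logarithms, dividing by $p$, and rearranging yields the claimed inequality. No serious obstacle arises: the argument is a clean packaging of Prékopa--Leindler, and the only care required is the bookkeeping for the scalar multiplication $\lambda\cdot f$ and the resulting form of the infimum convolution—the same bookkeeping that, in the limit $p\to\infty$, recovers the classical identity $\bigl(((1-\lambda)\cdot f)\,\square\,(\lambda\cdot g)\bigr)^{*}=(1-\lambda)f^{*}+\lambda g^{*}$ together with the Prékopa--Leindler volume bound $V(h)\geq V(f)^{1-\lambda}V(g)^{\lambda}$ that makes the logarithmic correction nonpositive in the convex case.
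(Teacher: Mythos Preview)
Your proof is correct and follows essentially the same approach as the paper: both establish the pointwise bound $h((1-\lambda)a+\lambda b)\le(1-\lambda)f(a)+\lambda g(b)$, verify the Pr\'ekopa--Leindler hypothesis for the integrands $e^{p\langle\,\cdot\,,y\rangle-(p+1)f}$, $e^{p\langle\,\cdot\,,y\rangle-(p+1)g}$, $e^{p\langle\,\cdot\,,y\rangle-(p+1)h}$, and then take logarithms and rearrange. The only difference is cosmetic---you package the integrands as named functions $F,G,H$ while the paper writes the inequalities out directly.
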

\begin{proof}
    For $x,z\in \R^n$
    \begin{equation}
    \label{infConvEq1}
        \begin{aligned}
             &\big(((1-\lambda)\cdot f)\,\square\, (\lambda\cdot g)\big)((1-\lambda) x + \lambda z))\\
             &\defeq \inf_{w\in \R^n}\left(((1-\lambda)\cdot f)(w) + (\lambda\cdot g)((1-\lambda)x + \lambda z - w) \right) \\
             &\leq ((1-\lambda)\cdot f)((1-\lambda)x) + (\lambda\cdot g)((1-\lambda) x + \lambda z - (1-\lambda)x) \\
             &= (1-\lambda)f(x) + \lambda g(z). 
        \end{aligned}
    \end{equation}
    Fix $y\in\R^n$. By \eqref{infConvEq1}, for all $x,z\in \R^n$, 
    \begin{equation*}
        \begin{aligned}
            &p\langle (1-\lambda)x + \lambda z, y\rangle - (p+1)\big(((1-\lambda)\cdot f)\,\square\, (\lambda\cdot g)\big)((1-\lambda) x + \lambda z)) \\
            &= (1-\lambda) p\langle x, y\rangle+ \lambda p \langle z,y\rangle - (p+1)\big(((1-\lambda)\cdot f)\,\square\, (\lambda\cdot g)\big)((1-\lambda) x + \lambda z))\\
            &\geq p(1-\lambda)\langle x,y\rangle + p\lambda \langle z, y\rangle - (p+1)(1-\lambda) f(x) - (p+1) \lambda g(z) \\
            &= (1-\lambda)(p\langle x, y\rangle - (p+1)f(x)) + \lambda(p\langle z, y\rangle - (p+1) g(z))
        \end{aligned}
    \end{equation*}
    Consequently, by the Pr\'ekopa--Leindler inequality \cite[Theorem 3]{Prekopa73}, 
    \begin{equation*}
    \begin{aligned}
        &\int_{\R^n} e^{p\left(\langle x, y\rangle - \big(((1-\lambda)\cdot f)\,\square\, (\lambda\cdot g)\big)(x)\right)} e^{-\big(((1-\lambda)\cdot f)\,\square\, (\lambda\cdot g)\big)} dx \\
        &\geq \left( \int_{\R^n}e^{p(\langle x,y\rangle - f(x))} e^{-f(x)}dx\right)^{1-\lambda} \left(\int_{\R^n}e^{p(\langle z, y\rangle - g(x))} e^{-g(x)}dx\right)^\lambda. 
    \end{aligned}
    \end{equation*}
    Dividing both sides by $V(\big(((1-\lambda)\cdot f)\,\square\, (\lambda\cdot g)\big))$ and applying $\log$ proves the claim. 
\end{proof}

\subsubsection{Tensoriality}
For functions $f:\R^n\to \R\cup\{\infty\}$ and $g:\R^m\to \R\cup\{\infty\}$, denote by 
\begin{equation*}
    (f\otimes g)(x, y)\defeq f(x) + g(y), \quad (x, y)\in\R^n\times \R^m, 
\end{equation*}
generalizing the Cartesian product of convex bodies: $\bm{1}^\infty_K \otimes \bm{1}_L^\infty = \bm{1}_{K\times L}^\infty$. 
Let, also, 
\begin{equation*}
    f^{\otimes m}(x_1, \ldots, x_m)\defeq f(x_1) + \ldots + f(x_m), \quad x_1, \ldots, x_m\in \R^n.  
\end{equation*}
It follows from Tonelli's Theorem \cite[\S 2.37]{Folland99} that 
\begin{equation}
\label{tensorVolume}
    V(f\otimes g) = \int_{\R^n\times \R^m}e^{-f(x)-g(y)}dxdy= \int_{\R^n}e^{-f(x)}dx \int_{\R^m}e^{-g(y)}dy= V(f) V(g). 
\end{equation}

\begin{lemma}
\label{LpLegendreTensor}
    Let $p\in (0,\infty)$. For $f:\R^n\to \R\cup\{\infty\}$ and $g: \R^m\to \R\cup\{\infty\}$ with $0< V(f), V(g)<\infty$,
    \begin{equation*}
        (f\otimes g)^{*,p}(x,y) = f^{*,p}(x) + g^{*,p}(y), \quad (x, y)\in\R^n\times \R^m. 
    \end{equation*}
\end{lemma}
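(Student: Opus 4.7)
The plan is to unwind the definition of the $L^p$-Legendre transform applied to $f\otimes g$ and observe that the integrand factorizes as a product of a function of the first $n$ coordinates and a function of the last $m$ coordinates, so Tonelli's theorem immediately splits the integral.

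Concretely, I would start by writing
\begin{equation*}
    (f\otimes g)^{*,p}(x,y) = \frac{1}{p}\log\int_{\R^n\times\R^m} e^{p(\langle (u,v),(x,y)\rangle - (f(u)+g(v)))}\,\frac{e^{-(f(u)+g(v))}\,du\,dv}{V(f\otimes g)}.
\end{equation*}
Using the inner product decomposition $\langle (u,v),(x,y)\rangle = \langle u,x\rangle + \langle v,y\rangle$, the fact that $f(u)+g(v)$ separates in the two groups of variables, and the identity $V(f\otimes g) = V(f)\,V(g)$ from \eqref{tensorVolume}, the integrand becomes the product
\begin{equation*}
    e^{p(\langle u,x\rangle - f(u))}\,\frac{e^{-f(u)}}{V(f)}\cdot e^{p(\langle v,y\rangle - g(v))}\,\frac{e^{-g(v)}}{V(g)}.
\end{equation*}

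Next, since the integrand is non-negative and measurable, Tonelli's theorem \cite[\S 2.37]{Folland99} applies and the double integral factors as the product of the two one-variable integrals. Taking logarithms and dividing by $p$ turns this product into the sum $f^{*,p}(x) + g^{*,p}(y)$, which is the desired identity.

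There is no real obstacle here: the only ingredients are Tonelli's theorem and the multiplicativity of $V$ under tensor products, both of which are already recorded. The main thing to be careful about is that Tonelli is used rather than Fubini, so no integrability hypothesis beyond non-negativity is needed; the conclusion then holds as an identity in $\R\cup\{\infty\}$, and in particular also captures the case where one of $f^{*,p}(x)$ or $g^{*,p}(y)$ is infinite.
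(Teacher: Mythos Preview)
Your proposal is correct and follows essentially the same approach as the paper: unwind the definition, use $V(f\otimes g)=V(f)V(g)$ and the separation of the inner product to factor the integrand, apply Tonelli to split the integral, and take $\frac{1}{p}\log$.
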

\begin{proof}
    Using the definition of $L^p$-Legendre transform and \eqref{tensorVolume}, 
    \begin{equation*}
        \begin{aligned}
            (f\otimes g)^{*, p}(x,y) &= \frac1p \log\int_{\R^n\times \R^m}e^{p(\langle (x, y), (u,v)\rangle - (f\otimes g)(u, v))}\frac{e^{-(f\otimes g)(u,v)} du dv}{V(f\otimes g)}\\
            &= \frac1p \log\int_{\R^n\times \R^m}e^{p(\langle x, u\rangle + \langle y,v\rangle - f(u)- g(v))}\frac{e^{-f(u) - g(v)} du dv}{V(f)V(g)} \\
            &= \frac1p \log \left(\int_{\R^n} e^{p(\langle x,u\rangle-f(u))} \frac{e^{-f(u)}du}{V(f)} \int_{\R^m}e^{p(\langle y, v\rangle-g(v))}\frac{e^{-g(v)}dv}{V(g)}\right) \\
            &= \frac1p \log \left(\int_{\R^n} e^{p(\langle x,u\rangle-f(u))} \frac{e^{-f(u)}du}{V(f)}\right) +\frac1p\log\left(\int_{\R^m}e^{p(\langle y, v\rangle-g(v))}\frac{e^{-g(v)}dv}{V(g)}\right) \\
            &= f^{*,p}(x) + g^{*,p}(y).
        \end{aligned}
    \end{equation*}
\end{proof}

\subsection{Examples}
\label{LpLegendreExamples}
We now compute the $L^p$-Legendre transform for a few basic examples, starting with the $L^1$ norm. For $q\in (0,\infty]$ and $x\in \R^n$, let
\begin{equation*}
    \|x\|_q\defeq \begin{dcases}(|x_1|^q + \ldots + |x_n|^q)^{1/q},  &q\in (0, \infty),\\
    \max\{|x_1|, \ldots, |x_n|\}, & q=\infty, 
    \end{dcases}
\end{equation*}
Ideally, we would like to compute the $L^p$-Legendre transforms of each $L^q$-norm. Although it is possible to compute $V(\|x\|_q)$ for all $q$, at this stage we can only compute the $L^p$-Legendre transforms of the $L^1$ norm. This is because the $L^1$-norm enjoys a product structure the the other norms lack.

\subsubsection{\texorpdfstring{$L^1$-norm}{L1-norm}}
Denote by 
\begin{equation*}
    \phi_1(x)\defeq \|x\|_1 = |x_1| + \ldots + |x_n|, \quad x\in \R^n. 
\end{equation*}
Computing the volume reduces to a 1-dimensional integral: 
\begin{equation}
\label{L1Volume}
    V(\phi_1)= \int_{\R^n}e^{-|x_1|-\ldots-|x_n|}dx = \left( \int_{\R}e^{-|x|}dx\right)^n= \left( 2\int_0^\infty e^{-x}dx\right)^n = 2^n. 
\end{equation}
\begin{lemma}
\label{LpLegendreL1norm}
    For $p\in (0,\infty)$, 
    \begin{equation*}
        \phi^{*,p}(y)= 
        \begin{dcases}
        -\frac{n}{p}\log(p+1)- \frac1p \sum_{i=1}^n \log\left[1- \left(\frac{p}{p+1} y_i\right)^2\right], & \|y\|_\infty < \frac{p+1}{p}\\
        \infty, & \|y\|_{\infty}\geq \frac{p+1}{p}.  
        \end{dcases}
    \end{equation*}
\end{lemma}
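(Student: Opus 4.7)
The plan is to reduce the $n$-dimensional computation to a $1$-dimensional one via tensoriality and then evaluate a simple integral. Since $\phi_1(x) = |x_1| + \cdots + |x_n|$, we can write $\phi_1 = (|\,\cdot\,|)^{\otimes n}$ on $\mathbb{R}$, so Lemma \ref{LpLegendreTensor} (applied iteratively) gives
\[
\phi_1^{*,p}(y) \;=\; \sum_{i=1}^n (|\,\cdot\,|)^{*,p}(y_i),
\]
reducing the problem to computing the $L^p$-Legendre transform of $|x|$ on $\mathbb{R}$.

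For the one-dimensional case, I would start from the defining integral and use $V(|\,\cdot\,|)=2$ (the one-dimensional version of the computation \eqref{L1Volume}) to write
\[
(|\,\cdot\,|)^{*,p}(t) = \frac{1}{p}\log \int_{\mathbb{R}} e^{p t x - (p+1)|x|}\,\frac{dx}{2}.
\]
Split the integral at the origin into $\int_0^\infty e^{(pt - (p+1))x}\,dx$ and $\int_{-\infty}^0 e^{(pt + (p+1))x}\,dx$. Both converge precisely when $|pt|<p+1$, i.e.\ when $|t|< (p+1)/p$; otherwise at least one piece diverges and $(|\,\cdot\,|)^{*,p}(t)=\infty$. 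In the convergent regime, direct evaluation gives
\[
\int_{\mathbb{R}} e^{ptx - (p+1)|x|}\,dx \;=\; \frac{1}{(p+1)-pt} + \frac{1}{(p+1)+pt} \;=\; \frac{2(p+1)}{(p+1)^2 - p^2 t^2}.
\]
Taking logarithm and dividing by $p$ yields
\[
(|\,\cdot\,|)^{*,p}(t) \;=\; -\frac{1}{p}\log(p+1) \;-\; \frac{1}{p}\log\!\left(1 - \Big(\tfrac{p}{p+1}\,t\Big)^2\right), \quad |t|<\tfrac{p+1}{p}.
\]

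Summing this formula over the coordinates $y_1,\dots,y_n$ through the tensorial identity then gives the claimed expression, noting that $\phi_1^{*,p}(y)=\infty$ whenever some $|y_i|\geq (p+1)/p$, equivalently $\|y\|_\infty \geq (p+1)/p$. There is no real obstacle here; the only point requiring care is correctly identifying the convergence threshold $|y_i|<(p+1)/p$ coming from the $(p+1)$-factor in the exponent of $|x|$, which is exactly what pins down the effective domain of the transform.
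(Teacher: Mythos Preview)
Your proof is correct and follows essentially the same approach as the paper: reduce to the one-dimensional case via Lemma~\ref{LpLegendreTensor}, split the integral at the origin, identify the convergence threshold $|t|<(p+1)/p$, and combine the two pieces to obtain the stated formula.
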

\begin{proof}
    By Lemma \ref{LpLegendreTensor} it is enough to prove the claim for $n=1$. In this case, by \eqref{L1Volume}, 
    \begin{equation}
    \label{L1eq1}
        \phi_1^{*,p}(y)= \frac1p \log\int_{\R}e^{pxy - (p+1)|x|}dx = \frac1p\log\left[ \int_{-\infty}^0 e^{pxy+ (p+1)x}\frac{dx}{2}+ \int_0^\infty e^{pxy - (p+1)x}\frac{dx}{2}\right]. 
    \end{equation}
    For the first integral, 
    \begin{equation}
    \label{L1eq2}
        \int_{-\infty}^0 e^{x(py + (p+1))}dx =\begin{dcases}
           \frac{1}{py + p + 1} & y > -\frac{p+1}{p}, \\
           \infty  & y\leq -\frac{p+1}{p}. 
        \end{dcases}
    \end{equation}
    For the second integral, 
    \begin{equation}
    \label{L1eq3}
        \int_0^\infty e^{x(py- (p+1))}\frac{dx}{2}= \begin{dcases}
           \frac{1}{p+1 - py} & y < \frac{p+1}{p}, \\
           \infty  & y\geq \frac{p+1}{p}. 
        \end{dcases}
    \end{equation}
    By \eqref{L1eq1}--\eqref{L1eq3}, $\phi^{*,p}(y)= \infty$ for $|y|\geq \frac{p+1}{p}$. For $|y|<\frac{p+1}{p}$, 
    \begin{equation*}
        \phi^{*,p}(y)=\frac1p\log\left[\frac{1}{2(p+1 + py)}+\frac{1}{2(p+1-py)}\right]= \frac1p\log\frac{p+1}{(p+1)^2 - (py)^2}
    \end{equation*}
    Dividing both numerator and denominator by $(p+1)^2$, $\phi^{*,p}(y) = \frac1p \log\frac{(p+1)^{-1}}{1 - (\frac{p}{p+1} y)^2}= - \frac1p\log(p+1) - \frac1p\log[1- (\frac{p}{p+1}y)^2]$. 
\end{proof}

\begin{remark}
    Sending $p\to \infty$ in Lemma \ref{LpLegendreL1norm}, $\lim_{p\to\infty}\frac{\log(1+p)}{p}= 0$, and by L'H\^opital, for $\|y\|_\infty\leq 1$, 
    \begin{equation*}
        \lim_{p\to\infty}\frac1p \log\left[1- \left(\frac{p}{p+1}y_i\right)^2\right]= \lim_{p\to\infty}\frac{-2\frac{p}{p+1}y_i \frac{1}{(1+p)^2}y_i}{1- (\frac{p}{p+1} y_i)^2}= 0.  
    \end{equation*}
    That is, $\phi_1^{*}= \bm{1}_{[-1,1]^n}^\infty$ as we already know. For example, using duality of the Legendre transform, the support function of the cube: $h_{[-1,1]^n}(y)= (\bm{1}_K^\infty)^*(y)= \phi_1(y)= \|y\|_1$, equals the $L^1$-norm. 
\end{remark}

\subsubsection{Convex quadratic}
Denote by 
\begin{equation*}
    q_2(x)\defeq |x|^2/2, \quad x\in \R^n. 
\end{equation*}
\begin{lemma}
\label{ConvexQuadraticLpLegendre}
    For $p\in (0,\infty)$, 
    \begin{equation*}
        q_2^{*,p}(y)= \frac{p}{p+1}\frac{|y|^2}{2} - \frac{n}{2p}\log(1+ p), \quad y\in \R^n. 
    \end{equation*}
\end{lemma}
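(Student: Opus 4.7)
The plan is to apply the definition of the $L^p$-Legendre transform directly and reduce the computation to a Gaussian integral by completing the square. By \eqref{|x|^2/2Vol}, we already know $V(q_2) = (2\pi)^{n/2}$, so
\begin{equation*}
    q_2^{*,p}(y) = \frac1p \log\left[ \frac{1}{(2\pi)^{n/2}}\int_{\R^n} e^{p\langle x,y\rangle - (p+1)|x|^2/2}\,dx\right].
\end{equation*}

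Next, I would rewrite the exponent in the integrand as
\begin{equation*}
    p\langle x,y\rangle - \tfrac{p+1}{2}|x|^2 = -\tfrac{p+1}{2}\Bigl|x - \tfrac{p}{p+1}y\Bigr|^2 + \tfrac{p^2}{2(p+1)}|y|^2,
\end{equation*}
which is a standard completion of the square. The quadratic-in-$x$ piece integrates to a translated Gaussian:
\begin{equation*}
    \int_{\R^n} e^{-\tfrac{p+1}{2}|x - \frac{p}{p+1}y|^2}\,dx = \left( \frac{2\pi}{p+1}\right)^{n/2}.
\end{equation*}

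Substituting back, the bracket inside the logarithm becomes
\begin{equation*}
    \frac{1}{(2\pi)^{n/2}}\cdot \left(\frac{2\pi}{p+1}\right)^{n/2}\cdot e^{\tfrac{p^2}{2(p+1)}|y|^2} = (p+1)^{-n/2}\, e^{\tfrac{p^2}{2(p+1)}|y|^2},
\end{equation*}
and taking $\tfrac1p\log$ yields $\tfrac{p}{p+1}\tfrac{|y|^2}{2} - \tfrac{n}{2p}\log(p+1)$, as claimed. No serious obstacle arises here; the only thing to watch is the bookkeeping of the $(2\pi)^{n/2}$ factors from $V(q_2)$ versus the Gaussian normalization. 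As a sanity check, sending $p\to\infty$ recovers $q_2^{*}(y) = |y|^2/2$, consistent with Corollary \ref{LpConvergence}.
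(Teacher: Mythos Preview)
Your proof is correct and follows essentially the same approach as the paper: both reduce to a Gaussian integral via completing the square. The paper packages the completion-of-the-square step into a general claim (Claim~\ref{GaussClaim}) and then specializes to $A=(p+1)I_n$, $b=py$, whereas you complete the square directly inline for this particular integrand; the arithmetic is identical.
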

For the proof of Lemma \ref{ConvexQuadraticLpLegendre} we need the following claim: 
\begin{claim}
\label{GaussClaim}
For $A\in GL(n, \R)$ symmetric and positive definite, and $b\in \R^n$, 
\begin{equation*}
    \int_{\R^n} e^{-\frac12 \langle Ax, x\rangle + \langle b,x\rangle}\dif x= \sqrt{\frac{(2\pi)^n}{\det A}} \,e^{\frac12\langle A^{-1}b, b\rangle}. 
\end{equation*}
\end{claim}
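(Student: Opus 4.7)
The plan is to reduce the integral to a product of one-dimensional Gaussian integrals by two standard maneuvers: completing the square to eliminate the linear term, and diagonalizing $A$ to decouple the quadratic form.

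First, I would complete the square in the exponent. Since $A$ is symmetric and invertible, the identity
\begin{equation*}
    -\tfrac12\langle Ax,x\rangle + \langle b,x\rangle = -\tfrac12\langle A(x - A^{-1}b), x - A^{-1}b\rangle + \tfrac12\langle A^{-1}b, b\rangle
\end{equation*}
holds, and translating by $z = x - A^{-1}b$ (with unit Jacobian) pulls the scalar factor $e^{\frac12\langle A^{-1}b, b\rangle}$ outside. This reduces the problem to evaluating $\int_{\R^n} e^{-\frac12\langle Az, z\rangle}\,dz$.

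Next, since $A$ is symmetric and positive definite, the spectral theorem yields an orthogonal $Q$ and a diagonal matrix $D = \mathrm{diag}(\lambda_1, \ldots, \lambda_n)$ with $\lambda_i > 0$ such that $A = Q^T D Q$. Changing variables $w = Qz$ preserves Lebesgue measure (since $|\det Q| = 1$) and transforms $\langle Az, z\rangle$ into $\sum_i \lambda_i w_i^2$. By Tonelli the integral factors as $\prod_{i=1}^n \int_{\R} e^{-\lambda_i w_i^2/2}\,dw_i = \prod_{i=1}^n \sqrt{2\pi/\lambda_i} = \sqrt{(2\pi)^n/\det A}$, where I use the standard one-dimensional Gaussian $\int_{\R} e^{-t^2/2}dt = \sqrt{2\pi}$ combined with $\det A = \prod \lambda_i$.

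There is no genuine obstacle here; the only thing to be careful about is that the translation and orthogonal change of variables are valid because $A$ is invertible and positive definite (so all $\lambda_i > 0$ and the one-dimensional Gaussian integrals converge). Combining the prefactor from completing the square with the evaluated Gaussian integral produces the claimed formula.
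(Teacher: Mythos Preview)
Your proof is correct and follows essentially the same approach as the paper: complete the square to extract the factor $e^{\frac12\langle A^{-1}b,b\rangle}$, then use a linear change of variables adapted to $A$ to reduce to the standard Gaussian. The only cosmetic difference is that the paper writes $A=B^2$ for a positive-definite square root $B$ and substitutes $z=By$ to land directly on $\int_{\R^n}e^{-|z|^2/2}\,dz$, whereas you diagonalize $A=Q^TDQ$ and factor into one-dimensional integrals; both routes produce $\sqrt{(2\pi)^n/\det A}$ immediately.
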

\begin{proof}
By the symmetry of $A$, $\langle Ax, y\rangle= \langle x, Ay\rangle$ for all $x,y\in \R^n$. Now, for $x= y+ A^{-1}b$, 
\begin{equation*}
    \begin{aligned}
    -\frac12\langle Ax, x\rangle+ \langle b,x\rangle&= -\frac12\langle Ay+b, y+ A^{-1}b\rangle+ \langle b, y+ A^{-1}b\rangle \\
    &= -\frac12\langle Ay,y\rangle-\frac12\langle Ay,  A^{-1}b\rangle- \frac12\langle b,y\rangle-\frac12\langle b, A^{-1}b\rangle+ \langle b,y\rangle+ \langle b, A^{-1}b\rangle\\
    &= -\frac12\langle Ay, y\rangle -\frac12\langle y,b\rangle-\frac12\langle y,b\rangle-\frac12\langle b, A^{-1}b\rangle+ \langle b,y\rangle+ \langle b, A^{-1}b\rangle \\
    &= -\frac12\langle Ay, y\rangle +\frac12\langle b, A^{-1}b\rangle. 
    \end{aligned}
\end{equation*}
Therefore, changing variables $y= x- A^{-1}b, dy= dx$, 
\begin{equation}
\label{gaussAeq1}
    \int_{\R^n}e^{-\frac12 \langle Ax,x\rangle+ \langle b,x\rangle}\dif x= e^{\frac12\langle b, A^{-1}b\rangle}\int_{\R^n}e^{-\frac12\langle Ay,y\rangle}\dif y. 
\end{equation}
To compute the remaining integral, note that since $A$ is symmetric and positive definite, there exists a symmetric and positive definite matrix $B\in GL(n,\R)$ such that $A= B^2$. Thus, we can rewrite $\langle Ay, y\rangle$ as follows: 
\begin{equation*}
    \langle Ay, y\rangle = \langle B^2 y, y\rangle = \langle By, B^T y\rangle = \langle By, By\rangle = |By|^2. 
\end{equation*}
Making the substitution $z= By$, which gives $dz= (\det\,B)dy= \sqrt{\det\,A} \, dy$, the integral in the right-hand side of \eqref{gaussAeq1} becomes
\begin{equation}
\label{gaussAeq2}
    \int_{\R^n}e^{-\frac12\langle Ay, y\rangle}= \int_{\R^n}e^{-\frac12 |By|^2}dy = \sqrt{\det\,A}\int_{\R^n} e^{-|z|^2/2}dz= \sqrt{\det\,A}\, (2\pi)^{n/2}.     
\end{equation}
Combining \eqref{gaussAeq1} and \eqref{gaussAeq2} proves the claim. 
\end{proof}

\begin{proof}[Proof of Lemma \ref{ConvexQuadraticLpLegendre}]
    By \eqref{|x|^2/2Vol}, 
    \begin{equation*}
        q_2^{*,p}(y)= \frac1p \log\int_{\R^n} e^{p(\langle x,y \rangle - |x|^2/2)} \frac{e^{-|x|^2/2}dx}{(2\pi)^{n/2}} = \frac1p \log\int_{\R^n} e^{-\frac12\langle (p+1) x, x\rangle + \langle py, x\rangle} \frac{dx}{(2\pi)^{n/2}}. 
    \end{equation*}
    Therefore, by Claim \ref{GaussClaim} with $A= (p+1)I_n$ and $b= py$, 
    \begin{equation*}
        q_2^{*,p}(y) = \frac1p\log\left(\sqrt{\frac{(2\pi)^n}{(p+1)^n}} \,e^{\frac12\langle \frac{p}{p+1} y, py\rangle} \frac{1}{(2\pi)^{n/2}}\right) = \frac1p\log\left( \frac{e^{\frac{p^2}{2}\frac{|y|^2}{p+1}}}{(p+1)^{n/2}}\right), 
    \end{equation*}
    from which the claim follows. 
\end{proof}

\subsubsection{Functional simplex}
Denote by 
\begin{equation*}
    \phi_{\Delta_n}(x)\defeq (x_1 + \ldots + x_n) (1+ \bm{1}^\infty_{[-1, \infty)^n}(x))= \begin{dcases}
        x_1 + \ldots + x_n, & x\in[-1, \infty)^n, \\
        \infty, & \text{otherwise}. 
    \end{dcases}
\end{equation*}
what we will refer to as the $n$-dimensional ``functional simplex". This is the conjecture minimizer for the functional non-symmetric Mahler conjecture \cite{FM08b, FM08a}. To compute its volume
\begin{equation}
\label{functionalSimplexVolume}
    V(\phi_{\Delta_n})= \int_{[-1, \infty)}e^{-(x_1 + \ldots + x_n)}dx = \left( \int_{-1}^\infty e^{-t}dt\right)^n = e^n. 
\end{equation}

In contrast to the simplex, the functional simplex enjoys a product structure: 
\begin{equation*}
    \phi_{\Delta_n}(x_1, \ldots, x_n) = \phi_{\Delta_1}(x_1) + \ldots + \phi_{\Delta_1}(x_n). 
\end{equation*}
Therefore, by the tensoriality properties of the $L^p$-Legendre transform (Lemma \ref{LpLegendreTensor}), to compute $\phi_{\Delta_n}^{*,p}$ it is enough to deal with the 1-dimensional case. 
\begin{lemma}
    \label{functionalSimplexLpLegendre}
    For $p\in (0,\infty)$, 
    \begin{equation*}
        \phi_{\Delta_n}^{*,p}(y)= \begin{dcases}
            n- (y_1 + \ldots + y_n) - \frac1p\sum_{i=1}^n \log(p+1-py_i)), & y\in \left(-\infty, \frac{p+1}{p}\right]^n, \\
            \infty, & \text{otherwise}.
        \end{dcases} 
    \end{equation*}
\end{lemma}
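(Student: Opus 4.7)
The plan is to exploit the product structure of the functional simplex noted just before the lemma, namely $\phi_{\Delta_n} = \phi_{\Delta_1}^{\otimes n}$, and reduce the problem to the one-dimensional case. By the tensoriality of the $L^p$-Legendre transform (Lemma \ref{LpLegendreTensor}), we immediately obtain
\begin{equation*}
    \phi_{\Delta_n}^{*,p}(y_1,\ldots,y_n) = \sum_{i=1}^n \phi_{\Delta_1}^{*,p}(y_i),
\end{equation*}
so the entire lemma follows once we establish that $\phi_{\Delta_1}^{*,p}(y) = 1 - y - \frac{1}{p}\log(p+1-py)$ for $y < \frac{p+1}{p}$ and $\phi_{\Delta_1}^{*,p}(y) = \infty$ otherwise.

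For the one-dimensional computation, I would plug directly into Definition \ref{LpLegendreDef} using the already computed volume $V(\phi_{\Delta_1}) = e$ from \eqref{functionalSimplexVolume}. Since $\phi_{\Delta_1}(x) = x$ on $[-1,\infty)$ and $\infty$ elsewhere, the relevant integral reduces to
\begin{equation*}
    \int_{-1}^\infty e^{pxy - (p+1)x}\,dx = \int_{-1}^\infty e^{x(py - (p+1))}\,dx.
\end{equation*}
The convergence of this integral at $+\infty$ forces the exponent to be negative, which gives the dichotomy: finite for $y < \frac{p+1}{p}$ and $+\infty$ for $y \geq \frac{p+1}{p}$. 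In the convergent range, the antiderivative evaluated at the endpoints yields $\frac{e^{p+1-py}}{p+1-py}$.

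Taking $\frac{1}{p}\log$ of this quantity divided by $V(\phi_{\Delta_1}) = e$ gives
\begin{equation*}
    \phi_{\Delta_1}^{*,p}(y) = \frac{1}{p}\log\left(\frac{e^{p-py}}{p+1-py}\right) = 1 - y - \frac{1}{p}\log(p+1-py),
\end{equation*}
as required. Summing over coordinates then produces the stated expression, and the boundary case $y_i = \frac{p+1}{p}$ is automatically absorbed into the $+\infty$ branch since $\log(p+1-py_i) \to -\infty$ there. There is no genuine obstacle in this proof — it is a direct calculation — the only point requiring minor care is bookkeeping the convergence condition coordinate-by-coordinate so that the formula and the ``otherwise'' case agree at the boundary of the effective domain.
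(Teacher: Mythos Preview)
Your proof is correct and follows essentially the same approach as the paper: reduce to the one-dimensional case via the tensoriality Lemma \ref{LpLegendreTensor}, compute $\int_{-1}^\infty e^{x(py-(p+1))}\,dx$ directly, and read off the dichotomy from the sign of the exponent. The only cosmetic difference is that the paper carries the factor $1/e$ inside the integral before taking the logarithm, whereas you divide by $V(\phi_{\Delta_1})=e$ at the end; the outcome is identical.
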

\begin{proof}
    Let us start with $n=1$: By \eqref{functionalSimplexVolume}, 
    \begin{equation*}
        \begin{aligned}
            \phi_{\Delta_1}^{*,p}(y) 
            = \frac1p\log\int_{-1}^\infty e^{p(xy - x)} \frac{e^{-x}dx}{e}
            = \frac1p \log\int_{-1}^\infty e^{x(py-(p+1))}\frac{dx}{e}. 
        \end{aligned}
    \end{equation*}
    Therefore, for $py\geq p+1$ the integral evaluates to $\infty$. For $py<p+1$, 
    \begin{equation*}
        \int_{-1}^\infty e^{x(py-(p+1))}\frac{dx}{e}= \frac1e \left[\frac{e^{x(py-(p+1))}}{py-(p+1)} \right]_{x=-1}^\infty = \frac{e^{p+1 - py}}{e(p+1-py)}.
    \end{equation*}
    That is,
    \begin{equation*}
        \phi_{\Delta_1}^{*,p}(y) = \begin{dcases}
            1- y-  \frac{\log(p+1 - py)}{p}, & y < \frac{p+1}{p}, \\
            \infty, & y \geq \frac{p+1}{p}. 
        \end{dcases}
    \end{equation*}
    The higher dimensional cases follow from the one-dimensional case Lemma \ref{LpLegendreTensor}. 
\end{proof}
\begin{remark}
    For $p=\infty$, using L'H\^optial's rule,
    \begin{equation*}
        \phi_{\Delta_1}^*(y) = \begin{dcases}
            \lim_{p\to\infty}\left[1- y -\frac{\log(1+ p (1-y))}{p}\right], &y \leq 1, \\
            \infty, & y > 1
        \end{dcases}
        = \begin{dcases}
           1-y,  &y\leq 1, \\
           \infty, & y > 1, 
        \end{dcases}
    \end{equation*}
    since logarithm grows slower than any polynomial. 
\end{remark}

\subsubsection{The Euclidean ball}
Let 
\begin{equation*}
    I_\alpha(x) \defeq \sum_{m=0}^\infty \left( \frac{x}{2}\right)^{2m+\alpha} \frac{1}{m! \Gamma(m+\alpha+1)}, \quad x\in \R, 
\end{equation*}
denote the modified Bessel function of the first kind \cite[(2) p. 77]{Watson95}. 

\begin{lemma}
\label{hpB2n}
    For $p\in (0,\infty]$,
    \begin{equation*}
        h_{p, B_2^n}(y) = \frac1p \log\left( \Gamma\left( 1 + \frac{n}{2}\right) \left( \frac{2}{p|y|}\right)^{n/2} I_{n/2}(p|y|)\right).
    \end{equation*}
\end{lemma}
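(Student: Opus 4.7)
The plan is to reduce the integral defining $h_{p, B_2^n}$ to a one-dimensional integral and then recognize it as Poisson's integral representation of the modified Bessel function $I_{n/2}$.

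First I would exploit the rotational symmetry of $B_2^n$: since $h_{p, B_2^n}(y)$ depends on $y$ only through $|y|$, I may replace $y$ by $|y|e_n$. Thus
\begin{equation*}
\int_{B_2^n} e^{p\langle x, y\rangle}\,dx = \int_{B_2^n} e^{p|y|x_n}\,dx.
\end{equation*}
Slicing by hyperplanes $\{x_n = t\}$ for $t \in [-1,1]$ (whose cross sections are $(n-1)$-balls of radius $\sqrt{1-t^2}$ with $(n-1)$-volume $\omega_{n-1}(1-t^2)^{(n-1)/2}$, where $\omega_{n-1} = \pi^{(n-1)/2}/\Gamma\bigl(\tfrac{n+1}{2}\bigr)$) yields
\begin{equation*}
\int_{B_2^n} e^{p|y|x_n}\,dx = \omega_{n-1}\int_{-1}^1 e^{p|y|t}(1-t^2)^{(n-1)/2}\,dt.
\end{equation*}

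Next I would invoke Poisson's integral representation of the modified Bessel function (Watson, \S 3.71),
\begin{equation*}
I_\nu(z) = \frac{(z/2)^\nu}{\sqrt{\pi}\,\Gamma\bigl(\nu+\tfrac{1}{2}\bigr)}\int_{-1}^1 e^{zt}(1-t^2)^{\nu - 1/2}\,dt, \qquad \Re\nu > -\tfrac{1}{2},
\end{equation*}
applied with $\nu = n/2$ and $z = p|y|$. This identifies the one-dimensional integral above with
\begin{equation*}
\sqrt{\pi}\,\Gamma\Bigl(\tfrac{n+1}{2}\Bigr)\Bigl(\tfrac{2}{p|y|}\Bigr)^{n/2} I_{n/2}(p|y|).
\end{equation*}
Combining with the value $\omega_{n-1} = \pi^{(n-1)/2}/\Gamma\bigl(\tfrac{n+1}{2}\bigr)$, the $\Gamma\bigl(\tfrac{n+1}{2}\bigr)$ factors cancel and one obtains
\begin{equation*}
\int_{B_2^n} e^{p\langle x, y\rangle}\,dx = \pi^{n/2}\Bigl(\tfrac{2}{p|y|}\Bigr)^{n/2} I_{n/2}(p|y|).
\end{equation*}

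Finally, dividing by $|B_2^n| = \pi^{n/2}/\Gamma\bigl(1+\tfrac{n}{2}\bigr)$ and taking $\tfrac{1}{p}\log$ produces the claimed formula. The $p = \infty$ case then follows by sending $p \to \infty$, using the large-argument asymptotics $I_{n/2}(z) \sim e^z/\sqrt{2\pi z}$, which give $h_{\infty, B_2^n}(y) = |y|$, matching the classical support function of the ball. The only nontrivial ingredient is recalling Poisson's representation for $I_\nu$; once that is in hand the computation is a matter of bookkeeping with $\Gamma$-factors.
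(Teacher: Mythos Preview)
Your proof is correct and follows essentially the same route as the paper: rotational symmetry, slicing by hyperplanes, and identifying the resulting one-dimensional integral with $I_{n/2}$. The only difference is that you cite Poisson's integral representation directly, whereas the paper reproves it from the Taylor expansion of $e^{ax}$ and the Beta integral (its Claim~\ref{hpB2nClaim}); your shortcut is perfectly legitimate since this is a standard identity.
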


The proof of Lemma \ref{hpB2n} is based on the following calculation \cite[10.32.2]{NIST_handbook}: 
\begin{claim}
\label{hpB2nClaim}
    For $n\in\mathbb{N}$ and $a>0$, 
    \begin{equation*}
        \int_{-1}^1 (1-x^2)^{\frac{n-1}{2}} e^{ax}dx = \sqrt{\pi}\,\Gamma\left( \frac{n+1}{2}\right) \left( \frac{2}{a}\right)^{n/2} I_{n/2}(a). 
    \end{equation*}
\end{claim}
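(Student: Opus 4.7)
The plan is to establish the identity by expanding $e^{ax}$ as a power series and integrating term by term, then recognizing the resulting series as the (normalized) modified Bessel function.

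First I would write $e^{ax} = \sum_{k=0}^{\infty} \frac{(ax)^k}{k!}$ and interchange sum and integral (justified by absolute convergence, since $(1-x^2)^{(n-1)/2}$ is integrable on $[-1,1]$ and the exponential is bounded there). Since $(1-x^2)^{(n-1)/2}$ is even, the odd-indexed terms vanish, so only $k = 2m$ survives:
\begin{equation*}
    \int_{-1}^1 (1-x^2)^{\frac{n-1}{2}} e^{ax}\,dx = \sum_{m=0}^\infty \frac{a^{2m}}{(2m)!} \int_{-1}^1 x^{2m}(1-x^2)^{\frac{n-1}{2}}\,dx.
\end{equation*}

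Next I would evaluate each inner integral via the substitution $u = x^2$, which gives a Beta integral:
\begin{equation*}
    \int_{-1}^1 x^{2m}(1-x^2)^{\frac{n-1}{2}}\,dx = B\!\left(m+\tfrac12, \tfrac{n+1}{2}\right) = \frac{\Gamma(m+\frac12)\,\Gamma(\frac{n+1}{2})}{\Gamma(m+\frac{n}{2}+1)}.
\end{equation*}
Using the standard identity $\Gamma(m+\tfrac12) = \frac{(2m)!\sqrt{\pi}}{4^m\, m!}$ (Legendre duplication), the factor $(2m)!$ cancels against the $1/(2m)!$ in the series, yielding
\begin{equation*}
    \int_{-1}^1 (1-x^2)^{\frac{n-1}{2}} e^{ax}\,dx = \sqrt{\pi}\,\Gamma\!\left(\tfrac{n+1}{2}\right)\sum_{m=0}^\infty \left(\frac{a}{2}\right)^{2m} \frac{1}{m!\,\Gamma(m+\frac{n}{2}+1)}.
\end{equation*}

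Finally, I would match this with the series definition of $I_{n/2}$ given just before the claim: dividing that series by $(a/2)^{n/2}$ produces exactly the tail sum above, so the right-hand side equals $\sqrt{\pi}\,\Gamma(\frac{n+1}{2})\,(2/a)^{n/2}\,I_{n/2}(a)$, as required. No step is a real obstacle; the only piece of mild care is invoking the duplication formula to cancel $(2m)!$ so the coefficients align with the Bessel series, and justifying the interchange of summation and integration (uniform convergence on $[-1,1]$ suffices).
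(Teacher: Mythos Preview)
Your proposal is correct and follows essentially the same approach as the paper's own proof: expand $e^{ax}$ as a power series, drop the odd terms by parity, evaluate the even moments via the substitution $u=x^2$ as Beta integrals, and then use the Legendre duplication formula to collapse the coefficients into the series for $I_{n/2}$. The only cosmetic difference is that the paper writes the duplication formula as $\Gamma(m)\Gamma(m+\tfrac12)=2^{1-2m}\sqrt{\pi}\,\Gamma(2m)$ rather than your equivalent form $\Gamma(m+\tfrac12)=\frac{(2m)!\sqrt{\pi}}{4^m m!}$.
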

\begin{proof}
    First, using the Taylor expansion of the exponential function: 
    \begin{equation}
    \label{B2nEq2}
        \int_{-1}^1 (1-x^2)^{\frac{n-1}{2}} e^{ax}dx  = \sum_{m=0}^\infty \frac{a^m}{m!}\int_{-1}^1 (1-x^2)^{\frac{n-1}{2}} x^m dx = 2\sum_{m=0}^\infty \frac{a^{2m}}{(2m)!}\int_0^1 (1-x^2)^{\frac{n-1}{2}}x^{2m}dx, 
    \end{equation}
    because 
    \begin{equation*}
        \int_{-1}^1 (1-x^2)^{\frac{n-1}{2}} x^m dx = \begin{dcases}
            0, & m = \text{ odd}, \\
            2\int_0^1 (1-x^2)^{\frac{n-1}{2}} x^{m}dx, & m = \text{ even}.  
        \end{dcases}
    \end{equation*}
    To compute the integral on the right-hand side of \eqref{B2nEq2}, use the change of variables $u = x^2, dx= du/ (2\sqrt{u})$, 
    \begin{equation}
    \label{B2nEq5}
    \begin{aligned}
        \int_{0}^1 (1- x^2)^{\frac{n-1}{2}} x^{2m} dx  &= \frac12 \int_0^1 (1-u)^{\frac{n-1}{2}} u^{m-\frac12}du = \frac12 \int_0^1 (1-u)^{\frac{n+1}{2}-1} u^{m+\frac12 - 1}du \\
        &= \frac12 \mathrm{B}\left( \frac{n+1}{2}, m + \frac12\right) = \frac{\Gamma(\frac{n+1}{2}) \Gamma(m + \frac12)}{2\Gamma(m + \frac{n}{2} +1)}.
    \end{aligned}
    \end{equation}
    Combining \eqref{B2nEq2} and \eqref{B2nEq5}, and using $\Gamma(m)\Gamma(m + \frac12)= 2^{1-2m}\sqrt{\pi}\,\Gamma(2m)$, 
    \begin{equation*}
    \begin{aligned}
        \int_{-1}^1 (1-x^2)^{\frac{n-1}{2}} e^{ax} dx &= 2\sum_{m=0}^\infty \frac{a^{2m}}{(2m)!} \frac{\Gamma(\frac{n+1}{2})\Gamma(m+\frac12)}{2\Gamma(m + \frac{n}{2} + 1)} \\
        &= \Gamma\left( \frac{n+1}{2}\right) \sum_{m=0}^\infty \frac{a^{2m}}{(2m)!} \frac{2^{1-2m} \sqrt{\pi}\,(2m-1)!}{(m-1)! \Gamma(m + \frac{n}{2} + 1)} \\
        &= 2\sqrt{\pi}\,\Gamma\left( \frac{n+1}{2}\right) \sum_{m=0}^\infty \left( \frac{a}{2}\right)^{2m} \frac{m}{(2m) \,m!\, \Gamma(m + \frac{n}{2} + 1)} \\
        &= \sqrt{\pi}\, \Gamma\left( \frac{n+1}{2}\right) \left( \frac{2}{a}\right)^{\frac{n}{2}} \sum_{m=0}^\infty \left( \frac{a}{2}\right)^{2m + \frac{n}{2}} \frac{1}{m! \Gamma(m + \frac{n}{2} + 1)} \\
        &= \sqrt{\pi}\,\Gamma\left( \frac{n+1}{2}\right) \left( \frac{2}{a}\right)^{n/2} I_{n/2}(a), 
    \end{aligned}
    \end{equation*}
    finishing the proof. 
\end{proof}
\begin{proof}[Proof of Lemma \ref{hpB2n}]
    Fix $y\in \R^n\setminus \{0\}$. Let $A$ be the orthogonal matrix that takes $y$ to $|y|e_1$. The change of variables $z= A x$ takes $B_2^n$ to itself with $dz= |\det A| dx = dx$, hence
    \begin{equation}
    \label{B2nEq1}
        \int_{B_2^n} e^{p\langle x, y\rangle}dx = \int_{B_2^n} e^{p\langle A^T z, y\rangle}dz = \int_{B_2^n} e^{p\langle z, A y\rangle}dz = \int_{B_2^n} e^{p|y|z_1}dz. 
    \end{equation}
    Using Fubini's theorem, 
    \begin{equation}
    \label{B2nEq4}
        \int_{B_2^n}e^{p|y|z_1}dz = \int_{-1}^1 \int_{\sqrt{1-z_1^2}B_2^{n-1}} e^{p|y|z_1}dz= |B_2^{n-1}|\int_{-1}^1 (1-z_1^2)^{\frac{n-1}{2}} e^{p|y|z_1}dz_1.  
    \end{equation}
    Using Claim \ref{hpB2nClaim} with $a= p|y|$, it follows form \eqref{B2nEq1} and \eqref{B2nEq4} that
    \begin{equation*}
        \begin{aligned}
            \int_{B_2^n} e^{p\langle x,y \rangle} \frac{dx}{|B_2^n|}= \frac{|B_2^{n-1}|}{|B_2^n|} \sqrt{\pi}\, \Gamma\left(\frac{n+1}{2}\right) \left( \frac{2}{p|y|}\right)^{n/2} I_{n/2}(p|y|)
        \end{aligned}
    \end{equation*}
    Using $|B_2^n| = \frac{\pi^{\frac{n}{2}}}{\Gamma(1 + \frac{n}{2})}$, 
    \begin{equation*}
        \frac{|B_2^{n-1}|}{|B_2^n|} \sqrt{\pi}\,\Gamma\left( \frac{n+1}{2}\right)= \frac{\frac{\pi^{\frac{n-1}{2}}}{\Gamma(1+\frac{n-1}{2})}}{\frac{\pi^{\frac{n}{2}}}{\Gamma(1 + \frac{n}{2})}} \sqrt{\pi}\, \Gamma\left( \frac{n+1}{2}\right) = \frac{\Gamma(1 + \frac{n}{2})}{\sqrt{\pi}\Gamma(\frac{n+1}{2})} \sqrt{\pi}\,\Gamma\left( \frac{n+1}{2}\right) = \Gamma\left( 1 + \frac{n}{2}\right). 
    \end{equation*}
    Consequently, 
    \begin{equation*}
        h_{p, B_2^n}(y)= \frac1p\log\int_{B_2^n}e^{p\langle x,y\rangle}\frac{dy}{|B_2^n|} = \frac1p \log\left( \Gamma\left( 1 + \frac{n}{2}\right) \left( \frac{2}{p|y|}\right)^{n/2} I_{n/2}(p|y|)\right). 
    \end{equation*}
\end{proof}

\begin{remark}
    By $I_\nu(x)\sim \frac{1}{\Gamma(\nu+1)} (x/2)^\nu$ $x\to 0^-$ \cite[10.30.1]{NIST_handbook},
    \begin{equation*}
        \lim_{y\to 0^-} \frac1p \log\left[ \Gamma\left(1+\frac{n}{2}\right)\left(\frac{2}{p|y|}\right)^{n/2}  I_{n/2}(p|y|)\right] = \frac1p\log 1=0 = h_{p, B_2^n}(0).
    \end{equation*}
    For $y\neq 0$, using $I_n(x)\sim \frac{e^x}{\sqrt{2\pi x}}$ for $x\to \infty$ \cite[10.30.4]{NIST_handbook}, 
    \begin{equation*}
    \begin{aligned}
        \lim_{p\to\infty}\frac1p \log\left[ \Gamma\left(1+\frac{n}{2}\right)\left(\frac{2}{p|y|}\right)^{n/2}  I_{n/2}(p|y|)\right]&= \lim_{p\to\infty} \frac1p\log\left[ \Gamma\left(1+\frac{n}{2}\right)\left(\frac{2}{p|y|}\right)^{n/2} \frac{e^{p|y|}}{\sqrt{2\pi p}}\right]\\
        &= \lim_{p\to\infty}\log\left[\left(\frac{\Gamma(1+\frac{n}{2}) 2^{n/2}}{|y|^{n/2}\sqrt{2\pi}}\right)^{1/p} p^{-\frac{n+1}{2p}} e^{|y|}\right] \\
        &= \log e^{|y|}= |y|  = h_{B_2^n}(x). 
    \end{aligned}
    \end{equation*}
\end{remark}

\subsubsection{The simplex}
\begin{lemma}
For $p\in (0,\infty)$, 
\label{SimplexLpSupport}
    \begin{equation*}
        h_{p,\Delta_{n,+}}(y) = - \frac{n}{p} \log p + \frac1p\log\left[ n! \sum_{i=1}^n \frac{\frac{e^{py_i}-1}{y_i}}{\prod_{\substack{j=1 \\ j\neq i}}^n(y_i-y_j)}\right], \quad x\in \R^n. 
    \end{equation*}
\end{lemma}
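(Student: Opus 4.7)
The plan is to reduce the claim to a direct computation of $\int_{\Delta_{n,+}} e^{p\langle x,y\rangle}\,dx$ and then recognize that the integral is an $n$-th divided difference of the exponential at the nodes $0, py_1, \ldots, py_n$. Since $|\Delta_{n,+}| = 1/n!$, one has
\begin{equation*}
    h_{p,\Delta_{n,+}}(y) \;=\; \frac{1}{p}\log\!\left( n!\int_{\Delta_{n,+}} e^{p\langle x,y\rangle}\,dx \right),
\end{equation*}
so setting $z\defeq py$ reduces the lemma to the identity
\begin{equation*}
    \int_{\Delta_{n,+}} e^{\langle x,z\rangle}\,dx \;=\; \sum_{i=1}^{n} \frac{(e^{z_i}-1)/z_i}{\prod_{j\neq i}(z_i-z_j)} \qquad (z_1,\ldots,z_n \text{ distinct and nonzero}),
\end{equation*}
after which the advertised $-(n/p)\log p$ term drops out by pulling out $p^{-n}$ from $\prod_{j\neq i}(py_i - py_j) = p^{n-1}\prod_{j\neq i}(y_i-y_j)$ together with the denominator $py_i$ in the numerator.

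To establish the central identity, I would first use the partial-fraction relation $\sum_{i=0}^n 1/\prod_{j\neq i}(z_i-z_j)=0$ (with $z_0\defeq 0$) and the rewriting $(e^{z_i}-1)/z_i = (e^{z_i}-e^{0})/(z_i-0)$ to recast the right-hand side as
\begin{equation*}
    \sum_{i=0}^{n} \frac{e^{z_i}}{\prod_{j\neq i}(z_i-z_j)}\bigg|_{z_0=0},
\end{equation*}
which is precisely the $n$-th divided difference of $e^{(\cdot)}$ at $0, z_1, \ldots, z_n$. The Hermite--Genocchi formula then identifies this divided difference with $\int_{\Delta_{n,+}} e^{t_1 z_1 + \cdots + t_n z_n}\,dt$, closing the loop. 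An alternative, self-contained route is induction on $n$: integrating $x_n$ out first yields $(e^{z_n(1-x_1-\cdots-x_{n-1})}-1)/z_n$, and iterating one variable at a time gives a telescoping sum that collapses to the target after applying partial fractions in the $z_i$'s.

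The only genuine subtlety is that the formula as stated has removable singularities on the loci $\{y_i = y_j\}$ and $\{y_i=0\}$. I would therefore first establish the identity on the open dense set where the $y_i$ are distinct and nonzero, and then extend by continuity: the left-hand side is continuous in $y$ because $\Delta_{n,+}$ is compact, while the right-hand side extends continuously by L'H\^opital's rule (for instance $(e^{py_i}-1)/y_i \to p$ as $y_i\to 0$, and the diagonal singularities at $y_i=y_j$ are cancelled by the antisymmetry of the sum). I expect this bookkeeping to be the main nuisance rather than any serious obstacle; the combinatorial heart of the proof is the identification of the integral over $\Delta_{n,+}$ with a divided difference of $e^{(\cdot)}$.
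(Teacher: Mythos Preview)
Your proposal is correct, and your primary route via divided differences is genuinely different from the paper's argument. The paper proceeds by direct induction on $n$: it verifies the $n=2$ case by hand, then for the inductive step writes $\int_{\Delta_{n,+}} e^{\langle x,y\rangle}\,dx$ as $\int_0^1 e^{x_n y_n}(1-x_n)^{n-1}\big(\int_{\Delta_{n-1,+}} e^{(1-x_n)\langle \xi',\eta\rangle}\,d\xi'\big)\,dx_n$, applies the induction hypothesis, and collapses the resulting sum using the partial-fraction identity $\sum_{i=1}^{n} 1/\prod_{j\neq i}(y_i-y_j)=0$ (the same identity you invoke, stated in the paper as a separate claim). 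Your alternative sketch --- integrating out $x_n$ first and iterating --- is essentially this argument.

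Your main approach, by contrast, identifies $\int_{\Delta_{n,+}} e^{\langle x,z\rangle}\,dx$ with the $n$-th divided difference $e^{(\cdot)}[0,z_1,\ldots,z_n]$ via the Hermite--Genocchi formula and then reads off the partial-fraction expansion. This is more conceptual and explains \emph{why} such a closed form exists: the simplex integral of a function is always a divided difference of an antiderivative, and for the exponential the antiderivative is again the exponential. The paper's induction is more self-contained (it does not import Hermite--Genocchi), but your route makes the structure transparent and would generalize immediately to other integrands whose $n$-th antiderivative is explicit. Your handling of the removable singularities at $y_i=y_j$ and $y_i=0$ is also more careful than the paper's, which tacitly works on the generic locus.
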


Let us start by verifying Lemma \ref{SimplexLpSupport} for $n=2$ and $p=1$. The rest of the proof will follow by induction. 
\begin{claim}
\label{2dSimplexClaim}
For $x, y\in\R$, 
    \begin{equation*}
        \int_{\Delta_{2,+}} e^{xu+ yv}dudv =\frac{\frac{e^x-1}{x} - \frac{e^y - 1}{y}}{x-y}.  
    \end{equation*} 
\end{claim}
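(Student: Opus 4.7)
The plan is a direct computation. The simplex $\Delta_{2,+}$ is (implicitly) the standard non-negative simplex $\{(u,v) : u,v \geq 0,\ u+v \leq 1\}$, so I would parametrize
\[
\int_{\Delta_{2,+}} e^{xu+yv}\,du\,dv = \int_0^1 e^{xu}\left(\int_0^{1-u} e^{yv}\,dv\right) du,
\]
and evaluate the inner integral to $\frac{e^{y(1-u)}-1}{y}$. This leaves a one-dimensional integral in $u$ that splits as
\[
\frac{1}{y}\int_0^1 \left(e^y e^{(x-y)u} - e^{xu}\right) du = \frac{1}{y}\left[\frac{e^x - e^y}{x-y} - \frac{e^x - 1}{x}\right].
\]

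The remaining step is purely algebraic: put everything over the common denominator $xy(x-y)$ to get
\[
\frac{x(e^x - e^y) - (x-y)(e^x-1)}{xy(x-y)} = \frac{y(e^x-1) - x(e^y-1)}{xy(x-y)} = \frac{\tfrac{e^x-1}{x} - \tfrac{e^y-1}{y}}{x-y},
\]
which matches the claim. The identities $\frac{e^x-1}{x}$, $\frac{e^y-1}{y}$ and the denominator $x-y$ are to be interpreted via their removable-singularity limits when $x$, $y$, or $x-y$ vanishes; by continuity of both sides as entire functions of $(x,y)$, it suffices to verify the identity away from these loci, which is exactly what the computation above gives.

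There is no real obstacle here beyond bookkeeping: the only mild subtlety is to handle the edge cases $x=0$, $y=0$, or $x=y$, which are all resolved by continuity since both sides are entire in $(x,y)$ (the apparent poles are cancelled by zeros of the numerator). This one-dimensional base case will then feed into the inductive argument needed to establish Lemma \ref{SimplexLpSupport} for arbitrary $n$, where the analogous identity involves divided differences of $(e^{y_i}-1)/y_i$ over the coordinates $y_1,\dots,y_n$.
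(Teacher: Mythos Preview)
Your proof is correct and follows essentially the same approach as the paper: the same iterated-integral parametrization, the same algebraic reduction over the common denominator $xy(x-y)$, and the same appeal to continuity/analyticity to cover the degenerate loci $x=0$, $y=0$, $x=y$ (the paper makes this last point explicit via the Taylor expansion, but the content is identical).
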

\begin{proof}
    Fix $x, y\in \R\setminus \{0\}$ with $x\neq y$. 
    Let us start with the integral: 
    \allowdisplaybreaks{
    \begin{align*}
        \int_{\Delta_{2,+}}e^{xu+ yv}du dv &= \int_0^1 \int_{0}^{1-u} e^{xu} e^{yv}dv du = \int_0^1 e^{xu} \left[ \frac{e^{yv}}{y}\right]_{v=0}^{1-u} du\\
        &= \frac1y \int_0^1 e^{xu} (e^y e^{-yu} - 1) du \\
        &= \frac1y \int_0^1 e^{u(x-y)} e^y - e^{xu} du \\
        &= \frac1y \left( \frac{e^{x-y}-1}{x-y} e^y - \frac{e^x-1}{x}\right) \\
        &= \frac{xe^x - x e^y - xe^x + x + ye^x - y}{xy(x-y)} \\
        &= \frac{y(e^x-1) - x(e^y - 1)}{xy(x-y)} \\
        &= \frac{\frac{e^x-1}{x} - \frac{e^y-1}{y}}{x-y}. 
    \end{align*}
    }
    Note, in addition, that this function is well-defined and smooth in $\R^2$. To see why, it is enough to use the Taylor expansion of the exponential function: 
    \begin{equation*}
        f(x,y)\defeq \frac{\frac{e^x-1}{x} - \frac{e^y-1}{y}}{x-y} = \frac{\sum_{n=1}^\infty \frac{x^{n-1} - y^{n-1}}{n!}}{x-y} = \sum_{n=2}^\infty \sum_{k=0}^{n-2} \frac{x^k y^{n-2-k}}{n!}, \quad (x,y)\in\R^2. 
    \end{equation*}
    Since both functions are continuous and agree on $\R^2\setminus \{x=0\}\cup \{y=0\}\cup \{x=y\}$, equality holds on $\R^2$.
\end{proof}

\begin{proof}[Proof of Lemma \ref{SimplexLpSupport}]
    It is enough to prove 
    \begin{equation}
        \int_{\Delta_{n,+}}e^{\langle x,y\rangle}dx = \sum_{i=1}^n \frac{\frac{e^{y_i}-1}{y_i}}{\prod_{\substack{j=1\\j\neq i}}^n (y_i-y_j)}
    \end{equation}
    The proof is by induction on $n$. The $n=2$ case is exactly Claim \ref{2dSimplexClaim}. For $n<2$, write $x= (\xi, x_n)\in \R^{n-1}\times \R$ and $y= (\eta, y_n)\in \R^{n-1}\times \R$, 
    \allowdisplaybreaks{
    \begin{align*}
        \int_{\Delta_{n,+}} e^{\langle x,y\rangle}dx &= \int_{x_n=0}^1 \int_{(1-x_n)\Delta_{n-1, +}}e^{\langle \xi, \eta\rangle + x_n y_n}d\xi dx_n \\
        &= \int_0^1 e^{x_n y_n} \int_{\Delta_{n-1,+}} e^{\langle (1-x_n)\xi', \eta\rangle} (1-x_n)^{n-1}d\xi' dx_n \\
        &= \int_0^1 e^{x_n y_n} (1-x_n)^{n-1} \sum_{i=1}^{n-1} \frac{\frac{e^{(1-x_n)y_i}-1}{(1-x_n)y_i}}{\prod_{\substack{j=1\\j\neq i}}^{n-1}((1-x_n)y_i- (1-x_n)y_j)}dx_n \\
        &= \int_0^1 e^{x_n y_n} \sum_{i=1}^{n-1}\frac{\frac{e^{(1-x_n)y_i}-1}{y_i}}{\prod_{\substack{j=1\\j\neq i}}^{n-1}(y_i- y_j)}dx_n \\
        &= \sum_{i=1}^{n-1}\frac{1}{y_i \prod_{\substack{j=1\\j\neq i}}^{n-1}(y_i - y_j)} \int_0^1 e^{x_ny_n} (e^{(1-x_n)y_i}-1) dx_n\\
        &= \sum_{i=1}^{n-1}\frac{1}{y_i \prod_{\substack{j=1\\j\neq i}}^{n-1}(y_i - y_j)} \left[ e^{y_i} \int_0^1 e^{x_n(y_n- y_i)}dx_n - \int_0^1 e^{x_n y_n}dx_n\right] \\
        &= \sum_{i=1}^{n-1}\frac{1}{y_i \prod_{\substack{j=1\\j\neq i}}^{n-1}(y_i - y_j)} \left[ \frac{e^{y_n}- e^{y_i}}{y_n  - y_i} - \frac{e^{y_n}-1}{y_n}\right] \\
        &= \sum_{i=1}^{n-1}\frac{1}{\prod_{\substack{j=1\\j\neq i}}^{n-1}(y_i - y_j)} \frac{\frac{e^{y_i}-1}{y_i}- \frac{e^{y_n}-1}{y_n}}{y_i- y_n} \\
        &=  \sum_{i=1}^{n-1}\frac{\frac{e^{y_i}-1}{y_i}}{\prod_{\substack{j=1\\j\neq i}}^{n}(y_i - y_j)}  - \frac{e^{y_n}-1}{y_n} \sum_{i=1}^{n-1}\frac{1}{\prod_{\substack{j=1\\ j\neq i}}^n (y_i -y_j)} \\
        &= \sum_{i=1}^{n-1}\frac{\frac{e^{y_i}-1}{y_i}}{\prod_{\substack{j=1\\j\neq i}}^{n}(y_i - y_j)}  + \frac{e^{y_n}-1}{y_n}\frac{1}{\prod_{\substack{j=1\\ j\neq n}}^n (y_n-y_j)}, 
    \end{align*} 
    }
    where the last equality follows from Claim \ref{SimplexAuxClaim2} below. 
\end{proof}

\begin{claim}
\label{SimplexAuxClaim2}
    For distinct $y_1, \ldots, y_n\in \R$, 
    \begin{equation*}
        \sum_{i=1}^{n-1} \frac{1}{\prod_{\substack{j=1\\ j\neq i}}^n (y_i -y_j)}= - \frac{1}{\prod_{\substack{j=1\\ j\neq n}}^n (y_i -y_j)}. 
    \end{equation*}
\end{claim}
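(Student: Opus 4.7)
The identity to prove is equivalent to the more symmetric statement
\[
    \sum_{i=1}^{n} \frac{1}{\prod_{\substack{j=1\\ j\neq i}}^n (y_i -y_j)}=0, \qquad n\geq 2,
\]
obtained by moving the $n$-th term of the right-hand side (which, as written in the claim, should read $y_n - y_j$ rather than $y_i - y_j$ in the denominator) to the left.

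The plan is to deduce this from the partial fraction decomposition of a suitable rational function. Set $P(x)\defeq \prod_{j=1}^n(x-y_j)$. Since the $y_j$ are distinct, $P$ has $n$ simple zeros, and the standard partial fraction expansion gives
\[
    \frac{1}{P(x)}=\sum_{i=1}^n\frac{c_i}{x-y_i},\qquad c_i=\frac{1}{P'(y_i)}=\frac{1}{\prod_{j\neq i}(y_i-y_j)}.
\]
The residues $c_i$ are exactly the terms appearing in the identity.

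Next, multiply through by $x$ and send $x\to\infty$. Since $\deg P=n\geq 2$, the left-hand side $x/P(x)$ tends to $0$. On the right-hand side, each summand $xc_i/(x-y_i)$ tends to $c_i$, so
\[
    0=\lim_{x\to\infty}\frac{x}{P(x)}=\sum_{i=1}^n c_i=\sum_{i=1}^n\frac{1}{\prod_{j\neq i}(y_i-y_j)},
\]
which is the desired identity. Rearranging the $i=n$ term yields the form stated in the claim.

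There is no real obstacle: the argument is a one-line consequence of partial fractions, valid for every $n\geq 2$, which matches the range needed in the inductive proof of Lemma \ref{SimplexLpSupport}. (An alternative, equally short, derivation is to apply Lagrange interpolation to the constant polynomial $f\equiv 1$ at the nodes $y_1,\ldots,y_n$ and read off the coefficient of $x^{n-1}$, which must vanish since $\deg f<n-1$.)
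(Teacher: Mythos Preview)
Your proof is correct. Your primary argument via partial fractions is a minor variant of the paper's: the paper forms the Lagrange interpolant $P_{n-1}(x)=\sum_i \prod_{j\neq i}(x-y_j)/\prod_{j\neq i}(y_i-y_j)$, observes it equals $1$ at the $n$ nodes and hence identically, and reads off the vanishing leading coefficient. Your alternative at the end is precisely this argument; your main route (clearing denominators in the partial fraction identity for $1/P(x)$ and comparing degrees, phrased as taking $x\to\infty$) is the same algebraic identity viewed from the other side. Both are equally short and valid for all $n\geq 2$.
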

\begin{proof}
    The claim is equivalent to showing: 
    \begin{equation*}
        \sum_{i=1}^{n} \frac{1}{\prod_{\substack{j=1\\ j\neq i}}^n (y_i -y_j)}= 0.
    \end{equation*}
    Consider the polynomial 
    \begin{equation*}
        P_{n-1}(x)\defeq \sum_{i=1}^n \frac{\prod_{\substack{j=1 \\ j\neq i}}^n (x - y_j)}{\prod_{\substack{j=1 \\ j\neq i}}^n (y_i - y_j)} =  \left[  \sum_{i=1}^{n} \frac{1}{\prod_{\substack{j=1\\ j\neq i}}^n (y_i -y_j)}\right] x^{n-1} + \text{ lower order terms}. 
    \end{equation*}
    This is such that $P_{n-1}(y_i)=1$ for all $i\in \{1, \ldots, n\}$, as all the terms of the sum vanish except one which evaluates to 1. Therefore, $P_{n-1}-1$ is a polynomial of degree $n-1$ that has $n$ distinct roots and must hence be constant $P_{n-1}\equiv 1$. In particular, the coefficient of the highest order term vanishes, hence the claim. 
\end{proof}

\section{\texorpdfstring{$L^p$-Mahler Integral}{Lp-Mahler Integral}}
\label{LpMahlerIntegralSection}

Following Section \ref{LegendreTransformSection}, we start in \S \ref{LpMahlerIntegralProperties} by exploring a few basic properties of $\M_p$ derived from the corresponding properties of the $L^p$-Legendre transform in \S\ref{LpLegendreProperties}. In \S\ref{LpMahlerIntegralExamples}, we use the calculations of \S\ref{LpLegendreExamples} to compute the Mahler integral of a few examples. Finally, in \S\ref{NakamuraTsujiConnection} we discuss the connection between $\M_p$ and the work of Nakamura--Tsuji.

\subsection{Basic properties}
\label{LpMahlerIntegralProperties}

\subsubsection{Convergence}
\begin{lemma}
    For $\phi\in\Cvx(\R^n)$, $\lim_{p\to \infty}\M_p(\phi)= \M(\phi)$. 
\end{lemma}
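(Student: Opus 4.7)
The plan is to reduce the statement to showing $\lim_{p\to\infty} V(\phi^{*,p}) = V(\phi^*)$, since the factor $V(\phi)$ does not depend on $p$ and $\M(\phi) = V(\phi) V(\phi^*)$. The main ingredients I would combine are the monotonicity of $p \mapsto \phi^{*,p}(y)$ (Lemma \ref{listLemma}(iii)) and the pointwise limit $\phi^{*,p}(y) \to \phi^*(y)$ as $p \to \infty$ (Corollary \ref{LpConvergence}). Together these say that $e^{-\phi^{*,p}(y)}$ is a monotone decreasing family of non-negative measurable functions converging pointwise to $e^{-\phi^*(y)}$.

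The natural next step is to apply the Monotone Convergence Theorem for decreasing sequences (equivalently, Dominated Convergence with $e^{-\phi^{*,p_0}}$ serving as the dominating integrable envelope for all $p \geq p_0$). Provided $V(\phi^{*,p_0}) < \infty$ for some $p_0 > 0$, this directly yields $V(\phi^{*,p}) \to V(\phi^*)$ and hence $\M_p(\phi) \to \M(\phi)$. If instead $V(\phi^{*,p}) = \infty$ for all $p>0$, then by Lemma \ref{FinitenessLemma} this is equivalent to $0 \notin \mathrm{int}\{\phi < \infty\}$; the same geometric condition forces the classical Legendre transform $\phi^*$ to lack superlinear growth (there is a direction along which $\phi^*$ stays bounded), so that $V(\phi^*) = \infty$ as well and the limit is trivially $\infty = \infty$.

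The main obstacle is the boundary case where $V(\phi^{*,p}) = \infty$, since there the Monotone Convergence Theorem provides no direct information about the limit. I would overcome this by invoking Lemma \ref{FinitenessLemma} to show that $V(\phi^{*,p}) < \infty$ and $V(\phi^*) < \infty$ hold or fail together, both being equivalent to $0 \in \mathrm{int}\{\phi < \infty\}$. With this equivalence in hand, the two cases exhaust all possibilities and the conclusion follows upon multiplying through by the $p$-independent factor $V(\phi)$.
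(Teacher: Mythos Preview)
Your proposal is correct and follows essentially the same route as the paper: both invoke the monotonicity of $p\mapsto \phi^{*,p}$ from Lemma~\ref{listLemma}(iii) and the pointwise limit from Corollary~\ref{LpConvergence}, then pass the limit under the integral. You are in fact more careful than the paper, which simply cites the Monotone Convergence Theorem without addressing the decreasing-sequence issue; your case split via Lemma~\ref{FinitenessLemma} (noting that $V(\phi^{*,p})<\infty$ and $V(\phi^{*})<\infty$ are both equivalent to $0\in\mathrm{int}\,\{\phi<\infty\}$) cleanly disposes of the situation where no integrable majorant is available.
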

\begin{proof}
    By Lemma \ref{listLemma}(iii), $\{\phi^{*,p}\}_{p>0}$ is an increasing sequence. Therefore, by the Monotone Convergence Theorem \cite[\S 2.14]{Folland99} and Corollary \ref{LpConvergence}, 
    \begin{equation*}
        \lim_{p\to\infty} V(\phi^{*,p})= \lim_{p\to\infty}\int_{\R^n}e^{-\phi^{*,p}(y)}dy = \int_{\R^n}\lim_{p\to\infty}e^{-\phi^{*,p}(y)}dy = \int_{\R^n} e^{-\phi^*(y)}dy= V(\phi^*). 
    \end{equation*}
    Since $\M_p(\phi)= V(\phi)V(\phi^{*,p})$, the claim follows.
\end{proof}

\subsubsection{Inequalities}
A direct implication of Lemma \ref{listLemma}(iii) is that for $\phi\in\Cvx(\R^n)$, 
\begin{equation}
    \M_q(\phi)\leq\M_p(\phi), \quad p\leq q.  
\end{equation}
A reverse inequality holds under the extra assumption of the vanishing of the barycenter. 
\begin{lemma}
    Let $0<p < q \leq \infty$. For $\phi\in\Cvx(\R^n)$ with $b(\phi)=0$, 
    \begin{equation*}
        \M_q(\phi)\leq \left( \frac{p}{(1+p)^{1+\frac1p}}\right)^n \M_p(\phi). 
    \end{equation*}
\end{lemma}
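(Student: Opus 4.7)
The plan is to integrate the pointwise upper bound provided by Lemma \ref{ReverseIneq}. Since $b(\phi)=0$, that lemma gives
$$\phi^{*,q}(y) \leq \phi^{*,p}\bigl(\tfrac{1+p}{p}y\bigr) + \tfrac{n}{p}\log(1+p)$$
for every $y\in\R^n$. Multiplying by $-1$ and exponentiating converts this into the pointwise density comparison
$$e^{-\phi^{*,q}(y)} \geq (1+p)^{-n/p}\,e^{-\phi^{*,p}((1+p)y/p)}.$$

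Next I would integrate over $\R^n$ and apply the linear change of variables $z=\tfrac{1+p}{p}y$, whose Jacobian contributes a factor of $(p/(1+p))^n$. Combining this factor with the constant $(1+p)^{-n/p}$ coming from the additive term gives
$$V(\phi^{*,q}) \geq \Bigl(\tfrac{p}{(1+p)^{1+1/p}}\Bigr)^n V(\phi^{*,p}).$$
Multiplying both sides by $V(\phi)$ turns this volume comparison into the claimed relation between $\M_q(\phi)$ and $\M_p(\phi)$.

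There is essentially no obstacle once Lemma \ref{ReverseIneq} is in hand — the substantive content (Jensen's inequality applied to the probability measure $e^{-\phi(u)}du/V(\phi)$, with the specific choice $\lambda=p/(p+1)$) was carried out there, and the present argument is just an integration plus a rescaling. The only care needed is bookkeeping of directions: an upper bound on $\phi^{*,q}$ exponentiates to a lower bound on $e^{-\phi^{*,q}}$ and hence on $V(\phi^{*,q})$, which matches the framing of "a reverse inequality" introduced immediately after the bound $\M_q(\phi)\leq \M_p(\phi)$ that precedes the statement.
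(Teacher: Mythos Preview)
Your proof is correct and essentially identical to the paper's: both invoke Lemma \ref{ReverseIneq}, exponentiate, integrate, change variables $z=\tfrac{1+p}{p}y$, and multiply by $V(\phi)$. Your observation about bookkeeping directions is apt---the argument indeed yields $\M_q(\phi)\geq\bigl(\tfrac{p}{(1+p)^{1+1/p}}\bigr)^n\M_p(\phi)$, matching the paper's proof (the $\leq$ in the displayed statement is a typo; the text preceding it announces a ``reverse inequality'' to $\M_q\leq\M_p$).
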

\begin{proof}
    By Lemma \ref{ReverseIneq}, 
    \begin{equation*}
        \begin{aligned}
            V(\phi^{*,q})= \int_{\R^n}e^{-\phi^{*,q}(y)}dy\geq (1+p)^{-\frac{n}{p}}\int_{\R^n}e^{-\phi^{*,p}(\frac{p+1}{p}y)}dy = \left( \frac{p}{(1+p)^{1+\frac1p}}\right)^n V(\phi^{*,p}),
        \end{aligned}
    \end{equation*}
    by changing variables. Multiply both sides by $V(\phi)$ to get the claim. 
\end{proof}

\subsubsection{Tensoriality}
As an immediate result of Lemma \ref{LpLegendreTensor} and \eqref{tensorVolume} we obtain the following: 
\begin{corollary}
\label{MpTensoriality}
    Let $p\in (0,\infty)$. For $f:\R^n\to \R\cup\{\infty\}$ and $g:\R^m \to \R\cup\{\infty\}$ with $0< V(f), V(g)<\infty$, 
    \begin{equation*}
        \M_p(f\otimes g) = \M_p(f)\M_p(g). 
    \end{equation*}
\end{corollary}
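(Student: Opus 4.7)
The proof is essentially a direct assembly of two results already established in the excerpt, so my plan is to chain them together.

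The first step is to apply Lemma \ref{LpLegendreTensor}, which identifies $(f\otimes g)^{*,p}$ with $f^{*,p}\otimes g^{*,p}$ pointwise. Next, I would invoke the tensor-volume formula \eqref{tensorVolume} twice: once on the pair $(f,g)$ to get $V(f\otimes g)=V(f)V(g)$, and once on the pair $(f^{*,p},g^{*,p})$ to get $V((f\otimes g)^{*,p})=V(f^{*,p}\otimes g^{*,p})=V(f^{*,p})V(g^{*,p})$. For the second application to be legal I should check that $V(f^{*,p})$ and $V(g^{*,p})$ are well defined as Lebesgue integrals — this is automatic from Lemma \ref{lowerSemiContLemma} (measurability) and the fact that $f^{*,p},g^{*,p}>-\infty$ so Tonelli applies to the non-negative integrand $e^{-f^{*,p}(x)-g^{*,p}(y)}$, with the value possibly being $+\infty$ but the product identity still holding as an identity in $[0,\infty]$.

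Putting the pieces together, I would then compute
\begin{equation*}
\M_p(f\otimes g)=V(f\otimes g)\,V((f\otimes g)^{*,p})=V(f)V(g)\,V(f^{*,p})V(g^{*,p})=\M_p(f)\M_p(g),
\end{equation*}
which is the claim. There is no real obstacle here beyond bookkeeping; the only mild subtlety is handling the case where one of $V(f^{*,p})$ or $V(g^{*,p})$ is infinite, but since both are strictly positive (by assumption $V(f),V(g)>0$ and the definition of $f^{*,p}$), the product is unambiguous in $(0,\infty]$, so the tensoriality of $\M_p$ holds as an equality of extended reals.
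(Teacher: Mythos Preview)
Your proposal is correct and follows exactly the paper's approach: the paper states this corollary as ``an immediate result of Lemma \ref{LpLegendreTensor} and \eqref{tensorVolume}'', and your argument is precisely that chain, with some additional (and appropriate) care about the extended-real case.
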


\subsection{Examples} 
\label{LpMahlerIntegralExamples}
\subsubsection{\texorpdfstring{$L^1$-norm}{L1-norm}}
\begin{lemma}
    For $p\in (0,\infty)$, 
    $$
    \M_p(\|x\|_1)= \left( \frac{(p+1)^{1+\frac1p}}{p}\frac{2\sqrt{\pi}\, \Gamma(1+\frac1p)}{\Gamma(\frac32 + \frac1p)}\right)^n. 
    $$
\end{lemma}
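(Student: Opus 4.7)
The plan is to reduce the computation to the one-dimensional case using tensoriality, then evaluate the resulting Beta-type integral.

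First, I would observe that the $L^1$-norm decomposes as $\phi_1(x) = |x_1| + \ldots + |x_n| = \phi_1^{\otimes n}(x)$, where I abuse notation by letting $\phi_1$ also denote the one-dimensional absolute value. By tensoriality of the $L^p$-Mahler integral (Corollary \ref{MpTensoriality}),
\begin{equation*}
    \M_p(\phi_1) = \M_p(|\,\cdot\,|)^n,
\end{equation*}
so it suffices to prove the identity for $n=1$. In this case, $V(\phi_1)=2$ by \eqref{L1Volume}, and Lemma \ref{LpLegendreL1norm} gives
\begin{equation*}
    \phi_1^{*,p}(y) = -\frac{1}{p}\log(p+1) - \frac{1}{p}\log\left[1-\left(\tfrac{p}{p+1}y\right)^2\right]
\end{equation*}
for $|y|<(p+1)/p$, and $\infty$ otherwise.

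Next, I would compute $V(\phi_1^{*,p})$ directly:
\begin{equation*}
    V(\phi_1^{*,p}) = (p+1)^{1/p}\int_{-(p+1)/p}^{(p+1)/p}\left[1-\left(\tfrac{p}{p+1}y\right)^2\right]^{1/p}dy.
\end{equation*}
The substitution $u = \frac{p}{p+1}y$ gives
\begin{equation*}
    V(\phi_1^{*,p}) = \frac{(p+1)^{1+\frac{1}{p}}}{p}\int_{-1}^{1}(1-u^2)^{1/p}du,
\end{equation*}
and then the further substitution $v=u^2$ reduces the remaining integral to a Beta function:
\begin{equation*}
    \int_{-1}^{1}(1-u^2)^{1/p}du = \int_0^1 (1-v)^{1/p}v^{-1/2}dv = B\!\left(\tfrac{1}{2},1+\tfrac{1}{p}\right) = \frac{\sqrt{\pi}\,\Gamma(1+\frac{1}{p})}{\Gamma(\frac{3}{2}+\frac{1}{p})},
\end{equation*}
using $\Gamma(1/2)=\sqrt{\pi}$.

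Finally, combining these pieces gives
\begin{equation*}
    \M_p(|\,\cdot\,|) = 2 \cdot \frac{(p+1)^{1+\frac{1}{p}}}{p}\cdot \frac{\sqrt{\pi}\,\Gamma(1+\frac{1}{p})}{\Gamma(\frac{3}{2}+\frac{1}{p})},
\end{equation*}
and raising to the $n$-th power via tensoriality yields the claimed formula. There is no substantive obstacle here; the calculation is essentially a Beta function evaluation once the tensoriality reduction and Lemma \ref{LpLegendreL1norm} are in hand.
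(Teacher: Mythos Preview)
Your proof is correct and follows essentially the same approach as the paper: reduce to $n=1$ via tensoriality (Corollary \ref{MpTensoriality}), invoke Lemma \ref{LpLegendreL1norm} and \eqref{L1Volume}, rescale the integration variable to $[-1,1]$, and evaluate the resulting integral as a Beta function using the substitution $v=u^2$ together with $\Gamma(1/2)=\sqrt{\pi}$.
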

\begin{proof}
    Denote by $\phi_1(x)\defeq \|x\|_1$. 
    By Lemma \ref{MpTensoriality}, it is enough to prove the claim for $n=1$. By Lemma \ref{LpLegendreL1norm}, 
    \begin{equation}
    \label{MpL1Eq1}
        \int_{\R}e^{-\phi_1^{*,p}(y)}dy = (p+1)^{1/p} \int_{-\frac{p+1}{p}}^{\frac{p+1}{p}} \left[1-\left( \frac{p}{p+1}y\right)^2\right]^{1/p}dy= \frac{(p+1)^{1+\frac1p}}{p} \int_{-1}^1 (1- z^2)^{\frac{1}{p}}dz.
    \end{equation}
    Recall that for the Beta function, 
    \begin{equation*}
        \mathrm{B}(x,y)\defeq \int_0^1 t^{x-1} (1-t)^{y-1}dt  = \frac{\Gamma(x)\Gamma(y)}{\Gamma(x+y)}, \quad x,y >0. 
    \end{equation*}
    Therefore, 
    \begin{equation}
    \label{MpL1Eq2}
        \int_{-1}^1 (1-z^2)^{\frac1p}dz = 2\int_0^1 (1-z^2)^{\frac1p}dz = \int_{0}^1 (1-w)^{\frac1p} w^{-\frac12} dw= \mathrm{B}\left( 1+\frac1p, \frac12\right) = \frac{\sqrt{\pi}\, \Gamma(1+\frac1p)}{\Gamma(\frac32 + \frac1p)}
    \end{equation}
    because $\Gamma(1/2)=\sqrt{\pi}$. The claim follows from \eqref{MpL1Eq1}--\eqref{MpL1Eq2} and \eqref{L1Volume}. 
\end{proof}

\begin{corollary}
    $\M_1(\|x\|_1) = (32/3)^n$. 
\end{corollary}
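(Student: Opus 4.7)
The plan is to simply specialize the formula of the previous lemma at $p = 1$. Setting $p=1$ gives $(p+1)^{1+1/p} = 2^2 = 4$, $\Gamma(1 + 1/p) = \Gamma(2) = 1$, and $\Gamma(3/2 + 1/p) = \Gamma(5/2) = \tfrac{3}{2}\cdot\tfrac{1}{2}\cdot\sqrt{\pi} = \tfrac{3\sqrt{\pi}}{4}$. Substituting these into
\begin{equation*}
    \M_p(\|x\|_1) = \left(\frac{(p+1)^{1+\frac1p}}{p}\frac{2\sqrt{\pi}\,\Gamma(1+\frac1p)}{\Gamma(\frac{3}{2}+\frac1p)}\right)^n
\end{equation*}
yields
\begin{equation*}
    \M_1(\|x\|_1) = \left(\frac{4}{1}\cdot\frac{2\sqrt{\pi}\cdot 1}{\frac{3\sqrt{\pi}}{4}}\right)^n = \left(4\cdot\frac{8}{3}\right)^n = \left(\frac{32}{3}\right)^n,
\end{equation*}
which is the claimed identity. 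Since every step is an arithmetic substitution, there is essentially no obstacle; the only care required is the evaluation of $\Gamma(5/2)$ via the recursion $\Gamma(x+1) = x\Gamma(x)$ together with $\Gamma(1/2) = \sqrt{\pi}$.
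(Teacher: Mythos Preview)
Your proof is correct and is exactly the intended argument: the paper states this corollary without proof, as it follows immediately from the preceding lemma by the arithmetic substitution you carried out.
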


\subsubsection{Convex quadratic}
\begin{lemma}
\label{convexQuadratic}
    For $p\in (0,\infty)$, 
    \begin{equation*}
        \M_p(|x|^2/2) = \left( 2\pi \sqrt{\frac{(1+p)^{\frac{1+p}{p}}}{p}}\right)^n. 
    \end{equation*}
\end{lemma}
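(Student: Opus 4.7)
The plan is to reduce to the one-dimensional case via tensoriality and then to directly plug in the formula from Lemma \ref{ConvexQuadraticLpLegendre}, using the Gaussian integral to evaluate $V(q_2^{*,p})$.

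First, observe that $|x|^2/2 = x_1^2/2 + \cdots + x_n^2/2$ has a tensor product structure: writing $q_2^{(1)}(x)\defeq x^2/2$ for the one-dimensional convex quadratic, we have $|x|^2/2 = (q_2^{(1)})^{\otimes n}$. Therefore, by Corollary \ref{MpTensoriality},
$$\M_p(|x|^2/2) = \M_p(q_2^{(1)})^n,$$
so it suffices to establish the formula for $n=1$.

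Next, by equation \eqref{|x|^2/2Vol}, $V(q_2^{(1)}) = \sqrt{2\pi}$. For the volume of the $L^p$-Legendre transform, apply Lemma \ref{ConvexQuadraticLpLegendre} with $n=1$ to get $(q_2^{(1)})^{*,p}(y) = \frac{p}{p+1}\frac{y^2}{2} - \frac{1}{2p}\log(1+p)$. Then
$$V((q_2^{(1)})^{*,p}) = (1+p)^{\frac{1}{2p}} \int_{\R} e^{-\frac{p}{p+1}\frac{y^2}{2}} dy,$$
and the remaining integral is the standard Gaussian after the change of variables $z = \sqrt{p/(p+1)}\, y$, yielding $\sqrt{2\pi(p+1)/p}$. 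Combining,
$$\M_p(q_2^{(1)}) = \sqrt{2\pi} \cdot (1+p)^{\frac{1}{2p}} \sqrt{\frac{2\pi(1+p)}{p}} = 2\pi\sqrt{\frac{(1+p)^{\frac{1}{p}+1}}{p}} = 2\pi\sqrt{\frac{(1+p)^{\frac{1+p}{p}}}{p}}.$$
Raising to the $n$-th power yields the claim.

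There is no significant obstacle: everything reduces to the one-dimensional Gaussian integral once tensoriality and Lemma \ref{ConvexQuadraticLpLegendre} are in hand. The only point to watch is bookkeeping the exponents $\frac{1}{2p}$ (from the additive constant in $q_2^{*,p}$) together with $\frac{1}{2}$ (from the Jacobian of the Gaussian rescaling), which combine as $\frac{1}{2}\bigl(\frac{1}{p}+1\bigr) = \frac{1+p}{2p}$, matching the exponent under the square root in the stated formula.
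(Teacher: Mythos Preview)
Your proof is correct and follows essentially the same approach as the paper: both apply Lemma \ref{ConvexQuadraticLpLegendre} and evaluate the resulting Gaussian integral via the substitution $z=\sqrt{p/(p+1)}\,y$. The only cosmetic difference is that you first reduce to $n=1$ via Corollary \ref{MpTensoriality} and then take the $n$-th power, whereas the paper carries out the same computation directly in $\R^n$.
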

\begin{proof}
    By Lemma \ref{ConvexQuadraticLpLegendre} and \eqref{|x|^2/2Vol}, 
    \begin{equation*}
    \begin{aligned}
        \M_p(|x|^2/2) &= V(|x|^2/2) \int_{\R^n} e^{-\frac{p}{p+1}\frac{|y|^2}{2} + \frac{n}{2p}\log(1+p)}dy \\
        &= (2\pi)^{\frac{n}{2}} (1+ p)^{\frac{n}{2p}} \int_{\R^n}e^{-\frac{|\sqrt{\frac{p}{p+1}} \,y|^2}{2}} dy\\
        &= (2\pi)^{\frac{n}{2}} (1+p)^{\frac{n}{2p}} \int_{\R^n} e^{-\frac{|z|^2}{2}}\left(\frac{p+1}{p}\right)^{\frac{n}{2}}dz\\
        &= (2\pi)^n\frac{(1+p)^{\frac{n}{2}\frac{1+p}{p}}}{p^{\frac{n}{2}}}, 
    \end{aligned}
    \end{equation*}
    where the substitution $z=\sqrt{\frac{p}{p+1}}\, y$ was used. 
\end{proof}

\subsubsection{Functional simplex}
\begin{lemma}
\label{MpFuncSimplex}
    For $p\in (0,\infty)$, 
    \begin{equation*}
        \M_p(\phi_{\Delta_n}) = \left( e^{1+\frac{1}{p}}p^{\frac1p} \Gamma(1+ \frac1p)\right)^n.
    \end{equation*}
\end{lemma}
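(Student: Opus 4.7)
The plan is to reduce the computation to the one-dimensional case via tensoriality, and then evaluate $V(\phi_{\Delta_1}^{*,p})$ directly from the explicit formula established in Lemma \ref{functionalSimplexLpLegendre}. No substantive obstacle is expected; this is essentially a gamma-function integral dressed up by two changes of variables.

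First, since $\phi_{\Delta_n}(x_1,\dots,x_n) = \phi_{\Delta_1}(x_1) + \cdots + \phi_{\Delta_1}(x_n) = \phi_{\Delta_1}^{\otimes n}(x_1,\dots,x_n)$, Corollary \ref{MpTensoriality} gives $\M_p(\phi_{\Delta_n}) = \M_p(\phi_{\Delta_1})^n$. Thus it suffices to prove the $n=1$ statement
$$\M_p(\phi_{\Delta_1}) = e^{1+\frac1p}\, p^{\frac1p}\, \Gamma\!\bigl(1+\tfrac1p\bigr).$$

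Second, by \eqref{functionalSimplexVolume} we already have $V(\phi_{\Delta_1}) = e$. It remains to compute $V(\phi_{\Delta_1}^{*,p})$. By Lemma \ref{functionalSimplexLpLegendre} in dimension one,
$$V(\phi_{\Delta_1}^{*,p}) = \int_{-\infty}^{(p+1)/p} \exp\!\Bigl(-(1-y) + \tfrac{1}{p}\log(p+1-py)\Bigr)\, dy = e^{-1}\int_{-\infty}^{(p+1)/p} e^{y}\,(p+1-py)^{1/p}\, dy.$$
Then I substitute $u = p+1-py$, so that $dy = -du/p$ and $e^{y} = e^{1+1/p}e^{-u/p}$, transforming the integral into
$$V(\phi_{\Delta_1}^{*,p}) = \frac{e^{1/p}}{p}\int_0^{\infty} e^{-u/p}\, u^{1/p}\, du.$$
A further substitution $v = u/p$ reduces this to
$$V(\phi_{\Delta_1}^{*,p}) = e^{1/p}\, p^{1/p}\int_0^\infty e^{-v} v^{1/p}\, dv = e^{1/p}\, p^{1/p}\, \Gamma\!\bigl(1+\tfrac1p\bigr).$$

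Finally, multiplying $V(\phi_{\Delta_1}) = e$ by $V(\phi_{\Delta_1}^{*,p})$ yields $\M_p(\phi_{\Delta_1}) = e^{1+\frac1p} p^{\frac1p}\Gamma(1+\tfrac1p)$, and raising to the $n$-th power via the tensoriality step above completes the proof. The only point requiring care is bookkeeping of the exponential factor coming from the change of variable $u = p+1-py$, since $e^y$ picks up precisely the $e^{1+1/p}$ that combines with the prefactor $e^{-1} = 1/V(\phi_{\Delta_1})^{-1}$ to produce the final $e^{1+1/p}$; this is the one place to check carefully.
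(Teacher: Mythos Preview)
Your proof is correct and essentially identical to the paper's: both reduce to $n=1$ via Corollary \ref{MpTensoriality}, invoke Lemma \ref{functionalSimplexLpLegendre} and \eqref{functionalSimplexVolume}, and evaluate the resulting integral with the successive substitutions $u=p+1-py$ and $v=u/p$ to land on $\Gamma(1+\tfrac1p)$. The only cosmetic difference is that the paper carries the factor $V(\phi_{\Delta_1})=e$ through the integral from the start, whereas you compute $V(\phi_{\Delta_1}^{*,p})$ separately and multiply at the end.
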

\begin{proof}
    Since $\phi_{\Delta_n}(y)= \phi_{\Delta_1}(y_1)+\ldots + \phi_{\Delta_1}(y_n)$, using Corollary \ref{MpTensoriality}, it is enough to prove the claim for $n=1$. 
    By Lemma \ref{functionalSimplexLpLegendre} and \eqref{functionalSimplexVolume}, 
    \begin{equation*}
        \M_p(\phi_{\Delta_1}) = V(\phi_{\Delta_1}) \int_{-\infty}^{\frac{1+p}{p}} e^{-1+ y +\frac1p \log(p + 1 -p y)} dy= e\int_{-\infty}^{\frac{1+p}{p}}e^{-1+y}(p+1 - py)^{\frac{1}{p}} dy. 
    \end{equation*}
    Setting $z= p+1 -py$ and then $t= z/p$, 
    \begin{equation*}
        \M_p(\phi_{\Delta_1}) = \int^{\infty}_0 e^{\frac{p+1-z}{p}} z^{\frac{1}{p}} \frac{dz}{p} = \frac{e^{1+\frac1p}}{p} \int_0^\infty e^{-\frac{z}{p}} z^{\frac1p}dz =  e^{1+\frac{1}{p}}p^{\frac1p}\int_{0}^\infty e^{-t} t^{\frac1p}dt = e^{1+\frac{1}{p}}p^{\frac1p} \Gamma(1+ \frac1p), 
    \end{equation*}
    hence the claim. 
\end{proof}

\subsubsection{The Euclidean ball}
Even though Lemma \ref{hpB2n} is very convenient for numerical experiments, it is not as convenient, at least for us, in the computation of $\M_p(B_2^n)$. In particular, we are not sure how one can go about computing the integral 
\begin{equation*}
    \int_0^\infty \frac{r^{n + \frac{n}{2p}-1}}{I_{n/2}(r)^{1/p}}dr. 
\end{equation*}

\begin{corollary}
\label{MpB2nCor}
    For $p\in (0,\infty)$, 
    \begin{equation*}
        \M_p(B_2^n) = \frac{n\pi^n}{2^{\frac{n}{2p}} p^n \Gamma(1+\frac{n}{2})^{2+ \frac1p}}\int_{0}^\infty \frac{r^{n + \frac{n}{2p} - 1}}{I_{n/2}(r)^{1/p}}dr= e^{o(n)} \left( \frac{2\pi e^{1+ \frac{1}{2p}}}{p n^{1+ \frac{1}{2p}}}\right)^n \int_0^\infty \frac{r^{n+\frac{n}{2p} - 1}}{I_{n/2}(r)^{1/p}}dr. 
    \end{equation*}
\end{corollary}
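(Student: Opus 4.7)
The plan is to reduce the computation of $\M_p(B_2^n) = |B_2^n| \int_{\R^n} e^{-h_{p,B_2^n}(y)}\,dy$ to a one‑dimensional integral via polar coordinates, then extract the dimensional asymptotics by applying Stirling's formula to the prefactor. No additional properties of $I_{n/2}$ beyond the explicit expression from Lemma \ref{hpB2n} are needed, which is why the integral itself remains in the answer.

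First I would plug in the formula from Lemma \ref{hpB2n} to get
\begin{equation*}
e^{-h_{p,B_2^n}(y)} = \Gamma\!\left(1+\tfrac{n}{2}\right)^{-1/p}\left(\tfrac{2}{p|y|}\right)^{-n/(2p)} I_{n/2}(p|y|)^{-1/p},
\end{equation*}
so that, passing to spherical coordinates using $\int_{\R^n} F(|y|)\,dy = n|B_2^n|\int_0^\infty F(r) r^{n-1}\,dr$, one has
\begin{equation*}
\M_p(B_2^n) = n |B_2^n|^2\, \Gamma\!\left(1+\tfrac{n}{2}\right)^{-1/p}\left(\tfrac{p}{2}\right)^{n/(2p)} \int_0^\infty r^{n + n/(2p) - 1} I_{n/2}(pr)^{-1/p}\,dr.
\end{equation*}
Next I would change variables $s = pr$ to absorb the factor of $p$ in the Bessel argument; this contributes a factor $p^{-n - n/(2p)}$, combining with $(p/2)^{n/(2p)}$ to yield $2^{-n/(2p)} p^{-n}$. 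Using $|B_2^n|^2 = \pi^n/\Gamma(1 + n/2)^2$ gives precisely the first equality of the corollary.

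For the asymptotic form, I would apply Stirling's approximation $\Gamma(1 + n/2) = e^{o(n)}\,(n/(2e))^{n/2}$, which gives $\Gamma(1 + n/2)^{2 + 1/p} = e^{o(n)}\,(n/(2e))^{n(1 + 1/(2p))}$. Substituting this into the prefactor $\frac{n\pi^n}{2^{n/(2p)} p^n \Gamma(1 + n/2)^{2 + 1/p}}$, the polynomial factor $n$ and the subexponential factors in Stirling are absorbed into $e^{o(n)}$, while the exponential part rearranges as
\begin{equation*}
\frac{\pi^n}{2^{n/(2p)} p^n} \cdot \left(\tfrac{2e}{n}\right)^{n(1 + 1/(2p))} = \frac{(2\pi)^n e^{n(1 + 1/(2p))}}{p^n\, n^{n(1 + 1/(2p))}} = \left(\frac{2\pi e^{1 + 1/(2p)}}{p\, n^{1 + 1/(2p)}}\right)^{\!n},
\end{equation*}
after cancelling the two powers of $2^{n/(2p)}$. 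The second equality of the corollary follows.

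The only delicate point is the bookkeeping of the various powers of $p$ and $2$ in the substitution $s=pr$, together with tracking the exponent on $\Gamma(1+n/2)$; there is no analytic obstacle beyond this, precisely because the corollary stops short of evaluating the remaining Bessel integral, which is noted in the surrounding discussion to be the genuinely hard step (addressed only for specific regimes in \S\ref{Asymptotics}).
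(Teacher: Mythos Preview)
Your proposal is correct and follows essentially the same approach as the paper: plug in Lemma \ref{hpB2n}, reduce to a radial integral, substitute to remove the $p$ from the Bessel argument, insert $|B_2^n|=\pi^{n/2}/\Gamma(1+n/2)$, and then apply Stirling to the prefactor. The only cosmetic difference is that the paper performs the substitution $z=py$ in $\R^n$ before passing to polar coordinates, whereas you pass to polar coordinates first and then set $s=pr$; the bookkeeping and the result are identical.
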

\begin{proof}
    By Lemma \ref{hpB2n}, 
    \begin{equation*}
        \begin{aligned}
            \M_p(B_2^n) &= |B_2^n| \frac{1}{2^{\frac{n}{2p}}\Gamma(1+ \frac{n}{2})^{\frac1p}} \int_{\R^n} \frac{|py|^{\frac{n}{2p}}}{I_{n/2}(p|y|)^{1/p}} dy.
        \end{aligned}
    \end{equation*}
    Using the change of variables $z = py, dz= p^n dy$, 
    \begin{equation*}
        \M_p(B_2^n) = |B_2^n| \frac{1}{2^{\frac{n}{2p}}\Gamma(1+ \frac{n}{2})^{\frac1p}} \int_{\R^n} \frac{|z|^{\frac{n}{2p}}}{I_{n/2}(|z|)^{1/p}}\frac{dz}{p^n}= \frac{n |B_2^n|^2}{2^{\frac{n}{2p}} p^n \Gamma(1+ \frac{n}{2})^{\frac1p}} \int_{0}^\infty \frac{r^{n + \frac{n}{2p} - 1}}{I_{n/2}(r)^{1/p}}dr. 
    \end{equation*}
    where polar coordinates and $|\partial B_2^n|= n |B_2^n|$ were used. 
    The first claim then follows from the formula for the volume of an $n$-ball, $|B_2^n| = \frac{\pi^{n/2}}{\Gamma(1+ \frac{n}{2})}$. 

    For the asymptotic behaviour, we use Stirling's formula $\Gamma(x)\sim \sqrt{2\pi x}\,(\frac{x}{e})^x$, to find 
    \begin{equation*}
        \Gamma(1+ \frac{n}{2}) = e^{o(n)} \left( \frac{n}{2e}\right)^{\frac{n}{2}}, 
    \end{equation*}
    hence $\Gamma(1+ \frac{n}{2})^{2+ \frac1p} = e^{o(n)} (\frac{n}{2e})^{n + \frac{n}{2p}}$, and 
    \begin{equation*}
        \frac{n\pi^n}{2^{\frac{n}{2p}}p^n \Gamma(1+ \frac{n}{2})^{2+\frac1p}} = e^{o(n)} \left( \frac{2\pi e^{1 + \frac{1}{2p}}}{p n^{1 + \frac{1}{2p}}}\right)^{n}, 
    \end{equation*}
    proving the second claim. 
\end{proof}

\subsubsection{The simplex}
We do not know how to explicitly compute $\M_p(\Delta_{n,+})$ for any finite $p$. Nonetheless, we conjecture that asymptotically, the $L^p$-Mahler volume of the simplex should equal the $L^p$-Mahler integral of the functional simplex (Lemma \ref{MpFuncSimplex}):
\begin{conjecture}
    For $p\in (0,\infty)$, 
    \begin{equation*}
        \M_p(\Delta_{n,0})= \left(e^{1+\frac1p} p^{\frac1p}\Gamma(1+\frac1p) \right)^n e^{o(n)}. 
    \end{equation*}
\end{conjecture}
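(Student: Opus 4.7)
The strategy is an Artstein--Klartag--Milman (AKM) type lift of convex functions to convex bodies in higher dimensions. The key observation is that $\phi_{\Delta_n}$ is the scaling limit of high-dimensional simplices: if $Z$ is uniform on $N\Delta_{N,+}\subset\R^N$, the joint law of its first $n$ coordinates converges, as $N\to\infty$, to $n$ independent standard exponentials, whose density is (up to a translation) $e^{-\phi_{\Delta_n}}$. This suggests that the asymptotics of $\M_p(\Delta_{N,0})$ should match those of $\M_p(\phi_{\Delta_n})$ once the right dimensional correction is identified.

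Concretely, the plan is to introduce an AKM lift $\phi\mapsto K^{(k)}_\phi\subset\R^{n+k}$ via
$$
K^{(k)}_\phi\defeq\left\{(x,y)\in\R^n\times\R^k_{\geq 0} : y_1+\cdots+y_k\leq 1-\tfrac{\phi(x)}{k}\right\},
$$
and then check two complementary facts. First, for $\phi=\phi_{\Delta_n}$, a linear change of variables $x\mapsto k^{-1}(x+\mathbf{1})$ identifies $K^{(k)}_{\phi_{\Delta_n}}$ with a scaled copy of $\Delta_{n+k,+}$, which is affinely equivalent to $\Delta_{n+k,0}$. Second, as $k\to\infty$ one should have the factorization $\M_p(K^{(k)}_\phi)=\M_p(\phi)\cdot\M_p(\phi_{\Delta_k})\cdot e^{o(k)}$, where the second factor arises because the fiber simplex $\{y\geq 0:\sum y_i\leq c\}$ is the geometric realization of $\phi_{\Delta_k}$. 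Combining these with Lemma \ref{MpFuncSimplex}, and writing $A\defeq e^{1+1/p}p^{1/p}\Gamma(1+1/p)$, yields
$$
\M_p(\Delta_{n+k,0})=A^{n}\cdot A^{k}\cdot e^{o(n+k)}=A^{n+k}\,e^{o(n+k)},
$$
which is exactly the conjecture with $N=n+k$ in place of $n$.

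The principal obstacle is the second step --- establishing the asymptotic factorization of $\M_p(K^{(k)}_\phi)$ uniformly in $n$ and $k$. Morally this is a Laplace/saddle-point analysis of the dual integral $\int e^{-h_{p,K^{(k)}_\phi}}$, but the Vandermonde-type denominators in Lemma \ref{SimplexLpSupport} make the quantitative control of the error term delicate for finite $p$. This is the direct analogue of the open $\M_p(B_2^n)$ asymptotics discussed in \S\ref{Asymptotics}, where the obstacle is controlling negative powers of Bessel functions; in both cases the expressions simplify only in the limit $p\to\infty$. Resolving this uniformly in dimension is the aim of the forthcoming work \cite{Mastr24}, which is why the statement is posed as a conjecture rather than a theorem.
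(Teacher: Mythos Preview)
The statement is a \emph{conjecture} in the paper, not a theorem; the paper offers no proof and explicitly says ``We do not know how to explicitly compute $\M_p(\Delta_{n,+})$ for any finite $p$.'' So there is no paper proof to compare against.

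Your proposal is likewise not a proof but a strategy sketch, and you say so yourself in the final paragraph. The heuristic is reasonable and in the spirit of the paper's AKM programme (\S\ref{Asymptotics}): the identification of $K^{(k)}_{\phi_{\Delta_n}}$ with an affine image of $\Delta_{n+k,+}$ is correct (after your change of variables one lands on $(1+n/k)\,\Delta_{n+k,+}$), and the conjectured factorization $\M_p(K^{(k)}_\phi)=\M_p(\phi)\,\M_p(\phi_{\Delta_k})\,e^{o(k)}$ is exactly the kind of ``approximate the Mahler integral by Mahler volumes of higher-dimensional bodies'' step the paper defers to \cite{Mastr24}. But that factorization is the entire content of the conjecture: once you grant it for $\phi=\phi_{\Delta_n}$ you are done, and without it you have nothing. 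So the proposal correctly locates the difficulty but does not reduce it; this is a restatement of why the conjecture is open, not progress toward resolving it.
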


\subsection{Connection to the work of Nakamura--Tsuji}
\label{NakamuraTsujiConnection}
Nakamura--Tsuji prove the following \cite[Corollary 1.6]{NakamuraTsuji24}:
Denote by $\gamma_n(x)\defeq (2\pi)^{-n/2}e^{-|x|^2/2}$ the standard Gaussian. 
\begin{theorem}[Nakamura--Tsuji]
\label{NTtheorem}
    Let $a\in (0,1)$.
    For a non-negative and even $F\in L^a(\R^n)$, 
    \begin{equation*}
        \frac{\|F\|_{L^a}}{\|\mathcal{L}F\|_{L^{\frac{a}{a-1}}}} \leq \frac{\|\gamma_n\|_{L^a}}{\|\mathcal{L}\gamma_n\|_{L^{\frac{a}{a-1}}}}. 
    \end{equation*}
\end{theorem}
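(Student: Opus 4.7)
The plan is to reduce Theorem \ref{NTtheorem} to the already-stated Theorem \ref{LpFuncSantalo} via the dictionary provided by Lemma \ref{DetropicalizedEquiv}. Set $p \defeq (1-a)/a \in (0,\infty)$, $\phi \defeq -a\log F$, and $\phi_{\gamma_n} \defeq -a\log \gamma_n = a|x|^2/2 + \tfrac{an}{2}\log(2\pi)$. Lemma \ref{DetropicalizedEquiv} applied to both $F$ and $\gamma_n$ yields
\begin{equation*}
\frac{\|F\|_{L^a}}{\|\mathcal{L}F\|_{L^{a/(a-1)}}} = p^{np}\M_p(\phi)^p, \qquad \frac{\|\gamma_n\|_{L^a}}{\|\mathcal{L}\gamma_n\|_{L^{a/(a-1)}}} = p^{np}\M_p(\phi_{\gamma_n})^p,
\end{equation*}
so the inequality to be proved is equivalent to $\M_p(\phi) \leq \M_p(\phi_{\gamma_n})$.

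Next, I would show that $\M_p(\phi_{\gamma_n}) = \M_p(|x|^2/2)$. The functional $\M_p$ is unchanged when a constant is added to $\phi$, since the constant cancels between $V(\phi)$ and $V(\phi^{*,p})$ (immediate from the integral defining $\phi^{*,p}$), and it is unchanged under pull-back by any $A \in GL(n,\R)$, since Lemma \ref{listLemma}(ii) together with the change-of-variables formula gives $V(A^*\phi) = V(\phi)/|\det A|$ and $V((A^*\phi)^{*,p}) = |\det A|\,V(\phi^{*,p})$. Taking $A = \sqrt{a}\,I_n$ and discarding the additive constant in $\phi_{\gamma_n}$ then gives the desired identification.

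It therefore suffices to prove $\M_p(\phi) \leq \M_p(|x|^2/2)$. Since $F$ is even, so is $\phi$; and the substitution $x \mapsto -x$ in the defining integral shows $\phi^{*,p}$ is also even, whence $b(\phi^{*,p}) = 0$. By the uniqueness clause of Proposition \ref{LpFunSantaloPoint}, this forces the $L^p$-Santal\'o point $s_p(\phi)$ to coincide with the origin, and hence $\inf_{x \in \R^n}\M_p(T_x\phi) = \M_p(\phi)$. Theorem \ref{LpFuncSantalo} then delivers the required bound $\M_p(\phi) \leq \M_p(|x|^2/2) = \M_p(\phi_{\gamma_n})$.

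The main obstacle in making this outline fully rigorous is verifying that $\phi = -a\log F$ actually belongs to $\Cvx(\R^n)$ under the hypotheses of Theorem \ref{NTtheorem}. Log-concavity of $F$ (the implicit setting for Nakamura--Tsuji's result) renders $\phi$ convex, while $F \in L^a$ and finiteness of $\|\mathcal{L}F\|_{L^{a/(a-1)}}$ supply the bounds $0 < V(\phi), V(\phi^{*,p}) < \infty$; lower semi-continuity is arranged by passing to $\mathrm{cl}\,\phi$, which agrees with $\phi$ almost everywhere and leaves both $V(\phi)$ and $\phi^{*,p}$ unchanged. Degenerate cases where $F$ vanishes on a set of positive measure can be handled by a monotone approximation such as $F \mapsto F + \varepsilon \gamma_n$ and passing to the limit $\varepsilon \to 0^+$, invoking the monotonicity of the $L^p$-Legendre transform under the pointwise ordering of functions (Lemma \ref{listLemma}(iv)) to control both sides simultaneously.
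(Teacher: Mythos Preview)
The paper does not supply its own proof of Theorem \ref{NTtheorem}; it is quoted as Nakamura--Tsuji's result with a citation, and the paper's only contribution here is the sentence ``Given Lemma \ref{DetropicalizedEquiv}, it becomes apparent that Theorem \ref{LpFuncSantalo} for even functions and Theorem \ref{NTtheorem} are equivalent.'' Your proposal is exactly the forward direction of that equivalence, so your approach coincides with the paper's viewpoint. Your two auxiliary reductions---the invariance of $\M_p$ under additive constants and $GL(n,\R)$ pull-backs, and the identification $s_p(\phi)=0$ for even $\phi$ via Proposition \ref{LpFunSantaloPoint}---are correct and fill in details the paper leaves implicit.

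The convexity issue you flag is genuine and is shared by the paper's equivalence claim: Theorem \ref{NTtheorem} as stated assumes only that $F$ is non-negative, even, and in $L^a$, whereas Theorem \ref{LpFuncSantalo} requires $\phi\in\Cvx(\R^n)$ and Corollary \ref{GeneralLpFunSantalo} requires a superlinear lower bound on $\phi$. A general $F\in L^a$ yields neither. Your proposed approximation $F\mapsto F+\varepsilon\gamma_n$ does not repair this, since a sum of a non-log-concave function with a Gaussian need not be log-concave. So your argument (and the paper's equivalence remark) rigorously recovers Theorem \ref{NTtheorem} only under an added log-concavity or superlinearity hypothesis; the statement in full generality rests on Nakamura--Tsuji's original proof.
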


Given Lemma \ref{DetropicalizedEquiv}, it becomes apparent that Theorem \ref{LpFuncSantalo} for even functions and Theorem \ref{NTtheorem} are equivalent. 
\begin{proof}[Proof of Lemma \ref{DetropicalizedEquiv}]
    Write $\phi\defeq -a\log F$. 
    First, 
    \begin{equation*}
        \|F\|_{L^a}= \left( \int_{\R^n}F(x)^{a}dx\right)^{1/a} = \left( \int_{\R^n}e^{-a\log F(x)}dx \right)^{1/a} = \left(\int_{\R^n} e^{-\phi(x)}dx\right)^{1/a}= V(\phi)^{1/a}. 
    \end{equation*}
    In addition, 
    \begin{equation*}
        (\mathcal{L}F)(y)\defeq \int_{\R^n} e^{\langle x,y\rangle} F(x) dx= \int_{\R^n} e^{\langle x,y\rangle + \log F(x)}dx = \int_{\R^n}e^{\langle x,y\rangle -\frac1a \phi(x)}dx. 
    \end{equation*}
    Since $p= \frac{1-a}{a}=\frac{1}{a}-1$, we have $\frac1a = p+1$, and hence 
    \begin{equation*}
        (\mathcal{L}F)(y) = \int_{\R^n}e^{\langle x,y\rangle - (p+1)\phi(x)}dx = V(\phi)\int_{\R^n} e^{p\langle x, y/p\rangle- (p+1)\phi(x)}\frac{dx}{V(\phi)}= V(\phi)e^{p\phi^{*,p}(y/p)}. 
    \end{equation*}
    Therefore, since $\frac{a}{a-1}= -\frac1p$, 
    \begin{equation*}
        \|\mathcal{L}F\|_{L^{\frac{a}{a-1}}}= \left( \int_{\R^n} (\mathcal{L}f)(y)^{\frac{a}{a-1}}dy\right)^{\frac{a-1}{a}} = \left(\int_{\R^n} V(\phi)^{-\frac1p}e^{-\phi^{*,p}(y/p)}dy\right)^{-p}= p^{-np}V(\phi)V(\phi^{*,p})^{-p}. 
    \end{equation*}
    Consequently, 
    \begin{equation*}
        \frac{\|F\|_{L^a}}{\|\mathcal{L}F\|_{L^{\frac{a}{a-1}}}}= p^{np} V(\phi)^{\frac{1}{a}-1} V(\phi^{*,p})^p = p^{np} V(\phi)^{p}V(\phi^{*,p})^p= p^{np} \M_p(\phi)^p
    \end{equation*}
    because $\frac{1}{a}-1=p$. 
\end{proof}

\section{\texorpdfstring{$L^p$-Santal\'o points}{Lp-Santal\'o points}}
\label{LpSantaloPointSection}
In this section, we generalize our result on the uniqueness and existence of $L^p$-Santal\'o points \cite[\S 4]{BMR23} from bodies to functions, by proving Proposition \ref{LpSantaloPoint}. 
The idea of proof is the same as in the case of convex bodies: $x\mapsto \M_p(T_x\phi)$ is a smooth strictly convex function when restricted to $\mathrm{int}\,\{\phi<\infty\}$ and blows-up on its complement. Consequently, it attains a unique minimum.

\subsection{Convexity and existence of a global minimum}
Let us review a fundamental property of lower-semicontinuous convex functions. A function $f:\R^n\to \R\cup\{\infty\}$ is lower-semi continuous when 
\begin{equation*}
    \liminf_{x\to x_0}f(x)\geq f(x_0), 
\end{equation*}
for all $x_0\in \R^n$. For an in-depth discussion on the existence of a minimum for a convex function, we refer the reader to \cite[\S 27]{Rock70}.
\begin{lemma}
\label{ConvMinimumEx}
    A proper lower semi-continuous convex function $\phi$ that satisfies 
    $
    \lim_{|x|\to \infty}\phi(x)=\infty
    $
    must attain a global minimum. 
\end{lemma}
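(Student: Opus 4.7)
The plan is the classical Weierstrass-type argument: isolate a compact sublevel set of $\phi$ on which the infimum is attained, then show this local infimum is the global one.

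First, I would fix a point where $\phi$ is finite. Since $\phi$ is proper, there exists $x_0\in\R^n$ with $\phi(x_0)<\infty$; set $M\defeq \phi(x_0)$ and consider the sublevel set
\begin{equation*}
    S\defeq \{x\in\R^n : \phi(x)\leq M\},
\end{equation*}
which contains $x_0$ and is therefore non-empty.

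Next, I would show $S$ is compact. Boundedness is immediate from coercivity: since $\lim_{|x|\to\infty}\phi(x)=\infty$, there exists $R>0$ such that $\phi(x)>M$ for all $|x|>R$, so $S\subseteq \overline{B(0,R)}$. Closedness follows from lower semi-continuity, as noted right after \eqref{LSC}: sublevel sets of lower semi-continuous functions are closed. Hence $S$ is closed and bounded, i.e., compact.

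Finally, I would apply the standard fact that a lower semi-continuous function on a non-empty compact set attains its infimum. Concretely, choose a minimizing sequence $\{x_k\}\subset S$ with $\phi(x_k)\to\inf_S \phi$; by compactness we may pass to a subsequence $x_k\to x_*\in S$, and by lower semi-continuity $\phi(x_*)\leq \liminf_k \phi(x_k)=\inf_S\phi$, so $x_*$ realizes $\inf_S\phi$. To conclude that $x_*$ is a global minimizer, observe that for any $x\notin S$ one has $\phi(x)>M\geq \phi(x_0)\geq \phi(x_*)$, so $\phi(x_*)= \inf_{\R^n}\phi$. Note that convexity plays no role in this argument: properness, lower semi-continuity, and coercivity already suffice, which is likely why the lemma is stated as a general fact to be invoked later in the $L^p$-Santal\'o point analysis. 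There is no serious obstacle; the only subtlety is remembering to justify each of the three ingredients (non-emptiness from properness, boundedness from coercivity, closedness from lower semi-continuity) separately.
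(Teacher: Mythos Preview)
Your proof is correct and follows essentially the same Weierstrass-type argument as the paper: confine a minimizing sequence to a compact region (you use the sublevel set $\{\phi\leq M\}$, the paper uses a ball $RB_2^n$), extract a convergent subsequence, and invoke lower semi-continuity. Your version is in fact slightly cleaner, since choosing $M=\phi(x_0)$ avoids the paper's awkward step of picking $R$ with $\phi\geq 2\beta$ outside $RB_2^n$, which is ill-posed when $\beta\leq 0$.
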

\begin{proof}
    Recall we are only dealing with convex functions that are not identically $\infty$. Therefore 
    \begin{equation*}
        \beta\defeq \inf_{x\in\R^n}\phi(x)
    \end{equation*}
    is not equal to $\infty$. By the assumption that the limit of $\phi$ tends to $\infty$ as $|x|$ tends to infinity, there exists $R>0$ such that $\phi(x)\geq 2\beta$ for all $|x|>R$. In particular, there exist $\{x_m\}_{m\geq 1}\subset RB_2^n$ such that $\lim_{m\to\infty}\phi(x_m)= \beta$. Since $RB_2^n$ is closed, there is $x_0$ and a subsequence $\{x_{k_m}\}_{m\geq 1}\subset \{x_m\}_{m\geq1}$ with $x_{k_m}\to x_0$. By lower semi-continuity, 
    \begin{equation*}
        \phi(x_0)\leq \liminf_{m\to\infty}\phi(x_{k_m})= \beta. 
    \end{equation*}
    In particular, since $\beta\defeq \inf_{\R^n}\phi$, it must be $\phi(x_0)=\beta$, hence the minimum is attained. Note, in particular, that $\beta>-\infty$ as $\phi$ does not take the $-\infty$ value. 
\end{proof}

\begin{remark}
    The exponential function $\phi(x)\defeq e^x, x\in \R^n$, is smooth and bounded from below but does not attain a global minimum. Consequently, Lemma \ref{ConvMinimumEx} does not hold without the assumption $\lim_{|x|\to\infty} \phi(x)=\infty$ (here: $\lim_{x\to -\infty}e^{x}=0$). 
\end{remark}

\begin{remark}
    Lemma \ref{ConvMinimumEx} does not hold without the lower semi-continuity assumption. For example, 
    \begin{equation*}
        \phi(x)\defeq \begin{dcases}
            \infty, & x< 0\\
            1,  & x = 0, \\
            x^2, & x>0, 
        \end{dcases}
    \end{equation*}
    is convex but does not attain a minimum. It is enough to demonstrate $\phi$ is bounded from below; the rest of the proof will be taken care of by lower semi-continuity. 
\end{remark}

\begin{corollary}
\label{GlobalMinEx}
    Any $\phi\in\Cvx(\R^n)$ attains a global minimum. 
\end{corollary}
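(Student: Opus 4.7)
The plan is to reduce this to Lemma \ref{ConvMinimumEx} by verifying its single nontrivial hypothesis, namely that $\phi$ blows up at infinity. Since $\phi \in \Cvx(\R^n)$ is already proper, convex, and lower semi-continuous by definition, the only thing to check is that $\lim_{|x|\to\infty}\phi(x) = \infty$.

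For this step I would invoke Lemma \ref{ConvexLowerBoundLemma} (already used in the proof of Lemma \ref{finiteLemma2}): for any $\phi \in \Cvx(\R^n)$ there exist $a>0$ and $b \in \R$ with
\begin{equation*}
\phi(x) \geq a|x| + b, \qquad x \in \R^n.
\end{equation*}
This linear lower bound immediately gives $\phi(x) \to \infty$ as $|x| \to \infty$.

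With this in hand, Lemma \ref{ConvMinimumEx} applies directly and yields a global minimizer of $\phi$, completing the proof.

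The only subtle point is the appeal to Lemma \ref{ConvexLowerBoundLemma}: this is where the finite-volume hypothesis $V(\phi) < \infty$ is used (together with properness and lower semi-continuity), since a proper convex lower semi-continuous function with $V(\phi) < \infty$ cannot grow slower than a positive multiple of $|x|$ at infinity, as otherwise $e^{-\phi}$ would fail to be integrable. Everything else is routine: convexity plus lower semi-continuity plus coercivity forces attainment of the infimum on a compact sublevel set, which is exactly the content of Lemma \ref{ConvMinimumEx}.
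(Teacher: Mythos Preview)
Your proof is correct and follows exactly the same approach as the paper: invoke Lemma \ref{ConvexLowerBoundLemma} to get the linear lower bound $\phi(x)\geq a|x|+b$, deduce coercivity, and apply Lemma \ref{ConvMinimumEx}. The paper's proof is simply a terser version of what you wrote.
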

\begin{proof}
    Let $\phi\in\Cvx(\R^n)$. By Lemma \ref{ConvexLowerBoundLemma}, there exist $a>0, b\in \R$ such that $\phi(x)\geq a|x|+b$. In particular, $\lim_{|x|\to\infty}\phi(x)=\infty$, hence by Lemma \ref{ConvMinimumEx}, $\phi$ attains a global minimum. 
\end{proof}

\subsection{Finiteness properties}
In the case of convex bodies $\M_p(K-x)$ is finite if and only if $x\in\mathrm{int}\,K$ \cite[Lemma 4.2]{BMR23}. This generalizes to functions. 
\begin{lemma}
\label{FinitenessLemma}
    Let $p\in (0,\infty)$. For $\phi\in \Cvx(\R^n)$, $\M_p(T_x\phi)<\infty$ if and only if $x\in\mathrm{int}\{\phi<\infty\}$. 
\end{lemma}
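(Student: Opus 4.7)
The plan is to reduce the equivalence to a statement about $V((T_x\phi)^{*,p})$ and then attack each direction separately. Write $\psi := T_x\phi$; since Lebesgue measure is translation invariant, $V(T_x\phi) = V(\phi) \in (0,\infty)$, so $\M_p(T_x\phi) < \infty$ iff $V(\psi^{*,p}) < \infty$. Since $\psi \in \Cvx(\R^n)$, the claim reduces to showing $V(\psi^{*,p}) < \infty$ iff $0 \in \mathrm{int}\{\psi < \infty\}$. Throughout I will use Lemma \ref{listLemma}(i), which gives $\psi^{*,p}(y) = \phi^{*,p}(y) - \langle x,y\rangle$.

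For the easy direction, suppose $0 \in \mathrm{int}\{\psi<\infty\}$. A proper convex function is locally bounded on the interior of its effective domain, so there exist $r>0$ and $M>0$ with $\psi \le M$ on $rB_2^n$. Plugging this into the definition of $\psi^{*,p}$ and rotating so that $y = |y|e_1$, I would restrict the defining integral to the half-ball $\{z \in rB_2^n : \langle z,y\rangle \ge r|y|/2\}$, whose volume is a dimensional constant times $r^n$. This yields $\psi^{*,p}(y) \ge \tfrac{r|y|}{2} + C$ for a constant $C$ depending on $r, M, V(\psi)$, and therefore $V(\psi^{*,p}) \le e^{-C}\int_{\R^n} e^{-r|y|/2}\,dy < \infty$.

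For the hard direction I argue by contrapositive: assume $0 \notin \mathrm{int}\{\psi<\infty\}$. Convexity of $\{\psi<\infty\}$ together with a standard separation argument (supporting hyperplane if $0 \in \overline{\{\psi<\infty\}}\setminus\mathrm{int}\{\psi<\infty\}$, strict separation otherwise) produces a unit vector $v$ with $\langle v,z\rangle \le 0$ for every $z$ with $\psi(z)<\infty$. By Lemma \ref{ConvexLowerBoundLemma} there are $a>0,\ b\in\R$ with $\psi(z) \ge a|z|+b$. Fix $\epsilon \in \bigl(0, a(p+1)/p\bigr)$ and consider any $y = tv + u$ with $t \ge 0$ and $|u| \le \epsilon$. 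For $z \in \{\psi<\infty\}$ one has $t\langle z,v\rangle \le 0$ and $|\langle z,u\rangle| \le \epsilon|z|$, so
\begin{equation*}
    \psi^{*,p}(y) \le \tfrac{1}{p}\log\!\int_{\R^n} e^{p\epsilon|z| - (p+1)\psi(z)}\tfrac{dz}{V(\psi)} \le \tfrac{1}{p}\log\!\int_{\R^n} e^{(p\epsilon - (p+1)a)|z| - (p+1)b}\tfrac{dz}{V(\psi)} =: M_\epsilon,
\end{equation*}
which is finite by the choice of $\epsilon$ and, crucially, independent of $t$ and $u$. Hence $e^{-\psi^{*,p}} \ge e^{-M_\epsilon}$ on the tube $T_\epsilon := \{tv + u : t \ge 0,\ |u| \le \epsilon\}$, which has infinite Lebesgue measure, so $V(\psi^{*,p}) = \infty$.

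The main obstacle is the hard direction: one must simultaneously exploit the separating hyperplane at $0$ and the convex lower bound on $\psi$ to produce a uniform upper bound on $\psi^{*,p}$ over an infinite-volume region. The separation alone only gives $\psi^{*,p}(tv) \to -\infty$ along a single ray, which is too weak; the convex growth estimate $\psi(z) \ge a|z|+b$ is what lets the perpendicular perturbation $u$ be absorbed into the integrand, upgrading a ray to a full tube of infinite measure.
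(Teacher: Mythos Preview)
Your proof is correct. The reduction to $0\in\mathrm{int}\{\psi<\infty\}$ and the easy direction match the paper's Claims~\ref{FinitenessClaim1}--\ref{FinitenessClaim2} in spirit, though the paper compares $\psi^{*,p}$ from below to $h_{p,[-r,r]^n}$ and invokes the finiteness of $\M_p([-1,1]^n)$, whereas you extract a linear lower bound $\psi^{*,p}(y)\ge \tfrac{r}{2}|y|+C$ directly; your version is more self-contained.

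The hard direction is where you genuinely diverge. The paper first normalizes $\phi\ge 0$ (via Corollary~\ref{GlobalMinEx}), observes that $e^{p\phi^{*,p}(-u)}<1$ strictly because $\{\phi<\infty\}$ is full-dimensional, then uses continuity on the sphere to pass to an open neighborhood $U\subset\partial B_2^n$ and integrates in polar coordinates. You instead feed the linear growth bound from Lemma~\ref{ConvexLowerBoundLemma} straight into the estimate, which lets a perpendicular perturbation $|u|\le\epsilon$ be absorbed uniformly; this upgrades control on a single ray to a full tube of infinite measure without any continuity or strict-inequality argument. Your route is cleaner and sidesteps the somewhat delicate step in the paper of justifying $e^{p\phi^{*,p}(rv)}\le e^{p\phi^{*,p}(v)}$ for $v$ merely close to $-u$. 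The paper's approach, on the other hand, produces the slightly stronger qualitative statement that $\phi^{*,p}$ is in fact bounded above (by a negative number after normalization) on an entire cone, not just bounded.
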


The proof of Lemma \ref{FinitenessLemma} will follow from the following two claims, generalizing the corresponding geometric claims \cite[Claims 4.8 and 4.9]{BMR23}:
\begin{claim}
\label{FinitenessClaim1}
    Let $p\in (0,\infty)$ and $\phi\in\Cvx(\R^n)$ with $0\in\mathrm{int}\,\{\phi<\infty\}$. There exists $r>0$ and $M>0$ such that  
    \begin{equation*}
        \M_p(\phi)\leq e^{\frac{p+1}{p}M}\frac{V(\phi)^{1+\frac1p}}{(2r)^{n+\frac{n}{p}}} \M_p([-1,1]^n). 
    \end{equation*}
    In particular, $\M_p(\phi)<\infty$. 
\end{claim}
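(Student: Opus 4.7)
The plan is to use the hypothesis $0\in\mathrm{int}\{\phi<\infty\}$ to dominate $\phi$ from above on a small cube and then use this to obtain a lower bound on $\phi^{*,p}$ in terms of $h_{p,[-r,r]^n}$.

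First, since $0$ is an interior point of $\{\phi<\infty\}$, choose $r>0$ so that the cube $[-r,r]^n\subset\{\phi<\infty\}$. A convex function that is finite at the $2^n$ vertices of a cube is, by Jensen's inequality, bounded above on the cube by the maximum of its values at the vertices. So set $M\defeq\max_{v\in\{-r,r\}^n}\phi(v)<\infty$; then $\phi(x)\leq M$ for every $x\in[-r,r]^n$.

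Next I would restrict the defining integral of $\phi^{*,p}$ to the cube and apply this bound:
\begin{equation*}
\phi^{*,p}(y)\geq\frac{1}{p}\log\int_{[-r,r]^n}e^{p\langle x,y\rangle-(p+1)M}\frac{dx}{V(\phi)}
=-\frac{(p+1)M}{p}-\frac{\log V(\phi)}{p}+\frac{n\log(2r)}{p}+h_{p,[-r,r]^n}(y),
\end{equation*}
where in the last step I recognize $\int_{[-r,r]^n}e^{p\langle x,y\rangle}dx=(2r)^n e^{p\,h_{p,[-r,r]^n}(y)}$ from Definition \ref{LpSupportDef}. Exponentiating the inequality with a minus sign and integrating in $y$ gives
\begin{equation*}
V(\phi^{*,p})\leq e^{\frac{(p+1)M}{p}}(2r)^{-n/p}V(\phi)^{1/p}\int_{\R^n}e^{-h_{p,[-r,r]^n}(y)}dy.
\end{equation*}

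Finally, I would rewrite the remaining integral through the identity $\int e^{-h_{p,K}}=n!|K^{\circ,p}|=\M_p(K)/|K|$, so that with $K=[-r,r]^n$ (of volume $(2r)^n$) I obtain $\int e^{-h_{p,[-r,r]^n}}=\M_p([-r,r]^n)/(2r)^n$. A quick change of variables $x=ru$ in Definition \ref{LpSupportDef} shows $h_{p,rL}(y)=h_{p,L}(ry)$, which in turn yields $\M_p(rL)=\M_p(L)$; applying this with $L=[-1,1]^n$ replaces $\M_p([-r,r]^n)$ by $\M_p([-1,1]^n)$. Multiplying by $V(\phi)$ then produces exactly
\begin{equation*}
\M_p(\phi)=V(\phi)V(\phi^{*,p})\leq e^{\frac{p+1}{p}M}\frac{V(\phi)^{1+\frac{1}{p}}}{(2r)^{n+\frac{n}{p}}}\M_p([-1,1]^n),
\end{equation*}
and the finiteness of the right-hand side (since $\M_p([-1,1]^n)<\infty$ from the convex body theory and $V(\phi)<\infty$ by assumption) gives the ``in particular'' statement. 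No single step is really an obstacle here; the main care is to track the three scaling factors ($(2r)^n$ from the volume of the cube, $e^{-(p+1)M}$ from the upper bound on $\phi$, and $V(\phi)^{-1}$ from the normalization in Definition \ref{LpLegendreDef}) and to invoke the scale invariance of $\M_p$ at the end.
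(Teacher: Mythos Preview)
Your proof is correct and follows essentially the same route as the paper's: restrict the defining integral for $\phi^{*,p}$ to a small cube around the origin, use an upper bound $M$ for $\phi$ on that cube to compare with $h_{p,[-r,r]^n}$, integrate, and finish with the scale invariance $\M_p([-r,r]^n)=\M_p([-1,1]^n)$. The only cosmetic difference is how the bound $\phi\le M$ on $[-r,r]^n$ is obtained: the paper cites continuity of a convex function on the interior of its domain, whereas you use the convex-hull-of-vertices argument via Jensen; both are standard and valid.
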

\begin{proof}
    Since $0\in\mathrm{int}\,\{\phi<\infty\}$, there exists $r>0$ such that $[-r,r]^n\subset \mathrm{int}\,\{\phi<\infty\}$. When restricted to $\mathrm{int}\,\{\phi<\infty\}$, $\phi$ is continuous \cite[Theorem 10.1]{Rock70}. In particular, it is bounded on $[-r,r]^n$, i.e., there exists $M>0$ such that $\phi(x)\leq M$ for all $x\in [-r,r]^n$. Using this, for $y\in \R^n$, 
    \begin{equation*}
        \begin{aligned}
            \phi^{*,p}(y)& =\frac1p \log\int_{\R^n}e^{p\langle x,y\rangle - (p+1)\phi(x)}\frac{dx}{V(\phi)}\geq \frac1p \log\int_{[-r,r]^n} e^{p\langle x, y\rangle - (p+1)\phi(x)}\frac{dx}{V(\phi)} \\
            &\geq \frac1p \log\int_{[-r,r]^n} e^{p\langle x, y\rangle - (p+1)M}\frac{dx}{V(\phi)}= \frac1p \log\left[e^{-(p+1)M}\int_{[-r,r]^n}e^{p\langle x,y\rangle}\frac{dx}{|[-r,r]^n|}\frac{|[-r,r]^n|}{V(\phi)}\right]\\ 
            &=  -\frac{p+1}{p}M + h_{p, [-r,r]^n}(y)+ \frac1p\log \frac{(2r)^n}{V(\phi)}. 
        \end{aligned}
    \end{equation*}
    Consequently, 
    \begin{equation*}
        \begin{aligned}
            \M_p(\phi)= V(\phi)\int_{\R^n}e^{-\phi^{*,p}(y)}dy \leq e^{\frac{p+1}{p}M} \frac{V(\phi)^{1+\frac1p}}{(2r)^{\frac{n}{p}}}\int_{\R^n}e^{-h_{p,[-r,r]^n}(y)}dy.  
        \end{aligned}
    \end{equation*}
    Now, $\int_{\R^n}e^{-h_{p,[-r,r]^n}(y)}dy = \M_p([-r,r]^n)/|[-r,r]^n|= \M_p([-r,r]^n)/(2r)^n = \M_p([-1,1]^n)/(2r)^n$, by the linear invariance of $\M_p$ \cite[Lemma 4.7]{BMR23}. 
\end{proof}

\begin{claim}
\label{FinitenessClaim2}
    Let $p\in (0,\infty]$. For $\phi\in\Cvx(\R^n)$ with $0\not\in\mathrm{int}\,\{\phi<\infty\}$, $\M_p(\phi)=\infty$.  
\end{claim}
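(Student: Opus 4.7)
The strategy is to exhibit a direction along which $\phi^{*,p}$ is non-increasing, use continuity to bound $\phi^{*,p}$ from above on a nonempty open set, and propagate this bound along the direction to get $\phi^{*,p}$ bounded on a set of infinite measure. Since $V(\phi) > 0$, this will force $V(\phi^{*,p}) = \infty$, and hence $\M_p(\phi) = V(\phi) V(\phi^{*,p}) = \infty$.

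The first step is a separation argument. Because $V(\phi) > 0$, the convex set $C := \{\phi < \infty\}$ has positive Lebesgue measure and therefore nonempty interior. Since $0 \notin \mathrm{int}\, C$, the hyperplane separation theorem yields a nonzero $y_0 \in \R^n$ with $\langle x, y_0 \rangle \leq 0$ for all $x \in \mathrm{int}\, C$; by density of $\mathrm{int}\, C$ in $C$ (valid since $\mathrm{int}\, C \neq \emptyset$), this inequality extends to all $x \in C$.

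The second step is the key monotonicity observation. For any $y \in \R^n$ and $t \geq 0$,
\begin{equation*}
    e^{p\phi^{*,p}(y + ty_0)} \;=\; \int_{\R^n} e^{p\langle x, y\rangle - (p+1)\phi(x)} \cdot e^{pt\langle x, y_0 \rangle}\, \frac{dx}{V(\phi)} \;\leq\; e^{p\phi^{*,p}(y)},
\end{equation*}
because the extra factor $e^{pt\langle x, y_0\rangle}$ is $\leq 1$ on the support of the integrand (where $\phi(x) < \infty$, hence $\langle x, y_0 \rangle \leq 0$). Thus $\phi^{*,p}$ is non-increasing along every ray in the direction $y_0$.

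The final step combines this with known regularity: by Corollary \ref{LpLegendreNonEmptyInterior}, the set $U := \mathrm{int}\{\phi^{*,p} < \infty\}$ is nonempty, and by Proposition \ref{LpLegendreSmooth} $\phi^{*,p}$ is continuous on $U$. Pick a bounded open $B$ with $\overline{B} \subset U$ and set $M := \sup_{\overline{B}} \phi^{*,p} < \infty$. By step two, $\phi^{*,p} \leq M$ on the half-cylinder $E := B + [0, \infty) y_0$, which has infinite $n$-dimensional Lebesgue measure because $y_0 \neq 0$ and $B$ is open. Therefore $V(\phi^{*,p}) \geq \int_E e^{-\phi^{*,p}} \geq e^{-M} |E| = \infty$, proving the claim. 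The argument involves no serious obstacle; the only subtle point is the separation step, which requires knowing $\mathrm{int}\, C$ is nonempty—guaranteed by $V(\phi) > 0$ and convexity of $C$.
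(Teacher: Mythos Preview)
Your proof is correct and follows essentially the same idea as the paper's: separate $0$ from $\{\phi<\infty\}$ by a hyperplane and exploit the resulting monotonicity of $\phi^{*,p}$ in the normal direction to get $\phi^{*,p}$ bounded on a set of infinite measure. The only difference is cosmetic: you propagate a bound along a half-cylinder $B+[0,\infty)y_0$, whereas the paper normalizes to $\phi\geq 0$, verifies the strict inequality $e^{p\phi^{*,p}(-u)}<1$, and then integrates in polar coordinates over a cone; your version is slightly cleaner in that it avoids the normalization and the strict-inequality step, at the cost of citing Corollary~\ref{LpLegendreNonEmptyInterior} and Proposition~\ref{LpLegendreSmooth} (continuity of $\phi^{*,p}$ would also follow from its convexity).
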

\begin{proof}
    By Corollary \ref{GlobalMinEx}, $\phi$ attains a global minimum. Replacing $\phi$ by $\phi-\inf_{\R^n}\phi$ leaves $\M_p$ invariant, hence we may assume $\phi\geq 0$. 
    
    By convexity of $\{\phi<\infty\}$, since $0\not\in \mathrm{int}\{\phi<\infty\}$, there exists $u\in\partial B_2^n$ and hyperplane through the origin 
    \begin{equation*}
        u^\perp\defeq \{x\in \R^n: \langle x,u\rangle =0\}
    \end{equation*}
    such that $\{\phi<\infty\}\subset  \{x\in \R^n: \langle x, u\rangle \geq 0\}$. In particular, $\langle x, -u\rangle \leq 0$ for all $x\in \{\phi<\infty\}$, thus 
    \begin{equation*}
        c\defeq e^{p\phi^{*,p}(-u)}= \int_{\{\phi<\infty\}}e^{p\langle x,-u\rangle -(p+1)\phi(x)}\frac{dx}{V(\phi)}\leq \int_{\{\phi<\infty\}}e^{p\langle x, -u\rangle}\frac{e^{-\phi(x)} dx}{V(\phi)} < 1
    \end{equation*}
    because $\phi\geq 0$. It is strictly less than one since $\{\phi<\infty\}$ is $n$-dimensional, hence not completely contained in $u^\perp$. By continuity, there exists an open neighborhood $U$ of $-u$ in $\partial B_2^n$ such that 
    \begin{equation*}
        e^{p\phi^{*,p}(rv)}\leq e^{p\phi^{*,p}(v)}\leq \int_{\{\phi<\infty\}} e^{p\langle x, v\rangle}\frac{e^{-\phi(x)} dx}{V(\phi)}< \frac{1+c}{2}, \quad \text{for all } v\in U, r\geq 1. 
    \end{equation*}
    Consequently, in polar coordinates
    \begin{equation*}
        \M_p(\phi)\geq V(\phi)\int_{\partial V}\int_{r=1}^\infty e^{-\phi^{*,p}(rv)}r^{n-1}drdv\geq V(\phi) \left(\frac{1+c}{2} \right)^{-1/p}\int_V \int_{1}^\infty r^{n-1}dr dv= \infty. 
    \end{equation*}
\end{proof}

\begin{proof}[Proof of Lemma \ref{FinitenessLemma}]
    It is enough to note that $x\in\mathrm{int}\,\{\phi<\infty\}$ if and only if $0\in\mathrm{int}\,\{T_x\phi<\infty\}$. By Claims \ref{FinitenessClaim1} and \ref{FinitenessClaim2}, it follows that $\M_p(T_x\phi)<\infty$ if and only if $x\in\mathrm{int}\,\{\phi<\infty\}$.  
\end{proof}

The next corollary holds trivially in case $\partial\{\phi<\infty\}= \emptyset$. 
\begin{corollary}
\label{MpBoundaryCor}
    Let $p\in (0,\infty]$. For $\phi\in\Cvx(\R^n)$ and $x_0\in \partial\{\phi<\infty\}$, 
    \begin{equation*}
        \lim_{x\to  x_0} \M_p(T_x\phi)= \infty. 
    \end{equation*}
\end{corollary}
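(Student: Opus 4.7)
The plan is to reduce to Claim \ref{FinitenessClaim2} applied at the limit point $x_0$ and then push the resulting $+\infty$ to nearby points by a Fatou-type lower semi-continuity argument, using the explicit translation formula from Lemma \ref{listLemma}(i) to give pointwise continuity in $x$.

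First I would verify that $T_{x_0}\phi$ still lies in $\Cvx(\R^n)$: translation preserves each of convexity, properness, and lower semi-continuity, and $V(T_{x_0}\phi)=V(\phi)\in(0,\infty)$. Since $x_0\in\partial\{\phi<\infty\}$ entails $x_0\notin\mathrm{int}\{\phi<\infty\}$, equivalently $0\notin\mathrm{int}\{T_{x_0}\phi<\infty\}$, Claim \ref{FinitenessClaim2} applies to $T_{x_0}\phi$ and gives $\M_p(T_{x_0}\phi)=\infty$. Combined with the finiteness of $V(T_{x_0}\phi)=V(\phi)$, this forces
\begin{equation*}
    V\bigl((T_{x_0}\phi)^{*,p}\bigr)=\infty.
\end{equation*}

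Next I would fix any sequence $x_k\to x_0$ and invoke Lemma \ref{listLemma}(i), which writes
\begin{equation*}
    (T_{x_k}\phi)^{*,p}(y)=\phi^{*,p}(y)-\langle x_k,y\rangle \xrightarrow{k\to\infty}\phi^{*,p}(y)-\langle x_0,y\rangle=(T_{x_0}\phi)^{*,p}(y)
\end{equation*}
pointwise in $y$ (with the convention $\infty-c=\infty$, so convergence also holds at points where $\phi^{*,p}(y)=\infty$). Consequently $e^{-(T_{x_k}\phi)^{*,p}(y)}\to e^{-(T_{x_0}\phi)^{*,p}(y)}$ for every $y$, and Fatou's lemma applied to this non-negative integrand yields
\begin{equation*}
    V\bigl((T_{x_0}\phi)^{*,p}\bigr)\leq \liminf_{k\to\infty} V\bigl((T_{x_k}\phi)^{*,p}\bigr).
\end{equation*}
Multiplying by $V(\phi)\in(0,\infty)$ gives $\liminf_{k\to\infty}\M_p(T_{x_k}\phi)\geq \M_p(T_{x_0}\phi)=\infty$, and since the sequence $x_k\to x_0$ was arbitrary this proves the corollary.

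There is no real obstacle: the content of the corollary is a lower semi-continuity statement for $\M_p$ under translation, and it falls out of the fact that translation shifts $\phi^{*,p}$ by a continuously varying linear functional. The only minor care is to apply Fatou in the correct direction, namely to the non-negative integrands $e^{-(T_x\phi)^{*,p}(y)}$ rather than to $(T_x\phi)^{*,p}$ itself.
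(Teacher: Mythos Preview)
Your proof is correct and follows essentially the same approach as the paper: both combine the fact that $\M_p(T_{x_0}\phi)=\infty$ (via Claim \ref{FinitenessClaim2}/Lemma \ref{FinitenessLemma}) with a Fatou-type lower semi-continuity argument. Your version is in fact more carefully written than the paper's, since you make explicit the role of Lemma \ref{listLemma}(i) in giving pointwise convergence of the integrands $e^{-(T_x\phi)^{*,p}(y)}$ to which Fatou is applied, whereas the paper compresses this into a one-line $\liminf$ manipulation.
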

\begin{proof}
     By Fatou's Lemma \cite[\S 2.18]{Folland99} and Lemma \ref{FinitenessLemma}, 
     \begin{equation*}
         \liminf_{x\to x_0}\M_p(T_x\phi)\geq \M_p(\liminf_{x\to x_0} T_x\phi)= \M_p(T_{x_0}\phi) = \infty. 
     \end{equation*}
     Therefore, the limit exists and equals $\infty$. 
\end{proof}

\subsection{Smoothness under translation and strict convexity}
\begin{lemma}
\label{SmoothnessLemma}
    Let $p\in (0,\infty]$. For $\phi\in\Cvx(\R^n)$, the map $x\mapsto \M_p(T_x\phi)$, when restricted to $\mathrm{int}\,\{\phi<\infty\}$, is smooth and strictly convex. Furthermore, $D_x\M_p(T_x\phi) = \M_p(T_x\phi) b((T_x\phi)^{*,p})$. 
\end{lemma}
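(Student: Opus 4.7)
The plan is to reduce everything to properties of a single Laplace-type integral. By Lemma \ref{listLemma}(i), $(T_x\phi)^{*,p}(y)=\phi^{*,p}(y)-\langle x,y\rangle$, and by translation invariance $V(T_x\phi)=V(\phi)$. Hence
$$\M_p(T_x\phi)=V(\phi)\int_{\R^n}e^{\langle x,y\rangle-\phi^{*,p}(y)}\,dy=:V(\phi)\,F(x),$$
and by Lemma \ref{FinitenessLemma} the map $F$ is finite precisely on $\mathrm{int}\,\{\phi<\infty\}$. Every claim in the lemma will follow from the corresponding property of the moment generating function $F(x)$ of the finite measure $d\mu(y):=e^{-\phi^{*,p}(y)}dy$.

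For smoothness, I would fix $x_0\in\mathrm{int}\,\{\phi<\infty\}$ and pick $r>0$ with $x_0+rB_2^n\subset\mathrm{int}\,\{\phi<\infty\}$. For $x$ in a small ball around $x_0$, the integrand $|y_1^{m_1}\cdots y_n^{m_n}|\,e^{\langle x,y\rangle-\phi^{*,p}(y)}$ is controlled by a sum of terms of the form $e^{\langle x_0\pm re_i,y\rangle-\phi^{*,p}(y)}$ (after absorbing polynomial factors $|y_i|^{m_i}$ into the exponential by further shrinking $r$), each of which is integrable by Lemma \ref{FinitenessLemma}. Iterated dominated convergence—exactly mirroring the argument used for Proposition \ref{LpLegendreSmooth}—then gives smoothness of $F$ and the identity $\partial_{x_i}F(x)=\int y_i e^{\langle x,y\rangle-\phi^{*,p}(y)}\,dy$. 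Recognizing $F(x)=V((T_x\phi)^{*,p})$ and $F(x)^{-1}\int y\,e^{\langle x,y\rangle-\phi^{*,p}(y)}\,dy=b((T_x\phi)^{*,p})$, one obtains
$$D_x\M_p(T_x\phi)=V(\phi)\,F(x)\,b((T_x\phi)^{*,p})=\M_p(T_x\phi)\,b((T_x\phi)^{*,p}).$$

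For strict convexity, the same differentiation machinery yields
$$\partial_i\partial_j\log F(x)=\frac{\int y_iy_j e^{\langle x,y\rangle-\phi^{*,p}(y)}\,dy}{F(x)}-\frac{\int y_i e^{\langle x,y\rangle-\phi^{*,p}(y)}\,dy\cdot\int y_j e^{\langle x,y\rangle-\phi^{*,p}(y)}\,dy}{F(x)^2},$$
which is precisely the covariance matrix of $y$ under the probability measure $F(x)^{-1}e^{\langle x,y\rangle-\phi^{*,p}(y)}\,dy$. By Corollary \ref{LpLegendreNonEmptyInterior}, $\{\phi^{*,p}<\infty\}$ has non-empty interior, so this measure is not supported on any affine hyperplane; its covariance is therefore strictly positive definite. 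Hence $\log F$ is strictly convex, and composing with the strictly increasing strictly convex exponential preserves strict convexity, so $F$—and thus $\M_p(T_x\phi)=V(\phi)F(x)$—is strictly convex on $\mathrm{int}\,\{\phi<\infty\}$.

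The main technical obstacle is the routine but somewhat delicate justification of repeated differentiation under the integral sign, which essentially duplicates the dominated-convergence argument behind Proposition \ref{LpLegendreSmooth}; once that machinery is in place, all three assertions (smoothness, the gradient formula, and strict convexity) are immediate consequences of viewing $\M_p(T_x\phi)$ as a cumulant generating function.
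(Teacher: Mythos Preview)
Your proposal is correct and follows essentially the same approach as the paper: both reduce $\M_p(T_x\phi)$ to $V(\phi)\int e^{\langle x,y\rangle-\phi^{*,p}(y)}dy$ via Lemma~\ref{listLemma}(i), justify differentiation under the integral by dominated convergence using finiteness at nearby points $x_0\pm re_i$ (Lemma~\ref{FinitenessLemma}), and deduce strict convexity from the fact that the support of $e^{-\phi^{*,p}}$ has non-empty interior. The only cosmetic difference is that the paper shows $v^T D^2 F\, v=V(\phi)\int\langle y,v\rangle^2 e^{-(T_x\phi)^{*,p}(y)}dy>0$ directly, whereas you pass through the covariance matrix $D^2\log F$ and then compose with the exponential; both routes rest on the same non-degeneracy observation.
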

\begin{proof}
    Denote by $e_1, \ldots, e_n$ the standard basis of $\R^n$. 
    Fix $x_0\in \mathrm{int}\,\{\phi<\infty\}$ and let $r>0$ such that $x_0 + 2rB_2^n \subset \mathrm{int}\,\{\phi<\infty\}$. For $0<\e<r$, using Lemma \ref{listLemma}(i),
    \begin{equation}
    \label{MpQuotientEq1}
        \begin{aligned}
            \frac{\M_p(T_{x_0+ \e e_i}\phi)-\M_p(T_{x_0}\phi)}{\e}
            &= V(\phi) \int_{\R^n}\frac{e^{\e y_i}-1}{\e} e^{-(T_{x_0}\phi)^{*,p}(y)}dy,  
        \end{aligned}
    \end{equation}
    because $(T_{x_0 + \e e_i}\phi)^{*,p}(y)= (T_{x_0}\phi)^{*,p}(y) - \langle \e e_i, y\rangle= (T_{x_0}\phi)^{*,p}(y)- \e y_i$. The goal is to apply the Dominated Convergence Theorem \cite[\S 2.24]{Folland99} to commute the integral and the limit. To use this theorem, we first need a uniform upper bound on the quotient \eqref{MpQuotientEq1} that is integrable and independent of $\e$. 
    Using the power series expansion of the exponential function:
    \begin{equation}
        \label{expBound}
        \left|\frac{e^{\e y_i}-1}{\e}\right| = \left| \sum_{m=1}^\infty\frac{\e^{m-1}}{m!} y_i^m\right|\leq \sum_{i=1}^\infty \frac{r^{m-1}}{m!} |y_i|^m= \frac{e^{r|y_i|}-1}{r} \leq \frac{e^{r|y_i|}}{r}. 
    \end{equation}
    By \eqref{MpQuotientEq1} and \eqref{expBound}, 
    \begin{equation*}
        \begin{aligned}
            \left|\frac{\M_p(T_{x_0+ \e e_i}\phi)-\M_p(T_{x_0}\phi)}{\e}\right| 
            &\leq V(\phi)\int_{\R^n} \frac{e^{r|y_i|}}{r}e^{-(T_{x_0}\phi)^{*,p}(y)}dy \\
            &= \frac{V(\phi)}{r} \int_{\R^{n-1}}\left( \int_0^\infty e^{ry_i} + \int_{-\infty}^0 e^{-ry_i}\right) e^{-(T_{x_0}\phi)^{*,p}(y)}dy\\
            &\leq \frac{V(\phi)}{r} \int_{\R^n}e^{ry_i}e^{-(T_{x_0}\phi)^{*,p}(y)} dy + \frac{V(\phi)}{r} \int_{\R^n} e^{-ry_i}e^{-(T_{x_0}\phi)^{*,p}(y)} dy\\
            &= \frac{V(\phi)}{r}\int_{\R^n}e^{-(T_{x_0+ r e_i}\phi)^{*,p}(y)}dy + \frac{V(\phi)}{r} \int_{\R^n} e^{-(T_{x_0 - re_i}\phi)^{*,p}(y)}dy \\
            &= \frac{\M_p(T_{x_0 + r e_i}\phi)+ \M_p(T_{x_0- re_i})}{r}, 
        \end{aligned}
    \end{equation*}
    which is finite by Lemma \ref{FinitenessLemma} because both $x_0+ re_i$ and $x_0- re_i$ lie in $\mathrm{int}\,\{\phi<\infty\}$. Therefore, the Dominated Convergence Theorem \cite[\S 2.24]{Folland99} applied to \eqref{MpQuotientEq1} gives 
    \begin{equation*}
        \begin{aligned}
            \lim_{\e\to 0}\frac{\M_p(T_{x_0+ \e e_i}\phi)- \M_p(T_{x_0}\phi)}{\e} 
            &= \lim_{\e\to 0} V(\phi)\int_{\R^n}\frac{e^{\e y_i}-1}{\e} e^{-(T_{x_0}\phi)^{*,p}(y)} dy\\
            &= V(\phi)\int_{\R^n}\lim_{\e\to 0}\frac{e^{\e y_i}-1}{\e}e^{-(T_{x_0}\phi)^{*,p}(y)} dy \\
            &= V(\phi)\int_{\R^n}y_i e^{-(T_{x_0}\phi)^{*,p}(y)} dy. 
        \end{aligned}
    \end{equation*}
    By multiplying and dividing the right-hand side by $V((T_{x_0}\phi)^{*,p})$, 
    \begin{equation*}
    \begin{aligned}
        \frac{\partial \M_p(T_x\phi)}{\partial x_i}(x_0)= V(\phi)V((T_{x_0}\phi)^{*,p})\int_{\R^n} y_i e^{-(T_{x_0}\phi)^{*,p}(y)}\frac{dy}{V((T_{x_0}\phi)^{*,p})} = \M_p(T_{x_0}\phi) b((T_{x_0}\phi)^{*,p})_i. 
    \end{aligned}
    \end{equation*}

    The existence of higher-order derivatives can be shown similarly. Specifically, 
    \begin{equation*}
        \frac{\partial^2 \M_p(T_x\phi)}{\partial x_i\partial x_j}(x_0)= V(\phi)\int_{\R^n}y_i y_j e^{-(T_{x_0}\phi)^{*,p}(y)} dy. 
    \end{equation*}
    Therefore, for any $v\in \R^n$, 
    \begin{equation*}
        v^T D^2_x\M_p(T_x\phi)v= V(\phi)\sum_{i,j=1}^n \int_{\R^n}y_iy_jv_iv_j e^{-(T_{x}\phi)^{*,p}(y)}dy =V(\phi)\int_{\R^n}\langle y,v\rangle^2 e^{-(T_{x}\phi)^{*,p}(y)}dy \geq 0, 
    \end{equation*}
    with equality if and only if $v=0$, demonstrating strict convexity. 
\end{proof}

\subsection{Existence of Santal\'o points}
In view of Lemma \ref{ConvMinimumEx}, in order to finish the proof of Proposition \ref{LpFunSantaloPoint} we require the following: 
\begin{lemma}
    Let $p\in (0,\infty]$. For $\phi\in\Cvx(\R^n)$, 
    $
        \lim_{|x|\to \infty} \M_p(T_x\phi)= \infty. 
    $
\end{lemma}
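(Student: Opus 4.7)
The plan is to reduce the statement to showing that $V((T_x\phi)^{*,p}) \to \infty$ as $|x| \to \infty$, since $V(T_x\phi) = V(\phi)$ is translation-invariant. Using Lemma \ref{listLemma}(i), $(T_x\phi)^{*,p}(y) = \phi^{*,p}(y) - \langle x,y\rangle$, and hence
\begin{equation*}
V((T_x\phi)^{*,p}) \;=\; \int_{\R^n} e^{\langle x,y\rangle - \phi^{*,p}(y)}\,dy .
\end{equation*}
The strategy is to restrict this integral to a small neighborhood of the origin where $\phi^{*,p}$ is bounded, and then localize further to a spherical cap pointing in the direction of $x$, where $\langle x,y\rangle$ is comparable to $|x|$.

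First I would establish that $\phi^{*,p}$ is bounded on a ball $rB_2^n$ around the origin. By Lemma \ref{ConvexLowerBoundLemma}, there exist $a>0$ and $b\in\R$ with $\phi(x)\geq a|x|+b$; exactly as in the proof of Lemma \ref{finiteLemma2}, this yields $\mathrm{int}\bigl(\tfrac{a(p+1)}{p}B_2^n\bigr)\subset\{\phi^{*,p}<\infty\}$, so in particular $0\in\mathrm{int}\{\phi^{*,p}<\infty\}$. By Proposition \ref{LpLegendreSmooth} the transform is smooth on this open set, and hence bounded on any compact subset; pick $r>0$ and $M>0$ with $\phi^{*,p}(y)\leq M$ for all $|y|\leq r$.

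Next, for $x\neq 0$ set $u = x/|x|$ and consider the cap $C_u \defeq \{y\in\R^n : |y|\leq r,\ \langle u,y\rangle\geq r/2\}$. Its Lebesgue measure equals that of $\{y : |y|\leq r,\ y_1\geq r/2\}$ by rotational invariance, which is a positive constant $c_{n,r}$ depending only on $n$ and $r$. Since $\langle x,y\rangle = |x|\langle u,y\rangle \geq |x|r/2$ on $C_u$, we obtain
\begin{equation*}
V((T_x\phi)^{*,p}) \;\geq\; e^{-M}\int_{C_u} e^{\langle x,y\rangle}\,dy \;\geq\; c_{n,r}\,e^{-M}\,e^{|x|r/2},
\end{equation*}
which tends to $\infty$ as $|x|\to\infty$, proving the claim.

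There is no real obstacle here; all the ingredients have been prepared. The only delicate point is knowing that the origin lies in the interior of $\{\phi^{*,p}<\infty\}$ so that a bounding neighborhood exists, and this is precisely the content (via the superlinearity of convex functions in $\Cvx(\R^n)$) of the computation already carried out in Lemma \ref{finiteLemma2}.
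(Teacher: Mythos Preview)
Your proof is correct and takes a somewhat different route from the paper's. The paper first treats radial limits $t\mapsto \M_p(T_{tu}\phi)$ by restricting the integral to the half-space $\{y:\langle y,u\rangle\geq 1\}$ and invoking the Monotone Convergence Theorem, and then upgrades to arbitrary sequences with $|x_m|\to\infty$ via a compactness/subsequence argument on the directions $x_m/|x_m|\in\partial B_2^n$. You instead localize to a spherical cap inside a small ball around the origin on which $\phi^{*,p}$ is already known to be bounded (from the proof of Lemma~\ref{finiteLemma2} together with Proposition~\ref{LpLegendreSmooth}), and obtain directly the uniform quantitative lower bound $\M_p(T_x\phi)\geq V(\phi)\,c_{n,r}\,e^{-M}\,e^{r|x|/2}$. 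This is more elementary: it avoids both the Monotone Convergence step and the subsequence extraction, and it also sidesteps having to check that the paper's half-space $\{\langle y,u\rangle\geq 1\}$ actually meets $\{\phi^{*,p}<\infty\}$. One minor remark: for $p=\infty$ Proposition~\ref{LpLegendreSmooth} is not stated, but the same conclusion (boundedness of $\phi^*$ on a closed ball contained in $\mathrm{int}\{\phi^*<\infty\}$) follows from the continuity of convex functions on the interior of their domain, so your argument goes through unchanged.
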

\begin{proof}
    Let us first deal with the radial limits: Fix $u\in \partial B_2^n$ and consider $\lim_{t\to\infty}\M_p(T_{tu}\phi)$. Restricting the integral to the cone where 
    $\langle y, u\rangle \geq 1$: 
    \begin{equation*}
        \int_{\R^n}e^{-(T_{tu}\phi)^{*,p}(y)}dy = \int_{\R^n} e^{-\phi^{*,p}(y)+ \langle tu, y\rangle}dy\geq \int_{\{y: \langle y,u\rangle \geq 1\}} e^{-\phi^{*,p}(y)+ t}dy, 
    \end{equation*}
    the first equality follows from Lemma \ref{listLemma}(i). 
    The Monotone Convergence Theorem \cite[\S 2.14]{Folland99} implies 
    \begin{equation*}
        \lim_{t\to\infty} \int_{\{y: \langle y,u\rangle\geq 1\}} e^{-\phi^{*,p}(y)+t}dy = \int_{\{y: \langle y,u\rangle \geq 1\}}\lim_{t\to\infty} e^{-\phi^{*,p}(y)+t} dy = \infty. 
    \end{equation*}

     In the general case one may argue by contradiction: suppose there exists a sequence $\{x_m\}_{m\geq 1}$ with $|x_m|\to \infty$ but $|\M_p(T_{x_m}\phi)|$ does not diverge to infinity. Then there must exist be a subsequence $\{x_{k_m}\}_{m\geq 1}\subset \{x_m\}_{m\geq 1}$ and a constant $M>0$ such that 
     \begin{equation}
     \label{boundedHypo}
         \M_p(T_{x_{k_m}}\phi)\leq M, \quad \text{ for all } m\in\mathbb{N}. 
     \end{equation} 
     Considering the sequence of directions $\{x_{k_m}/|x_{k_m}|\}_{m\geq 1}\subset \partial B_2^n$, the compactness of $\partial B_2^n$ implies the existence of a further subsequence $\{x_{l_{k_m}}\}_{m\geq 1}\subset \{x_{k_m}\}_{m\geq 1}$ with $x_{l_{k_m}}/|x_{l_{k_m}}|$ converging to a unit vector $u\in\partial B_2^n$. For sufficiently large $m$, if $y$ is such that $\langle y, u\rangle\geq 1$, then $\langle y, x_{l_{k_m}}/ |x_{l_{k_m}}|\rangle\geq 1/2$, or equivalently, $\langle y, x_{l_{k_m}}\rangle\geq |x_{l_{k_m}}|/2$. Thus, for large enough $m$, using Lemma \ref{listLemma}(i), 
     \begin{equation*}
         \int_{\R^n}e^{-(T_{x_{l_{k_m}}} \phi)^{*,p}(y)}dy = \int_{\R^n} e^{-\phi^{*,p}(y) + \langle y, x_{l_{k_m}}\rangle}dy \geq \int_{\{y: \langle y,u\rangle\geq 1\}}e^{-\phi^{*,p}(y) + |x_{l_{k_m}}|/2} dy.
     \end{equation*}
     By further restricting to a subsequence, we may assume $\{|x_{l_{k_m}}|\}_{m\geq 1}$ is increasing, hence the Monotone Convergence Theorem \cite[\S 2.14]{Folland99} gives $\lim_{m\to\infty}\M_p(T_{x_{l_{l_m}}}\phi)=\infty$, contradicting \eqref{boundedHypo}. 
\end{proof}

\begin{proof}[Proof of Proposition \ref{LpFunSantaloPoint}]
    The lower semi-continuity of $x\mapsto \M_p(T_x\phi)$ follows from Lemmas \ref{FinitenessLemma}, \ref{SmoothnessLemma}, and Corollary \ref{MpBoundaryCor}. The existence of a minimum $s_p(\phi)$ follows from the convexity of the map (Lemma \ref{SmoothnessLemma}) and Lemma \ref{GlobalMinEx}. Uniqueness of $s_p(\phi)$ follows from strict convexity. By Lemma \ref{FinitenessLemma}, since $D_x\M_p(T_{s_p(\phi)}\phi)=0$, we conclude that $b((T_{s_p(\phi)} \phi)^{*,p}=0$. 
\end{proof}

\section{Asymptotics}
\label{Asymptotics}

One potential approach to establishing Theorem \ref{LpFuncSantalo} is to leverage the corresponding geometric result (Theorem \ref{LpSantaloThm}), following the methods of Artstein--Klartag--Milman \cite{AKM04} and Fradelizi--Meyer \cite{FM08a}. The approach requires two main components. First, we need to approximate the $L^p$-Mahler integral of the function by the $L^p$-Mahler volume of convex bodies, potentially in a much higher dimension. This approximation should also ensure that the $L^p$-Santal\'o points of the approximating bodies converge to the $L^p$-Santal\'o point of the underlying function. Second, obtain an upper bound on the Mahler integral by Theorem \ref{LpSantaloThm} by sending the auxiliary dimension to infinity. This step requires knowing the dimensional asymptotics of $\M_p(B_2^n)$, which should match the asymptotic behavior corresponding maximizer of the functional setting. In our case, we would need the following: 
\begin{conjecture}
\label{MpB2nConj}
    For $p\in (0,\infty)$, 
    \begin{equation*}
        \M_p(B_2^n)= \M_p(|x|^2/2)e^{o(n)}= \left( 2\pi \sqrt{\frac{(1+p)^{\frac{1+p}{p}}}{p}}\right)^n e^{o(n)}. 
    \end{equation*}
\end{conjecture}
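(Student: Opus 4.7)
The plan is to extract the asymptotics from the integral representation of Corollary~\ref{MpB2nCor},
\begin{equation*}
    \M_p(B_2^n) = e^{o(n)} \left(\frac{2\pi e^{1+ \frac{1}{2p}}}{p\, n^{1+ \frac{1}{2p}}}\right)^{\!n} J_n, \qquad J_n \defeq \int_0^\infty \frac{r^{n + \frac{n}{2p} - 1}}{I_{n/2}(r)^{1/p}}\,dr,
\end{equation*}
by a Laplace-method analysis of $J_n$ in the large parameter $n$. The first step is to rescale $r = \nu z$ with $\nu = n/2$, so that the polynomial weight and the Bessel factor sit on the same exponential scale:
\begin{equation*}
    J_n = \nu^{n(1+\frac{1}{2p})}\int_0^\infty z^{n(1+\frac{1}{2p})-1}\,I_\nu(\nu z)^{-1/p}\,dz.
\end{equation*}

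The second step is to invoke Debye's uniform asymptotic expansion of the modified Bessel function: as $\nu\to\infty$,
\begin{equation*}
    I_\nu(\nu z) = \frac{e^{\nu \eta(z)}}{\sqrt{2\pi\nu}\,(1+z^2)^{1/4}}\bigl( 1 + O(1/\nu)\bigr), \qquad \eta(z) = \sqrt{1+z^2} + \log\frac{z}{1+\sqrt{1+z^2}},
\end{equation*}
valid uniformly on compact subsets of $(0,\infty)$. Substituting this exhibits $J_n$, up to an $e^{o(n)}$ factor, as a Laplace integral with exponent
\begin{equation*}
    f(z) = \Bigl(1+\tfrac{1}{2p}\Bigr)\log z - \frac{\eta(z)}{2p}.
\end{equation*}
Using $\eta'(z) = \sqrt{1+z^2}/z$, one finds $f'(z) = \bigl(2p+1 - \sqrt{1+z^2}\bigr)/(2pz)$, which vanishes at the unique interior critical point $z_* = 2\sqrt{p(p+1)}$, with $f''(z_*)<0$. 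Standard Laplace asymptotics will then give $J_n = \nu^{n(1+1/(2p))}\,e^{n f(z_*)}\,e^{o(n)}$. The value $f(z_*)$ simplifies via $\sqrt{1+z_*^2} = 2p+1$ and $\eta(z_*) = (2p+1) + \tfrac12\log\tfrac{p}{p+1}$ to an elementary expression in $\log p$, $\log(p+1)$ and $\log 2$. Plugging this back into the prefactors of Corollary~\ref{MpB2nCor} causes the $\log n$ and $\log 2$ contributions to cancel, and what remains is precisely $\M_p(|x|^2/2) = \bigl(2\pi\sqrt{(1+p)^{(1+p)/p}/p}\bigr)^n$ from Lemma~\ref{convexQuadratic}.

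The main obstacle will be upgrading the Debye expansion to a control that is uniform on the whole half-line, so that Laplace's method applies rigorously and the two tails contribute only $e^{o(n)}$. Near $z=0$, the small-argument series $I_\nu(\nu z) \sim (\nu z/2)^\nu/\Gamma(\nu+1)$ combined with the polynomial weight leaves an integrand behaving like $z^{n-1}$, which gives an $\epsilon^n/n$ tail; near $z=\infty$, the large-argument expansion $I_\nu(\nu z) \sim e^{\nu z}/\sqrt{2\pi\nu z}$ forces decay like $e^{-\nu z/p}$ that ultimately dominates any polynomial. Making both tail estimates quantitative, and verifying that the Debye $O(1/\nu)$ error is uniform on a neighborhood of $z_*$ large enough to capture the effective support of the Laplace integrand, is the technical crux; once this is in place, the remainder is the algebraic identity $f(z_*) = \tfrac{2p+1}{2p}\log 2 + \tfrac12\log p + \tfrac{p+1}{2p}\log(p+1) - \tfrac{2p+1}{2p}$ together with routine bookkeeping.
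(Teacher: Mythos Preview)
Your approach is correct in outline and, if the Laplace-method step is made rigorous, proves the full conjecture for every $p\in(0,\infty)$.  The paper, by contrast, proves only the case $p=1$, and by a completely different route.  The paper first replaces $I_{n/2}(r)^{-1/p}$ by $r^{1/p}K_{n/2+1}(r)^{1/p}$ using Gaunt's two-sided bound $\tfrac12<rK_{\nu+1}(r)I_\nu(r)\le 1$ (so the substitution costs only a harmless $2^{n/p}$).  For $p=1$ this leaves the first power of $K_{n/2+1}$, and the paper then exploits the closed-form identity $\int_{\R}e^{\beta t}|t|^\nu K_\nu(|t|)\,dt=\sqrt{\pi}\,\Gamma(\nu+\tfrac12)2^\nu(1-\beta^2)^{-\nu-1/2}$: expanding both sides as power series in $\beta$ and matching coefficients gives $\int_0^\infty t^{2m+\nu}K_\nu(t)\,dt$ \emph{exactly}, and the choice $m=(n-1)/2$, $\nu=n/2+1$ plus Stirling finishes.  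This method buys exact formulas but is locked to integer powers of $K_\nu$, hence to $p=1$; the paper explicitly says it does not know how to handle general $p$.  Your Debye/Laplace argument is asymptotic rather than exact, but treats all $p$ at once --- so in scope it strictly exceeds what the paper establishes.

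One remark on the technical crux you flag: the Debye expansion you quote is in fact Olver's \emph{uniform} asymptotic expansion (DLMF 10.41.3), valid with the $O(1/\nu)$ error uniform over all $z\in(0,\infty)$, not merely on compacta.  This removes most of the difficulty you anticipate near $z_*$; what remains is the two tail bounds, and the ones you sketch ($z^{n-1}$ near $0$ from $I_\nu(\nu z)\sim(\nu z/2)^\nu/\Gamma(\nu+1)$, and $e^{-\nu z/p}$ decay near $\infty$ from $I_\nu(\nu z)\sim e^{\nu z}/\sqrt{2\pi\nu z}$) are correct and straightforward to make quantitative.  Your computation of $z_*=2\sqrt{p(p+1)}$, of $f(z_*)$, and the final cancellation against the prefactors of Corollary~\ref{MpB2nCor} all check out.
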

By Corollary \ref{MpB2nCor}, we have an explicit formula for $\M_p(B_2^n)$. However, this formula involves the integral of a Bessel function, which is challenging to compute. Specifically, in view of Corollary \ref{MpB2nCor}, Conjecture \ref{MpB2nConj} becomes equivalent to the following: 
\begin{conjecture}
\label{BesselIntegralConj}
For $p\in (0,\infty)$, 
        \begin{equation}
        \label{GOAL}
        \int_0^\infty \frac{r^{n+\frac{n}{2p}-1}}{I_{n/2}(r)^{\frac1p}}dr = \left(\frac{n^{1+ \frac{1}{2p}}}{e^{1+ \frac{1}{2p}}} \sqrt{p(1+p)^{1+ \frac{1}{p}}}\right)^n e^{o(n)}.
    \end{equation}
\end{conjecture}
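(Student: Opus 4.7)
The plan is to evaluate the integral in \eqref{GOAL} by Laplace's method, using the uniform (Debye) asymptotic expansion of $I_\nu$ for large order $\nu \defeq n/2$ \cite[10.41.3]{NIST_handbook}:
\begin{equation*}
    I_\nu(\nu z) \sim \frac{e^{\nu \eta(z)}}{\sqrt{2\pi\nu}\,(1+z^2)^{1/4}}, \qquad \eta(z) \defeq \sqrt{1+z^2} + \log\frac{z}{1+\sqrt{1+z^2}},
\end{equation*}
valid uniformly on compact subsets of $(0,\infty)$. Setting $\alpha \defeq 2 + 1/p$, so that $n + n/(2p) = \nu \alpha$, the change of variables $r = \nu z$ converts the integral in \eqref{GOAL} to
\begin{equation*}
    \nu^{\nu\alpha} \int_0^\infty \frac{z^{\nu\alpha - 1}}{I_\nu(\nu z)^{1/p}}\, dz \;\sim\; \nu^{\nu\alpha} (2\pi\nu)^{\frac{1}{2p}} \int_0^\infty (1+z^2)^{\frac{1}{4p}}\, z^{-1}\, e^{\nu f(z)}\, dz,
\end{equation*}
where $f(z) \defeq \alpha \log z - \eta(z)/p$.

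Next I would apply Laplace's method to the last integral. Since $\eta'(z) = \sqrt{1+z^2}/z$, the saddle point equation $\alpha = \eta'(z)/p$ yields $\sqrt{1+z_*^2} = \alpha p = 2p+1$, so
\begin{equation*}
    z_* = 2\sqrt{p(p+1)}, \qquad \eta(z_*) = (2p+1) + \tfrac{1}{2}\log\tfrac{p}{p+1}.
\end{equation*}
A direct differentiation gives $f''(z_*) = -1/(p(2p+1)) < 0$, so the saddle is non-degenerate. Standard Laplace estimates on a compact neighborhood of $z_*$ then produce
\begin{equation*}
    \int_0^\infty \frac{r^{n + n/(2p) - 1}}{I_{n/2}(r)^{1/p}}\, dr = e^{o(n)}\cdot \nu^{\nu\alpha}\, e^{\nu f(z_*)}.
\end{equation*}
Plugging in $\nu = n/2$ and the explicit values of $z_*$ and $\eta(z_*)$, a straightforward algebraic simplification shows that $\nu[\alpha(\log\nu + \log z_*) - \eta(z_*)/p]$ equals exactly $n[(1 + \frac{1}{2p})\log n - (1 + \frac{1}{2p}) + \frac{1}{2}\log p + \frac{p+1}{2p}\log(p+1)]$, which is the logarithm of the right-hand side of \eqref{GOAL}. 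In particular, the stray $\log 2$ contributions from $\log \nu$ and $\log z_*$ cancel against each other.

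The main obstacle is rigorously extending the \emph{local} Laplace approximation to the entire positive axis within the prescribed $e^{o(n)}$ precision. Two regimes require a separate argument. For $z \in (0, \varepsilon)$, the small-argument bound $I_\nu(r) \sim (r/2)^\nu / \Gamma(\nu+1)$ together with Stirling's formula show that the contribution is at most $e^{o(n)}$ times the saddle value. For $z$ large, the expansion $I_\nu(r) \sim e^r/\sqrt{2\pi r}$ renders the integrand bounded by $e^{-r/p}$ times a polynomial in $r$, producing an exponentially small tail. The quantitative matching between these two regimes and the Debye expansion (which is optimal for $z$ of order one) requires a uniform error bound for $I_\nu(\nu z)$ in $z$, which is the most delicate ingredient and presumably the reason the estimate has so far been stated only as a conjecture.
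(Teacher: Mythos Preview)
Your Laplace/Debye computation is correct: the saddle point $z_*=2\sqrt{p(p+1)}$, the value $\eta(z_*)=(2p+1)+\tfrac12\log\tfrac{p}{p+1}$, the second derivative $f''(z_*)=-1/(p(2p+1))$, and the final matching with the right-hand side of \eqref{GOAL} all check out. You also identify the genuine obstacle precisely: what is missing is a uniform-in-$z$ control of $I_\nu(\nu z)$ that lets you glue the three regimes together at the $e^{o(n)}$ scale.

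The paper takes a completely different route and only settles the case $p=1$. Rather than attack the integral with $I_{n/2}$ in the denominator directly, it first uses Gaunt's two-sided bound $\tfrac12 < rK_{\nu+1}(r)I_\nu(r)\le 1$ (Lemma~\ref{GauntLemma}) to replace $I_{n/2}(r)^{-1/p}$ by $r^{1/p}K_{n/2+1}(r)^{1/p}$ up to $e^{o(n)}$, turning the problem into evaluating $\int_0^\infty r^{n+3n/(2p)-1}K_{n/2+1}(r)^{1/p}\,dr$. For $p=1$ this is a moment of $K_\nu$, and the paper extracts it exactly by expanding the closed-form identity $\int_\R e^{\beta t}|t|^\nu K_\nu(|t|)\,dt = \sqrt{\pi}\,\Gamma(\nu+\tfrac12)2^\nu (1-\beta^2)^{-\nu-1/2}$ as a power series in $\beta$ and comparing coefficients (Lemma~\ref{KintAsympt1}); Stirling then finishes the asymptotics. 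This is exact and elementary but visibly breaks for $p\neq 1$, since then one would need moments of $K_\nu^{1/p}$, for which no such generating-function trick is available.

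So your approach is strictly more general in scope: a rigorous version would resolve the full conjecture for all $p$, whereas the paper's method is confined to $p=1$. Conversely, for $p=1$ the paper's argument is fully rigorous and avoids any delicate uniform Bessel bounds, while yours remains heuristic pending exactly the uniform estimate you flag in your last paragraph.
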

We prove Conjectures \ref{MpB2nConj} and \ref{BesselIntegralConj} for $p=1$. It is possibly useful to mention we found the following work useful in our calculations: \cite{Gaunt14, Gaunt16, Gaunt18, Gaunt19, Gaunt20}. Also, an interesting book for asymptotic methods for integrals \cite{Temme15}.

\subsection{Reformulation: From first kind to second kind}
Denote by 
\begin{equation*}
    K_\nu(x) 
\end{equation*}
the modified Bessel function of the second kind. 
Our goal is to study the integral \eqref{GOAL}. However, having the Bessel function in the denominator is an integral that we could not trace in the literature. To overcome this problem, we use an inequality of Gaunt \cite[Lemma 3]{Gaunt16}: 
\begin{lemma}
\label{GauntLemma}
    For $\nu\geq -1/2$ and $r\geq 0$, $\frac12 < r K_{\nu+1}(r)I_\nu(r)\leq 1.$
\end{lemma}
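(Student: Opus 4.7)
The plan is to study $L(r) \defeq rK_{\nu+1}(r) I_\nu(r)$ using the classical Wronskian identity
\begin{equation*}
    I_\nu(r) K_{\nu+1}(r) + I_{\nu+1}(r) K_\nu(r) = \frac{1}{r}, \quad r > 0,
\end{equation*}
combined with a Tur\'an-type comparison for ratios of consecutive Bessel functions. The upper bound is essentially immediate from the Wronskian: both terms on the left are strictly positive for $r > 0$, so $r I_\nu(r) K_{\nu+1}(r) < 1$, with the boundary value $L(0) = 1$ obtained by continuous extension via the small-argument asymptotics $I_\nu(r) \sim (r/2)^\nu/\Gamma(\nu+1)$ and $K_{\nu+1}(r) \sim \tfrac{1}{2}\Gamma(\nu+1)(r/2)^{-(\nu+1)}$.

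For the lower bound, the Wronskian reduces the inequality $L(r) > 1/2$ to the Tur\'an-type inequality $I_{\nu+1}(r)/I_\nu(r) < K_{\nu+1}(r)/K_\nu(r)$; indeed, this forces $I_{\nu+1} K_\nu < I_\nu K_{\nu+1}$, whence $2 I_\nu(r) K_{\nu+1}(r) > 1/r$. To prove it for $\nu > -1/2$, I plan to use Poisson's integral representations
\begin{equation*}
    I_\nu(r) = \frac{(r/2)^\nu}{\sqrt{\pi}\,\Gamma(\nu+\tfrac{1}{2})} \int_{-1}^{1} e^{rt}(1-t^2)^{\nu-\frac{1}{2}}\,dt,
\end{equation*}
\begin{equation*}
    K_\nu(r) = \frac{\sqrt{\pi}\,(r/2)^\nu}{\Gamma(\nu+\tfrac{1}{2})} \int_{1}^{\infty} e^{-rt}(t^2-1)^{\nu-\frac{1}{2}}\,dt,
\end{equation*}
together with the identities $\frac{d}{dr}[r^{-\nu} I_\nu(r)] = r^{-\nu} I_{\nu+1}(r)$ and $\frac{d}{dr}[r^{-\nu} K_\nu(r)] = -r^{-\nu} K_{\nu+1}(r)$. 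Logarithmic differentiation then expresses $I_{\nu+1}(r)/I_\nu(r)$ as the expectation of the coordinate $t$ under the probability measure on $[-1,1]$ with density proportional to $e^{rt}(1-t^2)^{\nu-1/2}$, and $K_{\nu+1}(r)/K_\nu(r)$ as the analogous expectation on $[1,\infty)$ with density proportional to $e^{-rt}(t^2-1)^{\nu-1/2}$. Since the first expectation is strictly less than $1$ while the second is strictly greater than $1$, the inequality follows at once.

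The endpoint case $\nu = -1/2$ must be handled separately, since the density $(1-t^2)^{\nu-1/2}$ fails to be integrable at $t = \pm 1$. Here I would compute $L$ directly from the explicit formulas $I_{-1/2}(r) = \sqrt{2/(\pi r)}\cosh r$ and $K_{1/2}(r) = \sqrt{\pi/(2r)}\, e^{-r}$, obtaining $L(r) = (1 + e^{-2r})/2$, which is manifestly in $(1/2, 1]$ for $r \geq 0$. The main obstacle is the Tur\'an step; the integral-representation reduction to expectations on the disjoint supports $[-1,1]$ and $[1,\infty)$ yields a particularly transparent proof, provided the non-integrable boundary case $\nu = -1/2$ is dispatched by the explicit computation above.
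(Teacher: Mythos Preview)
The paper does not supply its own proof of this lemma: it is quoted verbatim from Gaunt \cite[Lemma 3]{Gaunt16} and used as a black box to pass from $I_{n/2}$ in the denominator to $K_{n/2+1}$ in the numerator. So there is no in-paper argument to compare against.

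Your proposal is nonetheless a correct and self-contained proof. The Wronskian $I_\nu K_{\nu+1}+I_{\nu+1}K_\nu=1/r$ together with positivity immediately gives the upper bound, and your reduction of the lower bound to $I_{\nu+1}/I_\nu < K_{\nu+1}/K_\nu$ is exactly right. The key step---expressing each ratio as the mean of $t$ under a probability measure supported in $[-1,1]$ (for $I$) and in $[1,\infty)$ (for $K$) via the Poisson integral representations and the identities $(r^{-\nu}I_\nu)'=r^{-\nu}I_{\nu+1}$, $(r^{-\nu}K_\nu)'=-r^{-\nu}K_{\nu+1}$---is clean and valid for $\nu>-1/2$, and your separate treatment of $\nu=-1/2$ via $I_{-1/2}(r)=\sqrt{2/(\pi r)}\cosh r$, $K_{1/2}(r)=\sqrt{\pi/(2r)}\,e^{-r}$ correctly closes the endpoint where the weight $(1-t^2)^{\nu-1/2}$ fails to be integrable. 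The limiting value $L(0)=1$ (so that $\le 1$ is attained only at $r=0$) is also handled properly.
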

Using Lemma \ref{GauntLemma} we may replace $I_{n/2}(r)^{-\frac{1}{p}}$ in \eqref{GOAL} by $r^{\frac1p}K_{\frac{n}{2}+1}(r)^{\frac1p}$: 
\begin{corollary}
    \label{GauntCorollary}
    For $p\in (0,\infty)$, 
    \begin{equation*}
        \int_0^\infty \frac{r^{n+\frac{n}{2p}-1}}{I_{n/2}(r)^{\frac1p}}dr= e^{o(n)}\int_{0}^\infty r^{n+ \frac{3n}{2p}-1} K_{\frac{n}{2}+1}(r)^{\frac{1}{p}}dr.  
    \end{equation*}
\end{corollary}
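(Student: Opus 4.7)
The plan is to invoke Lemma~\ref{GauntLemma} pointwise in $r$ to replace $I_{n/2}(r)^{-1/p}$ by a constant multiple of $(rK_{n/2+1}(r))^{1/p}$. Since the constants in Gaunt's bound are absolute (independent of both $r$ and $n$), the resulting multiplicative distortion is trivially $e^{o(n)}$, so integration gives the claim directly.

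Concretely, I would first record, from Lemma~\ref{GauntLemma} with $\nu=n/2$, the two-sided bound
$$rK_{n/2+1}(r)\leq \frac{1}{I_{n/2}(r)}<2\,rK_{n/2+1}(r),\quad r>0.$$
Raising to the positive power $1/p$ preserves both inequalities, and multiplying through by the positive factor $r^{n+n/(2p)-1}$ yields the pointwise sandwich
$$r^{n+\frac{n}{2p}+\frac{1}{p}-1}K_{n/2+1}(r)^{1/p}\;\leq\;\frac{r^{n+\frac{n}{2p}-1}}{I_{n/2}(r)^{1/p}}\;<\;2^{1/p}\,r^{n+\frac{n}{2p}+\frac{1}{p}-1}K_{n/2+1}(r)^{1/p}.$$

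Next, I would integrate these bounds over $r\in(0,\infty)$. Convergence of the left side of the corollary is already implicit in Corollary~\ref{MpB2nCor}; convergence of the $K_{n/2+1}$-integral then follows from the sandwich above, or directly from the standard small-$r$ asymptotic $K_{n/2+1}(r)\sim\frac{1}{2}\Gamma(n/2+1)(r/2)^{-n/2-1}$ together with the exponential decay $K_{n/2+1}(r)\sim\sqrt{\pi/(2r)}\,e^{-r}$ at infinity. The ratio of the two integrals therefore lies in the $n$-independent interval $[1,2^{1/p}]$, which is $e^{o(n)}$, giving the claim.

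I do not anticipate a genuine obstacle — the content of the corollary is essentially a one-line consequence of Gaunt's inequality, and the exponential nature of the $e^{o(n)}$ fudge factor makes any $n$-independent constant invisible. The only point warranting mild care is the exponent on $r$ in the $K_{n/2+1}$-integral: the direct computation above produces $n+\frac{n}{2p}+\frac{1}{p}-1$, which should be reconciled with the exponent $n+\frac{3n}{2p}-1$ recorded in the statement.
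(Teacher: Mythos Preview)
Your approach is exactly the paper's: the corollary is stated immediately after Gaunt's lemma with only the remark that one ``may replace $I_{n/2}(r)^{-1/p}$ by $r^{1/p}K_{n/2+1}(r)^{1/p}$,'' and your sandwich argument carries this out verbatim, with the ratio confined to $[1,2^{1/p}]$ and hence trivially $e^{o(n)}$.

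Your closing worry about the exponent is well-founded, and in fact you have caught a typo in the statement rather than an error in your own computation. Gaunt's bound contributes exactly one extra factor of $r^{1/p}$, so the exponent on the $K$-side must be $n+\frac{n}{2p}+\frac{1}{p}-1$ (equivalently $n+\frac{n+2}{2p}-1$), not $n+\frac{3n}{2p}-1$. This is confirmed by the paper's own application in the $p=1$ case: Corollary~\ref{p1Calc} evaluates $\int_0^\infty t^{n+n/2}K_{n/2+1}(t)\,dt$, whose exponent $n+n/2$ matches your formula at $p=1$ but not the stated $n+\tfrac{3n}{2}-1$. So your derivation is correct; the discrepancy lies in the printed statement.
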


\subsection{\texorpdfstring{The $p=1$ case}{The p=1 case}}
When $p=1$ we can use the well-known formula \cite[(A.17)]{Gaunt14}, 
\begin{equation}
\label{KnuInt}
    \int_{\R}e^{\beta t} |t|^\nu K_\nu(|t|)dt = \frac{\sqrt{\pi}\,\Gamma(\nu+1/2)2^\nu}{(1-\beta^2)^{\nu + \frac12}}, \quad \nu>-1/2, -1<\beta<1.  
\end{equation}
\begin{lemma}
\label{KintAsympt1}
    Let $n\in\mathbb{N}$ be odd. For $m\in\mathbb{N}$, 
    \begin{equation*}
        \int_{0}^\infty t^{2m+ \frac{n}{2}+1} K_{\frac{n}{2}+1}(t)dt = \sqrt{\pi}\,\Gamma\Big(\frac{n+1}{2}+1\Big) 2^{\frac{n}{2}}\frac{(m+\frac{n+1}{2})! (2m)!}{m!(\frac{n+1}{2})!}. 
    \end{equation*}
\end{lemma}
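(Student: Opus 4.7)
The plan is to extract the moments $\int_0^\infty t^{2m+\nu} K_\nu(t)\,dt$ by Taylor-expanding both sides of the integral identity \eqref{KnuInt} in the parameter $\beta$ and comparing coefficients, then specializing to $\nu = n/2+1$ with $n$ odd so that the half-integer $\Gamma$-values collapse to factorials.

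First, fix $\nu > -1/2$ and observe that the integrand on the left of \eqref{KnuInt}, when viewed as a function of $t \in \R$, factors as $e^{\beta t}$ times the \emph{even} function $|t|^\nu K_\nu(|t|)$. Expanding $e^{\beta t} = \sum_{k\geq 0} \beta^k t^k/k!$ and justifying the interchange of sum and integral (the dominating bound $|t|^\nu K_\nu(|t|) e^{|\beta||t|}$ is integrable for $|\beta|<1$, using the known asymptotics $K_\nu(t) \sim \sqrt{\pi/(2t)}\,e^{-t}$ as $t\to\infty$ and $K_\nu(t)\sim \tfrac12 \Gamma(\nu) (t/2)^{-\nu}$ as $t\to 0^+$), only even powers of $\beta$ survive, giving
\begin{equation*}
    \int_{\R} e^{\beta t}|t|^\nu K_\nu(|t|)\,dt \;=\; 2\sum_{m=0}^\infty \frac{\beta^{2m}}{(2m)!}\int_0^\infty t^{2m+\nu} K_\nu(t)\,dt.
\end{equation*}

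Next, expand the right-hand side of \eqref{KnuInt} by the generalized binomial theorem:
\begin{equation*}
    \frac{1}{(1-\beta^2)^{\nu+\frac12}} = \sum_{m=0}^\infty \frac{\Gamma(\nu+\tfrac12+m)}{\Gamma(\nu+\tfrac12)\,m!}\,\beta^{2m}.
\end{equation*}
Matching the coefficient of $\beta^{2m}$ on both sides and solving yields the clean identity
\begin{equation*}
    \int_0^\infty t^{2m+\nu} K_\nu(t)\,dt \;=\; \sqrt{\pi}\,2^{\nu-1}\,\frac{\Gamma(\nu+m+\tfrac12)\,(2m)!}{m!},
\end{equation*}
valid for $\nu > -1/2$ and $m\in\N$.

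Finally, specialize to $\nu = \tfrac{n}{2}+1$. Then $2m+\nu = 2m + \tfrac{n}{2}+1$ as required, and $\nu + m + \tfrac12 = m + \tfrac{n+1}{2} + 1$. Because $n$ is odd, $\tfrac{n+1}{2}$ is a non-negative integer, so $\Gamma(m + \tfrac{n+1}{2} + 1) = (m + \tfrac{n+1}{2})!$ and $\Gamma(\tfrac{n+1}{2}+1) = (\tfrac{n+1}{2})!$; inserting the factor $\Gamma(\tfrac{n+1}{2}+1)/(\tfrac{n+1}{2})! = 1$ and folding the $2^{\nu-1} = 2^{n/2}$ produces exactly the claimed formula. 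The only delicate step is the interchange of summation and integration, which is routine from the standard asymptotics of $K_\nu$; everything else is bookkeeping with $\Gamma$-function identities.
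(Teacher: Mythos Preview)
Your proof is correct and follows essentially the same approach as the paper: expand both sides of \eqref{KnuInt} as power series in $\beta$ and match coefficients. The only cosmetic difference is that you invoke the generalized binomial series for $(1-\beta^2)^{-(\nu+1/2)}$ directly (obtaining a formula valid for all $\nu>-1/2$ before specializing), whereas the paper obtains the same expansion by repeatedly differentiating $1/(1-\beta)$, which is why it needs $\frac{n+1}{2}$ to be an integer from the outset.
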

\begin{proof}
    The idea is to expand both sides of \eqref{KnuInt} as functions of $\beta$ and then compare coefficients. For the left-hand side, for any $\nu>-1/2$: 
    \begin{equation}
    \label{betaExp1}
        \int_{\R}e^{\beta t}|t|^\nu K_{\nu}(|t|)dt = \sum_{m=0}^\infty \frac{\beta^m}{m!}\int_{\R}t^m |t|^\nu K_{\nu}(|t|)dt = \sum_{m=0}^\infty \frac{2\beta^{2m}}{(2m)!}\int_0^\infty t^{2m+\nu} K_\nu(t)dt, 
    \end{equation}
    because 
    \begin{equation*}
        \int_{\R} t^m|t|^\nu K_\nu(|t|)dt = \begin{dcases}
            2\int_0^\infty t^{m+\nu} K_{\nu}(t), &\text{ if } m \text{ is even},\\
            0, & \text{ if } m \text{ is odd}. 
        \end{dcases}
    \end{equation*}
    
    For the right-hand side of \eqref{KnuInt}, starting with    
    \begin{equation*}
        \frac{1}{1-\beta} = \sum_{m=0}^\infty \beta^m, 
    \end{equation*}
    and differentiating $k$ times:
    \begin{equation*}
        \frac{k!}{(1-\beta)^{k+1}} = \sum_{m=k}^\infty \frac{m!}{(m-k)!} \beta^{m-k} = \sum_{m=0}^\infty \frac{(m+k)!}{m!}\beta^m. 
    \end{equation*}
    Since $n$ was taken odd, $k=\frac{n+1}{2}$ is an integer. Therefore, 
    \begin{equation}
    \label{betaExp2}
        \frac{1}{(1-\beta^2)^{\frac{n+1}{2}+1}}= \sum_{m=0}^\infty \frac{(m+ \frac{n+1}{2})!}{m! (\frac{n+1}{2})!}\beta^{2m}.
    \end{equation}
    By \eqref{betaExp1} for $\nu= \frac{n}{2}+1$, \eqref{KnuInt} and \eqref{betaExp2}, 
    \begin{equation*}
        \sum_{m=0}^\infty \frac{2\beta^{2m}}{(2m)!}\int_{0}^\infty t^{2m+\frac{n}{2}+1}K_{\frac{n}{2}+1}(t)dt = \sqrt{\pi}\,\Gamma\Big(\frac{n+1}{2}+1\Big) 2^{\frac{n}{2}+1}\sum_{m=0}^\infty \frac{(m+\frac{n+1}{2})!}{m! (\frac{n+1}{2})!}\beta^{2m}. 
    \end{equation*}
    Comparing the coefficients, we find 
    \begin{equation*}
        \frac{2}{(2m)!}\int_0^\infty t^{2m+\frac{n}{2}+1}K_{\frac{n}{2}+1}(t)dt = \sqrt{\pi}\,\Gamma\Big(\frac{n+1}{2}+1\Big) 2^{\frac{n}{2}+1}\frac{(m+\frac{n+1}{2})!}{m!(\frac{n+1}{2})!}, 
    \end{equation*}
    hence the claim. 
\end{proof}

Consequently, for $n\in \mathbb{N}$ odd and $m=\frac{n-1}{2}$: 
\begin{corollary}
\label{p1Calc}
    For $n\in\mathbb{N}$ odd, 
    \begin{equation*}
        \int_{0}^\infty t^{n+ \frac{n}{2}} K_{\frac{n}{2}+1}(t)dt = \sqrt{\pi}\,\Gamma\Big( \frac{n+1}{2}+1\Big) 2^{\frac{n}{2}}\frac{n! (n-1)!}{(\frac{n-1}{2})!(\frac{n+1}{2})!} = e^{o(n)}\left( \frac{2n^{3/2}}{e^{3/2}}\right)^n
    \end{equation*}
\end{corollary}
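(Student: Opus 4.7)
The plan is to derive the identity directly from Lemma \ref{KintAsympt1} and then extract the asymptotic via Stirling's formula.

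First, I would specialize Lemma \ref{KintAsympt1} by choosing $m = \tfrac{n-1}{2}$; this choice is legitimate precisely because $n$ is odd, so $m$ is a nonnegative integer. With this substitution, $2m + \tfrac{n}{2} + 1 = n + \tfrac{n}{2}$, $(2m)! = (n-1)!$, and $m + \tfrac{n+1}{2} = n$, giving $(m + \tfrac{n+1}{2})! = n!$. Plugging these into the formula of Lemma \ref{KintAsympt1} directly yields the first claimed equality
\begin{equation*}
\int_0^\infty t^{n + \frac{n}{2}} K_{\frac{n}{2}+1}(t)\,dt = \sqrt{\pi}\,\Gamma\!\Big(\tfrac{n+1}{2}+1\Big)\, 2^{n/2}\,\frac{n!\,(n-1)!}{(\tfrac{n-1}{2})!\,(\tfrac{n+1}{2})!}.
\end{equation*}

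Next, for the asymptotic, I would apply Stirling's formula in the form $\Gamma(1+x) = e^{o(x)}(x/e)^x$ to each factorial-type factor. Concretely, $n! = e^{o(n)}(n/e)^n$ and $(n-1)! = e^{o(n)}(n/e)^n$, while $\Gamma(\tfrac{n+1}{2}+1)$, $(\tfrac{n-1}{2})!$, and $(\tfrac{n+1}{2})!$ are each $e^{o(n)}\bigl(\tfrac{n}{2e}\bigr)^{n/2}$. Substituting and collecting powers of $n$, $2$, and $e$, the factor becomes
\begin{equation*}
\sqrt{\pi}\cdot 2^{n/2}\cdot\Bigl(\tfrac{n}{2e}\Bigr)^{n/2}\cdot\frac{(n/e)^{2n}}{(\tfrac{n}{2e})^{n}}\cdot e^{o(n)} = 2^n\cdot e^{-3n/2}\cdot n^{3n/2}\cdot e^{o(n)} = \Bigl(\tfrac{2n^{3/2}}{e^{3/2}}\Bigr)^n e^{o(n)},
\end{equation*}
which is the asserted asymptotic.

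There is essentially no obstacle: both steps are direct computations. The only care needed is that all the half-integer arguments of $\Gamma$ must be grouped so that the $2^{n/2}$ factor from $(\tfrac{n}{2e})^{n/2}$ terms cancels cleanly against the explicit $2^{n/2}$ in the numerator, and that the subexponential corrections in Stirling (the $\sqrt{2\pi x}$ prefactors and the discrepancy between $n$ and $n\pm 1$) are absorbed into the $e^{o(n)}$ term. Since none of the intermediate quantities depend on $p$, this calculation completes the $p=1$ case of Conjecture \ref{BesselIntegralConj} once combined with Corollary \ref{GauntCorollary}.
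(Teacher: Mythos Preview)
Your proposal is correct and matches the paper's approach exactly: specialize Lemma \ref{KintAsympt1} at $m=\tfrac{n-1}{2}$ for the first equality, then apply Stirling's formula to each gamma/factorial factor and absorb all polynomial prefactors into $e^{o(n)}$. The paper's proof is slightly terser but follows the identical two-step logic.
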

\begin{proof}
    The first equation is a direct consequence of Lemma \ref{KintAsympt1} for $m= \frac{n-1}{2}$. For the asymptotics, using Stirling's formula: 
    \begin{equation*}
        \sqrt{\pi}\,\Gamma\Big(\frac{n+1}{2}+1\Big) 2^{\frac{n}{2}} \frac{(n+1)!(n-1)!}{(\frac{n-1}{2})!(\frac{n+1}{2})!}= e^{o(n)}\Gamma\Big(\frac{n}{2}\Big)2^{\frac{n}{2}}\frac{\Gamma(n)^2}{\Gamma(\frac{n}{2})^2}= e^{o(n)}\left( \frac{2n^{3/2}}{e^{3/2}}\right)^n. 
    \end{equation*}
\end{proof}

\begin{proof}[Proof of Conjecture \ref{MpB2nConj} for $p=1$]
    By Corollaries \ref{MpB2nCor}, \ref{p1Calc}, and \ref{GauntCorollary},   
    \begin{equation*}
        \M_1(B_2^n)= e^{o(n)} \left( \frac{2\pi e^{\frac{3}{2}}}{n^{\frac{3}{2}}}\right)^n \int_{0}^\infty r^{n+ \frac{3n}{2}-1} K_{\frac{n}{2}+1}(r) dr= e^{o(n)} \left( \frac{2\pi e^{3/2}}{n^{3/2}}\right)^n \left( \frac{2n^{3/2}}{e^{3/2}}\right)^n= e^{o(n)}(4\pi)^n,
    \end{equation*}
    where $(4\pi)^n$ is exactly equal to $\M_1(|x|^2/2)$ by Lemma \ref{convexQuadratic}. 
\end{proof}

\section{Evolution under the Fokker--Planck heat flow}
\label{FokkerPlanckSection}
In this section, we turn to the proof of Theorem \ref{LpFuncSantalo} via the Fokker--Planck heat flow. The parabolic equation \eqref{FPEq1} is formulated for log-concave functions. Since we prefer to instead work with convex functions, we arrive at the following parabolic PDE: 
\begin{equation}
\label{FPEq2}
    \begin{dcases}
        (\partial_t\phi)(t,x) = (\Delta\phi)(t,x) - |(D\phi)(t,x)|^2 + \langle x, (D\phi)(t,x)\rangle - n,  & (t, x)\in [0,\infty)\times \R^n, \\
        \phi(0, x) = \phi_0(x), & x\in \R^n. 
    \end{dcases}
\end{equation}
For a quick proof, let $f= e^{-\phi}$ be a log-concave function that satisfies \eqref{FPEq1}. Since $\partial_i f = -(\partial_i \phi) e^{-\phi}$, the gradient $Df= -e^{-\phi} D\phi$. In addition, $\partial^2_{ij}f = -(\partial^2_{ij}\phi) e^{-\phi}+ (\partial_i\phi)(\partial_j\phi)e^{-\phi}$, hence $\Delta f= \sum_{i=1}^n \partial^2_{ii}f = \sum_{i=1}^n -\partial^2_{ii}\phi\, e^{-\phi} + (\partial_i\phi)^2 e^{-\phi}= - (\Delta\phi)e^{-\phi} + |D\phi|^2 e^{-\phi}$. Consequently, $f$ satisfies \eqref{FPEq1} if and only if 
\begin{equation*}
    -(\partial_t\phi) e^{-\phi} = \partial_t f= \Delta f + \langle x, Df\rangle + n f= - (\Delta \phi)e^{-\phi} + |D\phi|^2 e^{-\phi}-\langle x, D\phi\rangle e^{-\phi}+ n e^{-\phi}. 
\end{equation*}
Cancelling out $e^{-\phi}$ shows that $\phi$ satisfies \eqref{FPEq2}. We have shown the following: 

\begin{lemma}
\label{FPequivalence}
    $\phi$ satisfies \eqref{FPEq2} if and only if $e^{-\phi}$ satisfies \eqref{FPEq1}. 
\end{lemma}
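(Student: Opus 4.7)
The plan is to prove the equivalence by direct substitution, exploiting the fact that $f \mapsto e^{-\phi}$ is a smooth bijection between strictly positive functions and real-valued functions, so that each PDE can be rewritten unambiguously under the substitution $f = e^{-\phi}$.

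The first step is to record the three derivatives of $f = e^{-\phi}$ that appear in \eqref{FPEq1}. By the chain rule,
\begin{equation*}
    \partial_t f = -(\partial_t \phi)\, e^{-\phi}, \qquad \partial_i f = -(\partial_i \phi)\, e^{-\phi},
\end{equation*}
so that $Df = -e^{-\phi} D\phi$. Differentiating $\partial_i f$ once more yields $\partial^2_{ii} f = \bigl(-\partial^2_{ii}\phi + (\partial_i \phi)^2\bigr) e^{-\phi}$, hence
\begin{equation*}
    \Delta f = \bigl(-\Delta \phi + |D\phi|^2\bigr)\, e^{-\phi}.
\end{equation*}

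The second step is to substitute these expressions into \eqref{FPEq1} and simplify. The right-hand side becomes
\begin{equation*}
    \Delta f + \langle x, Df\rangle + n f = \bigl(-\Delta\phi + |D\phi|^2 - \langle x, D\phi\rangle + n\bigr)\, e^{-\phi},
\end{equation*}
while the left-hand side is $-(\partial_t \phi)\, e^{-\phi}$. Since $e^{-\phi}$ is strictly positive, we may cancel it, and negating both sides gives exactly \eqref{FPEq2}. Each implication is then immediate: if $\phi$ solves \eqref{FPEq2}, then $f \defeq e^{-\phi}$ satisfies \eqref{FPEq1} by reading the computation forwards; conversely, if $f > 0$ solves \eqref{FPEq1}, then $\phi \defeq -\log f$ is well-defined and satisfies \eqref{FPEq2} by reading the computation backwards. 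The matching of initial conditions $\phi(0,\cdot) = \phi_0$ versus $f(0,\cdot) = e^{-\phi_0}$ is built into the substitution.

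There is essentially no analytic obstacle here: the argument is a pure chain-rule calculation combined with the cancellation of the strictly positive factor $e^{-\phi}$. The only technical point to be careful about is ensuring $\phi$ is $C^{2,1}$ in $(x,t)$ so that the pointwise derivatives exist classically, but this is precisely the regularity in which both \eqref{FPEq1} and \eqref{FPEq2} are being interpreted.
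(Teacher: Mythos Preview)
Your proof is correct and follows essentially the same approach as the paper: compute $\partial_t f$, $Df$, and $\Delta f$ in terms of $\phi$ via the chain rule, substitute into \eqref{FPEq1}, and cancel the nonvanishing factor $e^{-\phi}$ to obtain \eqref{FPEq2}. The paper presents this computation in the paragraph immediately preceding the lemma statement, and your write-up matches it step for step.
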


\subsection{Existence and uniqueness of solutions}

Let us briefly verify that \eqref{OUsemigroup} is a solution to \eqref{FPEq1}, or equivalently: 
\begin{lemma}
For $\phi_0\in \Cvx(\R^n)$,
\begin{equation}
\label{FPsolution}
    \phi(t,x)\defeq - \log\int_{\R^n} e^{-\frac{|x- e^{-t}y|^2}{2(1-e^{-2t})}}\frac{e^{-\phi_0(y)} dy}{(2\pi(1-e^{-2t}))^{n/2}}
\end{equation}
is the unique solution to \eqref{FPEq2}. 
\end{lemma}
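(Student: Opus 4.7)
By Lemma \ref{FPequivalence}, $\phi$ solves \eqref{FPEq2} if and only if $f \defeq e^{-\phi}$ solves the linear equation \eqref{FPEq1}. The plan is therefore to verify that the Mehler integral
\begin{equation*}
    f(t,x) = \int_{\R^n} K_t(x,y)\, e^{-\phi_0(y)}\,dy, \qquad K_t(x,y) \defeq \frac{1}{(2\pi\sigma^2(t))^{n/2}}\,e^{-|x-\mu(t) y|^2/(2\sigma^2(t))},
\end{equation*}
with $\mu(t) \defeq e^{-t}$ and $\sigma^2(t) \defeq 1-e^{-2t}$, satisfies \eqref{FPEq1}, matches the initial datum $e^{-\phi_0}$ as $t \to 0^+$, and is the unique such solution.

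For \emph{existence}, the first step is a direct check that for each fixed $y$, the Mehler kernel $K_t(x,y)$ solves \eqref{FPEq1} in the variables $(t,x)$. Using $\mu' = -\mu$ and $(\sigma^2)' = 2\mu^2$, one computes $\partial_t \log K_t$, $\Delta_x \log K_t$, $|D_x \log K_t|^2$, and $\langle x, D_x \log K_t\rangle$ in closed form; substituting into $\partial_t K_t - \Delta_x K_t - \langle x, D_x K_t\rangle - n K_t = K_t\bigl(\partial_t\log K_t - \Delta_x\log K_t - |D_x\log K_t|^2 - \langle x, D_x\log K_t\rangle - n\bigr)$ and invoking the identity $\sigma^2 + \mu^2 = 1$, all resulting terms cancel pairwise and the bracket vanishes identically. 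Differentiation under the integral is then justified for $t > 0$ by the Gaussian decay of $K_t$ in $y$ combined with the growth bound $e^{-\phi_0(y)} \leq C e^{-a|y|}$ from Lemma \ref{ConvexLowerBoundLemma}, so $f$ is smooth on $(0,\infty) \times \R^n$ and solves \eqref{FPEq1}. Convexity of $\phi(t,\cdot) = -\log f(t,\cdot)$ for each $t > 0$ follows from the Pr\'ekopa--Leindler inequality, since after the substitution $z = \mu y$ the function $f(t,\cdot)$ is, up to a constant factor, the Gaussian convolution of the log-concave function $z \mapsto e^{-\phi_0(z/\mu)}$.

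The \emph{initial condition} follows because as $t \to 0^+$, $\sigma^2(t) \to 0$ and $\mu(t) \to 1$, so $K_t(x,\cdot)$ is an approximate identity at $y = x$; since $\phi_0$ is continuous on $\mathrm{int}\{\phi_0 < \infty\}$ \cite[Theorem 10.1]{Rock70}, $f(t,x) \to e^{-\phi_0(x)}$ pointwise there, and both sides vanish in the limit on the complement. For \emph{uniqueness}, given two solutions $\phi_1, \phi_2$ to \eqref{FPEq2} with the same initial datum (and with $e^{-\phi_i(t,\cdot)} \in L^1(\R^n)$ for all $t \geq 0$, which is the natural working assumption since we insist on solutions in $\Cvx(\R^n)$), the difference $g \defeq e^{-\phi_1} - e^{-\phi_2}$ is an $L^1$-solution of \eqref{FPEq1} with zero initial data. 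Standard $L^1$-uniqueness for the Ornstein--Uhlenbeck semigroup---obtained by duality against the backward semigroup acting on bounded smooth test functions---then forces $g \equiv 0$, hence $\phi_1 = \phi_2$. The main obstacle is precisely this uniqueness step: the class $\Cvx(\R^n)$ is not a standard function space for semigroup theory, so one must verify that finiteness of volume together with the convexity-induced decay places both candidate solutions firmly in the $L^1$-framework where classical uniqueness applies.
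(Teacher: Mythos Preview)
Your existence argument is essentially the paper's: both reduce via Lemma \ref{FPequivalence} to the linear equation \eqref{FPEq1}, verify that the Mehler kernel $K_t(x,y)$ (the paper's $G_y$) satisfies \eqref{FPEq1} in $(t,x)$ for each fixed $y$, and then integrate against $e^{-\phi_0(y)}\,dy$. Your computation via $\log K_t$ and the identity $\sigma^2+\mu^2=1$ is just a repackaging of the paper's direct calculation with the derivatives \eqref{gammanDerivatives}. You add material the paper leaves implicit---justifying differentiation under the integral via Lemma \ref{ConvexLowerBoundLemma}, checking the initial datum as $t\to 0^+$, and verifying that $\phi(t,\cdot)$ stays convex via Pr\'ekopa--Leindler---which is all correct and arguably makes the argument more complete.

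The one genuine difference is uniqueness. The paper treats \eqref{FPEq2} directly as a strictly parabolic equation and invokes \cite[Corollary 5.2]{Lieberman96} as a black box. You instead pass to the linear equation \eqref{FPEq1} and argue $L^1$-uniqueness for the Ornstein--Uhlenbeck semigroup by duality, using that $\Cvx(\R^n)$ forces $e^{-\phi_i(t,\cdot)}\in L^1$. Both routes are standard; yours has the advantage of sidestepping the nonlinearity $|D\phi|^2$ in \eqref{FPEq2} and working in a setting where semigroup uniqueness is cleanest, while the paper's citation is shorter but demands that the reader locate the right quasilinear result in Lieberman.
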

\begin{proof}
    Let
    \begin{equation}
    \label{gammanDef}
        \gamma_{n}(\beta, x)\defeq \frac{1}{(2\pi\beta)^{n/2}} e^{-\frac{|x|^2}{2\beta}}, \quad \beta>0, x\in \R^n. 
    \end{equation}
    This is such that $\int \gamma_n(x,\beta)dx=1$ for all $\beta>0$. Using $\gamma_{n}$, \eqref{FPsolution} may be expressed as 
    \begin{equation*}
        \phi(t,x)= -\log\int_{\R^n} e^{-\phi_0(y)} \gamma_n(1-e^{-2t}, x-e^{-t}y) dy. 
    \end{equation*}
    By Lemma \ref{FPequivalence}, and the linearity of the integral, in order to prove that $\phi$ solves \eqref{FPEq2}, it is enough to demonstrate that for all $y\in\R^n$, 
    \begin{equation*}
    G_y(t,x)\defeq  \gamma_n(1-e^{-2t}, x-e^{-t}y) 
    \end{equation*} 
    satisfies \eqref{FPEq1}. 

    To see why $G_y$ satisfies \eqref{FPEq2}, 
    note that 
    \begin{equation}
    \label{gammanDerivatives}
        \begin{aligned}
            & \partial_\beta \gamma_n(\beta, x)= \frac{|x|^2- n\beta}{2\beta^2}\gamma_n(\beta, x),  \\
            & D\gamma_n(\beta, x) = -\frac{x}{\beta}\gamma_n(\beta, x), \\
            & \Delta\gamma_n(\beta, x)= \frac{|x|^2- n\beta}{\beta^2}\gamma_n(\beta, x). 
        \end{aligned}
    \end{equation}
    Therefore, using \eqref{gammanDerivatives} and that $\beta = 1-e^{-2t}$, 
    {\allowdisplaybreaks
    \begin{equation}
    \label{partialtG}
    \begin{aligned}
            \partial_t G_y &= \partial_t(1-e^{-2t}) \partial_\beta \gamma_n + \langle D\gamma_n, \partial_t(x-e^{-t}y)\rangle \\ 
            &= 2e^{-2t} \frac{|x-e^{-2t}y|^2 - n(1-e^{-2t})}{2(1-e^{-2t})^2}\gamma_n + \langle -\frac{x-e^{-t}y}{1-e^{-2t}}\gamma_n, e^{-t}y\rangle\\
            &= e^{-2t} \frac{|x-e^{-2t}y|^2 - n(1-e^{-2t})}{(1-e^{-2t})^2}\gamma_n + \langle \frac{x-e^{-t}y}{1-e^{-2t}}, x-e^{-t}y-x\rangle \gamma_n\\
            &= \left[\frac{e^{-2t} |x-e^{-2t}y|^2}{(1-e^{-2t})^2} - \frac{ne^{-2t}}{1-e^{-2t}} + \frac{|x-e^{-t}y|^2- \langle x-e^{-t}y, x\rangle}{1-e^{-2t}}\right]\gamma_n\\
            &= \left[\frac{e^{-2t} |x-e^{-2t}y|^2}{(1-e^{-2t})^2} - \frac{ne^{-2t}}{1-e^{-2t}} + \frac{(1-e^{-2t})|x-e^{-t}y|^2}{(1-e^{-2t})^2} -\frac{\langle x-e^{-t}y, x\rangle}{1-e^{-2t}}\right]\gamma_n \\
            &= \left[\frac{|x-e^{-2t}y|^2}{(1-e^{-2t})^2} - \frac{ne^{-2t}}{1-e^{-2t}} -\frac{\langle x-e^{-t}y, x\rangle}{1-e^{-2t}}\right]\gamma_n.
    \end{aligned}
    \end{equation}
    }
    In addition, by \eqref{gammanDerivatives}, 
    \begin{equation}
    \label{GyDerivative}
        \begin{aligned}
            & DG_y = D\gamma_n (1-e^{-2t}, x- e^{-t}y)= -\frac{x-e^{-t}y}{1-e^{-2t}} \gamma_n, \\
            & \Delta G_y= \left( -\frac{n}{1-e^{-2t}} + \frac{|x-e^{-t}y|^2}{(1-e^{-2t})^2}\right) \gamma_n. 
        \end{aligned}
    \end{equation}
    Therefore, by \eqref{GyDerivative} and \eqref{partialtG}, 
    \begin{equation*}
        \begin{aligned}
            &\Delta G_y + \langle x, DG_y\rangle + n G_y \\
            &= \left[ -\frac{n}{1-e^{-2t}} + \frac{|x-e^{-t}y|^2}{(1-e^{-2t})^2} - \frac{\langle x, x-e^{-t}y\rangle}{1-e^{-2t}}+ n\right] \gamma_n\\
            &= \left[ -\frac{e^{-2t} n}{1-e^{-2t}} + \frac{|x-e^{-t}y|^2}{(1-e^{-2t})^2} - \frac{\langle x, x-e^{-t}y\rangle}{1-e^{-2t}}\right]\gamma_n \\
            &= \partial_tG_y. 
        \end{aligned}
    \end{equation*}
    
    Since \eqref{FPEq2} is a second-order strictly parabolic PDE, uniqueness follows from standard PDE theory \cite[Corollary 5.2]{Lieberman96}. 
\end{proof}

\begin{lemma}
\label{LimitingSolution}
    Let $\phi$ be a solution to \eqref{FPEq2}. Then, 
    \begin{equation*}
        \phi(\infty, x)\defeq \lim_{t\to\infty}\phi(t,x)= |x|^2/2 - \log\frac{V(\phi)}{(2\pi)^{n/2}}. 
    \end{equation*}
\end{lemma}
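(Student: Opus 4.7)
The plan is to work directly from the explicit integral representation \eqref{FPsolution} of the solution and pass to the limit $t\to\infty$ inside the integral via dominated convergence. Pointwise as $t\to\infty$ we have $e^{-t}\to 0$ and $1-e^{-2t}\to 1$, so the integrand converges to
\begin{equation*}
    e^{-|x|^2/2}\frac{e^{-\phi_0(y)}}{(2\pi)^{n/2}},
\end{equation*}
and the integral of this against $dy$ equals $e^{-|x|^2/2}V(\phi_0)/(2\pi)^{n/2}$. Taking $-\log$ produces $|x|^2/2 - \log(V(\phi_0)/(2\pi)^{n/2})$, and since the Fokker--Planck heat flow preserves volume (Lemma \ref{VolEvolution}), $V(\phi_0)=V(\phi(t,\cdot))=V(\phi)$, giving the claimed formula.

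The only nontrivial step is justifying the interchange of limit and integral. Fix $x\in\R^n$ and restrict to $t\geq t_0$ large enough that $1-e^{-2t}\geq 1/2$. Then the integrand
\begin{equation*}
    e^{-\frac{|x-e^{-t}y|^2}{2(1-e^{-2t})}}\frac{e^{-\phi_0(y)}}{(2\pi(1-e^{-2t}))^{n/2}}
\end{equation*}
is bounded above by $2^{n/2}(2\pi)^{-n/2}e^{-\phi_0(y)}$, since the Gaussian factor is at most $1$ and the normalization constant is bounded below. This dominating function is integrable with integral $2^{n/2}V(\phi_0)/(2\pi)^{n/2}<\infty$, so the Dominated Convergence Theorem applies.

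The pointwise limit inside the integral is elementary: for each fixed $y\in\R^n$,
\begin{equation*}
    \lim_{t\to\infty}\frac{|x-e^{-t}y|^2}{2(1-e^{-2t})}=\frac{|x|^2}{2}\quad\text{and}\quad\lim_{t\to\infty}(2\pi(1-e^{-2t}))^{n/2}=(2\pi)^{n/2}.
\end{equation*}
Combining the two yields
\begin{equation*}
    \lim_{t\to\infty}\int_{\R^n} e^{-\frac{|x-e^{-t}y|^2}{2(1-e^{-2t})}}\frac{e^{-\phi_0(y)}dy}{(2\pi(1-e^{-2t}))^{n/2}}=\frac{e^{-|x|^2/2}V(\phi_0)}{(2\pi)^{n/2}},
\end{equation*}
and taking $-\log$ of both sides, together with $V(\phi)=V(\phi_0)$, proves the lemma. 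No subtler obstacle arises since $\phi_0\in\Cvx(\R^n)$ already guarantees $0<V(\phi_0)<\infty$, making the argument of the logarithm finite and strictly positive.
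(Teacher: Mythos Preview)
Your proof is correct and follows exactly the same approach as the paper: take the limit in the integral representation \eqref{FPsolution}, then take $-\log$. The paper's proof is a one-liner that simply asserts the limit without justification, whereas you supply the dominated convergence argument that makes the interchange of limit and integral rigorous; this extra care is well placed.
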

\begin{proof}
By taking the limit in \eqref{FPsolution}, 
\begin{equation*}
    \lim_{t\to\infty}\phi(t,x) = -\log\int_{\R^n} e^{-\frac{|x|^2}{2}}e^{-\phi_0(y)}\frac{dy}{(2\pi)^{n/2}} = |x|^2/2- \log\frac{V(\phi)}{(2\pi)^{n/2}}. 
\end{equation*}
proves the claim. 
\end{proof}

\subsection{Evolution equations}

\subsubsection{Volume}
The volume of a function that evolves under \eqref{FPEq2} remains constant. One way of doing this is by using Stoke's theorem. However, since we have an explicit integral for the solution \eqref{FPsolution}, we may directly verify that $V(\phi(t,\,\cdot\,))= V(\phi_0)$ for all $t$. 
\begin{lemma}
\label{VolEvolution}
    The volume of a convex function evolving under \eqref{FPEq2} is constant in time. 
\end{lemma}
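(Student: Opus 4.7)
The plan is to exploit the explicit integral representation \eqref{FPsolution} for the solution of \eqref{FPEq2}. Since $V(\phi(t,\,\cdot\,)) = \int_{\R^n} e^{-\phi(t,x)}dx$ and \eqref{FPsolution} gives $e^{-\phi(t,x)}$ as an integral of a Gaussian against $e^{-\phi_0}$, the computation should reduce to swapping the order of integration via Tonelli's theorem.

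Concretely, I would rewrite the formula using the $\gamma_n$ notation introduced in \eqref{gammanDef}:
\begin{equation*}
    e^{-\phi(t,x)} = \int_{\R^n} e^{-\phi_0(y)}\, \gamma_n(1-e^{-2t},\, x - e^{-t}y)\,dy.
\end{equation*}
Integrating over $x\in\R^n$ and applying Tonelli's theorem (the integrand is nonnegative), the Fubini step gives
\begin{equation*}
    V(\phi(t,\,\cdot\,)) = \int_{\R^n} e^{-\phi_0(y)} \left(\int_{\R^n} \gamma_n(1-e^{-2t},\, x - e^{-t}y)\,dx\right) dy.
\end{equation*}
For fixed $t>0$ and $y\in\R^n$, the substitution $z = x - e^{-t}y$ shows the inner integral equals $\int_{\R^n}\gamma_n(1-e^{-2t},z)\,dz = 1$, since $\gamma_n(\beta,\,\cdot\,)$ is a probability density for every $\beta>0$. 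Hence $V(\phi(t,\,\cdot\,)) = \int_{\R^n}e^{-\phi_0(y)}dy = V(\phi_0)$, which is independent of $t$.

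The only subtlety is the $t\to 0^+$ boundary: the Gaussian $\gamma_n(1-e^{-2t},\,\cdot\,)$ collapses to a Dirac mass at $0$, but this does not affect the argument above for any fixed $t>0$, and the continuity of $t\mapsto V(\phi(t,\,\cdot\,))$ at $t=0$ (which would require the standard approximation-to-the-identity property of Gaussians against $e^{-\phi_0}\in L^1$) recovers the $t=0$ case. I do not anticipate any real obstacle: the argument is essentially a Fubini swap combined with the fact that the transition kernel of the Ornstein--Uhlenbeck semigroup is mass-preserving. An alternative route, in case one wished to avoid the explicit formula, would be to differentiate $V(\phi(t,\,\cdot\,)) = \int e^{-\phi}dx$ in $t$ using \eqref{FPEq1} via Lemma \ref{FPequivalence} and apply integration by parts: the $\Delta f$ term integrates to $0$, while $\int \langle x, Df\rangle dx = -n\int f\,dx$ cancels the $nf$ term; but the explicit-formula approach is cleaner and avoids justifying the decay needed for the boundary terms.
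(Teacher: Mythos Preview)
Your proposal is correct and matches the paper's proof essentially line for line: both invoke the explicit representation \eqref{FPsolution}, apply Tonelli to swap the $x$- and $y$-integrals, and use that $\int_{\R^n}\gamma_n(\beta,\,\cdot\,)=1$ to conclude $V(\phi(t,\,\cdot\,))=V(\phi_0)$. Your additional remarks on the $t\to 0^+$ limit and the alternative integration-by-parts route are not in the paper but are sound side comments.
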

\begin{proof}
    By \eqref{FPsolution}, and for $\gamma_n$ as in \eqref{gammanDef},
    \begin{equation*}
    \begin{aligned}
        V(\phi(t,\,\cdot\,))&= \int_{\R^n}\int_{\R^n} e^{-\phi(y)} \gamma_n(1-e^{-2t}, x- e^{-t}y)dy dx \\
        &= \int_{\R^n} e^{-\phi_0(y)}\int_{\R^n}\gamma_n(1-e^{-2t}, x-e^{-t}y)dx dy \\
        &= \int_{\R^n} e^{-\phi_0(y)}dy = V(\phi_0),
    \end{aligned}
    \end{equation*}
    because $\int \gamma_n(\beta, x)dx = 1$ for all $\beta>0$. 
\end{proof}

\subsubsection{\texorpdfstring{$L^p$-Legendre transform}{Lp-Legendre transform}}
Let us now compute the evolution equation of the $L^p$-Legendre transform of a convex function $\phi$ satisfying \eqref{FPEq2}. By this, we mean the evolution equation of $\phi(t,\,\cdot\,)^{*,p}$, where the transform is only with respect to the spacial direction. For clarity, denote by 
\begin{equation*}
    \phi_t(x)\defeq \phi(t,x), \quad x\in \R^n, 
\end{equation*}
so that $\phi^{*,p}(t,\,\cdot\,)= \phi_t^{*,p}$. 

On another note, as we see below, the evolution equation of $\phi_t^{*,p}$ involves the Fischer information of the following probability measure: 
\begin{equation}
\label{partialtphi}
    d\phi^{p,y}(x) \defeq \frac{e^{p\langle x,y\rangle - (p+1)\phi(x)}dx}{\int_{\R^n}e^{p\langle x,y\rangle- (p+1)\phi(x)}dx}. 
\end{equation}
Recall the Fischer information of a probability measure $\mu(x)= f(x)dx$ is given by 
\begin{equation*}
    I(\mu)\defeq \int_{\R^n}|D\log f(x)|^2 f(x)dx = \int_{\R^n}\frac{|Df(x)|^2}{f(x)}dx. 
\end{equation*}
For now, the definition suffices. In the next section, we recall a few more properties of the Fischer information.

\begin{lemma}
\label{LpLegendreEvolution}
    Let $p\in (0,\infty)$. For $\phi(t, x)$ evolving under \eqref{FPEq2},
    \begin{equation*}
        \partial_t \phi^{*,p}(t,y) = \frac{p}{p+1}|y|^2 - \langle y, D\phi^{*,p}(t,y)\rangle - \frac{I(d\phi^{p,y})}{p+1} + n. 
    \end{equation*}
\end{lemma}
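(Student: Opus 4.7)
By Lemma \ref{VolEvolution}, $V(\phi_t) = V(\phi_0)$ for all $t$. Hence, setting
\[
Z(t,y) \defeq \int_{\R^n} h(t,x,y)\,dx, \qquad h(t,x,y) \defeq e^{p\langle x,y\rangle - (p+1)\phi(t,x)},
\]
one has $p\,\phi^{*,p}(t,y) = \log Z(t,y) - \log V(\phi_0)$, so that $\partial_t\phi^{*,p} = (pZ)^{-1}\partial_t Z$. The plan is to differentiate $Z$ under the integral sign, substitute \eqref{FPEq2}, and then perform integration by parts in $x$ to convert everything into integrals against the probability measure $d\phi^{p,y}= h\,dx/Z$. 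A smoothness/dominated-convergence argument analogous to the one used in Proposition \ref{LpLegendreSmooth} justifies differentiation under the integral.

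Substituting \eqref{FPEq2}, $\partial_t Z = -(p+1)\int (\Delta\phi - |D\phi|^2 + \langle x,D\phi\rangle - n)\,h\,dx$. The central observation is that $D_x h = (py - (p+1)D\phi)\,h$, which allows one to rewrite derivatives of $\phi$ as derivatives of $h$ (this is precisely why the logarithmic-derivative structure of the Fisher information will appear). Concretely, integrating $\int \Delta\phi\,h$ by parts once gives
\[
\int \Delta\phi\cdot h\,dx = -\int \langle D\phi, D h\rangle\,dx = -p\int \langle D\phi,y\rangle h\,dx + (p+1)\int |D\phi|^2 h\,dx,
\]
which eliminates the second-order term in favor of $|D\phi|^2$ and $\langle D\phi,y\rangle$. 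Separately, noting that $D_y Z = p\int x\,h\,dx$, one obtains $\int \langle x,y\rangle h\,dx = Z\,\langle y, D\phi^{*,p}\rangle$, and integrating by parts the term $\int \langle x,D\phi\rangle h\,dx$ (using $D\phi = (py - D h/h)/(p+1)$ and $\int \langle x,Dh\rangle dx = -nZ$) yields $\int \langle x, D\phi\rangle h\,dx = (p+1)^{-1}[\,pZ\langle y,D\phi^{*,p}\rangle + nZ\,]$.

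The remaining step is to identify the $|D\phi|^2$ integral with the Fisher information $I(d\phi^{p,y})$. Since $D\log h = py - (p+1)D\phi$,
\[
I(d\phi^{p,y}) = \int |D\log h|^2\, \frac{h\,dx}{Z} = \frac{1}{Z}\int \big[(p+1)^2|D\phi|^2 - 2p(p+1)\langle y,D\phi\rangle + p^2|y|^2\big] h\,dx,
\]
which expresses $\int |D\phi|^2 h\,dx$ in terms of $I(d\phi^{p,y})$, $\int \langle y,D\phi\rangle h\,dx$, and $Z$. The latter is computed by IBP: $\int \langle y,D\phi\rangle h\,dx = p|y|^2 Z/(p+1)$ (obtained from $\int \langle y, Dh\rangle dx = 0$ together with $Dh/h = py - (p+1)D\phi$). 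Assembling the four identities into the expression for $\partial_t Z$ and dividing by $pZ$ yields the claimed formula; the cancellations are bookkeeping but straightforward.

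The main conceptual obstacle, and the step to carry out most carefully, is the fourth: recognizing that the combination of $|D\phi|^2$, $\langle y,D\phi\rangle$ and $|y|^2$ terms arising from the PDE naturally assembles into the Fisher information of $d\phi^{p,y}$. This is what tethers the computation to the Fisher-information–Cram\'er--Rao machinery used later in Proposition \ref{Mpbound}; without invoking this structure, one is left with an unwieldy expression involving second derivatives of $\phi$. Minor technical points — justifying differentiation under the integral, decay of $h$ at infinity for the boundary terms in integration by parts, and smoothness of $\phi^{*,p}$ in $y$ — are all supplied by the fact that $\phi_t\in\Cvx(\R^n)$ and the growth bound from Lemma \ref{ConvexLowerBoundLemma} (applied at each $t$ to the explicit solution \eqref{FPsolution}).
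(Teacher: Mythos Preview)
Your proposal is correct and follows essentially the same approach as the paper: differentiate the partition function $Z$ under the integral sign, substitute the evolution equation, integrate by parts, and identify the Fisher information of $d\phi^{p,y}$. The paper organizes the computation slightly differently by first passing to $F\defeq e^{-(p+1)\phi}$ and isolating its evolution equation (Claim~\ref{FEEq}) and the Fisher-information identity (Claim~\ref{IdphiClaim}) as separate preliminary claims, whereas you work directly with $h=e^{p\langle x,y\rangle-(p+1)\phi}$ and the PDE for $\phi$; the algebra and the underlying idea are identical.
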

To simplify the proof of Lemma \ref{LpLegendreEvolution}, let
\begin{equation}
\label{fFDef}
    f(t,x) = e^{-\phi(t,x)} \quad \text{ and } \quad F(t,x)\defeq f(t,x)^{p+1},
\end{equation} 
so that
$
    \phi^{*,p}(t,y)= \frac1p \log\int_{\R^n}F(t,x)e^{p\langle x,y\rangle}\frac{dx}{V(\phi)}. 
$
\begin{claim}
\label{FEEq}
Let $p\in (0,\infty)$ and $\phi$ evolving under \eqref{FPEq2}. For $F$ as in \eqref{fFDef}, 
\begin{equation*}
    \partial_t F = \Delta F - \frac{p}{p+1}\frac{|DF|^2}{F} + \langle x, DF\rangle + n(p+1)F. 
\end{equation*}
\end{claim}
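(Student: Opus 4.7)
The plan is a short, direct verification via the chain rule. Since $F = f^{p+1} = e^{-(p+1)\phi}$, every derivative of $F$ can be rewritten in terms of $\phi$ and its derivatives, after which the claim reduces to substituting the evolution equation \eqref{FPEq2} for $\partial_t\phi$ and collecting terms.

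First, I would differentiate in time: $\partial_t F = -(p+1)(\partial_t \phi)\, F$, and invoke \eqref{FPEq2} to write
\begin{equation*}
\partial_t F = -(p+1)\bigl(\Delta\phi - |D\phi|^2 + \langle x, D\phi\rangle - n\bigr)\, F.
\end{equation*}
Next, I would compute the spatial derivatives: $DF = -(p+1)(D\phi)\,F$, so that
\begin{equation*}
\frac{|DF|^2}{F} = (p+1)^2 |D\phi|^2\, F,
\end{equation*}
and $\partial_{ii}F = -(p+1)(\partial_{ii}\phi)\,F + (p+1)^2 (\partial_i\phi)^2\,F$, giving
\begin{equation*}
\Delta F = -(p+1)(\Delta\phi)\,F + (p+1)^2 |D\phi|^2\,F.
\end{equation*}

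Substituting these into the right-hand side of the claim yields
\begin{equation*}
\Delta F - \tfrac{p}{p+1}\tfrac{|DF|^2}{F} + \langle x, DF\rangle + n(p+1)F
= -(p+1)(\Delta\phi)F + \bigl((p+1)^2 - p(p+1)\bigr)|D\phi|^2 F - (p+1)\langle x, D\phi\rangle F + n(p+1)F.
\end{equation*}
Since $(p+1)^2 - p(p+1) = p+1$, this is precisely the expression for $\partial_t F$ derived in the first step, proving the claim.

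There is no real obstacle; the only subtlety is that the coefficient $p/(p+1)$ of the nonlinear term $|DF|^2/F$ is exactly what is needed to cancel the extra factor of $p$ arising from $\Delta F$, so that the quadratic-gradient term $|D\phi|^2 F$ balances with the $|D\phi|^2$ term produced by \eqref{FPEq2}. The factor $n(p+1)$ similarly matches the $-n$ term in \eqref{FPEq2} after multiplication by $-(p+1)$.
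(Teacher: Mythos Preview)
Your proof is correct and takes essentially the same approach as the paper: a direct chain-rule computation expressing all derivatives of $F$ in terms of the underlying potential, then substituting the evolution equation. The only cosmetic difference is that the paper routes the computation through $f=e^{-\phi}$ and invokes \eqref{FPEq1}, whereas you work directly with $\phi$ and \eqref{FPEq2}; by Lemma~\ref{FPequivalence} these are equivalent.
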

\begin{proof}
    For $f$ and $F$ as in \eqref{fFDef}, 
    \begin{align}
            &DF = (p+1)f^{p}Df \\
            &\Delta F = p(p+1)f^{p-1}|Df|^2 + (p+1)f^p\Delta f. 
    \end{align}
    because $\Delta F = \sum_{i=1}^n\partial_{x_i}(\partial_{x_i}F)= (p+1)\sum_{i=1}^n\partial_{x_i}(f^p \partial_{x_i}f) = (p+1)\sum_{i=1}^n pf^{p-1}(\partial_{x_i}f)^2 + f^p\partial_{x_i}^2 f$. 
    Solving for $f$, 
    \begin{align}
        Df &= \frac{DF}{(p+1)f^p} \label{DFeq} \\
        \Delta f &= \frac{1}{(p+1)f^{p}}\left(\Delta F - \frac{p}{p+1}\frac{|DF|^2}{F}\right) \label{DeltaFEq}.
    \end{align}
    By Lemma \ref{FPequivalence}, since $\phi$ evolves under \eqref{FPEq2}, $f$ evolves under \eqref{FPEq1}. Therefore,  $ \partial_t F = (p+1)f^p \partial_t f= (p+1)f^p(\Delta f + \langle x, Df\rangle + nf)$. By \eqref{DeltaFEq}, $(p+1)f^p\Delta f = \Delta F - \frac{p}{p+1}\frac{|DF|^2}{F}$, and by \eqref{DFeq}, $(p+1)f^p\langle x, Df\rangle = \langle x, DF\rangle$. Finally, $(p+1)f^p (nf)= n(p+1)f^{p+1}= n(p+1)F$, hence the claim. 
\end{proof}

\begin{claim}
\label{IdphiClaim}
    Let $p\in (0,\infty)$ and $\phi\in\Cvx(\R^n)$. For $F$ as in \eqref{fFDef} and $d\phi^{p,y}$ as in \eqref{partialtphi}, 
    \begin{equation*}
        I(d\phi^{p,y})= e^{-p\phi^{*,p}(y)} \int_{\R^n}\frac{|DF|^2}{F} e^{p\langle x,y\rangle}\frac{dx}{V(\phi)} -p^2 |y|^2. 
    \end{equation*}
\end{claim}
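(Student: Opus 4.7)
The plan is to unfold the definition of Fisher information directly and reduce the resulting expression to the claimed one. Set $g(x) \defeq e^{p\langle x,y\rangle - (p+1)\phi(x)}$ and $Z \defeq \int_{\R^n} g(x)\,dx$, so by Definition \ref{LpLegendreDef} one has $Z = V(\phi)\, e^{p\phi^{*,p}(y)}$ and the density of $d\phi^{p,y}$ is $g/Z$. Since $\log(g/Z) = p\langle x,y\rangle - (p+1)\phi(x) - \log Z$, its gradient in $x$ is $py - (p+1)D\phi$, so expanding the square yields
\[
I(d\phi^{p,y}) = p^2|y|^2 - 2p(p+1)\int \langle y, D\phi\rangle\, d\phi^{p,y} + (p+1)^2 \int |D\phi|^2\, d\phi^{p,y}.
\]

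The cross term will be handled by integration by parts in $x$. Observe that $\langle y, D_x g\rangle = \big(p|y|^2 - (p+1)\langle y, D\phi\rangle\big)\, g$, and the integral of this expression over $\R^n$ vanishes by the divergence theorem; dividing by $Z$ gives $\int \langle y, D\phi\rangle\, d\phi^{p,y} = \tfrac{p}{p+1}|y|^2$, and substituting collapses the first two terms above to $-p^2|y|^2$. To recast the remaining $|D\phi|^2$-integral in the claimed form, I would use that $F = e^{-(p+1)\phi}$ satisfies $DF = -(p+1)F\, D\phi$, hence $|DF|^2/F = (p+1)^2 F|D\phi|^2$. Unfolding the measure $d\phi^{p,y}$ and using $Z/V(\phi) = e^{p\phi^{*,p}(y)}$ then yields
\[
(p+1)^2\int|D\phi|^2\,d\phi^{p,y} = e^{-p\phi^{*,p}(y)}\int_{\R^n} \frac{|DF|^2}{F}\,e^{p\langle x,y\rangle}\,\frac{dx}{V(\phi)},
\]
which combined with the previous step proves the claim.

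The main technical point is justifying the integration by parts, since a priori $\phi \in \Cvx(\R^n)$ is only differentiable almost everywhere and $D\phi$ may grow at infinity. In the setting of this section $\phi$ solves \eqref{FPEq2} and is therefore smooth for $t>0$ by standard parabolic regularity; moreover Lemma \ref{ConvexLowerBoundLemma} yields $\phi(x)\geq a|x|+b$ for some $a>0$, so for $y$ in the interior of $\{\phi^{*,p}<\infty\}$ (non-empty by Corollary \ref{LpLegendreNonEmptyInterior}) the integrand $g(x)$ decays exponentially along every direction. This is more than enough to kill the boundary integrals on $\partial B_R$ as $R\to\infty$, so the divergence theorem applies without further qualification.
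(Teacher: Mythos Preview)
Your proof is correct and follows essentially the same approach as the paper: expand the squared gradient of the log-density, identify three terms, and handle the cross term by a one-line integration by parts. The only cosmetic difference is that you write the expansion in terms of $D\phi$ while the paper writes it in terms of $D\log F$; since $D\log F = -(p+1)D\phi$, the two computations are line-by-line equivalent, and your integration-by-parts identity $\int\langle y,D_x g\rangle\,dx=0$ is exactly the paper's step $\int\langle y,DF\rangle e^{p\langle x,y\rangle}dx = -p|y|^2\int Fe^{p\langle x,y\rangle}dx$ after one application of the product rule.
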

\begin{proof}
    First, note that in terms of $F$, $d\phi^{p,y}(x)= \frac{F(x)e^{p\langle x, y\rangle}}{V(\phi)e^{p\phi^{*,p}(y)}}$, hence by definition of the Fischer information, 
    \begin{equation}
    \label{IdphiEq1}
        I(d\phi^{p,y})= e^{-p\phi^{*,p}(y)}\int_{\R^n} |D\log(F(x)e^{p\langle x,y\rangle})|^2 F(x)e^{p\langle x,y\rangle}\frac{dx}{V(\phi)}. 
    \end{equation}
    Since 
    \begin{equation}
    \label{IdphiEq2}
        |D\log(F(x)e^{p\langle x,y\rangle})|^2 = |D\log F + py|^2= |D\log F|^2 + p^2 |y|^2 + 2p\langle D\log F, y\rangle, 
    \end{equation}
    we can decompose 
    $I(d\phi^{p,y})$ into three integrals:
    \begin{align*}
        &I_1 \defeq e^{-p\phi^{*,p}(y)} \int_{\R^n}|D\log F(x)|^2 F(x) e^{p\langle x,y\rangle}\frac{dx}{V(\phi)}, \\
        &I_2 \defeq p^2 |y|^2 e^{-p\phi^{*,p}(y)} \int_{\R^n}F(x) e^{p\langle x,y\rangle}\frac{dx}{V(\phi)}, \\
        &I_3 \defeq 2p e^{-p\phi^{*,p}(y)} \int_{\R^n}\langle D\log F, y\rangle F(x)e^{p\langle x,y\rangle}\frac{dx}{V(\phi)},
    \end{align*}
    where, by \eqref{IdphiEq1} and \eqref{IdphiEq2}, 
    \begin{equation}
    \label{IdphiDecomp}
        I(d\phi^{p,y})= I_1 + I_2 + I_3. 
    \end{equation}

    \noindent
    For $I_1$: Leave it as is. 

    \noindent
    For $I_2$: 
    \begin{equation}
    \label{IdphiEq3}
        I_2 = p^2 |y|^2 e^{-p\phi^{*,p}(y)} \int_{\R^n}e^{p\langle x,y\rangle- (p+1)\phi(x)}\frac{dx}{V(\phi)} = p^2 |y|^2 e^{-p\phi^{*,p}(y)} e^{p\phi^{*,p}(y)} = p^2 |y|^2. 
    \end{equation}
    For $I_3$: By integration by parts, 
    \begin{equation}
    \label{IdphiEq4}
        \begin{aligned}
            I_3 &= 2p e^{-p\phi^{*,p}(y)} \int_{\R^n}\langle y, D\log F\rangle Fe^{p\langle x,y\rangle}\frac{dx}{V(\phi)} \\
            &= 2p e^{-p\phi^{*,p}(y)} \int_{\R^n}\langle y, \frac{DF}{F}\rangle F e^{p\langle x,y\rangle}\frac{dx}{V(\phi)}\\
            &= 2p e^{-p\phi^{*,p}(y)}\int_{\R^n} \langle y, DF\rangle e^{p\langle x, y\rangle}\frac{dx}{V(\phi)} \\
            &= 2p e^{-p\phi^{*,p}(y)}\int_{\R^n} (-p|y|^2) F(x) e^{p\langle x,y\rangle}\frac{dx}{V(\phi)}\\
            &= -2p^2|y|^2.  
        \end{aligned}
    \end{equation}
    The claim follows from \eqref{IdphiDecomp}--\eqref{IdphiEq4}. 
\end{proof}

\begin{proof}[Proof of Lemma \ref{LpLegendreEvolution}]
    By definition of $F$ \eqref{fFDef}, $\phi^{*,p}(t,y)= \frac{1}{p}\log\int_{\R^n}F(t,x)e^{p\langle x,y\rangle}\frac{dx}{V(\phi)}$. Therefore, differentiating in $t$, recalling that $V(\phi)$ remains constant in time (Lemma \ref{VolEvolution}), 
    \begin{equation*}
        \partial_t \phi^{*,p}(t,y)= \frac{\int_{\R^n}\partial_t F(t,x)e^{p\langle x, y\rangle}\frac{dx}{V(\phi)}}{p\int_{\R^n}F(t,x)e^{p\langle x,y\rangle}\frac{dx}{V(\phi)}}
        = \frac{\int_{\R^n}\partial_t F(t,x)e^{p\langle x, y\rangle} dx}{p\int_{\R^n}F(t,x)e^{p\langle x,y\rangle} dx}. 
    \end{equation*}
    Therefore, by Claim \ref{FEEq}, 
    \begin{equation}
    \label{dtPhi}
        \partial_t \phi^{*,p}(t,y) = \frac{\int_{\R^n}(\Delta F - \frac{p}{p+1}\frac{|DF|^2}{F}+ \langle x, DF\rangle + n(p+1)F) e^{p\langle x,y\rangle}\frac{dx}{V(\phi)}}{p\int_{\R^n} F(t,x)e^{p\langle x,y\rangle}\frac{dx}{V(\phi)}}\defeq \frac{I_1+ I_2 + I_3+  I_4}{pe^{p\phi^{*,p}(t,y)}}. 
    \end{equation}
    For $I_1$: Integrating by parts, 
    \begin{equation}
    \label{I1}
        \begin{aligned}
            I_1&\defeq \int_{\R^n}\Delta F\, e^{p\langle x,y\rangle}\frac{dx}{V(\phi)}= \sum_{i=1}^n \int_{\R^n}\frac{\partial^2 F}{\partial x_i^2}e^{p\langle x,y\rangle}\frac{dx}{V(\phi)} = \sum_{i=1}^n -\int_{\R^n}\frac{\partial F}{\partial x_i}p y_i e^{p\langle x,y\rangle}\frac{dx}{V(\phi)} \\
            &= \sum_{i=1}^n \int_{\R^n}F p^2 y_i^2 e^{p\langle x,y\rangle}\frac{dx}{V(\phi)}= p^2 \sum_{i=1}^n y_i^2 \int_{\R^n}F e^{p\langle x,y\rangle}\frac{dx}{V(\phi)}\\
            &= p^2 |y|^2 e^{p\phi^{*,p}(t,y)}. 
        \end{aligned}
    \end{equation}
    For $I_2$: By Claim \ref{IdphiClaim}, 
    \begin{equation}
    \label{I2eq1}
        \begin{aligned}
            I_2\defeq -\frac{p}{p+1}\int_{\R^n}\frac{|DF|^2}{F}e^{p\langle x,y\rangle}\frac{dx}{V(\phi)} = -\frac{p}{p+1}(I(d\phi^{p,y}) + p^2 |y|^2)e^{p,\phi^{*,p}(y)}. 
        \end{aligned}
    \end{equation}
    For $I_3$: Integrating by parts, 
    \begin{equation}
    \label{I3}
        \begin{aligned}
            I_3&\defeq \int_{\R^n}\langle x, DF\rangle e^{p\langle x,y\rangle}\frac{dx}{V(\phi)}\\
            &= -\int_{\R^n} n F e^{p\langle x,y\rangle} + p\langle x,y\rangle F e^{p\langle x, y\rangle}\frac{dx}{V(\phi)} \\
            &= -n e^{p\phi^{*,p}(t,y)} - p\langle y, D\phi^{*,p}(t,y)\rangle e^{p\phi^{*,p}(t,y)}. 
        \end{aligned}
    \end{equation}
    For $I_4$:  
    \begin{equation}
    \label{I4}
        I_4\defeq \int_{\R^n}n(p+1)F e^{p\langle x,y\rangle}\frac{dx}{V(\phi)} = n(p+1)e^{p\phi^{*,p}(t,y)}. 
    \end{equation}
    Consequently, by \eqref{dtPhi}--\eqref{I4}, 
    \begin{equation*}
    \begin{aligned}
        \partial_t \phi^{*,p}(t,y)&= p|y|^2 -\frac{I(Fe^{p\langle \cdot, y\rangle})}{p+1}- \frac{p^2}{p+1}|y|^2- \frac{n}{p} - \langle y, D\phi^{*,p}(t,y)\rangle + \frac{n(p+1)}{p}\\
        &= \frac{p}{p+1}|y|^2 - \langle y, D\phi^{*,p}(t,y)\rangle - \frac{I(Fe^{p\langle \cdot, y\rangle})}{p+1} + n, 
    \end{aligned}
    \end{equation*}
    as desired. 
\end{proof}

\subsubsection{\texorpdfstring{$L^p$-Mahler integral}{Lp-Mahler integral}}
\begin{lemma}
\label{LpMahlerIntegralEvolution}
    Let $p\in (0,\infty)$. For $\phi(t,x)$ evolving under \eqref{FPEq2}, 
    \begin{equation*}
        \partial_t \M_p(\phi)= \frac{p \M_p(\phi)}{p+1}\left(\frac1p\int_{\R^n}I(d\phi^{p,y})\frac{e^{-\phi^{*,p}(t,y)}dy}{V(\phi^{*,p})} -\mathrm{tr}\,\mathrm{Cov}(\phi^{*,p}) - |b(\phi^{*,p})|^2\right). 
    \end{equation*}
\end{lemma}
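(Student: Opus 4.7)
Since $\mathcal{M}_p(\phi) = V(\phi)V(\phi^{*,p})$ and the volume $V(\phi)$ is preserved by the flow (Lemma \ref{VolEvolution}), the plan is to compute $\partial_t V(\phi^{*,p})$ directly, using the evolution equation for $\phi^{*,p}$ from Lemma \ref{LpLegendreEvolution} and then multiply by $V(\phi)$. Specifically, I would differentiate under the integral sign to write
\begin{equation*}
\partial_t V(\phi^{*,p}) = -\int_{\R^n} (\partial_t \phi^{*,p})(t,y)\, e^{-\phi^{*,p}(t,y)}\, dy,
\end{equation*}
substitute the four terms from Lemma \ref{LpLegendreEvolution}, and handle each piece separately.

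The term containing $\langle y, D\phi^{*,p}\rangle$ will be treated by integration by parts. Writing $\langle y, D\phi^{*,p}\rangle e^{-\phi^{*,p}} = -\langle y, D e^{-\phi^{*,p}}\rangle$ and integrating by parts against the divergence of $y$ produces exactly $n\, V(\phi^{*,p})$, which cancels the constant term $-n\,V(\phi^{*,p})$ coming from the $+n$ in Lemma \ref{LpLegendreEvolution}. The remaining two contributions are a pure $|y|^2$-moment and a Fisher-information integral. For the $|y|^2$ piece, I would normalize by $V(\phi^{*,p})$ to obtain the probability measure $e^{-\phi^{*,p}(y)}\,dy/V(\phi^{*,p})$ and use the standard second-moment identity
\begin{equation*}
\int_{\R^n} |y|^2\, \frac{e^{-\phi^{*,p}(y)}\,dy}{V(\phi^{*,p})} = \mathrm{tr}\,\mathrm{Cov}(\phi^{*,p}) + |b(\phi^{*,p})|^2,
\end{equation*}
which is just the decomposition of the second moment as variance plus squared mean.

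Combining the surviving terms gives
\begin{equation*}
\partial_t V(\phi^{*,p}) = \frac{1}{p+1}\int_{\R^n} I(d\phi^{p,y})\, e^{-\phi^{*,p}(t,y)}\,dy - \frac{p}{p+1}V(\phi^{*,p})\bigl(\mathrm{tr}\,\mathrm{Cov}(\phi^{*,p}) + |b(\phi^{*,p})|^2\bigr),
\end{equation*}
and multiplying by $V(\phi)$ and factoring $p\mathcal{M}_p(\phi)/(p+1)$ yields the stated identity. The main obstacle is purely analytic: justifying the exchange of $\partial_t$ with the spatial integral and the integration by parts, i.e.\ controlling boundary behavior at infinity. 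Here I would use the explicit representation \eqref{FPsolution}, which together with the convexity and growth provided by Lemma \ref{ConvexLowerBoundLemma} and the finiteness of all moments of $e^{-\phi^{*,p}}$ (via Corollary \ref{LpLegendreNonEmptyInterior} and the moment bounds used for Proposition \ref{LpLegendreSmooth}) allows both the differentiation under the integral sign and the vanishing of the boundary terms to be carried out by a dominated-convergence argument essentially identical to the one used in the proof of Lemma \ref{SmoothnessLemma}.
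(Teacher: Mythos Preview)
Your proposal is correct and follows essentially the same approach as the paper: differentiate $V(\phi^{*,p})$ under the integral sign, substitute the four terms from Lemma \ref{LpLegendreEvolution}, integrate the $\langle y, D\phi^{*,p}\rangle$ term by parts to get $nV(\phi^{*,p})$ which cancels the constant term, and rewrite the $|y|^2$-moment as $\mathrm{tr}\,\mathrm{Cov}(\phi^{*,p})+|b(\phi^{*,p})|^2$. The paper does not explicitly justify the analytic interchange of limit and integral or the vanishing boundary terms, so your added discussion of these points is, if anything, more thorough than the original.
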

One of the terms that will appear in the evolution equation of the $L^p$-Mahler integral is the covariance matrix of the measure $d\phi^{p,y}$. 
Recall the covariance matrix of a probability measure $\mu$, 
\begin{equation*}
    \mathrm{Cov}(\mu)\defeq \left[\int x_ix_jd\mu(x) - \int x_i d\mu(x)\int x_jd\mu(x)\right]_{i,j=1}^n. 
\end{equation*}
For a function $f$, let $\mathrm{Cov}(f)\defeq \mathrm{Cov}(\frac{f(x)dx}{V(f)})$. 
\begin{proof}[Proof of Lemma \ref{LpMahlerIntegralEvolution}]
    Since, by Lemma \ref{VolEvolution}, $\partial_tV(\phi)=0$, it suffices to compute $\partial_t V(\phi^{*,p})$. By Lemmas \ref{LpLegendreEvolution} and \ref{D^2CovEq}, 
    \begin{equation*}
        \begin{aligned}
            \partial_t V(\phi^{*,p})&= -\int_{\R^n}\partial_t\phi^{*,p}(t,y) e^{-\phi^{*,p}(t,y)}dy \\
            &= \int_{\R^n}\left( -\frac{p}{p+1}|y|^2 + \langle y, D\phi^{*,p}\rangle + \frac{I(d\phi^{p,y})}{p+1} - n\right) e^{-\phi^{*,p}(y)}dy \\
            &= I_1 + I_2 + I_3 + I_4. 
        \end{aligned}
    \end{equation*}
    For $I_1$: 
    \begin{equation*}
        \begin{aligned}
            I_1&\defeq -\frac{p}{p+1}\int_{\R^n}|y|^2 e^{-\phi^{*,p}(t,y)}dy = -\frac{p V(\phi^{*,p})}{p+1} \int_{\R^n} |y|^2 \frac{e^{-\phi^{*,p}(t,y)}dy}{V(\phi^{*,p})} \\
            &= -\frac{pV(\phi^{*,p})}{p+1}\left( \mathrm{tr}\,\mathrm{Cov}(\phi^{*,p}) + \sum_{i=1}^n\left( \int_{\R^n}y_i\frac{e^{-\phi^{*,p}(t,y)}dy}{V(\phi^{*,p})}\right)^2\right) \\
            &= -\frac{pV(\phi^{*,p})}{p+1}\left( \mathrm{tr}\,\mathrm{Cov}(\phi^{*,p}) + |b(\phi^{*,p})|^2\right). 
        \end{aligned}
    \end{equation*}
    For $I_2$: Integrating by parts, 
    \begin{equation*}
        I_2\defeq \int_{\R^n}\langle y, D\phi^{*,p}\rangle e^{-\phi^{*,p}(y)}dy = -\int_{\R^n}\langle y, D(e^{-\phi^{*,p}})\rangle dy = n \int_{\R^n} e^{-\phi^{*,p}(t,y)}dy= n V(\phi^{*,p}). 
    \end{equation*}
    For $I_3$: 
    \begin{equation*}
        I_3 \defeq \frac{1}{p+1}\int_{\R^n} I(d\phi^{p,y}) e^{-\phi^{*,p}(t,y)}dy =\frac{V(\phi^{*,p})}{p+1}\int_{\R^n} I(d\phi^{p,y})\frac{e^{-\phi^{*,p}(t,y)}dy}{V(\phi^{*,p})}. 
    \end{equation*}
    For $I_4$: 
    \begin{equation*}
        I_4\defeq \int_{\R^n}(-n)e^{-\phi^{*,p}(t,y)}dy = -n V(\phi^{*,p}). 
    \end{equation*}
    Summing up the four integrals proves the claim. 
\end{proof}

\subsection{\texorpdfstring{Proving the functional $L^p$-Santal\'o inequality}{Proving the functional Lp-Santal\'o inequality}}
\subsubsection{An inequality for Fischer information}
The Fischer information matrix of a probability measure
\begin{equation*}
    \mathcal{I}(\mu)\defeq \int_{\R^n}\frac{Df(Df)^T}{f}d\mu(x)
\end{equation*}
is such that $\mathrm{tr}\,\mathcal{I}(\mu) = I(\mu)$. 
\begin{theorem}[Cram\'er--Rao]
\label{CramerRaoIneq}
For a probability measure $\mu$, $\mathcal{I}(\mu)\geq \mathrm{Cov}(\mu)^{-1}$. 
\end{theorem}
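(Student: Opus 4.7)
The plan is to derive the matrix inequality from Cauchy--Schwarz applied to the \emph{score function} $S(x)\defeq D\log f(x)=Df(x)/f(x)$, where $f$ is the density of $\mu$ and $m\defeq\int_{\R^n}x\,d\mu(x)$ is the mean. Under sufficient decay of $f$ at infinity, integration by parts yields two key identities. First, $\int_{\R^n}S\,d\mu=\int_{\R^n}Df\,dx=0$, so $S$ is centered. Second, for each $i,j\in\{1,\ldots,n\}$,
\begin{equation*}
    \int_{\R^n}x_i\,S_j(x)\,d\mu(x)=\int_{\R^n}x_i\,\partial_jf(x)\,dx=-\delta_{ij},
\end{equation*}
so the cross-covariance of $X\sim\mu$ with $S(X)$ is exactly $-I_n$.

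Next, I would apply the scalar Cauchy--Schwarz inequality to the random variables $u\cdot(X-m)$ and $v\cdot S(X)$ for arbitrary $u,v\in\R^n$:
\begin{equation*}
    (u^Tv)^2=\Big(\int_{\R^n}\big(u\cdot(x-m)\big)\big(v\cdot S(x)\big)\,d\mu(x)\Big)^2\leq\big(u^T\mathrm{Cov}(\mu)u\big)\big(v^T\mathcal{I}(\mu)v\big),
\end{equation*}
where the equality on the left-hand side uses the cross-covariance identity $\mathrm{Cov}(X,S)=-I_n$. To extract the matrix statement, I would specialize $u=\mathrm{Cov}(\mu)^{-1}v$: the left-hand side becomes $(v^T\mathrm{Cov}(\mu)^{-1}v)^2$, while $u^T\mathrm{Cov}(\mu)u$ simplifies (using symmetry of $\mathrm{Cov}(\mu)$) to $v^T\mathrm{Cov}(\mu)^{-1}v$. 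Cancelling the common positive factor yields $v^T\mathrm{Cov}(\mu)^{-1}v\leq v^T\mathcal{I}(\mu)v$ for every $v\in\R^n$, which is the claimed positive semi-definiteness $\mathcal{I}(\mu)\geq\mathrm{Cov}(\mu)^{-1}$.

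The main obstacle is justifying the integration by parts, for which $f$ must decay fast enough that boundary terms of the form $\int_{\partial B_R}x_if\,\nu_j\,dS$ vanish as $R\to\infty$. For the densities $d\phi^{p,y}$ appearing in the preceding section, Lemma \ref{ConvexLowerBoundLemma} gives super-linear growth $\phi(x)\geq a|x|+b$, so that $f\propto e^{-(p+1)\phi(x)+p\langle x,y\rangle}$ decays super-exponentially and these boundary terms disappear. One also tacitly assumes that $\mathrm{Cov}(\mu)$ is non-degenerate so that $\mathrm{Cov}(\mu)^{-1}$ makes sense; in the degenerate case the same argument gives the inequality on the range of $\mathrm{Cov}(\mu)$, to be read in the pseudo-inverse sense.
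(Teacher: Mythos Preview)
The paper does not actually prove Theorem \ref{CramerRaoIneq}; it is merely stated as the classical Cram\'er--Rao inequality and then invoked in Corollary \ref{CramerRaoCorollary}. Your argument via the score function and Cauchy--Schwarz is the standard proof and is correct.

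Two minor wording issues worth cleaning up: the bound $\phi(x)\geq a|x|+b$ from Lemma \ref{ConvexLowerBoundLemma} is \emph{linear} growth, not super-linear, and the resulting decay of $e^{-(p+1)\phi(x)+p\langle x,y\rangle}$ is exponential, not super-exponential. This still suffices for the boundary terms to vanish, but only for those $y$ with $|y|<\frac{p+1}{p}a$, i.e.\ precisely for $y\in\mathrm{int}\,\{\phi^{*,p}<\infty\}$ (cf.\ the proof of Lemma \ref{finiteLemma2}), which is exactly where the inequality is applied in Corollary \ref{CramerRaoCorollary}.
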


The following generalizes our past observation \cite[Lemma 6.3]{BMR23}:
\begin{lemma}
\label{D^2CovEq}
    For $p\in (0,\infty)$, $D^2 \phi^{*,p}(y)= p\,\mathrm{Cov}(d\phi^{p,y})$. 
\end{lemma}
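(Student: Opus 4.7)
The plan is a direct computation, treating $\phi^{*,p}$ as the logarithm of a Laplace-type integral and recognizing its second derivative as a covariance, exactly as in the standard cumulant-generating-function identity.

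Write $Z(y) \defeq \int_{\R^n} e^{p\langle x,y\rangle - (p+1)\phi(x)}dx$, so that $\phi^{*,p}(y) = \tfrac{1}{p}\log Z(y) - \tfrac{1}{p}\log V(\phi)$ and the measure $d\phi^{p,y}(x) = e^{p\langle x,y\rangle - (p+1)\phi(x)}dx/Z(y)$ is a probability measure on $\R^n$. First, I would differentiate once under the integral sign, which is justified by Proposition \ref{LpLegendreSmooth} and its proof (the dominated convergence argument there works verbatim), yielding
\begin{equation*}
    \partial_i \phi^{*,p}(y) \;=\; \frac{1}{p}\,\frac{\partial_i Z(y)}{Z(y)} \;=\; \int_{\R^n} x_i \, d\phi^{p,y}(x).
\end{equation*}

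Second, I would differentiate again using the quotient/product rule. Applying $\partial_j$ to the previous expression and again pulling the derivative inside the integral gives
\begin{equation*}
    \partial_j\partial_i \phi^{*,p}(y) \;=\; p\int_{\R^n} x_i x_j \,d\phi^{p,y}(x) \;-\; \Bigl(\int_{\R^n} x_i\,d\phi^{p,y}(x)\Bigr)\frac{\partial_j Z(y)}{Z(y)}.
\end{equation*}
Using $\partial_j Z(y)/Z(y) = p\int x_j\,d\phi^{p,y}$ from the first-order calculation, the right-hand side becomes
\begin{equation*}
    p\Bigl[\int x_ix_j\,d\phi^{p,y} - \int x_i\,d\phi^{p,y}\int x_j\,d\phi^{p,y}\Bigr] \;=\; p\,\mathrm{Cov}(d\phi^{p,y})_{ij},
\end{equation*}
which is the desired identity.

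The only non-trivial point is legitimacy of differentiating under the integral twice, but this is precisely the content already established in the proof of Proposition \ref{LpLegendreSmooth}, since on $\mathrm{int}\{\phi^{*,p}<\infty\}$ all moments of the tilted log-concave density are finite by Lemma \ref{FinitenessofMoments}. Thus there is no serious obstacle; the lemma is really just the Gaussian/log-Laplace identity $D^2 \log \mathbb{E}[e^{\langle x,y\rangle}] = \mathrm{Cov}$ applied to the measure with density $e^{-(p+1)\phi}/V(\phi)$, with the scaling by $p$ coming from the prefactor $1/p$ in the definition of $\phi^{*,p}$.
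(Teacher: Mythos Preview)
Your proposal is correct and follows essentially the same approach as the paper's own proof: both compute the first derivative of $\phi^{*,p}$ as the mean of $d\phi^{p,y}$ and then differentiate once more via the quotient rule to obtain $p$ times the covariance. Your version is slightly more explicit about the justification for differentiating under the integral (invoking Proposition~\ref{LpLegendreSmooth} and Lemma~\ref{FinitenessofMoments}), but the argument is otherwise identical.
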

\begin{proof}
    Let us start with the first derivative: 
    \begin{equation*}
        \frac{\partial \phi^{*,p}}{\partial y_i}(y) = \frac{\int_{\R^n}px_i e^{p\langle x,y\rangle- (p+1)\phi(x)}\frac{dx}{V(\phi)}}{p\int_{\R^n} e^{p\langle x,y\rangle -(p+1)\phi(x)}\frac{dx}{V(\phi)}} = \frac{\int_{\R^n} x_i e^{p\langle x,y\rangle- (p+1)\phi(x)}dx}{\int_{\R^n}e^{p\langle x,y\rangle- (p+1)\phi(x)}dx}. 
    \end{equation*}
    Continuing with the second derivative: 
    \begin{equation*}
        \begin{aligned}
            \frac{\partial^2\phi^{*,p}}{\partial y_i\partial y_j}(y) &= \frac{p\int_{\R^n}x_ix_j e^{p\langle x,y\rangle-(p+1)\phi(x)}dx}{{\int_{\R^n}e^{p\langle x,y\rangle - (p+1)\phi(x)}dx}} - \frac{p \int_{\R^n}x_i e^{p\langle x,y\rangle- (p+1)\phi(x)}dx \int_{\R^n}x_je^{p\langle x,y\rangle - (p+1)\phi(x)}dx}{\left( \int_{\R^n}e^{p\langle x,y\rangle - (p+1)\phi(x)}dx\right)^2}\\
            &= p\int_{\R^n}x_ix_j d\phi^{p,y}(x) - p \int_{\R^n}x_i d\phi^{p,y}(x)\int_{\R^n}x_j e^{p\langle x,y\rangle} dx \\
            &= p\mathrm{Cov}(d\phi^{p,y}). 
        \end{aligned}
    \end{equation*}
\end{proof}

\begin{corollary}
\label{CramerRaoCorollary}
    Let $p\in (0,\infty)$. For $\phi\in \Cvx(\R^n)$, $I(d\phi^{p,y})\geq p\,\mathrm{tr}(D^2\phi^{*,p}(y)^{-1})$.  
\end{corollary}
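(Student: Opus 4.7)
The plan is to combine Theorem \ref{CramerRaoIneq} and Lemma \ref{D^2CovEq} in essentially one line, after justifying that $D^2\phi^{*,p}(y)$ is invertible so the right-hand side is well-defined. First I would apply Cram\'er--Rao to the probability measure $\mu = d\phi^{p,y}$ from \eqref{partialtphi}, which is a bona fide probability measure since its density is positive and the normalizing constant is $e^{p\phi^{*,p}(y)}V(\phi) < \infty$ for $y \in \mathrm{int}\{\phi^{*,p}<\infty\}$. Cram\'er--Rao gives the matrix inequality
\[
    \mathcal{I}(d\phi^{p,y}) \geq \mathrm{Cov}(d\phi^{p,y})^{-1}.
\]

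Next I would take the trace of both sides. The trace is monotone on the cone of positive semi-definite matrices (since if $A \geq B \geq 0$ then $A - B$ has non-negative eigenvalues and hence non-negative trace), so this step preserves the inequality. On the left, by the very definition $I(\mu) = \mathrm{tr}\,\mathcal{I}(\mu)$. Substituting Lemma \ref{D^2CovEq} on the right, $\mathrm{Cov}(d\phi^{p,y}) = \tfrac{1}{p}D^2\phi^{*,p}(y)$, so its inverse is $p(D^2\phi^{*,p}(y))^{-1}$, and taking traces produces the claimed $p\,\mathrm{tr}((D^2\phi^{*,p}(y))^{-1})$.

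The only point requiring a brief argument is the invertibility of $D^2\phi^{*,p}(y)$. By Lemma \ref{D^2CovEq} this reduces to strict positive-definiteness of $\mathrm{Cov}(d\phi^{p,y})$. For any non-zero $v \in \R^n$,
\[
    \langle \mathrm{Cov}(d\phi^{p,y})v, v\rangle = \int_{\R^n}\langle x - b, v\rangle^2 \,d\phi^{p,y}(x),
\]
where $b$ is the barycenter of $d\phi^{p,y}$; since the density of $d\phi^{p,y}$ is strictly positive on $\mathrm{int}\{\phi<\infty\}$, which is $n$-dimensional (recall $\phi \in \Cvx(\R^n)$ implies $V(\phi)>0$) and hence not contained in any affine hyperplane, this integral is strictly positive. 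So $D^2\phi^{*,p}(y)$ is positive definite and the inverse is well-defined.

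There is really no main obstacle here: the corollary is a clean trace-and-substitute consequence of two already established results, and the analytic content is concentrated in Lemma \ref{D^2CovEq} (the Hessian-covariance identity) and the textbook Cram\'er--Rao bound. The invertibility check is the only piece of new content, and it is immediate from full-dimensionality of $\{\phi < \infty\}$.
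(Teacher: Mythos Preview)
Your proposal is correct and follows essentially the same approach as the paper: apply Cram\'er--Rao, take traces, and substitute the Hessian--covariance identity of Lemma \ref{D^2CovEq}. The paper's proof is the one-line version of exactly this; your added justification of the invertibility of $D^2\phi^{*,p}(y)$ via full-dimensionality of $\{\phi<\infty\}$ is a useful detail the paper omits.
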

\begin{proof}
    By the Cram\`er--Rao inequality and Lemma \ref{D^2CovEq}, $I(d\phi^{p,y})= \mathrm{tr}\,\mathcal{I}(d\phi^{p,y})\geq \mathrm{tr}\,\mathrm{Cov}(d\phi^{p,y})^{-1}= \mathrm{tr}\,(p^{-1}D^2\phi^{*,p}(y))^{-1}= p \,\mathrm{tr}\,(D^2\phi^{*,p}(y)^{-1})$. 
\end{proof}

\subsubsection{An inequality for the trace of Covariance}
To get a bound on the integral of the Fischer information we need the following variance Brascamp--Lieb inequality \cite{BrascampLieb76b}: 
\begin{lemma}
\label{VarianceIneq}
    For a strictly convex probability function $\psi$ and a locally Lipschitz $g\in L^1(e^{-\psi})$, 
    \begin{equation*}
        \int_{\R^n}|g(y)|^2\frac{e^{-\psi(y)}dy}{V(\psi)} -\left(\int_{\R^n}g(y)\frac{e^{-\psi(y)}dx}{V(\psi)} \right)^2\leq \int_{\R^n}\langle Dg(y), D^2\psi(y)^{-1}Dg(y)\rangle \frac{e^{-\psi(y)dy}}{V(\psi)}. 
    \end{equation*}
\end{lemma}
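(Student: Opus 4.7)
The plan is to prove this variance inequality by the classical Hörmander $L^2$-method combined with the Bochner identity for the Witten-type Laplacian associated to $\psi$. Subtracting a constant from $g$ affects neither side, so I may assume $\int g\,d\mu = 0$, where $d\mu \defeq e^{-\psi(y)}dy/V(\psi)$, reducing the task to proving
\[ \int_{\R^n} |g|^2\,d\mu \leq \int_{\R^n} \langle (D^2\psi)^{-1}Dg, Dg\rangle\,d\mu. \]

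First I would introduce the weighted Laplacian $L_\psi u \defeq \Delta u - \langle D\psi, Du\rangle$, which is symmetric and non-positive on $L^2(\mu)$ by an integration by parts whose boundary terms vanish thanks to the superlinear growth of $\psi$ implied by strict convexity (cf.\ Lemma \ref{ConvexLowerBoundLemma}). Strict convexity also yields a spectral gap, so the Poisson equation $-L_\psi u = g$ is solvable for mean-zero $g$ in a suitable weighted Sobolev space. A single integration by parts then gives
\[ \int g^2\,d\mu \;=\; -\int g\, L_\psi u\,d\mu \;=\; \int \langle Dg, Du\rangle\,d\mu. \]
Next I apply Cauchy--Schwarz with the positive-definite weight $D^2\psi$,
\[ \Bigl(\int \langle Dg, Du\rangle\,d\mu\Bigr)^2 \leq \int \langle (D^2\psi)^{-1}Dg, Dg\rangle\,d\mu \cdot \int \langle D^2\psi\, Du, Du\rangle\,d\mu, \]
and bound the second factor by the Bochner--Weitzenböck identity for $L_\psi$,
\[ \int (L_\psi u)^2\,d\mu \;=\; \int \|D^2 u\|_{\mathrm{HS}}^2\,d\mu + \int \langle D^2\psi\, Du, Du\rangle\,d\mu, \]
which yields $\int \langle D^2\psi\,Du,Du\rangle\,d\mu \leq \int (L_\psi u)^2\,d\mu = \int g^2\,d\mu$. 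Chaining the three displays and cancelling one factor of $\int g^2\,d\mu$ (assuming it is nonzero, else the inequality is trivial) produces the stated variance bound.

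The main obstacle will be the low-regularity setup: solvability and sufficient regularity of the Poisson equation for merely locally Lipschitz $g$, and justification of the Bochner identity without smoothness of $\psi$ or decay at infinity of $u$, $Du$, $D^2 u$. I would handle this by the standard mollification argument, replacing $\psi$ by $\psi \ast \rho_\varepsilon + \tfrac{\varepsilon}{2}|y|^2$ and $g$ by a smooth compactly supported truncation, proving the inequality in the smooth, uniformly strictly convex, compactly-supported setting where all integrations by parts are legitimate, and then passing to the limit. The uniform positive lower bound on $D^2\psi$ afforded by strict convexity, together with monotone/dominated convergence, will recover the general case; locally Lipschitz $g$ is differentiable almost everywhere by Rademacher, which suffices since the right-hand side is an integral with respect to $d\mu$.
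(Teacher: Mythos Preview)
The paper does not supply a proof of this lemma; it is stated with a citation to Brascamp--Lieb \cite{BrascampLieb76b} as a known classical result and then applied immediately in Corollary~\ref{traceInformationIneq}. Your approach via the H\"ormander $L^2$-method and the Bochner--Weitzenb\"ock identity for the weighted Laplacian $L_\psi$ is one of the standard modern proofs of the Brascamp--Lieb variance inequality and is essentially correct.

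Two minor technical remarks on your write-up. First, the appeal to Lemma~\ref{ConvexLowerBoundLemma} yields only \emph{linear} growth of $\psi$, not superlinear; this is still sufficient for the boundary terms in the integrations by parts to vanish, since $e^{-\psi}$ then decays exponentially. Second, the claim that ``strict convexity yields a spectral gap'' is true but is itself a nontrivial theorem (every log-concave probability measure on $\R^n$ has a spectral gap); however, in your mollified setting $\psi\ast\rho_\varepsilon + \tfrac{\varepsilon}{2}|y|^2$ one has $D^2\psi \geq \varepsilon I$, which gives the spectral gap directly and makes the Poisson equation $-L_\psi u = g$ solvable by elementary means, so the approximation scheme you outline bypasses this issue cleanly. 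In sum, your proposal would furnish a complete, self-contained proof of a result the paper is content to quote.
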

\begin{corollary}
\label{traceInformationIneq}
    For $p\in (0,\infty)$ and $\phi\in\Cvx(\R^n)\cap C^2$, 
    \begin{equation*}
        \mathrm{tr}\,\mathrm{Cov}(\phi^{*,p})\leq \frac1p \int_{\R^n} I(d\phi^{p,y})\frac{e^{-\phi^{*,p}(y)}dy}{V(\phi^{*,p})}
    \end{equation*}
\end{corollary}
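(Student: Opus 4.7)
The plan is to reduce the claim to the variance Brascamp--Lieb inequality (Lemma \ref{VarianceIneq}) applied coordinate-by-coordinate with $\psi = \phi^{*,p}$, and then to bound the resulting trace of $(D^2\phi^{*,p})^{-1}$ via the Cram\'er--Rao inequality in the form of Corollary \ref{CramerRaoCorollary}.

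First I would check the hypotheses of Lemma \ref{VarianceIneq} for $\psi \defeq \phi^{*,p}$. Smoothness on $\mathrm{int}\{\phi^{*,p}<\infty\}$ is provided by Proposition \ref{LpLegendreSmooth}. Strict convexity follows from Lemma \ref{D^2CovEq}, which identifies $D^2\phi^{*,p}(y) = p\,\mathrm{Cov}(d\phi^{p,y})$ with a positive multiple of the covariance of the probability measure $d\phi^{p,y}$; this covariance is positive definite because $d\phi^{p,y}$ is supported on a set of positive Lebesgue measure, so $D^2\phi^{*,p}(y) > 0$.

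Now apply Lemma \ref{VarianceIneq} to the coordinate functions $g(y) = y_i$, for which $Dg = e_i$. This gives
\begin{equation*}
    \int_{\R^n} y_i^2\, \frac{e^{-\phi^{*,p}(y)}dy}{V(\phi^{*,p})} - \left(\int_{\R^n} y_i\,\frac{e^{-\phi^{*,p}(y)}dy}{V(\phi^{*,p})}\right)^2 \leq \int_{\R^n} \bigl[(D^2\phi^{*,p}(y))^{-1}\bigr]_{ii}\,\frac{e^{-\phi^{*,p}(y)}dy}{V(\phi^{*,p})}.
\end{equation*}
The left-hand side is exactly the $(i,i)$ entry of $\mathrm{Cov}(\phi^{*,p})$. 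Summing over $i = 1, \ldots, n$,
\begin{equation*}
    \mathrm{tr}\,\mathrm{Cov}(\phi^{*,p}) \leq \int_{\R^n} \mathrm{tr}\bigl((D^2\phi^{*,p}(y))^{-1}\bigr)\,\frac{e^{-\phi^{*,p}(y)}dy}{V(\phi^{*,p})}.
\end{equation*}

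To finish, I would invoke Corollary \ref{CramerRaoCorollary}, which states that $I(d\phi^{p,y}) \geq p\,\mathrm{tr}\bigl((D^2\phi^{*,p}(y))^{-1}\bigr)$ pointwise in $y$. Dividing by $p$ and integrating against $V(\phi^{*,p})^{-1} e^{-\phi^{*,p}(y)}dy$ combines with the previous display to yield the claimed bound. The main delicate point is justifying that $y_i \in L^1(e^{-\phi^{*,p}})$ so that the variance inequality is non-vacuous; this follows because $\phi \in \Cvx(\R^n)\cap C^2$ forces $\phi^{*,p}$ to grow sufficiently quickly away from its Santal\'o point (via Lemma \ref{ConvexLowerBoundLemma} applied to $\phi^{*,p}$, which is itself in $\Cvx(\R^n)$ after possibly a translation), so all first and second moments of $e^{-\phi^{*,p}}/V(\phi^{*,p})$ are finite.
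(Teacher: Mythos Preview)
Your proposal is correct and follows essentially the same approach as the paper: apply the Brascamp--Lieb variance inequality (Lemma \ref{VarianceIneq}) with $\psi=\phi^{*,p}$ and $g(y)=y_i$, sum over $i$ to bound $\mathrm{tr}\,\mathrm{Cov}(\phi^{*,p})$ by the integral of $\mathrm{tr}(D^2\phi^{*,p})^{-1}$, and then invoke Corollary \ref{CramerRaoCorollary}. Your additional remarks verifying strict convexity of $\phi^{*,p}$ via Lemma \ref{D^2CovEq} and the finiteness of moments are more careful than the paper's presentation but do not alter the argument.
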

\begin{proof}
Applyng Lemma \ref{VarianceIneq} for $\psi = \phi^{*,p}$ and $g= y_i$ yields: 
\begin{equation*}
    \int_{\R^n}y_i^2 \frac{e^{-\phi^{*,p}(y)}dy}{V(\phi^{*,p})} - \left( \int_{\R^n}y_i \frac{e^{-\phi^{*,p}(y)}dy}{V(\phi^{*,p})}\right)^2 \leq \int_{\R^n}\langle e_i, D^2\phi^{*,p}(y)^{-1}e_i\rangle \frac{e^{-\phi^{*,p}(y)}dy}{V(\phi^{*,p})}
\end{equation*}
Summing over all $i$, 
\begin{equation*}
\label{trIneq}
    \mathrm{tr}\,\mathrm{Cov}(\phi^{*,p})\leq \int_{\R^n} \mathrm{tr}(D^2\phi^{*,p}(y)^{-1})\frac{e^{-\phi^{*,p}(y)}dy}{V(\phi^{*,p})},
\end{equation*}
hence the claim follows from Corollary \ref{CramerRaoCorollary}. 
\end{proof}

\subsubsection{Finishing the proof}
\label{FinishingProofSection}
\begin{proof}[Proof of Proposition \ref{Mpbound}]
    Follows directly from the evolution equation $\partial_t \M_p(\phi)$ and the inequality of Corollary \ref{traceInformationIneq}. 
\end{proof}

\begin{proof}[Proof of Theorem \ref{LpFuncSantalo}]
Fix $\phi_0\in\Cvx(\R^n)$, and let $\phi(t,x), t>0,$ be the unique solution to \eqref{FPEq2} with $\phi(0,x)= \phi_0(x)$. Let  
\begin{equation*}
    g(t)\defeq \M_p(T_{s_p(\phi)}\phi)= V(T_{s_p(\phi)}\phi) V((T_{s_p(\phi)}\phi)^{*,p})= V(\phi) \int_{\R^n}e^{-\phi^{*,p}(t,y)} e^{\langle s_p(\phi), y\rangle}dy 
\end{equation*}
By the chain rule: 
\begin{equation*}
    g'(t)= \partial_t \M_p(T_{s_p(\phi)}\phi) + \langle \partial_t s_p(\phi), (D_x\M_p(T_x\phi))(s_p(\phi))\rangle. 
\end{equation*}
By definition, the $L^p$-Santal\'o point minimizers $\M_p(T_x\phi)$ among all $x\in \R^n$. Consequently, $(D_x\M_p(T_x\phi))(s_p(\phi))=0$. In addition, the $L^p$-Santal\'o point is characterized by $b((T_{s_{p}(\phi)}\phi)^{*,p})=0$ (Proposition \ref{LpFunSantaloPoint}). Therefore, by Proposition \ref{Mpbound}, $\partial_t \M_p(T_{s_p(\phi)}\phi)\geq - |b((T_{s_{p}(\phi)}\phi)^{*,p})|^2=0$. As a result, $g'(t)\geq0$, i.e., $g$ is an increasing function.  Since, by Lemma \ref{LimitingSolution}, $\phi_\infty(x)\defeq \lim_{t\to\infty}$ is $|x|^2/2$ minus a constant, it follows that 
\begin{equation*}
    \inf_{x\in \R^n}\M_p(\phi_0)= \M_p(T_{s_{p}(\phi_0)}\phi_0)= g(0)\leq g(\infty)= \M_p(|x|^2/2), 
\end{equation*}
as claimed. 
\end{proof}

\begin{proof}[Proof of Corollary \ref{GeneralLpFunSantalo}]
    The same proof as in Theorem \ref{LpFuncSantalo} works without the convexity assumption: The Fokker--Planck flow is instantaneously smoothing, Lemma \ref{CramerRaoCorollary} holds without the convexity assumption on $\phi$, and Lemma \ref{VarianceIneq} is applied to $f^{*,p}$ which is always convex. Super-linearity is essential to ensure the validity of the integration by parts used throughout. 
\end{proof}

\appendix
\section{Moments of a convex function}
Throughout this paper, we have focused on convex functions with non-zero and finite volume. Although this condition naturally arises as a generalization of the assumptions typically made for convex bodies, it comes with the additional benefit of ensuring that all integrals considered in this paper are convergent. Specifically, the $k$-th moments of the function are well-defined and finite. This results directly from the following lemma \cite[Lemma 2.2.1]{BGV+14}.  
 
\begin{lemma}
\label{ConvexLowerBoundLemma}
    Let $\phi:\R^n\to\R\cup\{\infty\}$ be a convex function with $0<V(\phi)<\infty$. There exist constants $a>0, b\in \R$ such that $\phi(x)\geq a|x| + b$ for all $x\in \R^n$. 
\end{lemma}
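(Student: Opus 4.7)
My plan is to deduce the linear lower bound in three steps: (a) $\phi$ is bounded below on $\R^n$; (b) $\phi(x)\to\infty$ as $|x|\to\infty$; (c) convexity along rays upgrades this qualitative growth to a linear bound. The integrability arguments (a) and (b) both proceed by contradiction, each exploiting the same mechanism: if $\phi$ is convex and bounded above by a constant $C$ on a region $\Omega$, then $V(\phi) \geq |\Omega|\,e^{-C}$, so a large region on which $\phi$ is very small (or a growing region on which $\phi$ is merely bounded) forces $V(\phi) = \infty$.

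Fix $y_0\in\mathrm{int}\,\{\phi<\infty\}$ (which is non-empty: otherwise $\{\phi<\infty\}$ is contained in a hyperplane and $V(\phi)=0$) and $\rho>0$ such that $\phi\leq M$ on $B(y_0,\rho)$, using local boundedness of convex functions on the interior of their effective domain. For step (a), suppose $\phi(x_n)\to-\infty$ along some sequence. Midpoint convexity yields $\phi\leq (M+\phi(x_n))/2$ on the translated ball $B((y_0+x_n)/2,\rho/2)$, so $V(\phi)\geq (\rho/2)^n |B_2^n|\,e^{-(M+\phi(x_n))/2}\to\infty$, a contradiction. After shifting by a constant I may henceforth assume $\phi\geq 0$. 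For step (b), suppose for contradiction that $|x_n|\to\infty$ while $\phi(x_n)\leq M'$. By convexity, $\phi\leq\max(M,M')$ on $\mathrm{conv}(B(y_0,\rho)\cup\{x_n\})$, which contains a solid cone with $(n-1)$-dimensional base of radius proportional to $\rho$ and height proportional to $|x_n-y_0|$; the volume of this cone tends to infinity with $|x_n|$, and integrating $e^{-\phi}$ over it again forces $V(\phi)=\infty$. Hence $c(r):=\inf_{|x|=r}\phi(x)\to\infty$ as $r\to\infty$.

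For step (c), pick $r_0$ with $c(r_0)>\phi(y_0)+1$ and set $\sigma:=(c(r_0)-\phi(y_0))/r_0>0$. For any $x$ with $|x-y_0|\geq r_0$, the one-dimensional map $t\mapsto\phi(y_0+t(x-y_0)/|x-y_0|)$ is convex and its secant slope from $t=0$ to $t=r_0$ is at least $\sigma$; the monotonicity of secant slopes of a convex function then forces the slope from $r_0$ to $|x-y_0|$ to be at least $\sigma$ as well, giving $\phi(x)\geq\phi(y_0)+\sigma|x-y_0|$. Combined with the uniform lower bound from (a), this produces the desired $\phi(x)\geq a|x|+b$ globally after adjusting constants. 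The principal obstacle is step (b), where the cone-volume estimate must be executed carefully: one needs the lower bound $|\mathrm{conv}(B(y_0,\rho)\cup\{x_n\})|\gtrsim \rho^{n-1}|x_n-y_0|$ to diverge, which is what actually activates the integrability contradiction and distinguishes convex functions with finite $V(\phi)$ from those (like affine functions in certain directions) that need not grow linearly.
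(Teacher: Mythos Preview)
Your argument is correct and tracks the paper's strategy closely: both first establish coercivity (the paper via Markov's inequality on sublevel sets together with a cone-volume bound, you via the same cone-volume contradiction done directly) and then upgrade to a linear lower bound using convexity along rays. One small notational slip to fix: you define $c(r):=\inf_{|x|=r}\phi(x)$ centered at the origin, but the ray argument in step (c) issues from $y_0$, so the secant estimate needs $\phi(y_0+r_0u)\geq c(r_0)$ for every unit vector $u$; this calls for $c(r):=\inf_{|x-y_0|=r}\phi(x)$ instead, which is harmless since the conclusion of (b) applies verbatim to the recentered infimum.
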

\begin{proof}
\textit{Step 1}: Since $V(\phi)>0$, the set $\{\phi<\infty\}$ has positive Lebesgue measure. By writing, 
\begin{equation*}
    \{\phi<\infty\}= \bigcup_{m\geq 0}\{\phi\leq m\}
\end{equation*}
there must exist $m_0\in \mathbb{N}$ such that $|\{\phi\leq m_0\}|>0$. 

\smallskip
\noindent
\textit{Step 2}: $\{\phi\leq m_0\}$ has non-empty interior. Indeed, since $|\{\phi\leq m_0\}|>0$, the set $\{\phi\leq m_0\}$ cannot be contained within a hyperplane and must contain linearly independent points $x_1, \ldots, x_n$. Fix a point $x_0\in \{\phi\leq m_0\}\setminus\{x_1, \ldots, x_n\}$. The convex hull of these points,
\begin{equation*}
    \Delta\defeq \mathrm{conv}\{x_0, x_1, \ldots, x_n\}
\end{equation*}
is a linear image of $\Delta_{n,+}$. One can see this by translating $\Delta- x_0= \mathrm{conv}\{0, x_1-x_0, \ldots, x_n-x_0\}$. The points $x_1- x_0, \ldots, x_n- x_0$ are still linearly independent, hence there exists $A\in GL(n,\R)$ such that $A(x_i-x_0)= e_i$. Therefore, $A(\Delta-x_0)= \Delta_{n,+}$ or equivalently  $\Delta= A^{-1}\Delta_{n,+} + x_0$, meaning it has non-empty interior. Since $\{\phi\leq m_0\}$ is convex and contains $x_0,\ldots, x_n$, it must also contain $\Delta$, and hence has a non-empty interior.  

\smallskip
\noindent 
\textit{Step 3:} The set $\{\phi\leq m_0+1\}$ is bounded. To prove this, recall that for convex sets with non-empty interior, boundedness is equivalent to finiteness of volume (Claim \ref{FiniteVolClaim} below). Therefore, it is enough to show that $\{\phi\leq m_0+1\}$ has a non-empty interior and finite volume. 

The non-emptiness of the interior follows directly from Step 2, as $\{\phi\leq m_0\}$, which has non-empty interior, is contained in $\{\phi\leq m_0+1\}$. For the boundedness of the volume, use Markov's inequality,
\begin{equation*}
    |\{\phi\leq m_0+ 1\}|= \int_{\{\phi\leq m_0+1\}} dx \leq e^{m_0+1}\int_{\{\phi\leq m_0+1\}}e^{-\phi(x)} dx \leq e^{m_0+1} V(\phi).
\end{equation*}
Since $V(\phi)$ is finite by assumption, $|\{\phi\leq m_0 + 1\}|$ is also finite. 
Consequently, $\{\phi\leq m_0+1\}$ is a convex set with non-empty interior and finite volume. By Claim \ref{FiniteVolClaim} below, it is bounded. 

\smallskip
\noindent
\textit{Step 4}: Upper bound for small $|x|$. By Step 2, there exists $r>0$ such that a ball of radius $r$ is contained in $\{\phi\leq m_0\}$. By replacing $\phi$(x) with $\phi(x- x_0)$, we may assume $rB_2^n\subset \{\phi\leq m_0\}$. Therefore, 
\begin{equation}
\label{AppendixEq1}
    \phi(x)\leq m_0, \quad \text{ for } |x|\leq r,
\end{equation}
providing an upper bound for small $|x|$. 

\smallskip 
\noindent
\textit{Step 5}: Upper bound for big $|x|$. 
By Step 3, there exists $R>0$ such that $\{\phi\leq m_0+1\}\subset \frac{R}{2}B_2^n$. That is, 
\begin{equation}
\label{AppendixEq2}
    \phi(x) > m_0 + 1, \quad \text{ for } |x|\geq R. 
\end{equation}
For $|x|>R$, we may express $Rx/|x|$ as a convex combination (Figure \ref{ConvexCombFig}): 
\begin{equation*}
    \frac{Rx}{|x|} = \frac{|x|-R}{|x|-r} \frac{rx}{|x|} + \frac{R-r}{|x|-r} x
\end{equation*}
By convexity of $\phi$, and bounds \eqref{AppendixEq1} and \eqref{AppendixEq2},
\begin{equation*}
    m_0+ 1 < \phi\left(\frac{Rx}{|x|}\right)\leq \frac{|x|-R}{|x|-r} \phi\left( \frac{rx}{|x|}\right) + \frac{R-r}{|x|-r}\phi(x)\leq \frac{|x|-R}{|x|-r} m_0 + \frac{R-r}{|x|-r}\phi(x). 
\end{equation*}
Therefore, 
\begin{equation}
\label{AppendixEq3}
    \phi(x)\leq \frac{|x|}{R-r} + m_0 - \frac{r}{R-r}, \quad \text{ for } |x|>R. 
\end{equation}
\begin{figure}[H]
    \centering
    \begin{tikzpicture}
        % Define coordinates for the points without using variables
        \coordinate (O) at (0,0);  % Origin
        \coordinate (rx) at ({2*cos(45)},{2*sin(45)});  % Point at r/|x| = 2
        \coordinate (Rx) at ({4*cos(45)},{4*sin(45)});  % Point at R/|x| = 4
        \coordinate (x) at ({5.3*cos(45)},{5.3*sin(45)});   % Point at x
    
        % Draw dotted line through the points rx, Rx, and x
        \draw[dotted, thick] (rx) -- (x);
    
        % Draw the arc of the circle with radius R passing through Rx
        \draw[thick] (Rx) arc[start angle=45, end angle=90, radius=4];
        \draw[thick] (Rx) arc[start angle=45, end angle=0, radius=4];
    
        % Draw points for rx, Rx, and x
        \fill (rx) circle (1.5pt) node[below right] {$\frac{rx}{|x|}$};
        \fill (Rx) circle (1.5pt) node[right] {$\frac{Rx}{|x|}$};
        \fill (x) circle (1.5pt) node[below right] {$x$};
    \end{tikzpicture}
    \caption{For $|x|>R$, $\frac{Rx}{|x|}$ is a convex combination of $\frac{rx}{|x|}$ and $x$.}
    \label{ConvexCombFig}
\end{figure}
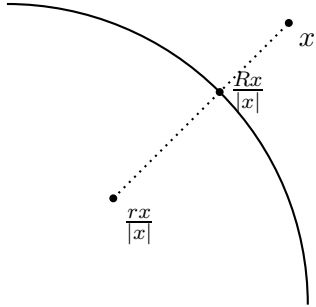

\smallskip
\noindent
\textit{Step 6:} Upper bound for $|x|\geq r$. For $|x|\geq r$ we can write $x= \frac{r}{R+1} \frac{(R+1)x}{r}$ and use the convexity of $\phi$ to obtain
\begin{equation}
\label{AppendixEq4}
    \phi(x)\leq \frac{r}{R+1}\phi\left(\frac{(R+1)x}{r}\right) + \left( 1-\frac{r}{R+1}\right)\phi(0). 
\end{equation}
Using \eqref{AppendixEq1} for $x=0$, $\phi(0)\leq m_0$. In addition, for $|x|\geq r$, we have $|\frac{(R+1)x}{r}|\geq R+1> R$, thus \eqref{AppendixEq3} applies to give an upper bound for $\phi(\frac{(R+1)x}{r})$. Consequently, \eqref{AppendixEq4} becomes
\begin{equation}
\label{AppendixEq5}
\begin{aligned}
    \phi(x) &\leq \frac{|x|}{R-r}+ m_0 - \frac{r^2}{(R+1)(R-r)}, \quad \text{ for } |x|\geq r. 
\end{aligned}
\end{equation}

\smallskip
\noindent
\textit{Step 7:} Choosing the constants $a$ and $b$. Note that $m_0 - \frac{r^2}{(R+1)(R-r)}\leq m_0$. Therefore, setting
\begin{equation*}
    a\defeq \frac{1}{R-r} \quad \text{ and } \quad b\defeq m_0, 
\end{equation*}
we conclude, by \eqref{AppendixEq1} and \eqref{AppendixEq5}, 
$\phi(x)\leq a|x|+b$ for all $x\in \R^n$. 
\end{proof}

\begin{claim}
\label{FiniteVolClaim}
    A convex set with a non-empty interior and finite volume must be bounded. 
\end{claim}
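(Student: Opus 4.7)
The plan is to prove the contrapositive: if $K \subset \R^n$ is convex, has non-empty interior, and is unbounded, then $|K| = \infty$.

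First, by translation invariance of volume, I may assume that the interior of $K$ contains the origin, so that $r B_2^n \subset K$ for some $r > 0$. Since $K$ is unbounded, I can choose a sequence $\{y_m\}_{m\geq 1} \subset K$ with $|y_m| \to \infty$. For each such $y_m$, convexity forces
\begin{equation*}
    C_m \defeq \mathrm{conv}\big(r B_2^n \cup \{y_m\}\big) \subset K.
\end{equation*}
The strategy is to bound $|C_m|$ from below by an explicit quantity that tends to infinity, contradicting $|K| < \infty$.

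To estimate $|C_m|$, I would identify inside $C_m$ an explicit cone whose volume is easy to compute. Writing $u_m \defeq y_m/|y_m|$ and $u_m^\perp \defeq \{x : \langle x, u_m\rangle = 0\}$, the set $D_m \defeq r B_2^n \cap u_m^\perp$ is an $(n-1)$-dimensional ball of radius $r$ sitting in the hyperplane through the origin perpendicular to $y_m$. Because $D_m \subset r B_2^n \subset K$ and $y_m \in K$, convexity gives that the ``ice-cream cone'' with base $D_m$ and apex $y_m$ lies in $C_m$. This cone has height $|y_m|$ (the distance from $y_m$ to the hyperplane $u_m^\perp$) and base volume $|B_2^{n-1}| r^{n-1}$, so
\begin{equation*}
    |C_m| \;\geq\; \frac{|y_m|}{n}\, |B_2^{n-1}|\, r^{n-1}.
\end{equation*}
Sending $m \to \infty$ gives $|C_m| \to \infty$, and since $C_m \subset K$ for every $m$, this forces $|K| = \infty$, contradicting the hypothesis. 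Hence $K$ must be bounded.

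I expect the argument to be essentially routine; the only mild subtlety is verifying that the cone with apex $y_m$ and base $D_m$ really is contained in $\mathrm{conv}(rB_2^n \cup \{y_m\})$, which follows immediately from convexity since each point of the cone is a convex combination of $y_m$ and some element of $D_m \subset r B_2^n$. No compactness of the sphere or passage to a limiting direction is needed, because each individual $y_m$ already produces a lower bound on $|K|$ that diverges.
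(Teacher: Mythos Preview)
Your proof is correct and uses essentially the same idea as the paper: both arguments show that the cone with apex at a point $x_0 \in K$ and base the $(n-1)$-ball of radius $r$ in the hyperplane through the origin orthogonal to $x_0$ has volume $\tfrac{1}{n}|B_2^{n-1}|r^{n-1}|x_0|$, and is contained in $K$ by convexity. The only cosmetic difference is that the paper argues directly---fixing an arbitrary $x_0\in C$ and reading off the bound $|x_0|\le \tfrac{n|C|}{r^{n-1}|B_2^{n-1}|}$---whereas you package the same estimate as a contrapositive via a sequence $\{y_m\}$; the sequence is not actually needed, since a single sufficiently distant point already gives the contradiction.
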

\begin{proof}
    Let us start by making some simplifying assumptions that do not hurt generality. Since $C$ has a non-empty interior, there exists $r>0$ such that a ball of radius $r$ is contained in $C$. By translating $C$, we can assume that $rB_2^n\subset C$. Fix $x_0\in C$. After rotation, we may assume $x_0= (0, \ldots, 0, h)$ for some $h>0$. 

    The goal is to find an upper bound on $|x_0|=h$. Since $x\in C$ and $(rB_2^{n-1})\times\{0\}\subset rB_2^n \subset C$, the cone 
    \begin{equation*}
        C_0\defeq \mathrm{conv}\{x_0, (rB_2^{n-1})\times\{0\}\} = \{((1-\lambda)\xi, \lambda h)\in \R^{n-1}\times \R: \lambda\in [0,1], \xi\in rB_2^{n-1}\},  
    \end{equation*}
    is contained in $C$ because $C$ is convex.
    Using Tonelli's theorem \cite[\S 2.37]{Folland99}, 
    \begin{equation*}
        |C_0|= \int_{0}^h \left| \frac{h-x_n}{h} rB_2^{n-1}\right| dx_n = \frac{r^{n-1}}{h^{n-1}}|B_2^{n-1}| \int_0^h (h-x_n)^{n-1}dx_n. 
    \end{equation*}
    Evaluating the integral, we find $|C_0|= \frac{|B_2^{n-1}|}{n} r^{n-1} h= \frac{|B_2^{n-1}|}{n} r^{n-1} |x_0|$, because $|x_0|=h$. By convexity of $C$, $C_0\subset C$, hence $|C_0|\leq |C|$ and 
    \begin{equation*}
        |x_0|\leq \frac{n|C|}{r^{n-1}|B_2^{n-1}|},
    \end{equation*}
    which proves $C$ is bounded. 
\end{proof}

\begin{corollary}
\label{FinitenessofMoments}
    Let $\phi:\R^n\to \R\cup\{\infty\}$ be a convex function with $0<V(\phi)<\infty$. For $t>0$, 
    \begin{equation*}
        \int_{\R^n}|x|^t e^{-\phi(x)} <\infty. 
    \end{equation*}
\end{corollary}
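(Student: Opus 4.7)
The plan is to reduce the claim directly to Lemma \ref{ConvexLowerBoundLemma}, which gives us linear control of $\phi$ from below and hence exponential decay of $e^{-\phi}$.

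First, I would invoke Lemma \ref{ConvexLowerBoundLemma} to extract constants $a>0$ and $b\in\R$ such that $\phi(x)\geq a|x|+b$ for all $x\in\R^n$. This immediately yields the pointwise bound
\begin{equation*}
    |x|^t e^{-\phi(x)} \;\leq\; e^{-b}\,|x|^t\,e^{-a|x|}, \qquad x\in\R^n.
\end{equation*}

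Then the claim reduces to showing that $\int_{\R^n} |x|^t e^{-a|x|} dx < \infty$. I would compute this in polar coordinates: using $|\partial B_2^n| = n|B_2^n|$,
\begin{equation*}
    \int_{\R^n} |x|^t e^{-a|x|}\,dx \;=\; n|B_2^n| \int_0^\infty r^{t+n-1} e^{-ar}\,dr \;=\; \frac{n|B_2^n|\,\Gamma(t+n)}{a^{t+n}},
\end{equation*}
which is finite for any $t>0$. Combining this with the pointwise bound gives the claim.

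The proof is essentially immediate once Lemma \ref{ConvexLowerBoundLemma} is available, so there is no real obstacle — the linear lower bound on $\phi$, which encodes the super-linear growth forced by $V(\phi)<\infty$, is the only nontrivial ingredient, and all its content has already been established. One small remark worth making for completeness: the same argument shows that $\int_{\R^n} e^{c|x|} e^{-\phi(x)} dx < \infty$ for any $0<c<a$, so that $\phi$ automatically has exponential moments, not just polynomial ones; in particular, the barycenter $b(\phi)$ used throughout the paper is well-defined.
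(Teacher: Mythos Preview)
Your proof is correct and essentially identical to the paper's own argument: both invoke Lemma~\ref{ConvexLowerBoundLemma} to obtain the linear lower bound $\phi(x)\geq a|x|+b$, pass to polar coordinates, and recognize the resulting radial integral as a Gamma function. Your additional remark about exponential moments is a nice bonus but not part of the paper's proof.
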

\begin{proof}
    By Lemma \ref{ConvexLowerBoundLemma}, there exist $a>0$ and $b\in \R$ such that $\phi(x)\geq a|x|+ b$ for all $x\in \R^n$. Therefore, using polar coordinates, 
    \begin{equation*}
        \int_{\R^n}|x|^t e^{-\phi(x)}dx \leq e^{-b} \int_{\R^n}|x|^t e^{-a|x|}dx= |\partial B_2^{n}| e^{-b} \int_0^{\infty}r^{t+n-1}e^{-ar}dr
    \end{equation*}
    The integral on the right-hand side can be recognized as $a^{-(t+n)} \Gamma(t+n)$, hence is finite. 
\end{proof}

\phantomsection
\addcontentsline{toc}{section}{References}
\printbibliography

{\sc University of Maryland}

{\tt vmastr@umd.edu}
\end{document}